\apptocmd{\lim}{\limits}{}{}
\apptocmd{\sup}{\limits}{}{}
\apptocmd{\inf}{\limits}{}{}
\apptocmd{\liminf}{\limits}{}{}
\apptocmd{\limsup}{\limits}{}{}
\pretocmd{\langle}{\left}{}{}
\pretocmd{\rangle}{\right}{}{}
\newcommand{\vect}[1]{\mathbf{#1}}
\newcommand{\Ac}{\mathcal{A}}
\newcommand{\Dc}{\mathcal{D}}
\newcommand{\Ec}{\mathcal{E}}
\newcommand{\Fc}{\mathcal{F}}
\newcommand{\Hc}{\mathcal{H}}
\newcommand{\Kc}{\mathcal{K}}
\newcommand{\Lc}{\mathcal{L}}
\newcommand{\Mc}{\mathcal{M}}
\newcommand{\Sc}{\mathcal{S}}
\newcommand{\Tc}{\mathcal{T}}
\newcommand{\mi}{\mu}
\renewcommand{\phi}{\varphi}
\newcommand{\N}{\mathds{N}}
\newcommand{\R}{\mathds{R}}
\newcommand{\C}{\mathds{C}}
\newcommand{\Hd}{\mathds{H}}
\newcommand{\gf}{\mathfrak{g}}
\newcommand{\zf}{\mathfrak{z}}
\newcommand{\Pf}{\mathfrak{P}}
\newcommand{\Sf}{\mathfrak{S}}
\newcommand{\Uf}{\mathfrak{U}}
\newcommand{\dd}{\mathrm{d}}
\newcommand{\loc}{\mathrm{loc}}
\DeclareMathOperator{\card}{Card}
\DeclarePairedDelimiter{\abs}{\lvert}{\rvert}
\DeclarePairedDelimiter{\norm}{\lVert}{\rVert}
\DeclarePairedDelimiterX\Set[1]\lbrace\rbrace{#1}
\newcommand{\Meg}{\geqslant}
\newcommand{\meg}{\leqslant}
\newcommand{\Supp}[1]{\mathrm{Supp}\left( #1\right)}
\newcommand{\quot}[2]{\mathchoice
	{\setbox1\hbox{${\displaystyle #1}_{\scriptstyle #2}$}
		\text{$#1$\raisebox{-1mm}{\resizebox{2.8mm}{3.5mm}{$/$}}\raisebox{-1.8mm}{$#2$}}}
	{\setbox1\hbox{${\textstyle #1}_{\scriptstyle #2}$}
		\text{$#1$\raisebox{-1mm}{\resizebox{2.8mm}{3.5mm}{$/$}}\raisebox{-1.8mm}{$#2$}}}
	{\setbox1\hbox{${\scriptstyle #1}_{\scriptscriptstyle #2}$}
		\text{$#1$\raisebox{-0.6mm}{\resizebox{2.3mm}{2.0125mm}{$/$}}\raisebox{-1.4mm}{$ #2$}}}
	{\setbox1\hbox{${\scriptscriptstyle #1}_{\scriptscriptstyle #2}$}
		\text{$#1$\raisebox{-0.3mm}{\resizebox{1.55mm}{1.4mm}{$/$}}\raisebox{-0.7mm}{$#2$}}}}
\newcommand{\restr}[2]{\mathchoice
	{\setbox1\hbox{${\displaystyle #1}_{\scriptstyle #2}$}
		\text{$#1\,$\raisebox{-1.4mm}{$\smash{\vrule height 9.5pt depth 0\dp1}\,\scriptstyle #2$}}}
	{\setbox1\hbox{${\textstyle #1}_{\scriptstyle #2}$}
		\text{$#1\,$\raisebox{-1.4mm}{$\smash{\vrule height 9.5pt depth 0\dp1}\,\scriptstyle #2$}}}
	{\setbox1\hbox{${\scriptstyle #1}_{\scriptscriptstyle #2}$}
		\text{$#1\,$\raisebox{-1mm}{$\smash{\vrule height 6pt depth 0\dp1}\,\scriptstyle #2$}}}
	{\setbox1\hbox{${\scriptscriptstyle #1}_{\scriptscriptstyle #2}$}
		\text{$#1\,$\raisebox{-1mm}{$\smash{\vrule height 5.5pt depth 0\dp1}\,\scriptstyle #2$}}}}
\DeclareMathOperator{\Pfaff}{Pf}
\newcommand{\ap}{\mathrm{ap\,}}
\newcommand{\open}{\overset{\resizebox{1.1mm}{1.1mm}{$\circ$}}}
\DeclareMathOperator{\tr}{Tr}
\newcommand*{\mint}[1]{%
	\mint@l{#1}{}%
}
\newcommand*{\mint@l}[2]{%
	\@ifnextchar\limits{%
		\mint@l{#1}%
	}{%
	\@ifnextchar\nolimits{%
		\mint@l{#1}%
	}{%
	\@ifnextchar\displaylimits{%
		\mint@l{#1}%
	}{%
	\mint@s{#2}{#1}%
}%
}%
}%
}
\newcommand*{\mint@s}[2]{%
	\@ifnextchar_{%
		\mint@sub{#1}{#2}%
	}{%
	\@ifnextchar^{%
		\mint@sup{#1}{#2}%
	}{%
	\mint@{#1}{#2}{}{}%
}%
}%
}
\def\mint@sub#1#2_#3{%
	\@ifnextchar^{%
		\mint@sub@sup{#1}{#2}{#3}%
	}{%
	\mint@{#1}{#2}{#3}{}%
}%
}
\def\mint@sup#1#2^#3{%
	\@ifnextchar_{%
		\mint@sub@sup{#1}{#2}{#3}%
	}{%
	\mint@{#1}{#2}{}{#3}%
}%
}
\def\mint@sub@sup#1#2#3^#4{%
	\mint@{#1}{#2}{#3}{#4}%
}
\def\mint@sup@sub#1#2#3_#4{%
	\mint@{#1}{#2}{#4}{#3}%
}
\newcommand*{\mint@}[4]{%
	\mathop{}%
	\mkern-\thinmuskip
	\mathchoice{%
		\mint@@{#1}{#2}{#3}{#4}%
		\displaystyle\textstyle\scriptstyle
	}{%
	\mint@@{#1}{#2}{#3}{#4}%
	\textstyle\scriptstyle\scriptstyle
}{%
\mint@@{#1}{#2}{#3}{#4}%
\scriptstyle\scriptscriptstyle\scriptscriptstyle
}{%
\mint@@{#1}{#2}{#3}{#4}%
\scriptscriptstyle\scriptscriptstyle\scriptscriptstyle
}%
\mkern-\thinmuskip
\int#1%
\ifx\\#3\\\else_{#3}\fi
\ifx\\#4\\\else^{#4}\fi  
}
\newcommand*{\mint@@}[7]{%
	\begingroup
	\sbox0{$#5\int\m@th$}%
	\sbox2{$#5\int_{}\m@th$}%
	\dimen2=\wd0 %
	\let\mint@limits=#1\relax
	\ifx\mint@limits\relax
	\sbox4{$#5\int_{\kern1sp}^{\kern1sp}\m@th$}%
	\ifdim\wd4>\wd2 %
	\let\mint@limits=\nolimits
	\else
	\let\mint@limits=\limits
	\fi
	\fi
	\ifx\mint@limits\displaylimits
	\ifx#5\displaystyle
	\let\mint@limits=\limits
	\fi
	\fi
	\ifx\mint@limits\limits
	\sbox0{$#7#3\m@th$}%
	\sbox2{$#7#4\m@th$}%
	\ifdim\wd0>\dimen2 %
	\dimen2=\wd0 %
	\fi
	\ifdim\wd2>\dimen2 %
	\dimen2=\wd2 %
	\fi
	\fi
	\rlap{%
		$#5%
		\vcenter{%
			\hbox to\dimen2{%
				\hss
				$#6{#2}\m@th$%
				\hss
			}%
		}%
		$%
	}%
	\endgroup
}
\begin{document}

\title{Spectral Multipliers on $2$-Step Stratified Groups, I
}

\date{}

\author{Mattia Calzi\thanks{The author is partially supported by the grant PRIN 2015 \emph{Real and Complex Manifolds:  Geometry, Topology and Harmonic Analysis}, and is member of the Gruppo Nazionale per l’Analisi Matematica, la Probabilità e le loro Applicazioni (GNAMPA) of the Istituto Nazionale di Alta Matematica (INdAM).}
}

\theoremstyle{definition}
\newtheorem{definition}{Definition}[section]

\newtheorem{remark}[definition]{Remark}

\theoremstyle{plain}
\newtheorem{theorem}[definition]{Theorem}

\newtheorem{lemma}[definition]{Lemma}

\newtheorem{proposition}[definition]{Proposition}

\newtheorem{corollary}[definition]{Corollary}

\maketitle

\begin{small}
	\section*{Abstract}
	Given a $2$-step stratified group which does not satisfy a slight strengthening of the Moore-Wolf condition, a sub-Laplacian $\Lc$ and a family $\Tc$ of elements of the derived algebra, we study the convolution kernels associated with the operators of the form $m(\Lc, -i \Tc)$. 
	Under suitable conditions, we prove that: i) if the convolution kernel of the operator $m(\Lc,-i \Tc)$ belongs to $L^1$, then $m$ equals almost everywhere a continuous function vanishing at $\infty$ (`Riemann-Lebesgue lemma'); ii) if the convolution kernel of the operator $m(\Lc,-i\Tc)$ is a Schwartz function, then $m$ equals almost everywhere a Schwartz function.
\end{small}

\section{Introduction}
\label{intro}

Let $\Lc$ be a translation-invariant differential operator on $\R^n$. In many situations, the study of $\Lc$ may be simplified by means of the Fourier transform, since $\Fc \Lc$ is a polynomial.
If we consider a left-invariant differential operator $\Lc$ on a Lie group $G$, we may still study $\Lc$ by means of the Fourier transform; however, if $G$ is not commutative, the Fourier transform is less manageable than in the commutative case, so that a different approach is preferable. 

A reasonable alternative is provided by  the spectral theorem. 
However, an approach of this kind is very sensitive to the operators involved, while the Fourier transform allows, in principle, to treat all the left-invariant differential operators at the same time. 
For this reason, it might be sensible to consider a finite family of commuting operators  $\Lc_1,\dots, \Lc_k$ instead of a single one. 

Now, assume that $\Lc_1,\dots, \Lc_k$ are formally self-adjoint left-invariant differential operators on $G$, each of which induces an essentially self-adjoint operator on $C^\infty(G)$; assume that the self-adjoint operators induced by $\Lc_1,\dots, \Lc_k$ commute. Then, there is a unique spectral measure $\mi$ on $\R^k$ such that 
\[
\Lc_j \phi=\int_{\R^k} \lambda_j \,\dd \mi(\lambda) \,\phi
\]
for every $\phi \in C^\infty_c(G)$. If $m\colon \R^k\to \C$ is bounded and $\mi$-measurable, we may then associate with $m$ a distribution $\Kc(m)$ such that
\[
m(\Lc_1,\dots, \Lc_k) \, \phi= \phi*\Kc(m)
\]
for every  $\phi \in C^\infty_c(G)$. The mapping $\Kc$ is the desired substitute for the (inverse) Fourier transform.

One may then investigate the similarities between $\Kc$ and the (inverse) Fourier transform. For instance, one may consider the following questions:
\begin{itemize}
	\item does the `Riemann-Lebesgue' property hold? In other words, if $m\in L^\infty(\mi)$ and $\Kc(m)\in L^1(G)$, does $m$ necessarily admit a continuous representative?	
	
	\item is there a positive Radon measure $\beta$ on $\R^k$ such that $\Kc$ extends to an isometry of $L^2(\beta)$ into $L^2(G)$?
	
	\item if such a `Plancherel measure' $\beta$ exists, is it possible to find an `integral kernel' $\chi\in L^1_\loc(\beta \otimes \nu_G)$\footnote{Here, $\nu_G$ denotes a fixed (left or right) Haar measure on $G$. } such that, for every $m\in L^\infty(\beta)$ with compact support,
	\[
	\Kc(m)(g)=\int_{\R^k} m(\lambda)\chi(\lambda,g)\,\dd \beta(\lambda)
	\] 
	for almost every $g\in G$?
	
	\item if $G$ is a group of polynomial growth, so that $\Sc(G)$ can be defined in a reasonable way, does $\Kc$ map $\Sc(\R^k)$ into $\Sc(G)$?
	
	\item if $G$ is a group of polynomial growth and $\Kc(m)\in \Sc(G)$ for some $m\in L^\infty(\mi)$, does $m$ necessarily admit a representative in $\Sc(\R^k)$?
\end{itemize}

Some of these questions have already been addressed in certain situations.
In the case of one operator, the construction of a Plancherel measure dates back to M.\ Christ~\cite[Proposition 3]{Christ} for the case of a homogeneous sub-Laplacian on a stratified group. The `integral kernel' was then introduced by L.\ Tolomeo~\cite[Theorem 2.11]{Tolomeo} for a sub-Laplacian on a group of polynomial growth.
Further, A.\ Hulanicki~\cite{Hulanicki} showed that Schwartz multipliers have Schwartz kernels in the setting of a positive Rockland operator on a graded group. On the other hand, a positive answer to the last question is only known for homogeneous sub-Laplacians on stratified groups and for sub-Laplacians on the plane motion group~\cite{MartiniRicciTolomeo}.

Concerning the case of more operators, Hulanicki's theorem was extended first for some families on the Heisenberg group by
A.\ Veneruso~\cite{Veneruso}, and then to the case of a weighted subcoercive system of operators on a group of polynomial growth by A.\ Martini~\cite[Proposition 4.2.1]{Martini}. 
A.\ Martini~\cite[Theorem 3.2.7]{Martini} also extended the existence result for the Plancherel measure to the case of a weighted subcoercive system of operators on a general Lie group.
As for what concerns the correspondence between Schwartz kernels and Schwartz multipliers, it has been proved on a large class of nilpotent groups for commuting families of differential operators that are invariant under the action of a compact group (nilpotent Gelfand pairs); see the work of F.\ Astengo, B.\ Di Blasio and F.\ Ricci~\cite{AstengoDiBlasioRicci,AstengoDiBlasioRicci2}, V.\ Fischer and F.\ Ricci~\cite{FischerRicci}, V.\ Fischer, F.\ Ricci and O.\ Yakimova~\cite{FischerRicciYakimova}.

\smallskip

In this paper, we focus our attention on two properties, which we call $(RL)$ (`Riemann-Lebesgue') and $(S)$ (`Schwartz'); these properties correspond to the first and fifth questions, respectively. 

The case of a homogeneous operator on a homogeneous group is greatly simplified by the presence of the dilations, to the point that property $(RL)$ holds automatically (cf.~Theorems~\ref{teo:2} and~\ref{teo:3}), while property $(S)$ can be characterized in a simple way, at least on abelian groups; we shall present this characterization in a future paper.
On the other hand, the case of more operators, even homogeneous, is much more involved and properties $(RL)$ and $(S)$ may fail even in standard situations like abelian groups or the Heisenberg groups. 
We shall present examples of these pathological behaviours in a future paper.

In the first part of the paper, we introduce Rockland families on homogeneous groups, and some relevant objects such as the `kernel transform' $\Kc$, the `Plancherel measure' $\beta$, the `integral kernel' $\chi$, and the `multiplier transform' $\Mc$ (Section~\ref{sec:3}).  
Then, we discuss the possibility of transferring properties $(RL)$ and $(S)$ to products of groups (Section~\ref{sez:7}) or to image families under polynomial maps (Section~\ref{sec:8}). 
While the former case is relatively simple, the latter one is more delicate; in Sections~\ref{sec:6} and~\ref{sec:7} we prove some general results which can be used to prove the validity of property $(RL)$ or $(S)$ for an image family once the validity of the corresponding property for the `main family' is known.  

In the second part of the paper we focus on the case of sub-Laplacians and elements of the centre of $\gf$ on a $2$-step stratified group $G$. 
Even in this specific context, there is a wide variety of situations. In particular, we distinguish two classes of such groups, where the families of the preceding kind behave quite differently:
\begin{itemize}
	\item the groups $G$ which have a homogeneous subgroup $G'$ contained in $[G,G]$ such that the quotient of $G$ by $G'$ is a Heisenberg group;
	
	\item the groups $G$ which have no such quotients.
\end{itemize}
We call the groups of the first kind $MW^+$ groups, or groups satisfying the $MW^+$ condition, since the condition which defines these groups is a slight strengthening of the Moore-Wolf condition (cf.~\cite{MooreWolf} and also~\cite{MullerRicci}); in fact, the condition that was actually considered in~\cite{MooreWolf} is related to the centre $Z$ of $G$ instead of $[G,G]$. Nevertheless, one may always factor out an abelian group so as to reduce to a group with $Z=[G,G]$. 
In addition, the exposition becomes clearer if we consider only elements of the derived algebra, instead of the centre, so that the above dichotomy becomes more natural; thanks to Remark~\ref{oss:2}, we can always reduce to this case.
Since the treatment of these two classes of groups is quite different, we focus here on groups which do \emph{not} satisfy the $MW^+$ condition; we shall study $MW^+$ groups in a future paper.

In Section~\ref{sec:11}, we give an expression for the Plancherel measure; we also provide the reader with the tools needed to find an expression for the integral kernel, which we do not display explicitly since we do not consider it as particularly illuminating.
Finally, in the last two sections we shall prove several conditions under which properties $(RL)$ and $(S)$ hold for the given families.

\section{Definitions and Notation}

A homogeneous group is a simply connected nilpotent Lie group $G$ endowed with a family of dilations which are group automorphisms; we shall denote them by $r\cdot x$ ($r>0$, $x\in G$).
The homogeneous dimension $Q$ of $G$ is the sum of the homogeneous degrees of the elements of any homogeneous basis of the Lie algebra of $G$.
We denote by $\nu_G$ a Haar measure on $G$, which is the image of a fixed Lebesgue measure on the Lie algebra under the exponential map.

A homogeneous norm on $G$ is a proper mapping $\abs{\,\cdot\,}\colon G\to \R_+$ which is symmetric and homogeneous of homogeneous degree $1$.

\begin{definition}
	A differential operator $X$ on $G$ is homogeneous of degree $d\in \C$ if
	\[
	X [ \phi(r\,\cdot\,) ]=r^d (X \phi)(r\,\cdot\,) 
	\]
	for every $\phi \in C^\infty(G)$ and for every $r>0$.
\end{definition}

We end this section with some general notation concerning measures.
First of all, unless explicitly stated, all measures are supposed to be positive and Radon. 
For the sake of simplicity, we only deal with Radon measures on Polish spaces, that is, topological spaces with a countable base whose topology is induced by a complete metric.
For example, every locally compact space with a countable base is a Polish space, but we shall need to deal with some Polish spaces which are \emph{not} locally compact (cf.~the proof of Theorem~\ref{prop:10}).
If $X$ is a Polish space, then a positive Borel measure $\mi$ on $X$ is a Radon measure if and only if it is locally finite (cf.~\cite[Theorem 2 and Proposition 3 of Chapter IX, § 3]{BourbakiInt2}).

Now, if $X$ and $Y$ are Polish spaces, $\mi$ is a Radon measure on $X$, and  $\pi\colon X\to Y$ is a $\mi$-measurable mapping, then $\pi$ is called $\mi$-proper if $\pi_*(\mi)$ is a Radon measure.
Observe that, if $\pi$ is proper, then it is $\mi$-proper (cf.~\cite[Remark 2 of Chapter IX, § 2, No.\ 3]{BourbakiInt2}).

If $\mi$ is a Radon measure on a Polish space $X$, and $f\in L^1_\loc(\mi)$, then we shall denote by $f\cdot \mi$ the Radon measure $E\mapsto\int_E f\,\dd \mi$.
We say that two positive Radon measures on a Polish space are equivalent if they share the same negligible sets; in other words, if they are absolutely continuous with respect to one another.

If $X$ is a locally compact space, then we denote by $C_0(X)$ the space of complex-valued continuous functions on $X$ which vanish at $\infty$, endowed with the maximum norm. We denote by $\Mc^1(X)$ the dual of $C_0(X)$, that is, the space of bounded (Radon) measures on $X$.

\section{Rockland Families and the Kernel Transform}
\label{sec:3}

In this section, $G$ denotes a homogeneous group of dimension $n$ and homogeneous dimension~$Q$.

\begin{definition}
	Let $\Lc_A=(\Lc_\alpha)_{\alpha\in A}$ be a family of differential operators on $G$.
	We say that $\Lc_A$ is jointly hypoelliptic if the following hold: if $V$ is an open subset of $G$ and $T$ is a distribution on $V$ such that $\Lc_\alpha T\in C^\infty(V)$ for every $\alpha\in A$, then $T\in C^\infty(V)$.
\end{definition}

The following result enriches~\cite[Proposition 3.6.3]{Martini}.

\begin{theorem}\label{teo:1}
	Let $\Lc_A=(\Lc_\alpha)_{\alpha\in A}$ be a non-empty finite family of formally self-adjoint, homogeneous, left-invariant differential operators without terms of order $0$ on $G$. Then, the following conditions are equivalent:
	\begin{enumerate}
		\item $\Lc_A$ is jointly hypoelliptic;
		
		\item for every continuous non-trivial irreducible unitary representation $\pi$ of $G$ in a hilbertian space $H$, the family of operators $\dd \pi(\Lc_A)$ is jointly injective on $C^\infty(\pi)$;
		
		\item the (non-unital) algebra generated by $\Lc_A$ contains a positive Rockland operator, possibly with respect to a different family of dilations on $G$ with respect to which the $\Lc_\alpha$ are still homogeneous.
	\end{enumerate} 
	Assume, in addition, that the $\Lc_\alpha$ commute as differential operators. Then, the preceding conditions are equivalent to the following one:
	\begin{enumerate}
		\item[4.] the $\Lc_\alpha$ are essentially self-adjoint on $C^\infty_c(G)$, their self-adjoint extensions commute and for every $m\in \Sc(\R^A)$ the convolution kernel of the operator $m(\Lc_A)$ belongs to $\Sc(G)$. 
	\end{enumerate}
\end{theorem}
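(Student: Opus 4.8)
The equivalence of conditions (1)--(3) is the content of~\cite[Proposition 3.6.3]{Martini} together with Rockland's theorem (hypoellipticity $\Leftrightarrow$ injectivity of $\dd\pi$ on smooth vectors for nontrivial irreducibles) and the Helffer--Nourrigat/Rockland characterization; the phrase ``enriches'' presumably refers to the observation that the positive Rockland operator in~(3) can be found inside the algebra generated by $\Lc_A$ and that one may have to change the dilation structure. I would first recall these facts and then concentrate on the genuinely new part, namely the equivalence of~(4) with the others under the commutativity hypothesis. The plan is to show (1)$\Rightarrow$(4) and (4)$\Rightarrow$(2) (or (4)$\Rightarrow$(1) directly), closing the loop.

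For (1)$\Rightarrow$(4): assuming joint hypoellipticity, condition~(3) gives a positive Rockland operator $\Lc_0$ lying in the algebra generated by the commuting operators $\Lc_\alpha$, hence $\Lc_0$ is a polynomial $p(\Lc_A)$ with no constant term, homogeneous with respect to some admissible dilation structure. Essential self-adjointness of each $\Lc_\alpha$ on $C^\infty_c(G)$ and commutativity of their closures is exactly~\cite[\dots]{Martini} (a weighted subcoercive system is self-adjoint); this yields the joint spectral measure $\mi$ on $\R^A$ and the kernel transform $\Kc$. The key step is: given $m\in\Sc(\R^A)$, produce $\Kc(m)\in\Sc(G)$. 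Here I would use Hulanicki's theorem in Martini's form~\cite[Proposition 4.2.1]{Martini}: since $(\Lc_\alpha)$ is a weighted subcoercive system (which follows from~(3), as $\Lc_0=p(\Lc_A)$ is positive Rockland), Schwartz multipliers of the system have Schwartz convolution kernels. So the real work is checking that hypoellipticity of $\Lc_A$ forces $(\Lc_\alpha)$ to be a weighted subcoercive system in the sense of Martini --- essentially the step ``(1)$\Rightarrow$(3)'', which is where the change of dilations enters and which I expect to be the main obstacle: one must verify that the algebra generated by $\Lc_A$, which is commutative and finitely generated by homogeneous elements, contains a positive element that is Rockland for \emph{some} dilation structure simultaneously making all $\Lc_\alpha$ homogeneous. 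The natural candidate is a suitable sum $\sum_\alpha \Lc_\alpha^{2 k_\alpha}$ with exponents chosen to equalize homogeneous degrees and to guarantee positivity; one then invokes the representation-theoretic criterion (2), which the $\Lc_\alpha$ satisfy by hypoellipticity via Rockland's theorem, to conclude the sum is Rockland.

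For the converse, (4)$\Rightarrow$(1): if every Schwartz multiplier has a Schwartz (in particular smooth) kernel, I would argue hypoellipticity along standard lines. Pick $m\in\Sc(\R^A)$ with $m\equiv 1$ near a point $\lambda_0$ in the joint spectrum; then $\Kc(m)$ is a smooth, rapidly decaying convolution kernel, and $\Kc((p-p(\lambda_0))m)$ is likewise Schwartz for any polynomial $p$, while $(p(\Lc_A)-p(\lambda_0))$ applied to $\Kc(m)$ recovers a Schwartz correction; combining such multipliers with the positive Rockland operator $\Lc_0$ in the algebra one builds a \emph{parametrix}: a Schwartz kernel $\Psi$ with $\Lc_0\Psi=\delta_e+\omega$, $\omega\in\Sc(G)$. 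Local smoothness of $\Lc_0$-solutions then follows by the usual convolution argument, and joint hypoellipticity of $\Lc_A$ follows since $\Lc_0\in$ algebra$(\Lc_A)$: if $\Lc_\alpha T$ is smooth for all $\alpha$ then so is $\Lc_0 T=p(\Lc_A)T$, whence $T$ is smooth. The one delicate point is the construction of the Schwartz parametrix purely from property~(4) without already assuming~(3); alternatively, and more cleanly, I would simply prove (4)$\Rightarrow$(2): a nontrivial irreducible $\pi$ with a nonzero $v\in C^\infty(\pi)$ in the joint kernel of $\dd\pi(\Lc_A)$ would make $\pi(\Kc(m))v=m(0)v$ for \emph{every} $m\in\Sc(\R^A)$, forcing $\pi(f)v$ to depend only on $f$ through a single scalar for all $f$ in the (dense) span of such kernels --- contradicting irreducibility and nontriviality of $\pi$ once one notes, using the Schwartz regularity of the kernels and homogeneity, that these kernels separate enough of $\pi$. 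That contradiction gives~(2), and we are back in the established cycle.
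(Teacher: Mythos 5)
Your handling of the equivalence of (1)--(3) and of the forward implication towards (4) is in line with the paper, which settles 1$\Rightarrow$2 by adapting Beals, 2$\Rightarrow$3 by Martini's Proposition 3.6.3, 3$\Rightarrow$1 directly, and 3$\Rightarrow$4 by Martini's Propositions 1.4.4, 3.1.2 and 4.2.1. The gap is in the converse direction, which is precisely the new content of the theorem. Your first route (a Schwartz parametrix built from $\Lc_0$) is, as you yourself admit, circular: the positive Rockland operator $\Lc_0$ in the algebra generated by $\Lc_A$ is exactly what condition (3) asserts, and it is not available under (4). Your fallback (4)$\Rightarrow$(2) does not close the gap either. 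First, the identity $\pi(\Kc_{\Lc_A}(m))v=m(0)v$ for $v$ in the joint kernel of $\dd\pi(\Lc_A)$ already presupposes a transference statement ($\pi(\Kc_{\Lc_A}(m))=m(\dd\pi(\Lc_A))$ on smooth vectors), which Martini establishes for weighted subcoercive systems, i.e.\ under (3); under (4) alone it would have to be proved by hand. More seriously, even granting it, the claim that these kernels ``separate enough of $\pi$'' is unsubstantiated: all the $\Kc_{\Lc_A}(m)$ lie in the commutative convolution algebra given by the functional calculus of $\Lc_A$, and the relation $\pi(\Kc_{\Lc_A}(m))v=m(0)v$ for every Schwartz $m$ is not, by itself, in conflict with irreducibility and non-triviality of $\pi$. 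Turning it into a contradiction (e.g.\ by letting heat kernels spread under the dilations and exploiting decay of the relevant matrix coefficients, or by induction through the centre) is essentially the hard Beals/Rockland-type step, and it is exactly the point at which your sketch stops.

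For comparison, the paper proves (4)$\Rightarrow$(3) without representation theory: after changing the dilations (via Miller) so that the degrees $\delta_\alpha$ are positive integers, it sets $P(X_A)=\sum_{\alpha}X_\alpha^{2k_\alpha}$ with $k_\alpha=\prod_{\alpha'\neq\alpha}\delta_{\alpha'}$, a positive, proper, homogeneous polynomial. Property (4) makes the heat kernels $p_t$ of $P(\Lc_A)$ Schwartz for $t>0$; the scaling identity $p_t(g)=t^{-Q/\delta'}p_1\bigl(t^{-1/\delta'}\cdot g\bigr)$ shows that the extension $p(t,g)$ (set equal to $0$ for $t\leq 0$) is smooth off $(0,e)$ and is a fundamental solution of the heat operator $\partial_t-P(\Lc_A)$, with a right-invariant counterpart obtained from formal self-adjointness; Treves' classical criterion then yields hypoellipticity of $\partial_t-P(\Lc_A)$, hence of $P(\Lc_A)$, which is condition (3). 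If you wish to keep your representation-theoretic route, you must supply an analytic input of this strength (or reproduce Beals' argument in your setting); as written, the proposal does not prove the implication from (4) back to (1)--(3).
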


\begin{proof}
	{\bf 1 $\implies$ 2.} This is a simple adaptation of the proof of~\cite[Theorem 1]{Beals}.

	{\bf2 $\implies$ 3.} This is the implication $(ii) \implies (i)$ of~\cite[Proposition 3.6.3]{Martini}.
	
	{\bf 3 $\implies$ 1.} Take an open subset $V$ of $G$ and $T\in \Dc'(T)$ such that $\Lc_\alpha T$ is $C^\infty$ on $V$ for every $\alpha\in A$. Take $P\in \C[A]$ such that $P(0)=0$ and $P(\Lc_A)$ is hypoelliptic. Then,  $P(\Lc_A)T$ is $C^\infty$ on $V$, so that $T$ is $C^\infty$ on $V$.
	
	\medskip
	
	Now, assume that the $\Lc_\alpha$ commute as differential operators.
	
	{\bf3 $\implies$ 4.} This follows from~\cite[Propositions 1.4.4, 3.1.2, and 4.2.1]{Martini}.
	
	{\bf4 $\implies$ 3.} Notice first that, by~\cite[Proposition 1.1]{Miller}, we may replace the family of dilations of $G$ with another one in such a way that  the $\Lc_\alpha$ are still homogeneous and the degrees of homogeneity $\delta_\alpha$ of the $\Lc_\alpha$ all belong to $\N^*$.\footnote{If $\Lc_{\alpha}=0$, choose $\delta_{\alpha}=1$.} 
	Then, define $k_\alpha\coloneqq \prod_{\alpha'\neq \alpha} \delta_{\alpha'}$ for every $\alpha\in A$, and  $P(X_A)\coloneqq\sum_{\alpha\in A} X_\alpha^{2 k_{\alpha} }$; observe that $P$ defines a positive \emph{homogeneous} proper polynomial mapping on $\R^A$, of homogeneous degree $\delta'=2 \prod_{\alpha\in A} \delta_\alpha$.	
	Now, take $t\Meg 0$ and let $p_t$ be the convolution kernel of the operator $e^{- t P(\Lc_A)}$; by assumption, $p_t\in \Sc(G)$ for $t>0$, while $p_0=\delta_e$. In addition, it is readily seen that 
	\[
	p_t(g)=t^{-\sfrac{Q}{\delta'}} p_1\left( t^{-\sfrac{1}{\delta'}}\cdot g  \right)
	\]
	for every $t>0$ and for every $g\in G$. Now, define
	\[
	p(t,g)\coloneqq \begin{cases}
	p_t(g) & \text{if $t>0$}\\
	0 & \text{if $t\meg 0$}.
	\end{cases}
	\]
	Then, it is easily seen that $p$ is of class $C^\infty$ on $(\R\times G)\setminus \Set{(0,e)}$. In addition, for every $\phi \in C^\infty_c(G)$ the mapping $t \mapsto \phi* p_t\in L^2(G)$ is continuous on $\R_+$ and differentiable on $\R_+^*$, with derivative $t\mapsto \phi*[P(\Lc_A) p_t]$. By the arbitrariness of $\phi$, we deduce that the mapping $t\mapsto p_t\in \Dc'(G)$ is of class $C^1$ on $\R_+$, with derivative $t\mapsto P(\Lc_A) p_t$ (cf.~\cite[Theorem 3.1]{DixmierMalliavin}). Hence, by means of routine arguments we see that $p$ is a fundamental solution of the heat operator $\partial_t-P(\Lc_A)$ on $\R\times G$. 
	Since $\partial_t-P(\Lc_A)$ is formally self-adjoint, we see that $\check{\overline{p}}$ is a fundamental solution of the right-invariant differential operator associated with $\partial_t-P(\Lc_A)$.	
	Arguing as in the proof of~\cite[Theorem 2.1]{Treves2}, we see that $\partial_t-P(\Lc_A)$ is hypoelliptic, so that also $P(\Lc_A)$ is hypoelliptic.
\end{proof}

\begin{definition}
	A Rockland family is a non-empty finite \emph{commutative} family of homogeneous left-invariant differential operators without terms of order $0$ which satisfies the equivalent conditions of Theorem~\ref{teo:1}. 
\end{definition}

Here we depart slightly from the notion of `Rockland system' as defined in~\cite{Martini}.
Indeed, a Rockland system is a Rockland family, while a Rockland family need not be a Rockland system, since the algebra it generates need not contain a Rockland operator. 
Nevertheless, the difference is only illusory: as Theorem~\ref{teo:1} shows, given a Rockland family $\Lc_A$, one may change the dilations of $G$ in such a way that $\Lc_A$ becomes a Rockland system.
In other words, up to a change of dilations, there is no difference between Rockland families and Rockland systems.

Notice that, as a consequence of the results of Section~\ref{sec:8}, the properties we are going to investigate do not pertain to the chosen family $\Lc_A$, but actually to the (non-unital) algebra it generates.
As a matter of fact, we can start with a commutative, finitely generated, formally self-adjoint and dilation-invariant sub-algebra of $\Uf_\C(\gf)$ and require that its elements have no constant terms and that it contains a hypoelliptic operator without constant terms.
It is not hard to see that such  algebras are generated by a Rockland family (use~\cite{RobbinSalamon} to prove that dilation-invariant sub-algebras are graded, that is, generated by homogeneous elements), and that different Rockland families which generate the same algebra are equivalent in a natural sense.

Notice, in addition, that we do not impose any minimality conditions on the chosen family, in terms of the aforementioned equivalence; we do not even require that each $\Lc_\alpha$ should be non-zero.
This choice does not provide serious inconveniences; instead, it makes the exposition simpler, since we do not have to check at each step that the families we introduce are `minimal' (cf., for instance, Proposition~\ref{prop:3}).

\begin{definition}\label{def:1}
	Let $\Lc_A$ be a Rockland family. Then, we denote by $\mi_{\Lc_A}$ the spectral measure associated with the self-adjoint extensions of the $\Lc_\alpha$.
	
	We say that a $\mi_{\Lc_A}$-measurable function $m\colon \R^A\to \C$ admits a kernel if $C^\infty_c(G)$ is contained in the domain of $m(\Lc_A)$. In this case, $\Sc(G)$ is contained in the domain of $m(\Lc_A)$ and there is a unique $K\in \Sc'(G)$ such that $m(\Lc_A)\phi= \phi* K$ for every $\phi \in \Sc(G)$ (cf.~\cite[Theorem 7.2]{DixmierMalliavin}); we shall denote $K$ by $\Kc_{\Lc_A}(m)$.
\end{definition}

\begin{definition}
	Let $\Lc_A$ be a Rockland family. We shall say that $\Lc_A$ satisfies property:
	\begin{itemize}
		\item[$(RL)$] (`Riemann-Lebesgue') if every $m\in L^\infty(\mi_{\Lc_A})$ such that $\Kc_{\Lc_A}(m)\in L^1(G)$ has a continuous representative;
		
		\item[$(S)$] (`Schwartz') if every $m\in L^\infty(\mi_{\Lc_A})$ such that $\Kc_{\Lc_A}(m)\in \Sc(G)$ has a representative in $\Sc(\R^A)$.
	\end{itemize}
\end{definition}

\begin{remark}\label{oss:1:1}
	Observe that we did not require that $m$ has a representative in $C_0(\R^A)$ in the definition of property $(RL)$. Actually, the fact that $m$ vanishes at $\infty$ is basically automatic (cf.~\cite[Proposition 3.2.11]{Martini}). 
\end{remark}

Notice that, thanks to~\cite[Proposition 3.6.3]{Martini},  we may take advantage of the study of `weighted subcoercive systems' pursued in~\cite{Martini}. Actually, many of the general results proved below hold for weighted subcoercive systems on (say) groups of polynomial growth.
In particular, we shall often use without reference some elementary properties of $\Kc_{\Lc_A}$. For example, 
\[
\Kc_{\Lc_A}(\overline m)=\Kc_{\Lc_A}(m)^*= \overline{[\Kc_{\Lc_A}(m)]\check\;}.
\]
We shall also have to deal with equalities of the form
\[
\Kc_{\Lc_A}(m_1 m_2)=\Kc_{\Lc_A}(m_1)*\Kc_{\Lc_A}(m_2).
\]
We leave to the reader to verify case by case that such equalities hold (or else to refer to~\cite{Martini} whenever possible).

Before we proceed, let us state a couple of useful results. 
The first one is basically a corollary of~\cite[Proposition 3.2.4]{Martini}.

\begin{proposition}\label{prop:3}
	Let $G,G'$ be two non-trivial homogeneous groups, and $\pi$ a \emph{homogeneous} homomorphism of $G$ \emph{onto} $G'$. Let $\Lc_A$ be a Rockland family on $G$. Then, the following hold:
	\begin{enumerate}
		\item $\dd \pi(\Lc_A)=(\dd \pi(\Lc_\alpha))_{\alpha\in A}$ is a Rockland family on $G'$;
		
		\item $\sigma(\dd \pi(\Lc_A))\subseteq \sigma(\Lc_A)$;
		
		\item if $m\colon E_{\Lc_A}\to \C$ is $\beta_{\Lc_A}$-measurable and continuous on an open set which carries $\beta_{\dd \pi(\Lc_A)}$, and if $\Kc_{\Lc_A}(m)\in \Mc^1(G)+\Ec'(G)$, then
		\[
		\pi_*(\Kc_{\Lc_A}(m))= \Kc_{\dd \pi(\Lc_A)}(m).
		\]
	\end{enumerate}
\end{proposition}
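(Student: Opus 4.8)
The plan is to reduce everything to the transference principle for kernels contained in \cite[Proposition 3.2.4]{Martini}, after checking that the abstract hypotheses of that result are met in our homogeneous setting. First I would establish item (1): since $\pi$ is a homogeneous homomorphism onto $G'$, the differential $\dd\pi$ maps the Lie algebra of $G$ onto that of $G'$, hence carries left-invariant differential operators to left-invariant ones, preserves the absence of order-zero terms, and intertwines the dilations so that the $\dd\pi(\Lc_\alpha)$ are still homogeneous of the same degrees. Commutativity is inherited because $\pi$ is a homomorphism. The only substantive point is joint hypoellipticity of $\dd\pi(\Lc_A)$ on $G'$, which I would get from condition (2) of Theorem~\ref{teo:1}: every non-trivial irreducible unitary representation $\pi'$ of $G'$ pulls back along $\pi$ to such a representation of $G$ (non-trivial because $\pi$ is onto and $\pi'$ is non-trivial), with $\dd(\pi'\circ\pi)(\Lc_\alpha)=\dd\pi'(\dd\pi(\Lc_\alpha))$ on the common space of smooth vectors, so joint injectivity for $\Lc_A$ forces joint injectivity for $\dd\pi(\Lc_A)$. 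Alternatively one invokes \cite[Proposition 3.2.4]{Martini} directly for this assertion, if it is already contained there.

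For item (2), I would argue that since $\pi$ is a surjective homomorphism, the functional calculus is compatible: for any bounded Borel $m$ one has $m(\dd\pi(\Lc_A))\phi' = \phi' * \Kc_{\dd\pi(\Lc_A)}(m)$ whenever the kernel exists, and more to the point, the push-forward of the spectral measure $\mi_{\Lc_A}$ under the identity on $\R^A$ dominates $\mi_{\dd\pi(\Lc_A)}$ in the sense that the joint spectrum can only shrink. Concretely, if $\lambda\notin\sigma(\Lc_A)$, choose $m$ supported near $\lambda$ and equal to $1$ there; then $m(\Lc_A)=0$, its kernel is the zero distribution, and transferring via $\pi$ (which is legitimate here because the zero distribution is trivially in $\Mc^1(G)$) gives $m(\dd\pi(\Lc_A))=0$, whence $\lambda\notin\sigma(\dd\pi(\Lc_A))$. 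This is really the same computation as item (3) applied to a locally constant $m$, so I would phrase (2) as a special case of (3).

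Item (3) is the heart of the matter and where I expect the main obstacle. The content of \cite[Proposition 3.2.4]{Martini} should be precisely that $\pi_*$ intertwines the kernel transforms on the two groups, for multipliers that are regular enough that the relevant push-forward and convolution identities make sense; the hypothesis $\Kc_{\Lc_A}(m)\in\Mc^1(G)+\Ec'(G)$ guarantees that $\pi_*(\Kc_{\Lc_A}(m))$ is a well-defined distribution on $G'$ (a bounded measure plus a compactly supported distribution), since $\pi$ is proper on supports modulo the kernel subgroup and push-forward of bounded measures is bounded. The continuity hypothesis on $m$ along an open set carrying $\beta_{\dd\pi(\Lc_A)}$ is what makes $\Kc_{\dd\pi(\Lc_A)}(m)$ unambiguous: it ensures $m$ is determined $\mi_{\dd\pi(\Lc_A)}$-a.e. by its values on that set, so the object on the right-hand side does not depend on the chosen representative of $m$. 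I would then verify the identity first on a dense class — say $m\in\Sc(\R^A)$ or $m$ continuous with compact support — where both sides are classical and the identity follows by testing against $\phi'\in C_c^\infty(G')$: writing $\phi'\circ\pi$ (suitably interpreted, since $\pi$ is not injective one integrates over fibres) one reduces to the functional-calculus identity $m(\dd\pi(\Lc_A))\phi' = (\text{push-forward of } m(\Lc_A)(\text{lift of }\phi'))$. The delicate part is the passage to the limit: one must approximate a general $m$ satisfying the hypotheses by nice multipliers in such a way that $\Kc_{\Lc_A}(m_j)\to\Kc_{\Lc_A}(m)$ in a topology under which $\pi_*$ is continuous (weak-$*$ on $\Mc^1$ plus the $\Ec'$ topology on the compactly supported part) and simultaneously $\Kc_{\dd\pi(\Lc_A)}(m_j)\to\Kc_{\dd\pi(\Lc_A)}(m)$ in $\Sc'(G')$. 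Here the continuity assumption on $m$ is exactly what is needed to control the second convergence. I would expect that this limiting argument is already packaged inside \cite[Proposition 3.2.4]{Martini}, in which case the proof of (3) is a one-line citation after the preliminary reductions; the honest work is checking that our hypotheses are a special case of the hypotheses there, and disposing of the $\Ec'(G)$ summand separately (for which properness of $\pi$ on the relevant quotient does the job).
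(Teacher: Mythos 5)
Your plan follows the paper's proof essentially step for step: part~1 via pullback of unitary representations along $\pi$ and condition~2 of Theorem~\ref{teo:1}, part~3 by invoking \cite[Proposition 3.2.4]{Martini} on a dense class of multipliers (e.g.\ $m\in C_0(E_{\Lc_A})$ with $L^1$ kernel) followed by an approximation argument, and part~2 as an easy consequence of part~3. The only difference is one of precision rather than substance: where you test against $\phi'\in C^\infty_c(G')$ and hope the transference is already packaged in Martini's result, the paper makes this concrete by applying that proposition to the right quasi-regular representation of $G$ on $L^2(G')$, $\widetilde\pi(g)f=f(\,\cdot\,\pi(g))$, which yields the push-forward identity $\pi_*(\Kc_{\Lc_A}(m))=\Kc_{\dd\pi(\Lc_A)}(m)$ directly in the nice case.
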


\begin{proof}
	{\bf1.} The fact that $\dd \pi(\Lc_A)$ is Rockland follows from the fact that, if $\widetilde \pi$ is a continuous unitary representation of $G'$, then $\widetilde \pi \circ \pi$ is a continuous unitary representation of $G$, with $C^\infty(\widetilde \pi)=C^\infty(\widetilde \pi\circ \pi)$ since $\pi$ is a submersion, and $\dd \widetilde \pi(\dd \pi(\Lc_A))= \dd (\widetilde \pi \circ \pi)(\Lc_A)$; finally, $\widetilde \pi\circ \pi$ is irreducible or trivial if and only if $\widetilde \pi$ is irreducible or trivial, respectively.
	
	{\bf3.} Let $\widetilde \pi$ be the right quasi-regular representation of $G$ in $L^2(G')$, that is, $\widetilde \pi(g) f=f(\,\cdot\,\pi(g))$ for every $g\in G$ and for every $f\in L^2(G')$. Then~\cite[Proposition 3.2.4]{Martini}, applied to $\widetilde \pi$, implies that our assertion holds if $m\in C_0(E_{\Lc_A})$ and $\Kc_{\Lc_A}(m)\in L^1(G)$. The general case follows by approximation.
	
	{\bf2.}  This follows easily from~{\bf3}.
\end{proof}

The following definition will shorten the notation in the sequel.
\begin{definition}
	Let $F$ be a subspace of $\Dc'(G)$. Then, we  denote by $F_{\Lc_A}$ the set of $\Kc_{\Lc_A}(m)$ as $m$ runs through the set of $\mi_{\Lc_A}$-measurable functions which admit a kernel in $F$.
\end{definition}

\begin{proposition}\label{prop:1}
	Let $F$ be a Fréchet space which is continuously embedded in $\Mc^1(G)$ or, more generally, in the right convolutors of $L^2(G)$. Then, $F_{\Lc_A}$ is closed in $F$.
\end{proposition}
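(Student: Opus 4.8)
The plan is to exhibit $F_{\Lc_A}$ as the kernel of a continuous linear map from $F$ to a Hausdorff topological vector space, so that closedness is automatic. The natural candidate for the target is a space of distributions on which $\Kc_{\Lc_A}$ behaves functorially. Concretely, I would argue as follows. Let $(K_j)_{j\in\N}$ be a sequence in $F_{\Lc_A}$ converging to some $K\in F$; write $K_j=\Kc_{\Lc_A}(m_j)$ with $m_j$ a $\mi_{\Lc_A}$-measurable function admitting a kernel in $F$. The goal is to produce a $\mi_{\Lc_A}$-measurable $m$ admitting a kernel in $F$ with $\Kc_{\Lc_A}(m)=K$. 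Since $F$ embeds continuously into the right convolutors of $L^2(G)$, convergence $K_j\to K$ in $F$ gives convergence of the associated convolution operators $\phi\mapsto\phi*K_j$ to $\phi\mapsto\phi*K$ in the strong (indeed operator-norm) topology of $\mathcal B(L^2(G))$. Each operator $\phi\mapsto\phi*K_j$ equals $m_j(\Lc_A)$ on $C^\infty_c(G)$, hence coincides with $m_j(\Lc_A)$ as a bounded operator (by density and the fact that $m_j(\Lc_A)$ is bounded here, since $K_j$ is a right convolutor).

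The key step is then a spectral-theoretic argument: the von Neumann algebra generated by the spectral measure $\mi_{\Lc_A}$ — equivalently, the algebra of bounded operators of the form $m(\Lc_A)$, $m\in L^\infty(\mi_{\Lc_A})$ — is closed in the operator-norm topology (it is even weakly closed, being a von Neumann algebra). Therefore the limit operator, which is $\phi\mapsto\phi*K$, is again of the form $m(\Lc_A)$ for some $m\in L^\infty(\mi_{\Lc_A})$; and one recovers $m$ from $\|m_j-m\|_{L^\infty(\mi_{\Lc_A})}\to 0$, using that $m\mapsto m(\Lc_A)$ is an isometric $*$-isomorphism of $L^\infty(\mi_{\Lc_A})$ (modulo $\mi_{\Lc_A}$-null functions) onto that von Neumann algebra. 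Since this $m$ admits a kernel — namely $\Kc_{\Lc_A}(m)$ is precisely the right convolutor $K\in F$ representing $m(\Lc_A)$, and $C^\infty_c(G)\subseteq\Sc(G)$ lies in its domain — we conclude $K\in F_{\Lc_A}$.

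I expect the main obstacle to be the passage from "$K_j\to K$ in $F$" to "the corresponding operators converge in a topology in which the von Neumann algebra $\{m(\Lc_A):m\in L^\infty(\mi_{\Lc_A})\}$ is closed". The hypothesis that $F$ is continuously embedded in the right convolutors (or in $\Mc^1(G)$, which embeds into them) is exactly what is designed to handle this: it guarantees that the evaluation $K\mapsto(\phi\mapsto\phi*K)$ is continuous from $F$ into $\mathcal B(L^2(G))$ with its norm topology, and norm-closedness of a von Neumann algebra is standard. A secondary technical point is checking that $\phi*K_j=m_j(\Lc_A)\phi$ extends from $\phi\in C^\infty_c(G)$ to all of $L^2(G)$ as the same bounded operator; this follows from Definition~\ref{def:1} together with the density of $C^\infty_c(G)$ in $L^2(G)$ and boundedness of $m_j(\Lc_A)$ (forced by $K_j$ being a right convolutor, since $\phi\mapsto\phi*K_j$ is then a bounded, densely defined, closable operator whose closure is the spectral-functional-calculus operator). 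Apart from these two points, the argument is a routine combination of the spectral theorem with the functoriality properties of $\Kc_{\Lc_A}$ recorded after Definition~\ref{def:1}, and I would not grind through the bookkeeping in detail.
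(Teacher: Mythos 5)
Your argument is correct and is essentially the paper's own proof: both pass from convergence in $F$ to operator-norm convergence of $\phi\mapsto\phi*K_j=m_j(\Lc_A)$ via the embedding into the right convolutors of $L^2(G)$, and then use the isometric $L^\infty(\mi_{\Lc_A})$ functional calculus (equivalently, norm-closedness of its image) to produce the limiting multiplier $m$ and identify $\Kc_{\Lc_A}(m)$ with the limit kernel. The paper phrases this as a Cauchy-sequence transfer in $L^\infty(\mi_{\Lc_A})$ followed by continuity of $\Kc_{\Lc_A}$ into $\Sc'(G)$, which is the same mechanism.
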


In particular, this applies to $L^1(G)$ and $\Sc(G)$. With more effort, one may generalize this result to any locally convex space $F$ which is continuously embedded in $\Dc'(G)$ and for which the bilinear mapping $*\colon  C_c^\infty(G)\times F\to L^2(G)$ is separately continuous.

\begin{proof}
	Indeed, let $(m_j)$ be a sequence in $L^\infty(\mi_{\Lc_A})$ such that the sequence $(\Kc_{\Lc_A}(m_j))$ converges to some $f$ in $F$. 
	Then, $(m_j(\Lc_A))$ is a Cauchy sequence in $\Lc(L^2(G))$, so that $(m_j)$ is a Cauchy sequence in $L^\infty(\mi_{\Lc_A})$ by spectral theory. Therefore, it converges to some $m$ in $L^\infty(\mi_{\Lc_A})$; hence, $\Kc_{\Lc_A}(m_j)$ converges to $\Kc_{\Lc_A}(m)$ in $\Sc'(G)$ (cf.~\cite[Theorem 7.2]{DixmierMalliavin}). Therefore, $\Kc_{\Lc_A}(m)=f$.
\end{proof}

We shall often need some dilations on $\R^A$ which reflect the homogeneity of the $\Lc_\alpha$. This leads to the following definition.

\begin{definition}
	Let $\Lc_A$ be a Rockland family. Then, we shall define $E_{\Lc_A}$ as $\R^A$, endowed with the dilations
	\[
	r \cdot (x_\alpha)\coloneqq (r^{\delta_\alpha} x_\alpha)
	\]
	for every $r>0$ and for every $(x_\alpha)\in \R^A$; here, $\delta_\alpha$ is the homogeneous degree of $\Lc_\alpha$ if $\Lc_\alpha\neq 0$, while $\delta_\alpha=1$ otherwise. We denote by $\abs{\,\cdot\,}$ a homogeneous norm on $E_{\Lc_A}$.
\end{definition}

The following proposition is basically a consequence of~\cite[Theorem 3.2.7 and Proposition 3.6.1]{Martini}. The proof is omitted. 

\begin{theorem}
	Let $\Lc_A$ be a Rockland family. Then, there is a unique positive Radon measure $\beta_{\Lc_A}$ on $E_{\Lc_A}$ such that the following hold:
	\begin{enumerate}
		\item $\mi_{\Lc_A}$ and $\beta_{\Lc_A}$ are equivalent;
		
		\item $\Kc_{\Lc_A}$ induces an isometry of $L^2(\beta_{\Lc_A})$ onto $L^2_{\Lc_A}(G)$;
		
		\item $(r\,\cdot \,)_*(\beta_{\Lc_A})=r^{-Q} \beta_{\Lc_A}$ for every $r>0$.
	\end{enumerate}
\end{theorem}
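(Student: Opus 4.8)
The plan is to treat the three assertions separately, the only substantial one being property (3). Existence of a positive Radon measure $\beta_{\Lc_A}$ on $E_{\Lc_A}$ satisfying (1) and (2) is essentially known: by~\cite[Proposition 3.6.3]{Martini} a Rockland family is a weighted subcoercive system, so~\cite[Theorem 3.2.7 and Proposition 3.6.1]{Martini} provide such a measure together with the fact that $\Kc_{\Lc_A}$ — which already satisfies $\norm{\Kc_{\Lc_A}(m)}_{L^2(G)}=\norm{m}_{L^2(\beta_{\Lc_A})}$ for $m$ bounded with compact support — extends to an isometry of $L^2(\beta_{\Lc_A})$ onto the closure $L^2_{\Lc_A}(G)$ of its image in $L^2(G)$. (Here the underlying set $\R^A$ of Martini's statement is identified with $E_{\Lc_A}$; the dilations carried by $E_{\Lc_A}$ play no role in (1) or (2).) What remains is uniqueness and property (3).

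Uniqueness is elementary, and the same computation will reappear in the proof of (3). If $\beta,\beta'$ both satisfy (1) and (2), then both are equivalent to $\mi_{\Lc_A}$, hence mutually equivalent and $\sigma$-finite (recall $E_{\Lc_A}=\R^A$ is $\sigma$-compact), and for every relatively compact Borel set $E\subseteq E_{\Lc_A}$ the function $\mathds{1}_E$ is bounded and lies in $L^2(\beta)\cap L^2(\beta')$, so applying (2) to each measure gives $\beta(E)=\norm{\mathds{1}_E}_{L^2(\beta)}^2=\norm{\Kc_{\Lc_A}(\mathds{1}_E)}_{L^2(G)}^2=\norm{\mathds{1}_E}_{L^2(\beta')}^2=\beta'(E)$; inner regularity then forces $\beta=\beta'$.

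For property (3) the idea is that the dilations of $G$ merely rescale the operators $\Lc_\alpha$, a rescaling that is absorbed into the dilations of $E_{\Lc_A}$. I would introduce the unitary $D_r$ on $L^2(G)$ given by $D_rf\coloneqq r^{-Q/2}f(r^{-1}\,\cdot\,)$, which is unitary because $\nu_G(r\,\cdot\,E)=r^Q\nu_G(E)$. Since each $\Lc_\alpha$ is left-invariant and homogeneous of degree $\delta_\alpha$, a short computation gives $D_r\,\Lc_\alpha\,D_r^{-1}=r^{\delta_\alpha}\Lc_\alpha$; hence, by the spectral theorem, $D_r\,m(\Lc_A)\,D_r^{-1}=\bigl(m(r\,\cdot\,)\bigr)(\Lc_A)$ for every bounded $\mi_{\Lc_A}$-measurable $m$, where $m(r\,\cdot\,)$ denotes the function $\lambda\mapsto m\bigl((r^{\delta_\alpha}\lambda_\alpha)_\alpha\bigr)$ on $E_{\Lc_A}$. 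Combining this with the rule $(\psi(r\,\cdot\,))*(\eta(r\,\cdot\,))=r^{-Q}(\psi*\eta)(r\,\cdot\,)$ and comparing convolution kernels (testing against $\phi\in C^\infty_c(G)$) yields the distributional identity
\[
\Kc_{\Lc_A}\bigl(m(r\,\cdot\,)\bigr)=r^{-Q}\,\bigl[\Kc_{\Lc_A}(m)\bigr](r^{-1}\,\cdot\,).
\]
Taking $m$ bounded with compact support, so that $\Kc_{\Lc_A}(m)\in L^2(G)$ by (2), and changing variables once more via $\nu_G(r\,\cdot\,E)=r^Q\nu_G(E)$, one gets $\norm{\Kc_{\Lc_A}(m(r\,\cdot\,))}_{L^2(G)}^2=r^{-Q}\norm{\Kc_{\Lc_A}(m)}_{L^2(G)}^2$. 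Reading both norms through the isometry of (2) turns this into $\int\abs{m}^2\,\dd\bigl((r\,\cdot\,)_*\beta_{\Lc_A}\bigr)=r^{-Q}\int\abs{m}^2\,\dd\beta_{\Lc_A}$ for all such $m$; specializing $m$ to indicators of relatively compact Borel sets and using inner regularity as in the uniqueness step, we conclude $(r\,\cdot\,)_*(\beta_{\Lc_A})=r^{-Q}\beta_{\Lc_A}$, which is (3).

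I expect the only genuinely delicate point to be the dilation-intertwining identity for $\Kc_{\Lc_A}$: one has to track carefully the normalizing powers of $r$ coming from $D_r$, from the functional calculus, and from the behaviour of convolution under dilations, and to run the computation at the level of tempered distributions, since a priori $m(r\,\cdot\,)$ need only possess a distributional kernel. Everything else — the passage from $C^\infty_c(G)$ to $\Sc(G)$, and the $\sigma$-finiteness and inner regularity used above — is routine.
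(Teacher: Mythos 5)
Your argument is correct, and it is worth noting how it sits relative to the paper: the paper gives no proof at all, stating only that the theorem is ``basically a consequence'' of Martini's Theorem 3.2.7 and Proposition 3.6.1 (the latter being where the homogeneity of the Plancherel measure for a homogeneous weighted subcoercive system is handled). You invoke the same source, via Proposition 3.6.3, but only for the existence of a measure satisfying (1) and (2); uniqueness and the dilation property (3) you then reprove by hand. Your uniqueness argument (test against indicators of relatively compact Borel sets, use the isometry for both measures, conclude by inner regularity) is sound, since $\Kc_{\Lc_A}(\mathds{1}_E)$ is defined by the functional calculus independently of either measure. Your proof of (3) is also correct: the conjugation identity $D_r\,\Lc_\alpha\,D_r^{-1}=r^{\delta_\alpha}\Lc_\alpha$ on $C^\infty_c(G)$ passes to the self-adjoint closures because $D_r$ preserves $C^\infty_c(G)$ and the operators are essentially self-adjoint there, the spectral theorem then gives $D_r\,m(\Lc_A)\,D_r^{-1}=\bigl(m(r\,\cdot\,)\bigr)(\Lc_A)$, and the convolution--dilation rule yields $\Kc_{\Lc_A}\bigl(m(r\,\cdot\,)\bigr)=r^{-Q}\bigl[\Kc_{\Lc_A}(m)\bigr](r^{-1}\,\cdot\,)$, from which the norm computation and the specialization to indicators give $(r\,\cdot\,)_*(\beta_{\Lc_A})=r^{-Q}\beta_{\Lc_A}$. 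The net effect is that your write-up is more self-contained than the paper's (which defers the homogeneity entirely to Martini), at the cost of redoing material that the cited reference already covers; either route is legitimate, and your version has the advantage of making explicit exactly which structural facts (unitarity of $D_r$, homogeneity of the $\Lc_\alpha$, behaviour of convolution under automorphic dilations) the homogeneity of $\beta_{\Lc_A}$ actually depends on.
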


The following corollary has already been considered in~\cite[Proposition 3.2.12]{Martini} for the case $p=1$. The general case follows by interpolation.

\begin{corollary}
	Take $p\in [1,2]$. Then, $\Kc_{\Lc_A}$ induces a unique continuous linear mapping
	\[
	\Kc_{\Lc_A,p}\colon L^p(\beta_{\Lc_A})\to L^{p'}(G).
	\]
	
	In addition, $\Kc_{\Lc_A,1}$ maps $L^1(\beta_{\Lc_A})$ into $C_0(G)$,  has norm $1$ and induces an isometry from the set of positive $\beta_{\Lc_A}$-integrable functions into the set of continuous functions of positive type on $G$.
\end{corollary}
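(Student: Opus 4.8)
The plan is to obtain the statement from its two endpoint cases $p=1$ and $p=2$ by the Riesz--Thorin interpolation theorem. For $p=2$ nothing new is needed: the Plancherel theorem proved just above says that $\Kc_{\Lc_A}$ restricts to an isometry of $L^2(\beta_{\Lc_A})$ onto $L^2_{\Lc_A}(G)$, hence in particular to a bounded linear map $L^2(\beta_{\Lc_A})\to L^2(G)$ of norm $1$. For $p=1$, the assertion --- that $\Kc_{\Lc_A}$ extends to a bounded map $L^1(\beta_{\Lc_A})\to L^\infty(G)$ of norm $\le 1$, that its image is contained in $C_0(G)$, that its norm is exactly $1$, and that it restricts to an isometry of the cone of positive $\beta_{\Lc_A}$-integrable functions into the cone of continuous functions of positive type (with the sup norm, which on such functions equals the value at $e$) --- is precisely~\cite[Proposition 3.2.12]{Martini}, and I would simply cite it. If one prefers a self-contained argument for the inclusion of the image in $C_0(G)$: by property~4 of Theorem~\ref{teo:1}, $\Kc_{\Lc_A}$ maps $\Sc(\R^A)$ into $\Sc(G)\subseteq C_0(G)$; since $C^\infty_c(\R^A)\subseteq\Sc(\R^A)$ is dense in $L^1(\beta_{\Lc_A})$, since $\|\Kc_{\Lc_A}(m)\|_\infty\le\|m\|_{L^1(\beta_{\Lc_A})}$ on this dense subspace, and since $C_0(G)$ is closed in $L^\infty(G)$, the bounded extension takes values in $C_0(G)$. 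The norm being exactly $1$ is immediate from the isometry statement, as $\|\Kc_{\Lc_A}(m)\|_\infty=\|m\|_{L^1(\beta_{\Lc_A})}$ for every nonzero positive $m\in L^1(\beta_{\Lc_A})$.

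For $p\in(1,2)$ I would interpolate. Set $D\coloneqq C^\infty_c(\R^A)$: by Definition~\ref{def:1} together with property~4 of Theorem~\ref{teo:1}, $\Kc_{\Lc_A}$ is defined on $D$; $D$ is dense in $L^q(\beta_{\Lc_A})$ for every $q\in[1,2)$ (using that $\beta_{\Lc_A}$ is a Radon, hence $\sigma$-finite, measure on $\R^A$); and on $D$ one has simultaneously $\|\Kc_{\Lc_A}(m)\|_\infty\le\|m\|_{L^1(\beta_{\Lc_A})}$ and $\|\Kc_{\Lc_A}(m)\|_{L^2(G)}=\|m\|_{L^2(\beta_{\Lc_A})}$. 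Writing $\tfrac1p=\tfrac{1-\theta}{1}+\tfrac\theta2$ with $\theta\in(0,1)$ gives $\tfrac1{p'}=\tfrac\theta2$, so the pair $(p,p')$ is the $\theta$-interpolate of $(1,\infty)$ and $(2,2)$. The Riesz--Thorin theorem then gives, for $m\in D$, the bound $\|\Kc_{\Lc_A}(m)\|_{L^{p'}(G)}\le\|m\|_{L^p(\beta_{\Lc_A})}$, and extending by density of $D$ in $L^p(\beta_{\Lc_A})$ yields the desired continuous map $\Kc_{\Lc_A,p}\colon L^p(\beta_{\Lc_A})\to L^{p'}(G)$ of norm $\le 1$. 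Uniqueness of this extension, and its compatibility with $\Kc_{\Lc_A}$ and with $\Kc_{\Lc_A,1}$ on the common domains, follows from the density of $D$ together with the continuity of all the maps involved into $\Dc'(G)$ (or $\Sc'(G)$).

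I do not expect a serious obstacle: the argument is essentially a bookkeeping exercise around Riesz--Thorin once the two endpoints are in hand. The points that need a little care are exactly those: arranging that one is interpolating a \emph{single} linear operator defined on a common dense domain on which the two endpoint estimates both hold; the density of that domain in each $L^q(\beta_{\Lc_A})$; and, for the $C_0(G)$ refinement at $p=1$, invoking the Schwartz-kernel property of Rockland families from Theorem~\ref{teo:1} (or else just quoting~\cite[Proposition 3.2.12]{Martini}).
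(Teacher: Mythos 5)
Your proposal is correct and follows essentially the same route as the paper, which likewise quotes~\cite[Proposition 3.2.12]{Martini} for the endpoint $p=1$ and obtains the range $p\in(1,2)$ by interpolating against the $L^2$ isometry. The additional details you supply (the common dense domain, the $C_0(G)$ refinement via density and closedness in $L^\infty$) are exactly the bookkeeping the paper leaves implicit.
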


From the preceding corollary we deduce the existence of an `integral kernel' $\chi_{\Lc_A}$ for the `kernel transform' $\Kc_{\Lc_A}$. This integral kernel was introduced in~\cite[Theorem 2.11]{Tolomeo} for a sub-Laplacian on a group of polynomial growth; since many results of the remainder of this section are basically extensions of those presented in~\cite{Tolomeo}, we shall omit most of the proofs.

\begin{proposition}
	There is a unique $\chi_{\Lc_A}\in L^\infty(\beta_{\Lc_A}\otimes \nu_G)$ such that
	\[
	\Kc_{\Lc_A}(m)(g)=\int_{E_{\Lc_A}} m(\lambda) \chi_{\Lc_A}(\lambda,g) \,\dd \beta_{\Lc_A}(\lambda)
	\]
	for $\nu_G$-almost every $g\in G$. 
	In addition, $\norm{\chi_{\Lc_A}}_\infty=1$.
\end{proposition}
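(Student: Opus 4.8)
The plan is to extract $\chi_{\Lc_A}$ from the bilinear pairing underlying $\Kc_{\Lc_A,1}$, using the fact (from the preceding corollary) that $\Kc_{\Lc_A,1}$ maps $L^1(\beta_{\Lc_A})$ into $C_0(G)\subseteq L^\infty(\nu_G)$ with norm $1$. First I would observe that $\Kc_{\Lc_A,1}$, composed with the inclusion $C_0(G)\hookrightarrow L^\infty(\nu_G)$, is a continuous linear map $L^1(\beta_{\Lc_A})\to L^\infty(\nu_G)$ of norm $\leqslant 1$. Since $L^\infty(\nu_G)$ is the dual of $L^1(\nu_G)$, and $L^1(\beta_{\Lc_A})$ is separable (as $\beta_{\Lc_A}$ is a Radon measure on a Polish space) — or more directly since $G$ is $\sigma$-finite — the space of bounded operators $L^1(\beta_{\Lc_A})\to L^\infty(\nu_G)$ is canonically isometric to $L^\infty(\beta_{\Lc_A}\otimes\nu_G)$ by the standard kernel theorem for $L^1$-to-$L^\infty$ operators on $\sigma$-finite measure spaces (this is where $\sigma$-finiteness and the Dunford–Pettis-type representation enter). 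This yields a $\chi_{\Lc_A}\in L^\infty(\beta_{\Lc_A}\otimes\nu_G)$ with $\norm{\chi_{\Lc_A}}_\infty=\norm{\Kc_{\Lc_A,1}}_{L^1\to L^\infty}$ and
\[
\int_G \Kc_{\Lc_A}(m)(g)\,\psi(g)\,\dd\nu_G(g)=\int_{E_{\Lc_A}}\int_G m(\lambda)\chi_{\Lc_A}(\lambda,g)\psi(g)\,\dd\nu_G(g)\,\dd\beta_{\Lc_A}(\lambda)
\]
for all $m\in L^1(\beta_{\Lc_A})$ and $\psi\in L^1(\nu_G)$.

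Next I would upgrade this weak identity to the pointwise (a.e.) statement of the proposition for compactly supported bounded $m$. Fix such an $m$; since $\beta_{\Lc_A}$ is Radon, $m\in L^1(\beta_{\Lc_A})\cap L^\infty(\beta_{\Lc_A})$, so both sides of the asserted formula make sense: the left side is the $C_0(G)$ function $\Kc_{\Lc_A}(m)$, and the right side is, for each fixed $g$, the integral of $\lambda\mapsto m(\lambda)\chi_{\Lc_A}(\lambda,g)$ against $\beta_{\Lc_A}$, which by Fubini (justified by $m\in L^1(\beta_{\Lc_A})$ and $\chi_{\Lc_A}\in L^\infty$) defines a function of $g$ in $L^\infty(\nu_G)$ whose pairing against every $\psi\in L^1(\nu_G)$ agrees with $\int\Kc_{\Lc_A}(m)\psi$. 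Since two elements of $L^\infty(\nu_G)$ with the same action on $L^1(\nu_G)$ coincide $\nu_G$-a.e., this gives the displayed formula for $\nu_G$-a.e.\ $g$, for each fixed compactly supported $m\in L^\infty(\beta_{\Lc_A})$.

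For uniqueness of $\chi_{\Lc_A}$: if $\chi'$ is another element of $L^\infty(\beta_{\Lc_A}\otimes\nu_G)$ satisfying the formula for every compactly supported $m\in L^\infty(\beta_{\Lc_A})$, then $\int_{E_{\Lc_A}} m(\lambda)(\chi_{\Lc_A}-\chi')(\lambda,g)\,\dd\beta_{\Lc_A}(\lambda)=0$ for $\nu_G$-a.e.\ $g$, for each such $m$. Running $m$ through a countable family dense in $L^1(K,\beta_{\Lc_A})$ for an exhausting sequence of compact sets $K$ (possible since $\beta_{\Lc_A}$ is Radon on a Polish space, hence $\sigma$-finite with separable $L^1$), we conclude $(\chi_{\Lc_A}-\chi')(\lambda,g)=0$ for $(\beta_{\Lc_A}\otimes\nu_G)$-a.e.\ $(\lambda,g)$. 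Finally, for the norm equality $\norm{\chi_{\Lc_A}}_\infty=1$: the inequality $\leqslant 1$ is the operator-norm bound $\norm{\Kc_{\Lc_A,1}}_{L^1\to L^\infty}\leqslant\norm{\Kc_{\Lc_A,1}}_{L^1\to C_0}=1$ from the corollary; for $\geqslant 1$, take $m$ a nonnegative $\beta_{\Lc_A}$-integrable function, so that $\Kc_{\Lc_A,1}(m)$ is a continuous function of positive type with $\Kc_{\Lc_A,1}(m)(e)=\norm{\Kc_{\Lc_A,1}(m)}_\infty=\norm{m}_{L^1(\beta_{\Lc_A})}$ (again by the corollary), which forces the $L^1\to L^\infty$ norm, and hence $\norm{\chi_{\Lc_A}}_\infty$, to be exactly $1$.

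The main obstacle I expect is the rigorous passage from the operator $\Kc_{\Lc_A,1}\colon L^1(\beta_{\Lc_A})\to L^\infty(\nu_G)$ to the kernel $\chi_{\Lc_A}\in L^\infty(\beta_{\Lc_A}\otimes\nu_G)$: one must invoke the isometric identification of $L^1$-to-$L^\infty$ operators between $\sigma$-finite measure spaces with $L^\infty$ of the product, which is standard but relies on $\sigma$-finiteness (guaranteed here since Radon measures on Polish spaces are $\sigma$-finite) and on a measurable selection / Fubini argument to produce a jointly measurable representative. Once that identification is in hand, everything else — Fubini, the a.e.\ pointwise identity, uniqueness via a countable dense family, and the norm computation via functions of positive type — is routine.
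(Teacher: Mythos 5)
Your proposal is correct and follows essentially the route the paper itself indicates: the paper omits the proof, referring to Tolomeo's Theorem 2.11 and explicitly noting that ``one may also make use of the Dunford--Pettis Theorem,'' which is precisely your identification of the norm-one operator $L^1(\beta_{\Lc_A})\to C_0(G)\subseteq L^\infty(\nu_G)$ with a kernel in $L^\infty(\beta_{\Lc_A}\otimes \nu_G)$. The remaining steps you outline (the Fubini/duality upgrade to the almost-everywhere identity, uniqueness via countably many test multipliers, and the norm equality obtained from the isometry on positive integrable functions and functions of positive type) are the routine completions of that same argument.
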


The proof follows the lines of the proof of~\cite[Theorem 2.11]{Tolomeo}. One may also make use of the Dunford-Pettis Theorem.

We now pass to show some of the main properties of $\chi_{\Lc_A}$. In particular, we shall find some representatives of $\chi_{\Lc_A}$ which are particularly well-behaved. 
The following simple result generalizes~\cite[Theorem 2.33]{Tolomeo}.

\begin{proposition}\label{prop:3:7}
	For every $s>0$ and for $(\beta_{\Lc_A}\otimes \nu_G)$-almost every $(\lambda,g)\in E_{\Lc_A}\times G$,
	\[
	\chi_{\Lc_A}(s\cdot\lambda,g)=\chi_{\Lc_A}(\lambda, s \cdot g).
	\]		
\end{proposition}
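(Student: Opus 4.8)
The plan is to derive the identity from the covariance of the kernel transform under dilations, by regarding a dilation of $G$ as a homogeneous automorphism and invoking Proposition~\ref{prop:3}. So I would fix $s>0$ and consider the map $\delta_s\colon G\to G$, $\delta_s(x)=s\cdot x$. Since the dilations of $G$ commute with one another, $\delta_s$ is a homogeneous automorphism of $G$; and since each $\Lc_\alpha$ is homogeneous of degree $\delta_\alpha$, one has $\dd\delta_s(\Lc_\alpha)=s^{\delta_\alpha}\Lc_\alpha$, so that $\dd\delta_s(\Lc_A)=(s^{\delta_\alpha}\Lc_\alpha)_{\alpha\in A}$ is again a Rockland family (by Proposition~\ref{prop:3}), with the same underlying homogeneous space $E_{\Lc_A}$ and with spectral measure $(s\cdot)_*\mi_{\Lc_A}$. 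Hence $m(\dd\delta_s(\Lc_A))=(m\circ(s\cdot))(\Lc_A)$ for every bounded $\mi_{\Lc_A}$-measurable $m$, so that $\Kc_{\dd\delta_s(\Lc_A)}(m)=\Kc_{\Lc_A}(m\circ(s\cdot))$. On the other hand, if $m\in C^\infty_c(E_{\Lc_A})$, then $\Kc_{\Lc_A}(m)\in\Sc(G)\subseteq\Mc^1(G)$ (Theorem~\ref{teo:1}, since $\Lc_A$ is a Rockland family) and $m$ is continuous, so Proposition~\ref{prop:3}(3) applies with $\pi=\delta_s$ and gives $(\delta_s)_*\Kc_{\Lc_A}(m)=\Kc_{\dd\delta_s(\Lc_A)}(m)$. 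Combining the two identities and writing out the pushforward of the Schwartz function $\Kc_{\Lc_A}(m)$ under $\delta_s$, I obtain
\[
\Kc_{\Lc_A}(m\circ(s\cdot))(g)=s^{-Q}\,\Kc_{\Lc_A}(m)(s^{-1}\cdot g)\qquad\text{for every }g\in G
\]
and every $m\in C^\infty_c(E_{\Lc_A})$. (The same identity may also be obtained directly, observing that $\phi\mapsto s^{Q/2}\phi(s\cdot)$ is a unitary operator on $L^2(G)$ that conjugates $\Lc_\alpha$ into $s^{\delta_\alpha}\Lc_\alpha$, and then using the behaviour of convolution under dilations on $G$.)

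Next I would insert the integral-kernel representation of $\chi_{\Lc_A}$ into both sides of the last identity, which is legitimate since $m$ and $m\circ(s\cdot)$ both belong to $L^1(\beta_{\Lc_A})$; on the left-hand side I change variables $\lambda\mapsto s\cdot\lambda$, using $(s\cdot)_*\beta_{\Lc_A}=s^{-Q}\beta_{\Lc_A}$, and cancel the common factor $s^{-Q}$. After relabelling $s$, this yields, for every $s>0$, every $m\in C^\infty_c(E_{\Lc_A})$ and $\nu_G$-almost every $g\in G$,
\[
\int_{E_{\Lc_A}}m(\lambda)\,\chi_{\Lc_A}(s\cdot\lambda,g)\,\dd\beta_{\Lc_A}(\lambda)=\int_{E_{\Lc_A}}m(\lambda)\,\chi_{\Lc_A}(\lambda,s\cdot g)\,\dd\beta_{\Lc_A}(\lambda).
\]
To remove $m$, I fix $s$ and note that the maps $(\lambda,g)\mapsto(s\cdot\lambda,g)$ and $(\lambda,g)\mapsto(\lambda,s\cdot g)$ carry $\beta_{\Lc_A}\otimes\nu_G$ into an equivalent measure, so that $(\lambda,g)\mapsto\chi_{\Lc_A}(s\cdot\lambda,g)$ and $(\lambda,g)\mapsto\chi_{\Lc_A}(\lambda,s\cdot g)$ belong to $L^\infty(\beta_{\Lc_A}\otimes\nu_G)$; by Fubini's theorem, for $\nu_G$-almost every $g$ each of them is, in the variable $\lambda$, an element of $L^\infty(\beta_{\Lc_A})$. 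Choosing a countable subset of $C^\infty_c(E_{\Lc_A})$ dense in $L^1(\beta_{\Lc_A})$ and discarding the corresponding $\nu_G$-negligible exceptional sets, I conclude that for $\nu_G$-almost every $g$ the displayed identity holds for every $m\in L^1(\beta_{\Lc_A})$; since $\beta_{\Lc_A}$ is $\sigma$-finite, this forces $\chi_{\Lc_A}(s\cdot\lambda,g)=\chi_{\Lc_A}(\lambda,s\cdot g)$ for $\beta_{\Lc_A}$-almost every $\lambda$, and one more application of Fubini's theorem gives the statement.

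The substantive point is the first step, namely the identification of $\Kc_{\Lc_A}(m\circ(s\cdot))$ with a dilate of $\Kc_{\Lc_A}(m)$ — equivalently, the fact that $\dd\delta_s(\Lc_A)$ is the `dilated family' and that Proposition~\ref{prop:3} may be applied to it (or, in the alternative route, carrying the conjugation by $\phi\mapsto s^{Q/2}\phi(s\cdot)$ through the functional calculus and the convolution formula at the level of distributions). Everything afterwards is a routine change of variables together with the usual handling of almost-everywhere statements via Fubini's theorem and the separability of $L^1(\beta_{\Lc_A})$; the only mild care needed is in checking that the various dilations preserve the relevant measure classes.
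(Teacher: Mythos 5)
Your proof is correct, and since the paper leaves this proposition unproved (referring to Tolomeo's Theorem 2.33), your argument is exactly the intended one: the dilation covariance $\Kc_{\Lc_A}(m\circ(s\,\cdot\,))=s^{-Q}\,\Kc_{\Lc_A}(m)(s^{-1}\cdot\,)$ — obtained either via Proposition~\ref{prop:3} applied to the homogeneous automorphism $\delta_s$ or via conjugation by the unitary $\phi\mapsto s^{Q/2}\phi(s\,\cdot\,)$ — combined with the homogeneity $(r\,\cdot\,)_*(\beta_{\Lc_A})=r^{-Q}\beta_{\Lc_A}$ and the routine separability/Fubini handling of the null sets. No gaps.
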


The following property is reminiscent of an analogous one concerning Gelfand pairs. It extends~\cite[Proposition 2.14]{Tolomeo} to our setting; we shall nevertheless present an alternative proof.

\begin{proposition}\label{prop:3:8}
	Take a $\beta_{\Lc_A}$-measurable function $m\colon E_{\Lc_A}\to \C$ which admits a kernel in $\Mc^1(G)+\Ec'(G)$. Then
	\[
	\Kc_{\Lc_A}(m)*\chi_{\Lc_A}(\lambda,\,\cdot\,)=\chi_{\Lc_A}(\lambda,\,\cdot\,)*\Kc_{\Lc_A}(m)= m(\lambda)\chi_{\Lc_A}(\lambda,\,\cdot\,)
	\]
	for $\beta_{\Lc_A}$-almost every $\lambda\in E_{\Lc_A}$.
\end{proposition}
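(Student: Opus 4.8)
The plan is to reduce the proposition to the integral-kernel identity of the earlier proposition together with the associativity of convolution, handling the two types of kernels (bounded measures and compactly supported distributions) simultaneously via an approximation/density argument analogous to the one used in the proof of Proposition~\ref{prop:3}. First I would fix $m$ with kernel $\Kc_{\Lc_A}(m)\in\Mc^1(G)+\Ec'(G)$ and, for a suitable approximate identity or truncation, reduce to the case in which $m$ itself is nice enough that all convolutions below are unambiguous; more precisely, I would first treat $m\in C_0(E_{\Lc_A})$ with $\Kc_{\Lc_A}(m)\in L^1(G)$, since then $\Kc_{\Lc_A}(m)$ is an $L^1$ function and $\chi_{\Lc_A}(\lambda,\cdot)\in L^\infty$, so the convolutions $\Kc_{\Lc_A}(m)*\chi_{\Lc_A}(\lambda,\cdot)$ and $\chi_{\Lc_A}(\lambda,\cdot)*\Kc_{\Lc_A}(m)$ are genuine functions.

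For that base case, the key computation is to expand, for test functions $\phi\in C^\infty_c(G)$,
\[
\phi*\bigl(\Kc_{\Lc_A}(m)*\chi_{\Lc_A}(\lambda,\cdot)\bigr)
=\bigl(\phi*\Kc_{\Lc_A}(m)\bigr)*\chi_{\Lc_A}(\lambda,\cdot)
= m(\Lc_A)\phi * \chi_{\Lc_A}(\lambda,\cdot),
\]
and then to use the integral-kernel representation of the earlier proposition to write $\psi*\chi_{\Lc_A}(\lambda,\cdot)$ in terms of $\Kc_{\Lc_A}$ applied to a translate/modulate of a function that picks out $m(\lambda)$. Concretely, I expect that for $\psi\in\Sc(G)$ one has an identity of the form $\psi*\chi_{\Lc_A}(\lambda,\cdot)=\widehat{?}$, obtained by testing the defining equation $\Kc_{\Lc_A}(n)(g)=\int n(\mu)\chi_{\Lc_A}(\mu,g)\,\dd\beta_{\Lc_A}(\mu)$ against $\psi$ and using the fact that $\Kc_{\Lc_A}$ intertwines pointwise multiplication of multipliers with convolution of kernels, i.e. $\Kc_{\Lc_A}(n_1n_2)=\Kc_{\Lc_A}(n_1)*\Kc_{\Lc_A}(n_2)$. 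Applying this with the multiplier of $\psi$ replaced by (the multiplier of) $m(\Lc_A)\phi$ produces the factor $m(\lambda)$ evaluated $\beta_{\Lc_A}$-a.e.\ in $\lambda$, and one reads off that $\phi*\bigl(m(\lambda)\chi_{\Lc_A}(\lambda,\cdot)\bigr)$ equals the same thing; since $\phi$ is arbitrary and both sides lie in $L^2(G)$ (or $\Sc'(G)$), equality of the distributions follows, for $\beta_{\Lc_A}$-a.e.\ $\lambda$ (after a Fubini argument on $E_{\Lc_A}\times G$ to pass from "a.e.\ $(\lambda,g)$" to "a.e.\ $\lambda$, for a.e.\ $g$"). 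The second equality, with the convolution on the other side, is obtained symmetrically using $\chi_{\Lc_A}(\lambda,\cdot)*\psi$ and the commutativity of the functional calculus, or by taking adjoints and using $\Kc_{\Lc_A}(\overline m)=\Kc_{\Lc_A}(m)^*$.

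To remove the restriction $m\in C_0$, $\Kc_{\Lc_A}(m)\in L^1$, I would approximate: write an arbitrary $m$ with $\Kc_{\Lc_A}(m)\in\Mc^1(G)+\Ec'(G)$ as a limit of multipliers $m_j$ (e.g. $m_j = m\cdot\eta(2^{-j}\,\cdot\,)$ for a suitable cutoff, composed with a heat-semigroup regularization $e^{-2^{-j}P(\Lc_A)}$ as in the proof of Theorem~\ref{teo:1}) whose kernels converge to $\Kc_{\Lc_A}(m)$ in the relevant topology, with $\Kc_{\Lc_A}(m_j)\in L^1(G)$; the convolutions $\Kc_{\Lc_A}(m_j)*\chi_{\Lc_A}(\lambda,\cdot)$ converge (in $\Sc'(G)$, uniformly in $\lambda$ on the support of $\beta_{\Lc_A}$ after passing to a subsequence) to $\Kc_{\Lc_A}(m)*\chi_{\Lc_A}(\lambda,\cdot)$, using that convolution with the bounded measure or compactly supported distribution $\Kc_{\Lc_A}(m)$ is continuous, and on the right-hand side $m_j(\lambda)\to m(\lambda)$ $\beta_{\Lc_A}$-a.e. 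Passing to the limit (again with a Fubini/subsequence argument to handle the exceptional $\lambda$-sets) yields the identity for general $m$.

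The main obstacle I anticipate is purely measure-theoretic bookkeeping rather than a conceptual difficulty: the integral-kernel identity and all the intermediate equalities hold only $(\beta_{\Lc_A}\otimes\nu_G)$-almost everywhere, so each step introduces a null set that may depend on auxiliary data ($\phi$, $j$, etc.), and one must use separability of $C^\infty_c(G)$ (or of $\Sc(G)$) together with Fubini's theorem to arrange a single $\beta_{\Lc_A}$-conull set of $\lambda$ for which the asserted distributional identity holds. A secondary technical point is justifying that the convolutions appearing on both sides make sense and are associative in the generality of $\Mc^1(G)+\Ec'(G)$ convolved against $L^\infty$ functions; here one uses that $\chi_{\Lc_A}(\lambda,\cdot)$, being in $L^\infty$, is a right convolutor of $L^1$ and that compactly supported distributions convolve with everything, so no growth issues arise.
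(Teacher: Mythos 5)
Your toolbox is the right one (the weak integral $\Kc_{\Lc_A}(\phi_1)=\int_{E_{\Lc_A}}\phi_1(\lambda)\chi_{\Lc_A}(\lambda,\,\cdot\,)\,\dd\beta_{\Lc_A}(\lambda)$, the multiplicativity $\Kc_{\Lc_A}(m\phi_1)=\Kc_{\Lc_A}(m)*\Kc_{\Lc_A}(\phi_1)$, Fubini plus separability), but the step on which everything hinges is left as a placeholder, and the way you propose to fill it is circular. After writing $\phi*\bigl(\Kc_{\Lc_A}(m)*\chi_{\Lc_A}(\lambda,\,\cdot\,)\bigr)=m(\Lc_A)\phi*\chi_{\Lc_A}(\lambda,\,\cdot\,)$ you appeal to ``an identity of the form $\psi*\chi_{\Lc_A}(\lambda,\,\cdot\,)=\widehat{?}$'' that ``picks out $m(\lambda)$''; but $m(\Lc_A)\phi=\phi*\Kc_{\Lc_A}(m)$ is not of the form $\Kc_{\Lc_A}(n)$, and the only identity producing the factor $m(\lambda)$ at a \emph{fixed} $\lambda$ is exactly the proposition you are trying to prove. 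The non-circular way to get a pointwise-in-$\lambda$ statement is to test in the $\lambda$-variable: for $\phi_1\in C^\infty_c(E_{\Lc_A})$ and $\phi_2\in C^\infty_c(G)$ one shows $\int_{E_{\Lc_A}}\phi_1(\lambda)\langle \Kc_{\Lc_A}(m)*\chi_{\Lc_A}(\lambda,\,\cdot\,),\phi_2\rangle\,\dd\beta_{\Lc_A}(\lambda)=\langle \Kc_{\Lc_A}(m)*\Kc_{\Lc_A}(\phi_1),\phi_2\rangle=\langle \Kc_{\Lc_A}(m\phi_1),\phi_2\rangle=\int_{E_{\Lc_A}}\phi_1(\lambda)m(\lambda)\langle \chi_{\Lc_A}(\lambda,\,\cdot\,),\phi_2\rangle\,\dd\beta_{\Lc_A}(\lambda)$ and then lets $\phi_1,\phi_2$ vary over countable dense sets. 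The first equality is the point you never address: one must pull convolution by $\Kc_{\Lc_A}(m)$ inside the weak integral defining $\Kc_{\Lc_A}(\phi_1)$, which is legitimate because $f\mapsto\langle \Kc_{\Lc_A}(m)*f,\phi_2\rangle=\langle f,\Kc_{\Lc_A}(m)\check{\;}*\phi_2\rangle$ is $\sigma(L^\infty(G),L^1(G))$-continuous, since $\Kc_{\Lc_A}(m)\check{\;}*\phi_2\in L^1(G)$ whenever $\Kc_{\Lc_A}(m)\in\Mc^1(G)+\Ec'(G)$. This is precisely the paper's proof.

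Once this is observed, the argument works directly for every $m$ with kernel in $\Mc^1(G)+\Ec'(G)$, so your base-case-plus-approximation superstructure is unnecessary; it is also not sound as stated: the regularized multipliers $m\,\eta(2^{-j}\,\cdot\,)e^{-2^{-j}P}$ are not in $C_0$ (only their kernels are in $L^1$), and the claimed convergence of $\Kc_{\Lc_A}(m_j)*\chi_{\Lc_A}(\lambda,\,\cdot\,)$ ``uniformly in $\lambda$'' is unjustified. One could repair the limit step by applying the already-proved special case to the regularizing factors and using associativity of $(\Mc^1+\Ec')*\Sc*L^\infty$, but at that point one has reproduced the direct duality argument anyway. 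Your treatment of the second equality (by symmetry or adjoints) is fine.
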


\begin{proof}
	Notice first that, for every $\phi_2\in C^\infty_c(G)$, the linear functional
	\[
	L^\infty(G)\ni f \mapsto \langle \Kc_{\Lc_A}(m)* f, \phi_2\rangle= \langle f, \Kc_{\Lc_A}(m)\check{\;} * \phi_2\rangle\in \C
	\]
	is continuous with respect to the weak topology $\sigma(L^\infty(G),L^1(G))$. In addition, for every $\phi_1\in C^\infty_c(E_{\Lc_A})$,
	\[
	\Kc_{\Lc_A}(\phi_1)= \int_{E_{\Lc_A}} \phi_1(\lambda)\chi_{\Lc_A}(\lambda,\,\cdot\,)\,\dd \beta_{\Lc_A}(\lambda)
	\]
	in $L^\infty(G)$, endowed with the weak topology $\sigma(L^\infty(G),L^1(G))$. Therefore, 
	\[
	\begin{split}
	&\int_{E_{\Lc_A}} \langle \Kc_{\Lc_A}(m)*\chi_{\Lc_A}(\lambda,\,\cdot\,), \phi_2\rangle\, \phi_1(\lambda)\,\dd \beta_{\Lc_A}(\lambda)= \langle \Kc_{\Lc_A}(m)* \Kc_{\Lc_A}(\phi_1), \phi_2\rangle\\
	&\qquad \qquad\qquad \qquad\qquad \qquad\quad=\langle \Kc_{\Lc_A}(m \phi_1), \phi_2\rangle\\
	&\qquad \qquad\qquad \qquad\qquad \qquad\quad=\int_{E_{\Lc_A}} (m \phi_1)(\lambda) \langle \chi_{\Lc_A}(\lambda,\,\cdot\,), \phi_2\rangle\,\dd \beta_{\Lc_A}(\lambda),
	\end{split}
	\]
	whence the assertion by the arbitrariness of $\phi_2$. The other equality is proved similarly.	
\end{proof}

\begin{corollary}
	Let $P$ be a polynomial on $E_{\Lc_A}$. Then
	\[
	P(\Lc_A)\chi_{\Lc_A}(\lambda,\,\cdot \,)=P(\Lc_A^R)\chi_{\Lc_A}(\lambda,\,\cdot \,)=P(\lambda)\chi_{\Lc_A}(\lambda,\,\cdot\,)
	\]
	for $\beta_{\Lc_A}$-almost every $\lambda\in E_{\Lc_A}$; here, $\Lc_A^R$ denotes the family of right-invariant differential operators which corresponds to $\Lc_A$.
\end{corollary}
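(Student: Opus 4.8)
The statement to prove is the Corollary: for a polynomial $P$ on $E_{\Lc_A}$,
\[
P(\Lc_A)\chi_{\Lc_A}(\lambda,\,\cdot\,)=P(\Lc_A^R)\chi_{\Lc_A}(\lambda,\,\cdot\,)=P(\lambda)\chi_{\Lc_A}(\lambda,\,\cdot\,)
\]
for $\beta_{\Lc_A}$-almost every $\lambda$. The plan is to reduce this to Proposition~\ref{prop:3:8}. Write $P=\sum_{\mathbf{k}} c_{\mathbf{k}} X^{\mathbf{k}}$. The key observation is that if we can make sense of "convolution with a suitable derivative of $\delta_e$", then applying $\Lc_A$ (a left-invariant operator) to a function is the same as convolving on the right by the corresponding right-invariant differential of $\delta_e$, and applying $\Lc_A^R$ is the same as convolving on the left; more precisely, $\Lc_\alpha f = f * (\Lc_\alpha \delta_e)$ and $\Lc_\alpha^R f = (\Lc_\alpha \delta_e) * f$ for distributions $f$ for which these make sense. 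Since $X_\alpha$ corresponds via the functional calculus to the function $\lambda\mapsto \lambda_\alpha$ on $E_{\Lc_A}$, we have $\Lc_\alpha \delta_e = \Kc_{\Lc_A}(m_\alpha)$ where $m_\alpha(\lambda)=\lambda_\alpha$, and more generally $P(\Lc_A)\delta_e = \Kc_{\Lc_A}(P)$. This is compactly supported (it is a differential operator applied to $\delta_e$), hence lies in $\Ec'(G)$, so Proposition~\ref{prop:3:8} applies with $m=P$.

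First I would set $m=P$, restricted to $E_{\Lc_A}$, and check that $P$ admits a kernel in $\Ec'(G)\subseteq\Mc^1(G)+\Ec'(G)$: indeed $\Kc_{\Lc_A}(P)=P(\Lc_A)\delta_e$, which one verifies directly from the fact that $\Kc_{\Lc_A}$ intertwines multiplication by coordinate functions with the action of the $\Lc_\alpha$ on kernels (this is one of the "elementary properties of $\Kc_{\Lc_A}$" the paper says it will use without reference; it follows from the spectral theorem and the definition $m(\Lc_A)\phi=\phi*\Kc_{\Lc_A}(m)$). Then Proposition~\ref{prop:3:8} gives, for $\beta_{\Lc_A}$-a.e. $\lambda$,
\[
\Kc_{\Lc_A}(P)*\chi_{\Lc_A}(\lambda,\,\cdot\,)=\chi_{\Lc_A}(\lambda,\,\cdot\,)*\Kc_{\Lc_A}(P)=P(\lambda)\,\chi_{\Lc_A}(\lambda,\,\cdot\,).
\]
It remains to identify the two convolutions with $\Kc_{\Lc_A}(P)=P(\Lc_A)\delta_e$ as $P(\Lc_A^R)\chi_{\Lc_A}(\lambda,\,\cdot\,)$ and $P(\Lc_A)\chi_{\Lc_A}(\lambda,\,\cdot\,)$ respectively. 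This is the standard relation between convolution by derivatives of the Dirac mass and the action of left/right-invariant differential operators: for $T\in\Ec'(G)$ of the form $D\delta_e$ with $D$ left-invariant, $T*S = DS$ viewed with the correct side, while $S*T$ applies the corresponding right-invariant operator; here one must be careful with the convention, but since the convolution $\chi_{\Lc_A}(\lambda,\,\cdot\,)*\Kc_{\Lc_A}(P)$ pairs the compactly supported factor on the right, it realizes the right-invariant operator family $P(\Lc_A^R)$ applied to $\chi_{\Lc_A}(\lambda,\,\cdot\,)$, and symmetrically for the other order.

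The main obstacle — really the only non-routine point — is making the last identification rigorous in the distributional setting, i.e. justifying that $\chi_{\Lc_A}(\lambda,\,\cdot\,)$, which a priori is only an $L^\infty$ (hence tempered-distribution) object, can be convolved with the compactly supported distribution $P(\Lc_A)\delta_e$ and that this convolution equals the differential operator $P(\Lc_A)$ (resp. $P(\Lc_A^R)$) applied to it in the sense of distributions. This is handled by testing against $\phi\in C^\infty_c(G)$ and unwinding definitions: $\langle P(\Lc_A)\chi_{\Lc_A}(\lambda,\,\cdot\,),\phi\rangle = \langle \chi_{\Lc_A}(\lambda,\,\cdot\,), {}^tP(\Lc_A)\phi\rangle$, and one checks this agrees with $\langle \Kc_{\Lc_A}(P)*\chi_{\Lc_A}(\lambda,\,\cdot\,),\phi\rangle$, using the associativity of convolution on $C^\infty_c(G)*\Ec'(G)*L^\infty(G)$ and the identity $\phi*\Kc_{\Lc_A}(P)=P(\Lc_A)\phi$ valid for $\phi\in\Sc(G)$ (Definition~\ref{def:1}). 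Once this bookkeeping is done the corollary follows immediately; no new analytic input beyond Proposition~\ref{prop:3:8} is needed, which is why the paper can afford to state it as a corollary.
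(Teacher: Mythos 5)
Your proof is correct and follows exactly the route the paper intends: the corollary is left unproved there precisely because it is the specialization of Proposition~\ref{prop:3:8} to $m=P$, whose kernel $\Kc_{\Lc_A}(P)=P(\Lc_A)\delta_e$ lies in $\Ec'(G)$, combined with the standard identification of convolution with a distribution supported at $e$ with the action of the corresponding left/right-invariant operators. Note only the harmless labelling slip in your penultimate paragraph: with your (correct) displayed conventions $\Lc_\alpha f=f*(\Lc_\alpha\delta_e)$ and $\Lc_\alpha^R f=(\Lc_\alpha\delta_e)*f$, the convolution $\chi_{\Lc_A}(\lambda,\,\cdot\,)*\Kc_{\Lc_A}(P)$ is $P(\Lc_A)\chi_{\Lc_A}(\lambda,\,\cdot\,)$ while $\Kc_{\Lc_A}(P)*\chi_{\Lc_A}(\lambda,\,\cdot\,)$ is $P(\Lc_A^R)\chi_{\Lc_A}(\lambda,\,\cdot\,)$, the opposite of what you wrote at that point --- which does not affect the conclusion, since Proposition~\ref{prop:3:8} gives both convolutions equal to $P(\lambda)\chi_{\Lc_A}(\lambda,\,\cdot\,)$.
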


In the following result we show the existence of well-behaved representatives of $\chi_{\Lc_A}$. Since it is a straightforward extension of~\cite[Lemmas 2.12 and 2.15 and Propositions 2.17 and 2.18]{Tolomeo}, the proof is omitted.

\begin{theorem}\label{teo:2}
	There is a representative $\chi_{0}$ of $\chi_{\Lc_A}$ such that the following hold:
	\begin{enumerate}
		\item $\chi_{0}(\lambda,\,\cdot\,)$ is a function of positive type of class $C^{\infty}$ with maximum $1$  for every $\lambda\in E_{\Lc_A}$;
		
		\item for every homogeneous left- and right-invariant differential operators $X$ and $Y$ on $G$ of homogeneous degrees $\dd_X$ and $\dd_Y$, respectively, there is a constant $C_{X,Y}>0$ such that
		\[
		\norm{ Y X \chi_{0}(\lambda,\,\cdot\,)  }_\infty\meg C_{\gamma_1,\gamma_2} \abs{\lambda}^{\dd_{X}+\dd_{Y}}
		\]		
		for every $\lambda\in E_{\Lc_A}$;
		
		\item $\chi_{0}(\lambda,\,\cdot\,)$ converges to $\chi_{0}(0,\,\cdot\,)=1$ in $\Ec(G)$ as $\lambda\to 0$; 
		
		\item $\chi_{0}(\,\cdot\,,g)$ is $\beta_{\Lc_A}$-measurable for every $g\in G$.
	\end{enumerate}
\end{theorem}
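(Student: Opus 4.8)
The plan is to produce $\chi_{0}$ by \emph{regularising $\chi_{\Lc_A}$ in the group variable}, by means of the reproducing identity of Proposition~\ref{prop:3:8}, and then to use the homogeneity of Proposition~\ref{prop:3:7} to transfer every estimate from a fixed compact annulus of spectral parameters to the whole of $E_{\Lc_A}$. Concretely, fix $\eta\in C^\infty_c(E_{\Lc_A})$ with $\eta\equiv 1$ on $\Set{\tfrac12\meg\abs{\lambda}\meg 2}$ and $\Supp{\eta}\subseteq E_{\Lc_A}\setminus\Set{0}$; then $\eta\in\Sc(E_{\Lc_A})$ admits a kernel $\Kc_{\Lc_A}(\eta)\in\Sc(G)$, and Proposition~\ref{prop:3:8} yields $\Kc_{\Lc_A}(\eta)*\chi_{\Lc_A}(\lambda,\,\cdot\,)=\eta(\lambda)\chi_{\Lc_A}(\lambda,\,\cdot\,)$ for $\beta_{\Lc_A}$-almost every $\lambda$. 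For almost every $\lambda$ in the annulus the right-hand side is $\chi_{\Lc_A}(\lambda,\,\cdot\,)$, while the left-hand side is the convolution of a Schwartz function with an $L^\infty$ function of norm $\meg 1$; by the elementary estimates for convolution by Schwartz functions it is therefore of class $C^\infty$, and each of its $YX$-derivatives is bounded, \emph{uniformly in such $\lambda$}, by a fixed Schwartz seminorm $C_{X,Y}$ of $\Kc_{\Lc_A}(\eta)$. Thus $\chi_0(\lambda,\,\cdot\,)$ is defined, and $C^\infty$, for almost every $\lambda$ in the annulus. Since $\Kc_{\Lc_A,1}$ maps non-negative $\beta_{\Lc_A}$-integrable functions isometrically into the continuous functions of positive type on $G$, one gets that $\chi_{\Lc_A}(\lambda,\,\cdot\,)$ admits a representative of positive type with value $1$ at $e$; combined with the identity above this shows that $\chi_0(\lambda,\,\cdot\,)$ is of positive type with maximum $\chi_0(\lambda,e)=1$, which is Property~1.

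I would then extend $\chi_0$ to $E_{\Lc_A}\setminus\Set{0}$ by $\chi_0(\lambda,g)\coloneqq\chi_0\bigl(\abs{\lambda}^{-1}\cdot\lambda,\,\abs{\lambda}\cdot g\bigr)$ --- meaningful because $\abs{\lambda}^{-1}\cdot\lambda$ lies on the unit sphere $\Set{\abs{\lambda}=1}$, which is contained in the annulus --- and set $\chi_0(0,\,\cdot\,)\coloneqq 1$. By Proposition~\ref{prop:3:7} the result is still a representative of $\chi_{\Lc_A}$. Property~2 then follows at once, since the defining relation of a homogeneous differential operator gives $\norm{YX\chi_0(\lambda,\,\cdot\,)}_\infty=\abs{\lambda}^{\dd_X+\dd_Y}\norm{YX\chi_0(\abs{\lambda}^{-1}\cdot\lambda,\,\cdot\,)}_\infty\meg C_{X,Y}\abs{\lambda}^{\dd_X+\dd_Y}$, using that $\abs{\lambda}^{-1}\cdot\lambda$ lies in the annulus. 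Property~3 is a consequence of Properties~1 and~2: writing $\chi_0(\lambda,g)=\chi_0(\omega_\lambda,\abs{\lambda}\cdot g)$ with $\omega_\lambda\coloneqq\abs{\lambda}^{-1}\cdot\lambda$ on the compact unit sphere, Property~2 shows that $\Set{\chi_0(\omega,\,\cdot\,)\given\abs{\omega}=1}$ is bounded in $\Ec(G)$; as each of these functions has value $1$ at $e$ and the left-invariant operators are homogeneous, $\chi_0(\lambda,\,\cdot\,)\to 1$ in $\Ec(G)$ as $\lambda\to 0$. Finally, Property~4 is obtained by fixing a jointly $(\beta_{\Lc_A}\otimes\nu_G)$-measurable representative of $\chi_{\Lc_A}$, applying Fubini's theorem to the convolution defining $\chi_0$ on the annulus, and composing with the continuous maps $\lambda\mapsto\abs{\lambda}$ and $\lambda\mapsto\abs{\lambda}^{-1}\cdot\lambda$.

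The main obstacle is the point I passed over: so far $\chi_0(\lambda,\,\cdot\,)$ has been constructed only for \emph{$\beta_{\Lc_A}$-almost every} $\lambda$, whereas the statement demands every $\lambda$ --- and this cannot be dispensed with, since the unit sphere may be $\beta_{\Lc_A}$-negligible while its points carry the values propagated by homogeneity. To handle it I would prove that the $\Ec(G)$-valued map $\lambda\mapsto\chi_0(\lambda,\,\cdot\,)$, so far defined on the full-measure set of ``good'' parameters in the annulus, is uniformly continuous there, hence extends continuously to the whole annulus. The extension still satisfies $\int_{E_{\Lc_A}}\phi(\lambda)\chi_0(\lambda,\,\cdot\,)\,\dd\beta_{\Lc_A}(\lambda)=\Kc_{\Lc_A}(\phi)$ for $\phi\in C^\infty_c$ supported where $\eta\equiv 1$, so it remains a representative of $\chi_{\Lc_A}$ and inherits Properties~1 and~2; off $\Supp{\beta_{\Lc_A}}$ one may simply put $\chi_0(\lambda,\,\cdot\,)\coloneqq 1$, which is compatible with all four properties. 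The continuity itself would follow by combining the uniform $\Ec(G)$-estimates of the first step --- which make $\Set{\chi_0(\lambda,\,\cdot\,)\given\lambda\in K}$ relatively compact in $\Ec(G)$ for each compact $K$ --- with an identification of limits along a family of non-negative $\phi$ with $\rho_\eps^{(\lambda_0)}\cdot\beta_{\Lc_A}$ converging weakly to $\delta_{\lambda_0}$: by Lebesgue differentiation of $\beta_{\Lc_A}$ one has $\Kc_{\Lc_A}\bigl(\rho_\eps^{(\lambda_0)}\bigr)\to\chi_0(\lambda_0,\,\cdot\,)$ in $\Ec(G)$ for $\beta_{\Lc_A}$-almost every $\lambda_0$, and the uniform continuity then upgrades this to every $\lambda_0$. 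This is precisely the content of \cite[Lemmas 2.12 and 2.15 and Propositions 2.17 and 2.18]{Tolomeo} transported to the present setting; no new idea is involved, but the measure-theoretic bookkeeping in the passage from ``almost every $\lambda$'' to ``every $\lambda$'' is the part that must actually be carried out.
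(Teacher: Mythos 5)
The decisive step of your argument is false, not merely unproven. You claim that the $\Ec(G)$-valued map $\lambda\mapsto\chi_0(\lambda,\,\cdot\,)$, defined on a full-measure subset of the annulus, is uniformly continuous there and hence extends continuously to the whole annulus (in particular to the unit sphere), after which homogeneity propagates the values to all $\lambda\neq 0$. If this were available, the resulting representative would have $\chi_0(\,\cdot\,,g)$ continuous on $\sigma(\Lc_A)$ for every $g$ (continuity on $\sigma(\Lc_A)$ minus the origin by the extension and homogeneity, and at the origin by your Property~3), and then Theorem~\ref{teo:3} would force $\chi_{\Lc_A}$ to admit a globally continuous representative. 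Remark~\ref{oss:1} exhibits a Rockland family, $(\Lc+\Delta,iT)$ on $\Hd^1\times\R$, whose integral kernel admits \emph{no} continuous representative; so the uniform continuity you invoke cannot hold in general, and the ``upgrade from almost every $\lambda_0$ to every $\lambda_0$'' via Lebesgue differentiation plus uniform continuity is exactly the part that breaks down. Continuity of the kernel in the spectral variable is a genuinely stronger property than the statement of Theorem~\ref{teo:2}, and your route would prove it for every Rockland family.

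The underlying misstep is conceptual: since $\chi_0$ only has to agree with $\chi_{\Lc_A}$ $(\beta_{\Lc_A}\otimes\nu_G)$-almost everywhere, the values at the exceptional $\beta_{\Lc_A}$-null set of $\lambda$ (in particular on the unit sphere, which is indeed null by the scaling of $\beta_{\Lc_A}$) are \emph{free}; ``every $\lambda$'' is achieved by choosing good values there, not by forcing the smoothed values to extend by continuity. A working variant of your own construction: apply Proposition~\ref{prop:3:8} twice with scale-adapted cutoffs $\eta(\abs{\lambda}^{-1}\cdot\,)$, so that for $\beta_{\Lc_A}$-almost every $\lambda\neq 0$ one has the two-sided identity $\chi_0(\lambda,\,\cdot\,)=\Kc_{\Lc_A}(\eta(\abs{\lambda}^{-1}\cdot\,))*\chi_{\Lc_A}(\lambda,\,\cdot\,)*\Kc_{\Lc_A}(\eta(\abs{\lambda}^{-1}\cdot\,))$ (the two-sided form is needed anyway, since a one-sided convolution only controls one of $X$, $Y$); the dilation behaviour of these kernels gives Property~2 with the factor $\abs{\lambda}^{\dd_X+\dd_Y}$ directly at each such $\lambda$, Property~1 follows from your positive-type and normalisation argument at almost every $\lambda$, and Property~3 follows from Properties~1--2 (value $1$ at $e$ plus derivative bounds tending to $0$). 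On the residual null set, at $\lambda=0$, and off $\Supp{\beta_{\Lc_A}}$ simply set $\chi_0(\lambda,\,\cdot\,)\coloneqq 1$, which is compatible with all four properties; Property~4 then only requires Fubini-type measurability of the convolution formula, not continuity. This is, in substance, what the omitted proof via~\cite[Lemmas 2.12 and 2.15 and Propositions 2.17 and 2.18]{Tolomeo} does; your first two paragraphs are sound, but the sphere-extension mechanism must be abandoned, not merely justified.
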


We conclude this section with some remarks concerning the adjoint of $\Kc_{\Lc_A}$ and the continuity of $\chi_{\Lc_A}$.

\begin{definition}
	We shall denote by $\Mc_{\Lc_A}\colon \Mc^1(G)\to L^\infty(\beta_{\Lc_A})$ the transpose of the mapping 
	\[
	L^1(\beta_{\Lc_A})\ni m\mapsto \Kc_{\Lc_A,1}(m) \check{\;} \in C_0(G).
	\] 
\end{definition}

By the way, $\Mc_{\Lc_A}$ coincides with the adjoint of $\Kc_{\Lc_A}\colon L^2(\beta_{\Lc_A})\to L^2(G)$ on $L^1(G)\cap L^2(G)$. 
Therefore, by interpolation we deduce that $\Mc_{\Lc_A}$ extends to a continuous linear mapping of $L^p(G)$ into $L^{p'}(\beta_{\Lc_A})$ for every $p\in [1,2]$.

The following result extends~\cite[Theorem 2.13]{Tolomeo} to the present setting. The proof is omitted.

\begin{proposition}\label{prop:3:3}
	Take a representative $\chi_{0}$ of $\chi_{\Lc_A}$ as in Theorem~\ref{teo:2}. Then, for every $\mi \in \Mc^1(G)$ we have
	\[
	\Mc_{\Lc_A}(\mi)(\lambda)=\int_{G} \overline{\chi_{0}(\lambda,g) }\,\dd \mi(g)
	\]
	for $\beta_{\Lc_A}$-almost every $\lambda\in E_{\Lc_A}$.
\end{proposition}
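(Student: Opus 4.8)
The plan is to reduce the statement to the $L^1$ case already available from Tolomeo's work, via approximation and the mapping properties of $\Mc_{\Lc_A}$. First I would recall that $\Mc_{\Lc_A}$ was \emph{defined} as the transpose of $m\mapsto \Kc_{\Lc_A,1}(m)\check{\;}$, so that for $\mi\in\Mc^1(G)$ and $m\in L^1(\beta_{\Lc_A})$ we have the pairing identity $\langle \Mc_{\Lc_A}(\mi),m\rangle = \langle \mi, \Kc_{\Lc_A,1}(m)\check{\;}\rangle$; unwinding the right-hand side with the integral-kernel representation $\Kc_{\Lc_A,1}(m)(g)=\int_{E_{\Lc_A}} m(\lambda)\chi_{\Lc_A}(\lambda,g)\,\dd\beta_{\Lc_A}(\lambda)$ and Fubini's theorem gives $\langle\mi,\Kc_{\Lc_A,1}(m)\check{\;}\rangle=\int_G\int_{E_{\Lc_A}} m(\lambda)\,\overline{\chi_{\Lc_A}(\lambda,g)}\cdot(\text{something})$—more precisely, after being careful with the check ($g\mapsto g^{-1}$) and the complex conjugation built into the dual pairing versus the bilinear one, one gets exactly $\int_{E_{\Lc_A}} m(\lambda)\Big(\int_G \overline{\chi_0(\lambda,g)}\,\dd\mi(g)\Big)\dd\beta_{\Lc_A}(\lambda)$. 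By the arbitrariness of $m\in L^1(\beta_{\Lc_A})$ (or a dense subspace thereof), this identifies $\Mc_{\Lc_A}(\mi)$ with the asserted function for $\beta_{\Lc_A}$-almost every $\lambda$, \emph{provided} one knows the inner integral $\lambda\mapsto\int_G\overline{\chi_0(\lambda,g)}\,\dd\mi(g)$ is a well-defined element of $L^\infty(\beta_{\Lc_A})$; this is where Theorem~\ref{teo:2} enters, since $\chi_0(\lambda,\cdot)$ has maximum $1$, which bounds the inner integral by $\norm{\mi}$, and the $\beta_{\Lc_A}$-measurability of $\lambda\mapsto\chi_0(\lambda,g)$ (item~4 of Theorem~\ref{teo:2}) together with dominated convergence gives measurability of the integral.

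A cleaner route, which I would actually carry out, avoids Fubini subtleties with the check and the conjugation by first treating point masses and then extending. For $\mi=\delta_g$ the claimed formula reads $\Mc_{\Lc_A}(\delta_g)(\lambda)=\overline{\chi_0(\lambda,g)}$, which should follow directly from the defining transpose relation: $\langle\Mc_{\Lc_A}(\delta_g),m\rangle = \Kc_{\Lc_A,1}(m)\check{\;}(g)=\Kc_{\Lc_A,1}(m)(g^{-1})$, and one compares with $\int_{E_{\Lc_A}} m(\lambda)\overline{\chi_0(\lambda,g)}\,\dd\beta_{\Lc_A}(\lambda)$ using the representative properties (in particular that $\chi_0(\lambda,\cdot)$ is of positive type, hence $\chi_0(\lambda,g^{-1})=\overline{\chi_0(\lambda,g)}$, and that the integral kernel may be taken to be $\chi_0$). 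Then by linearity the formula holds for finitely supported measures, and since $\Mc_{\Lc_A}\colon\Mc^1(G)\to L^\infty(\beta_{\Lc_A})$ is continuous when $L^\infty(\beta_{\Lc_A})$ carries the weak-$*$ topology (it is a transpose map), and finitely supported measures are weak-$*$ dense in $\Mc^1(G)$, one passes to the limit on the left; on the right, for fixed $m\in L^1(\beta_{\Lc_A})$ the functional $\mi\mapsto\int_{E_{\Lc_A}}m(\lambda)\big(\int_G\overline{\chi_0(\lambda,g)}\,\dd\mi(g)\big)\dd\beta_{\Lc_A}(\lambda)$ is weak-$*$ continuous on norm-bounded sets by dominated convergence (using $\abs{\chi_0}\meg 1$ and that $g\mapsto\overline{\chi_0(\lambda,g)}$ is continuous, item~1 of Theorem~\ref{teo:2}, so it pairs continuously against weak-$*$ convergent nets of bounded measures). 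Matching the two sides for all $m$ in a suitable countable dense subset of $L^1(\beta_{\Lc_A})$ yields the almost-everywhere equality for general $\mi$.

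The main obstacle I anticipate is the bookkeeping around the three involutions in play: the reflection $\check{\;}$ in the definition of $\Mc_{\Lc_A}$, the complex conjugation implicit in writing $\Mc_{\Lc_A}$ as the \emph{adjoint} (rather than transpose) of $\Kc_{\Lc_A}$ on $L^1\cap L^2$, and the positive-type symmetry $\chi_0(\lambda,g^{-1})=\overline{\chi_0(\lambda,g)}$. Getting all the bars and inverses in the right places is exactly what determines whether the formula has $\overline{\chi_0(\lambda,g)}$ or $\chi_0(\lambda,g)$ under the integral; the safest check is to test the final formula against $\mi=\delta_e$, where it must give $\Mc_{\Lc_A}(\delta_e)(\lambda)=\overline{\chi_0(\lambda,e)}=1$, consistent with $\Kc_{\Lc_A,1}(m)\check{\;}(e)=\Kc_{\Lc_A,1}(m)(e)=\int m\,\dd\beta_{\Lc_A}$, i.e.\ the constant function $1$, which is correct. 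A secondary point to handle with care is the measurability and integrability of $\lambda\mapsto\int_G\overline{\chi_0(\lambda,g)}\,\dd\mi(g)$ as an honest element of $L^\infty(\beta_{\Lc_A})$ when $\mi$ is not compactly supported; here one invokes tightness of the Radon measure $\mi$ together with item~3 of Theorem~\ref{teo:2} (convergence $\chi_0(\lambda,\cdot)\to 1$ in $\Ec(G)$ is not needed, only the uniform bound $\norm{\chi_0(\lambda,\cdot)}_\infty=1$), so the bound $\abs{\int_G\overline{\chi_0(\lambda,g)}\,\dd\mi(g)}\meg\norm{\mi}_{\Mc^1}$ holds uniformly in $\lambda$ and the supremum is attained in $L^\infty$. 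Everything else is a routine density-and-continuity argument of the type used repeatedly in the preceding results, and, as the excerpt indicates, closely parallels the proof of~\cite[Theorem 2.13]{Tolomeo}.
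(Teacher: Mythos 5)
Your first argument --- unwinding the defining transpose relation $\langle \Mc_{\Lc_A}(\mi),m\rangle=\langle\mi,\Kc_{\Lc_A,1}(m)\check{\;}\rangle$ with the integral-kernel representation of $\Kc_{\Lc_A,1}(m)$, using the positive-type symmetry $\chi_0(\lambda,g^{-1})=\overline{\chi_0(\lambda,g)}$ to absorb the reflection, and applying Fubini (joint measurability of $\chi_0$ coming from continuity in $g$ and $\beta_{\Lc_A}$-measurability in $\lambda$, items 1 and 4 of Theorem~\ref{teo:2}, together with the bound $\abs{\chi_0}\meg 1$ and the boundedness of $\mi$) --- is correct and complete, and is essentially the argument the paper has in mind: the proof is omitted there precisely because it is the routine extension of~\cite[Theorem 2.13]{Tolomeo}, which proceeds along these lines. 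Note also that the kernel representation of $\Kc_{\Lc_A,1}(m)$ holds pointwise and not merely $\nu_G$-almost everywhere, since both sides are continuous in $g$; this is what lets you evaluate at $g^{-1}$, and it is also what makes your consistency check at $\mi=\delta_e$ legitimate.

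The ``cleaner route'' you say you would actually carry out contains one genuine flaw. After establishing the formula for finitely supported measures, you pass to the limit along a $\sigma(\Mc^1(G),C_0(G))$-convergent approximation and claim the right-hand side passes to the limit because $g\mapsto\overline{\chi_0(\lambda,g)}$ is continuous and bounded. But that function is in general \emph{not} in $C_0(G)$ (it is of positive type with value $1$ at $e$; for $\lambda=0$ it is identically $1$), and pairing a bounded continuous function against a weak-$*$ convergent bounded net of measures need not converge --- mass can escape to infinity. The repair is the tightness you mention only in passing: for a bounded Radon measure on a Polish space one can choose finitely supported approximants converging \emph{narrowly} (i.e.\ against all of $C_b$), which handles the right-hand side, while narrow convergence still implies weak-$*$ convergence, so the left-hand side converges by the transpose continuity of $\Mc_{\Lc_A}$. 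With that correction the second route also works, but it is not needed: your first argument already proves the proposition.
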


\begin{corollary}\label{cor:4}
	Take $m\in L^\infty(\beta_{\Lc_A})$ such that $\Kc_{\Lc_A}(m)\in \Mc^1(G)$ and $\mi \in \Mc^1(G)$. Then
	\[
	\Mc_{\Lc_A}( \Kc_{\Lc_A}(m)*\mi )= \Mc_{\Lc_A}(\mi* \Kc_{\Lc_A}(m) )=m \Mc_{\Lc_A}(\mi).
	\]
\end{corollary}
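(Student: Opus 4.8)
The plan is to combine the integral representation of $\Mc_{\Lc_A}$ given by Proposition~\ref{prop:3:3} with the eigenfunction property of the integral kernel established in Proposition~\ref{prop:3:8}. Fix a representative $\chi_0$ of $\chi_{\Lc_A}$ as in Theorem~\ref{teo:2}. Since $\Kc_{\Lc_A}(m)\in\Mc^1(G)$ and $\Mc^1(G)$ is stable under convolution, both $\Kc_{\Lc_A}(m)*\mi$ and $\mi*\Kc_{\Lc_A}(m)$ lie in $\Mc^1(G)$, so Proposition~\ref{prop:3:3} reduces the assertion to proving
\[
\int_G\overline{\chi_0(\lambda,g)}\,\dd(\Kc_{\Lc_A}(m)*\mi)(g)=\int_G\overline{\chi_0(\lambda,g)}\,\dd(\mi*\Kc_{\Lc_A}(m))(g)=m(\lambda)\int_G\overline{\chi_0(\lambda,g)}\,\dd\mi(g)
\]
for $\beta_{\Lc_A}$-almost every $\lambda\in E_{\Lc_A}$.

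Fix such a $\lambda$ and set $\psi\coloneqq\chi_0(\lambda,\,\cdot\,)$; since $\psi$ is a continuous function of positive type (see Theorem~\ref{teo:2}) one has $\check\psi=\overline\psi$. Because $\psi$ is bounded and continuous and $\Kc_{\Lc_A}(m)$ is a bounded measure, the convolutions $\psi*\Kc_{\Lc_A}(m)$ and $\Kc_{\Lc_A}(m)*\psi$ are bounded continuous functions; as they represent the same $L^\infty(\nu_G)$-classes as $\chi_{\Lc_A}(\lambda,\,\cdot\,)*\Kc_{\Lc_A}(m)$ and $\Kc_{\Lc_A}(m)*\chi_{\Lc_A}(\lambda,\,\cdot\,)$, and the latter are continuous too, Proposition~\ref{prop:3:8} yields, for $\beta_{\Lc_A}$-almost every $\lambda$ (a single negligible exceptional set),
\[
\psi*\Kc_{\Lc_A}(m)=\Kc_{\Lc_A}(m)*\psi=m(\lambda)\psi
\]
as genuine identities of continuous functions on $G$. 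Now invoke the elementary rules $\int_G f\,\dd(\alpha*\beta)=\int_G(\check\alpha*f)\,\dd\beta=\int_G(f*\check\beta)\,\dd\alpha$ (valid for $\alpha,\beta\in\Mc^1(G)$ and $f$ bounded Borel) together with $\check\nu*\check\psi=(\psi*\nu)\check{\;}$ and $\check\psi*\check\nu=(\nu*\psi)\check{\;}$ for $\nu\in\Mc^1(G)$: taking $f=\overline\psi=\check\psi$ we get
\[
\int_G\overline{\chi_0(\lambda,g)}\,\dd(\Kc_{\Lc_A}(m)*\mi)(g)=\int_G\bigl(\psi*\Kc_{\Lc_A}(m)\bigr)\check{\;}\,\dd\mi=m(\lambda)\int_G\check\psi\,\dd\mi
\]
and symmetrically
\[
\int_G\overline{\chi_0(\lambda,g)}\,\dd(\mi*\Kc_{\Lc_A}(m))(g)=\int_G\bigl(\Kc_{\Lc_A}(m)*\psi\bigr)\check{\;}\,\dd\mi=m(\lambda)\int_G\check\psi\,\dd\mi.
\]
Since $\int_G\check\psi\,\dd\mi=\int_G\overline{\chi_0(\lambda,g)}\,\dd\mi(g)=\Mc_{\Lc_A}(\mi)(\lambda)$ by Proposition~\ref{prop:3:3}, this is exactly what was needed.

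Once Propositions~\ref{prop:3:3} and~\ref{prop:3:8} are available the argument is essentially bookkeeping, and I expect the only genuine points of care to be: the precise convolution conventions underlying the pairing and reflection identities above; the fact that $\psi*\Kc_{\Lc_A}(m)$ and $\Kc_{\Lc_A}(m)*\psi$ are actually \emph{continuous}, which is what lets one pass from the almost-everywhere equalities of Proposition~\ref{prop:3:8} to equalities holding at \emph{every} point of $G$ and then pair them against the (possibly $\nu_G$-singular) measure $\mi$; and the identity $\check\psi=\overline\psi$, which relies on $\chi_0(\lambda,\,\cdot\,)$ being of positive type.
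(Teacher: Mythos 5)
Your proposal is correct and follows essentially the same route as the paper: reduce via Proposition~\ref{prop:3:3} to pairing $\overline{\chi_0(\lambda,\,\cdot\,)}$ against the convolved measures, shift the convolution onto the kernel by the standard duality/reflection identities, and conclude with the eigenfunction property of Proposition~\ref{prop:3:8} (the paper phrases the transfer through $\Kc_{\Lc_A}(\overline m)=\Kc_{\Lc_A}(m)^*$, you through $\check\psi=\overline\psi$ for functions of positive type, which is the same bookkeeping). Your explicit remark that the convolutions are continuous, so that the almost-everywhere identities of Proposition~\ref{prop:3:8} upgrade to pointwise ones before being paired with a possibly $\nu_G$-singular measure $\mi$, is a point the paper leaves implicit and is handled correctly.
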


\begin{proof}
	Indeed, take a representative $\chi_0$ of $\chi_{\Lc_A}$ as in Theorem~\ref{teo:2}. Then, Proposition~\ref{prop:3:8} implies that
	\[
	\begin{split}
	\Mc_{\Lc_A}(  \Kc_{\Lc_A}(m)*\mi )(\lambda)&= \langle \Kc_{\Lc_A}(m)*\mi, \overline {\chi_{0}(\lambda,\,\cdot\,)}\rangle=\langle \mi,  \overline{\Kc_{\Lc_A}(\overline m)*\chi_{0}(\lambda,\,\cdot\,)}\rangle\\
	&= m(\lambda) \langle\mi, \overline{\chi_0(\lambda,\,\cdot\,)}\rangle= m(\lambda) \Mc_{\Lc_A}(\mi)(\lambda) 
	\end{split}
	\]
	for $\beta_{\Lc_A}$-almost every $\lambda\in E_{\Lc_A}$. The other equality is proved analogously.
\end{proof}

\begin{corollary}\label{cor:5}
	Take a function $m\in L^\infty(\beta_{\Lc_A})$ such that $\Kc_{\Lc_A}(m)\in\Mc^1(G)$. Then, $m=\Mc_{\Lc_A}(\Kc_{\Lc_A}(m))$.
	In particular, $m$ is continuous at $0$ and
	\[
	m(0)=\int_{G} \dd \Kc_{\Lc_A}(m).
	\]
\end{corollary}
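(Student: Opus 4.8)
The plan is to apply Proposition~\ref{prop:3:3} and Corollary~\ref{cor:4} with the point mass $\mi=\delta_e$ at the identity, and then invoke Proposition~\ref{prop:3:8} together with the representative $\chi_0$ furnished by Theorem~\ref{teo:2}. First I would fix a representative $\chi_0$ of $\chi_{\Lc_A}$ as in Theorem~\ref{teo:2}; in particular $\chi_0(\lambda,\,\cdot\,)$ is continuous with $\chi_0(\lambda,e)=1$ for every $\lambda\in E_{\Lc_A}$, since it is a normalized function of positive type with maximum $1$ attained at $e$. Then Proposition~\ref{prop:3:3} applied to $\mi=\delta_e$ gives $\Mc_{\Lc_A}(\delta_e)(\lambda)=\overline{\chi_0(\lambda,e)}=1$ for $\beta_{\Lc_A}$-almost every $\lambda$, i.e. $\Mc_{\Lc_A}(\delta_e)=1$ in $L^\infty(\beta_{\Lc_A})$.

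Next, given $m\in L^\infty(\beta_{\Lc_A})$ with $\Kc_{\Lc_A}(m)\in\Mc^1(G)$, I would apply Corollary~\ref{cor:4} with this $m$ and with $\mi=\delta_e$. Since $\Kc_{\Lc_A}(m)*\delta_e=\Kc_{\Lc_A}(m)$, the corollary yields
\[
\Mc_{\Lc_A}\bigl(\Kc_{\Lc_A}(m)\bigr)=m\,\Mc_{\Lc_A}(\delta_e)=m\cdot 1=m
\]
in $L^\infty(\beta_{\Lc_A})$, which is the first assertion. For the `in particular' part, note that $\Kc_{\Lc_A}(m)\in\Mc^1(G)$, so by Proposition~\ref{prop:3:3} the function $m=\Mc_{\Lc_A}(\Kc_{\Lc_A}(m))$ has the representative
\[
\lambda\mapsto \int_G \overline{\chi_0(\lambda,g)}\,\dd\Kc_{\Lc_A}(m)(g).
\]
By property~3 of Theorem~\ref{teo:2}, $\chi_0(\lambda,\,\cdot\,)\to\chi_0(0,\,\cdot\,)=1$ in $\Ec(G)$ as $\lambda\to 0$, hence uniformly on the compact support issues are handled because $\Kc_{\Lc_A}(m)$ is a bounded measure: by dominated convergence (dominated by $\norm{\Kc_{\Lc_A}(m)}_{\Mc^1}$, using $\abs{\chi_0}\leqslant 1$), this representative is continuous at $0$ and takes the value $\int_G \overline{1}\,\dd\Kc_{\Lc_A}(m)=\int_G\dd\Kc_{\Lc_A}(m)$ there.

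I do not expect a genuine obstacle here, since every ingredient is already in place; the only point requiring a modicum of care is the continuity-at-$0$ argument, where one must be slightly careful because convergence in $\Ec(G)$ is only local uniform convergence, whereas $\Kc_{\Lc_A}(m)$ is a measure that need not have compact support. This is resolved by the uniform bound $\abs{\chi_0(\lambda,g)}\leqslant\chi_0(\lambda,e)=1$ valid for all $(\lambda,g)$, which allows dominated convergence against the finite measure $\abs{\Kc_{\Lc_A}(m)}$ once pointwise convergence $\chi_0(\lambda,g)\to 1$ is known for each $g$ (a consequence of property~3 of Theorem~\ref{teo:2}, or directly of the fact that $\chi_0(\,\cdot\,,g)$ is normalized of positive type). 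Everything else is a direct substitution into the results of this section.
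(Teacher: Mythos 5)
Your proposal is correct and follows essentially the same route as the paper: the first assertion is obtained from Corollary~\ref{cor:4} with $\mi=\delta_e$ (the identity $\Mc_{\Lc_A}(\delta_e)=1$ coming from Proposition~\ref{prop:3:3} and $\chi_0(\lambda,e)=1$), and the continuity at $0$ with $m(0)=\int_G\dd\Kc_{\Lc_A}(m)$ from point~3 of Theorem~\ref{teo:2} via the integral representative and dominated convergence. Your extra care about local versus global uniform convergence, handled by the bound $\abs{\chi_0}\meg 1$ and the finiteness of $\Kc_{\Lc_A}(m)$, is exactly the detail the paper leaves implicit.
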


\begin{proof}
	The first assertion follows from Corollary~\ref{cor:4}, applied with $\mi\coloneqq \delta_e$.
	The second assertion follows from~{\bf3} of Theorem~\ref{teo:2}.
\end{proof}

\begin{theorem}\label{teo:3}
	The following conditions are equivalent:
	\begin{enumerate}
		\item $\chi_{\Lc_A}$ has a representative $\chi_0$ such that $\chi_0(\,\cdot\,, g)$ is continuous on $\sigma(\Lc_A)$ for $\nu_G$-almost every $g\in G$;\footnote{Notice that, in principle, this condition is weaker than separate continuity.}
		
		\item $\Mc_{\Lc_A}$ induces a continuous linear mapping from $L^1(G)$ into $C_0(\sigma(\Lc_A))$;
		
		\item $\Mc_{\Lc_A}$ induces a continuous linear mapping from $\Mc^1(G)$ into $C_b(\sigma(\Lc_A))$;
		
		\item $\chi_{\Lc_A}$ has a continuous representative.
	\end{enumerate}
\end{theorem}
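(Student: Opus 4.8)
The plan is to prove the cycle of implications $1 \implies 2 \implies 3 \implies 4 \implies 1$, which is the natural way to package this kind of equivalence; implications $3 \implies 4 \implies 1$ being essentially trivial, the content sits in $1 \implies 2$ and $2 \implies 3$.

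\textbf{$1 \implies 2$.} Assume $\chi_0$ is a representative with $\chi_0(\,\cdot\,,g)$ continuous on $\sigma(\Lc_A)$ for $\nu_G$-almost every $g$. By Proposition~\ref{prop:3:3}, for $f\in L^1(G)$ we have $\Mc_{\Lc_A}(f\cdot\nu_G)(\lambda)=\int_G \overline{\chi_0(\lambda,g)}f(g)\,\dd\nu_G(g)$ for $\beta_{\Lc_A}$-almost every $\lambda$. I would first show the right-hand side is continuous on $\sigma(\Lc_A)$: using the uniform bound $\norm{\chi_0(\lambda,\,\cdot\,)}_\infty\meg 1$ from Theorem~\ref{teo:2}(1) together with pointwise continuity in $\lambda$ (for a.e.\ $g$), dominated convergence gives continuity of $\lambda\mapsto\int_G\overline{\chi_0(\lambda,g)}f(g)\,\dd\nu_G(g)$; so $\Mc_{\Lc_A}(f\cdot\nu_G)$ has a continuous representative on $\sigma(\Lc_A)$, and $\norm{\Mc_{\Lc_A}(f\cdot\nu_G)}_\infty\meg\norm{f}_{L^1}$ already recorded. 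The remaining point is that this continuous representative vanishes at $\infty$ on $\sigma(\Lc_A)$. For this I would first treat $f\in C_c^\infty(G)$ and more specifically $f=\Kc_{\Lc_A}(\phi_1)$ with $\phi_1\in C_c^\infty(E_{\Lc_A})$: by Corollary~\ref{cor:5}, $\Mc_{\Lc_A}(\Kc_{\Lc_A}(\phi_1)\cdot\nu_G)=\phi_1$, which visibly lies in $C_c(\sigma(\Lc_A))\subseteq C_0(\sigma(\Lc_A))$. Such kernels $\Kc_{\Lc_A}(\phi_1)$ are dense in $L^1_{\Lc_A}(G)$ hence, by self-adjointness of the class and the density coming from the Plancherel isometry, a dense subspace of $L^1(G)$ in a suitable sense — more precisely, I would invoke that $\{\Kc_{\Lc_A}(\phi_1):\phi_1\in C_c^\infty\}*C_c^\infty(G)$ is dense in $L^1(G)$ (an approximate identity argument, since $\Kc_{\Lc_A}(\phi_1)$ can be taken to be an approximate identity as $\phi_1\to 1$, by Theorem~\ref{teo:2}(3)). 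On such products $\Mc_{\Lc_A}$ lands in $C_0(\sigma(\Lc_A))$ by Corollary~\ref{cor:4} (the product by a bounded continuous $m$ times something in $C_0$), and since $\Mc_{\Lc_A}\colon L^1(G)\to L^\infty(\sigma(\Lc_A))$ is bounded and $C_0(\sigma(\Lc_A))$ is closed in the sup norm, the image of all of $L^1(G)$ lies in $C_0(\sigma(\Lc_A))$.

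\textbf{$2 \implies 3$.} This is a duality/extension argument. The mapping $L^1(\beta_{\Lc_A})\ni m\mapsto \Kc_{\Lc_A,1}(m)\check{\;}\in C_0(G)$ is, by definition, the pre-transpose of $\Mc_{\Lc_A}\colon\Mc^1(G)\to L^\infty(\beta_{\Lc_A})$. Condition 2 says $\Mc_{\Lc_A}$ restricted to $L^1(G)$ factors through $C_0(\sigma(\Lc_A))$ continuously; equivalently, the pre-transpose $m\mapsto\Kc_{\Lc_A,1}(m)\check{\;}$, composed with the restriction $\Mc^1(G)\to L^\infty$, when paired against $L^1(G)$, only sees the $C_0(\sigma(\Lc_A))$-part. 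I would argue as follows: $\Mc_{\Lc_A}$ on $\Mc^1(G)$ is the bitranspose of $\Mc_{\Lc_A}|_{L^1(G)}$ under the canonical embeddings $L^1(G)\hookrightarrow\Mc^1(G)$ and, on the target, $L^\infty(\beta_{\Lc_A})=(L^1(\beta_{\Lc_A}))'$. Since $\Mc_{\Lc_A}|_{L^1}\colon L^1(G)\to C_0(\sigma(\Lc_A))$ is continuous, its transpose maps $\Mc^1(\sigma(\Lc_A))=C_0(\sigma(\Lc_A))'$ continuously to $L^\infty(G)$, and its bitranspose maps $(L^\infty(G))'\to C_0(\sigma(\Lc_A))''=C_b(\sigma(\Lc_A))$ (using that the bidual of $C_0$ of a locally compact space contains $C_b$ as the image of point evaluations / the weak-$*$ continuous part). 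Restricting the bitranspose to $\Mc^1(G)\subseteq(L^\infty(G))'$ and checking it agrees with $\Mc_{\Lc_A}$ there (which follows from Corollary~\ref{cor:4} and weak-$*$ density of, say, finitely supported measures, or directly from the integral formula of Proposition~\ref{prop:3:3} once we know a jointly reasonable representative exists on $L^1$), one gets $\Mc_{\Lc_A}(\Mc^1(G))\subseteq C_b(\sigma(\Lc_A))$ with the desired continuity. An alternative, cleaner route: for $\mi\in\Mc^1(G)$ write $\mi$ as a weak-$*$ limit of $\mi*\rho_j$ with $\rho_j\in C_c^\infty(G)$ an approximate identity; then $\mi*\rho_j\in L^1(G)$, so $\Mc_{\Lc_A}(\mi*\rho_j)\in C_0(\sigma(\Lc_A))$ by 2, and $\Mc_{\Lc_A}(\mi*\rho_j)=\Mc_{\Lc_A}(\mi)\,\Mc_{\Lc_A}(\rho_j)$ by Corollary~\ref{cor:4}; since $\Mc_{\Lc_A}(\rho_j)\to 1$ locally uniformly on $\sigma(\Lc_A)$ (again Theorem~\ref{teo:2}(3)) and is uniformly bounded, $\Mc_{\Lc_A}(\mi)$ is a locally uniform limit of continuous functions, hence continuous, and it is bounded since $\norm{\Mc_{\Lc_A}}_{\Mc^1\to L^\infty}\meg 1$.

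\textbf{$3 \implies 4$ and $4 \implies 1$.} For $3 \implies 4$: assume $\Mc_{\Lc_A}(\Mc^1(G))\subseteq C_b(\sigma(\Lc_A))$ continuously. Define $\chi_0(\lambda,g)\coloneqq\overline{\Mc_{\Lc_A}(\delta_g)(\lambda)}$ using this continuous representative; the point-mass map $g\mapsto\delta_g$ is continuous from $G$ into $\Mc^1(G)$ with the norm topology only on compacta weakly, but combined with Proposition~\ref{prop:3:3} applied to $\mi=\delta_g$ one identifies $\chi_0$ with a representative of $\chi_{\Lc_A}$; continuity in $\lambda$ for each fixed $g$ is exactly condition 3, and a Fubini/measurability check (using Theorem~\ref{teo:2}(4) and that point evaluations are Borel) shows this is a bona fide representative of $\chi_{\Lc_A}$ on $E_{\Lc_A}\times G$. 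Then $4 \implies 1$ is the implication $4\implies$ (the stated form of 1): condition 4 gives a continuous representative on all of $E_{\Lc_A}$, which in particular is continuous on $\sigma(\Lc_A)$ for every $g$, giving 1.

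\textbf{Main obstacle.} The delicate part is the vanishing at $\infty$ in $1 \implies 2$ and, dually, the passage to $C_b$ (not merely locally bounded Borel) in $2 \implies 3$. The uniform bound $\norm{\chi_0(\lambda,\,\cdot\,)}_\infty\meg 1$ and the approximate-identity behaviour of $\Kc_{\Lc_A}(\phi_1)$ as $\phi_1\to 1$ (Theorem~\ref{teo:2}(1) and (3)) are the crucial inputs that make the density argument and the locally-uniform-limit argument work; without the homogeneity-driven decay/normalization of $\chi_0$ one would only get locally bounded Borel representatives, not membership in $C_0$ or $C_b$. I expect that formalizing ``$\Mc_{\Lc_A}$ on $\Mc^1(G)$ is the bitranspose of its restriction to $L^1(G)$'' and pinning down the identification of $C_b(\sigma(\Lc_A))$ inside the relevant bidual is where the most care is needed, though the approximate-identity argument sketched above sidesteps most of the functional-analytic bookkeeping.
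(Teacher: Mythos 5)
The genuine gap is in your step $3\implies 4$, which is the step your cycle cannot do without. Setting $\chi_0(\lambda,g)\coloneqq\overline{\Mc_{\Lc_A}(\delta_g)(\lambda)}$ and invoking condition 3 gives, for each fixed $g$, continuity of $\chi_0(\,\cdot\,,g)$ on $\sigma(\Lc_A)$ — that is, it re-derives condition 1, not condition 4, which asks for a \emph{jointly} continuous representative; the footnote to the statement is precisely about this distinction. The map $g\mapsto\delta_g$ is not norm-continuous into $\Mc^1(G)$, so the continuity of $\Mc_{\Lc_A}\colon\Mc^1(G)\to C_b(\sigma(\Lc_A))$ gives no control in the $g$-variable, and the proposed ``Fubini/measurability check'' cannot produce it: for each fixed $g$ your $\chi_0(\,\cdot\,,g)$ agrees with the Theorem~\ref{teo:2} representative only up to a $\lambda$-null set depending on $g$, so not even separate continuity in $g$ is available, let alone joint continuity. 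As written, your chain proves $1\Rightarrow 2\Rightarrow 3$ and $4\Rightarrow 1$, but never reaches 4, so the equivalence is not closed. The paper closes the loop by proving $2\implies 4$ with a translation trick: fix $\tau\in\Sc(E_{\Lc_A})$ with $\tau>0$ everywhere and consider $(\lambda,g)\mapsto\Mc_{\Lc_A}\bigl(\Kc_{\Lc_A}(\tau)(g\,\cdot\,)\bigr)(\lambda)$. Since $g\mapsto\Kc_{\Lc_A}(\tau)(g\,\cdot\,)$ is norm-continuous into $L^1(G)$, condition 2 yields \emph{joint} continuity of this function on $\sigma(\Lc_A)\times G$; Proposition~\ref{prop:3:8} identifies it $(\beta_{\Lc_A}\otimes\nu_G)$-a.e.\ with $\tau(\lambda)\chi_1(\lambda,g)$, and dividing by $\tau$ (then extending off $\sigma(\Lc_A)\times G$ by the Bourbaki extension theorem) gives the continuous representative. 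Some argument of this kind, replacing $\delta_g$ by translates of a fixed kernel, is what your proof is missing.

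Two further remarks. Your $1\implies 2$ is essentially the paper's own proof (dominated convergence for continuity; then $\Mc_{\Lc_A}(\phi*\Kc_{\Lc_A}(\tau(2^{-j}\,\cdot\,)))=\tau(2^{-j}\,\cdot\,)\Mc_{\Lc_A}(\phi)$ via Corollary~\ref{cor:4}, uniform convergence, and closedness of $C_0$), so that part is fine. Your ``cleaner route'' for $2\implies 3$ is viable — the paper instead deduces 3 from 4 — but as stated it misapplies Corollary~\ref{cor:4}: the identity $\Mc_{\Lc_A}(\mi*\rho_j)=\Mc_{\Lc_A}(\mi)\,\Mc_{\Lc_A}(\rho_j)$ is not valid for an arbitrary approximate identity $\rho_j\in C^\infty_c(G)$, since that corollary requires one factor to be of the form $\Kc_{\Lc_A}(m)$; the multiplier transform is not multiplicative on all of $L^1(G)$. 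Taking $\rho_j=\Kc_{\Lc_A}(\tau(2^{-j}\,\cdot\,))$ with $\tau\in C^\infty_c(E_{\Lc_A})$, $\tau(0)=1$, repairs this: then $\Mc_{\Lc_A}(\mi*\rho_j)=\tau(2^{-j}\,\cdot\,)\Mc_{\Lc_A}(\mi)\in C_0(\sigma(\Lc_A))$ by condition 2, and since $\tau(2^{-j}\,\cdot\,)\to 1$ locally uniformly one gets continuity, boundedness coming from $\norm{\Mc_{\Lc_A}(\mi)}_\infty\meg\norm{\mi}$.
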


This shows, in particular, that if $\chi_{\Lc_A}$ has a continuous representative, then $\Lc_A$ satisfies property $(RL)$. 
Nevertheless, the converse fails as Remark~\ref{oss:1} shows.

\begin{proof}
	{\bf1 $\implies$ 2.} In order to prove continuity, it suffices to show that
	\[
	\Mc_{\Lc_A}(\phi)(\lambda)=\int_G \overline{\chi_0(\lambda,g)}\,\phi(g)\,\dd g
	\]
	for every $\phi \in L^1(G)$ and for $\beta_{\Lc_A}$-almost every $\lambda\in E_{\Lc_A}$, and to apply the dominated convergence theorem.  In order to prove that $\Mc_{\Lc_A}(\phi)$ vanishes at $\infty$, it suffices to observe that, if $\tau\in C^\infty_c(E_{\Lc_A})$ and $\tau(0)=1$, then $\Mc_{\Lc_A}(\phi)$ is the limit in $C_b(\sigma(\Lc_A))$ of $\Mc_{\Lc_A}(\phi*\Kc_{\Lc_A}(\tau(2^{-j}\,\cdot\,)))$, which equals $\tau(2^{-j}\,\cdot\,) \Mc_{\Lc_A}(\phi)$ by Corollary~\ref{cor:4}.

	{\bf2 $\implies $ 4.} Take $\tau\in \Sc(E_{\Lc_A})$ such that $\tau(\lambda)>0$ for every $\lambda\in E_{\Lc_A}$. Observe that the mapping $G\ni g\mapsto \Kc_{\Lc_A}(\tau)(g\,\cdot\,)\in L^1(G)$ is continuous, so that also the mapping $G\ni g\mapsto \Mc_{\Lc_A}(\Kc_{\Lc_A}(\tau)(g\,\cdot\,))\in C_0(\sigma(\Lc_A))$ is continuous. Therefore, the mapping
	\[
	\sigma(\Lc_A)\times G\ni (\lambda,g)\mapsto \Mc_{\Lc_A}(\Kc_{\Lc_A}(\tau)(g\,\cdot\,))(\lambda) \in \C
	\]
	is continuous. Now, let $\chi_{1}$ be a representative of $\chi_{\Lc_A}$ as in Theorem~\ref{teo:2}. Then, Proposition~\ref{prop:3:8} implies that
	\[
	\begin{split}
	\Mc_{\Lc_A}(\Kc_{\Lc_A}(\tau)(g\,\cdot\,))(\lambda)&= \int_G \Kc_{\Lc_A}(\tau)(g g')\chi_{1}(\lambda,g'^{-1})\,\dd g'= [ \Kc_{\Lc_A}(\tau)*\chi_{1}(\lambda,\,\cdot\,) ](g)= \tau(\lambda) \chi_{1}(\lambda,g)
	\end{split}
	\]
	for $(\beta_{\Lc_A}\otimes \nu_G)$-almost every $(\lambda, g)\in E_{\Lc_A}\times G$. In particular, $\chi_{\Lc_A}$ has a representative which is continuous on $\sigma(\Lc_A)\times G$.  By~\cite[Corollary to Theorem 2 of Chapter IX, § 4, No.\ 3]{BourbakiGT2}, $\chi_{\Lc_A}$ has a continuous representative.
	
	{\bf 4 $\implies$ 1.} Obvious.
	
	{\bf 4 $\implies$ 3.} The proof is similar to that of the implication {\bf1 $\implies$ 2}.
	
	{\bf 3 $\implies$ 2.} This follows from the proof of the  implication {\bf1 $\implies$ 2}.
\end{proof}

\section{Products}\label{sez:7}

In this section we deal with the following situation: we have a finite family of homogeneous groups $(G_A)_{A\in \Ac}$, and on each $G_A$  a Rockland family $\Lc_A$.\footnote{In order to avoid technical problems, we shall assume that the elements of $\Ac$ are pairwise disjoint.} 
Then, we shall consider $G\coloneqq \prod_{A\in \Ac} G_A$, endowed with the dilations
\[
r\cdot (g_A)\coloneqq (r\cdot g_A),
\]
for $r>0$ and $(g_A)\in G$. We shall denote by $ A'$ the union of $\Ac$ and, for every $\alpha\in  A'$, we shall denote by $\Lc'_\alpha$ the operator on $G$ induced by $\Lc_\alpha$. Then,  $\Lc'_{ A'}$ will denote the family $(\Lc'_\alpha)_{\alpha\in   A'}$.
We shall investigate what we can say about $\Lc'_{ A'}$ on the ground of our knowledge of the families $\Lc_A$.
Notice that many of the implications of this section are actually equivalences; nevertheless, we shall leave to the reader the task of stating and proving the easy converses.

The following result is basically a consequence of~\cite[Propositions 3.4.2 and 3.4.3]{Martini}. The proof is omitted.

\begin{proposition}
	The following hold:
	\begin{enumerate}
		\item $\Lc'_{ A'}$ is a Rockland family;
		
		\item take a $\mi_{\Lc_A}$-measurable function $m_A\colon \R^A\to \C$ which admits a kernel for every $A\in \Ac$. Then,  $\bigotimes_{A\in \Ac} m_A$ is $\mi_{\Lc'_{ A'}}$-measurable, admits a kernel, and
		\[
		\Kc_{\Lc'_{ A'}}\left(\bigotimes_{A\in \Ac} m_A  \right)= \bigotimes_{A\in \Ac} \Kc_{\Lc_A}(m_A).
		\]
	\end{enumerate}
\end{proposition}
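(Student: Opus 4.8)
The plan is to prove the two assertions separately, each time reducing to the corresponding one-factor fact.

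\emph{Part 1.} First I would record the routine structural points: $\Lc'_{ A'}$ is non-empty and finite, none of its members has a term of order $0$, each $\Lc'_\alpha$ is left-invariant and homogeneous for the product dilations, and two operators $\Lc'_\alpha$, $\Lc'_{\alpha'}$ with $\alpha\in A$, $\alpha'\in B$ commute — if $A=B$ because $\Lc_A$ is a commutative family, if $A\neq B$ because they act on different factors of $G=\prod_{A\in\Ac}G_A$. Hence it only remains to verify, say, condition~3 of Theorem~\ref{teo:1}. Since each $\Lc_A$ is a Rockland family, after replacing the dilations of $G_A$ by suitable ones — keeping the $\Lc_\alpha$ ($\alpha\in A$) homogeneous — the algebra generated by $\Lc_A$ contains a positive Rockland operator $R_A$ on $G_A$; rescaling these dilations once more, I may assume that all the $R_A$ have one and the same degree of homogeneity $d$. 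Equipping $G$ with the associated product dilations, each $\Lc'_\alpha$ stays homogeneous, and $R\coloneqq\sum_{A\in\Ac}R'_A$ (with $R'_A$ the operator on $G$ induced by $R_A$) is a formally self-adjoint, left-invariant differential operator, homogeneous of degree $d$, without term of order $0$, lying in the algebra generated by $\Lc'_{ A'}$ and of the form $P(\Lc'_{ A'})$ with $P(0)=0$. Thus everything reduces to showing that $R$ is a Rockland operator on $G$.

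For this I would use condition~2 of Theorem~\ref{teo:1}. The $G_A$ being simply connected nilpotent Lie groups, hence of type~I, every non-trivial continuous irreducible unitary representation $\pi$ of $G$ is equivalent to an outer tensor product $\bigotimes_{A\in\Ac}\pi_A$ of continuous irreducible unitary representations of the $G_A$, not all trivial; fix $A_0$ with $\pi_{A_0}$ non-trivial. Then $C^\infty(\pi)$ contains the algebraic tensor product of the $C^\infty(\pi_A)$, on which $\dd\pi(R)=\sum_{A\in\Ac}\dd\pi_A(R_A)\otimes\mathrm{Id}$, each summand acting on the $A$-th tensor factor and being symmetric and $\Meg 0$ because $R_A$ is a \emph{positive} Rockland operator. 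If $v\in C^\infty(\pi)$ satisfies $\dd\pi(R)v=0$, pairing with $v$ writes $0$ as the sum of non-negative numbers $\sum_A\langle(\dd\pi_A(R_A)\otimes\mathrm{Id})v,v\rangle$, so $\langle(\dd\pi_{A_0}(R_{A_0})\otimes\mathrm{Id})v,v\rangle=0$; since $\dd\pi_{A_0}(R_{A_0})\otimes\mathrm{Id}$ is symmetric and $\Meg 0$ on the dense subspace $C^\infty(\pi)$, the Cauchy-Schwarz inequality for the associated semi-inner product gives $(\dd\pi_{A_0}(R_{A_0})\otimes\mathrm{Id})v=0$. Expanding $v$ along an orthonormal basis of the remaining tensor factor — the components being smooth vectors for $\pi_{A_0}$ — and using the injectivity of $\dd\pi_{A_0}(R_{A_0})$ on $C^\infty(\pi_{A_0})$ (as $R_{A_0}$ is Rockland), one gets $v=0$. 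Hence $\dd\pi(R)$ is injective on $C^\infty(\pi)$, i.e.\ $R$ is Rockland. (Alternatively: $e^{-tR}=\bigotimes_A e^{-tR_A}$, so $e^{-tR}$ has a Schwartz convolution kernel for $t>0$, and one appeals to the standard characterization of positive Rockland operators.)

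\emph{Part 2.} Identify $\R^{ A'}$ with $\prod_{A\in\Ac}\R^A$. Because $\Lc'_\alpha$ ($\alpha\in A$) acts only on the factor $L^2(G_A)$ of $L^2(G)=\bigotimes_A L^2(G_A)$, the spectral measure $\mi_{\Lc'_{ A'}}$ equals $\bigotimes_A\mi_{\Lc_A}$; consequently $\bigotimes_A m_A$, a product of functions each measurable for the corresponding $\mi_{\Lc_A}$ and depending on one block of variables, is $\mi_{\Lc'_{ A'}}$-measurable. Writing $\widetilde m_A\colon\R^{ A'}\to\C$ for $m_A$ composed with the projection onto $\R^A$, functional calculus gives $\widetilde m_A(\Lc'_{ A'})=m_A(\Lc_A)\otimes\mathrm{Id}$. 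A closed-graph argument — the multiplier operators are closed, and convolution with a fixed tempered distribution is continuous on each space of smooth functions with a prescribed compact support — then shows that $C^\infty_c(G)$ lies in the domain of $\widetilde m_A(\Lc'_{ A'})$ and that $\Kc_{\Lc'_{ A'}}(\widetilde m_A)=\Kc_{\Lc_A}(m_A)\otimes\delta_e$, where $\delta_e$ is the Dirac mass at the identity of $\prod_{B\neq A}G_B$. Since $\bigotimes_A m_A=\prod_{A\in\Ac}\widetilde m_A$, I would then apply repeatedly the multiplicativity $\Kc_{\Lc'_{ A'}}(m_1m_2)=\Kc_{\Lc'_{ A'}}(m_1)*\Kc_{\Lc'_{ A'}}(m_2)$ — valid here because at each stage the factors and their product admit kernels and the convolutions are between distributions supported on complementary subgroups of $G$ — and use that such a convolution of the one-factor kernels reproduces their tensor product, obtaining
\[
\Kc_{\Lc'_{ A'}}\Bigl(\bigotimes_{A\in\Ac}m_A\Bigr)=\bigotimes_{A\in\Ac}\Kc_{\Lc_A}(m_A).
\]

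The hard part is the bookkeeping in Part~2: checking that $\bigotimes_A m_A$ and all the partial products $\prod_{A\in B}\widetilde m_A$ ($B\subseteq\Ac$) admit kernels, and that the multiplicativity of $\Kc$ genuinely applies at every step — that is, that the relevant convolutions of distributions supported on complementary subgroups are well defined and associative. This is precisely the content of~\cite[Propositions 3.4.2 and 3.4.3]{Martini}, which also underlies the identity $\mi_{\Lc'_{ A'}}=\bigotimes_A\mi_{\Lc_A}$ used above; leaning on it lets me sidestep the delicate questions about the functional calculus of products of commuting unbounded self-adjoint operators.
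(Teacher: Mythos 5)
Your proposal is correct in substance, but note that the paper does not actually write out a proof of this proposition: it declares it to be ``basically a consequence'' of Propositions 3.4.2 and 3.4.3 of Martini and omits the argument. For assertion 2 you end up in the same place as the paper — after the reduction $\widetilde m_A(\Lc'_{A'})=m_A(\Lc_A)\otimes\mathrm{Id}$, the closed-graph continuity of $\phi\mapsto m_A(\Lc_A)\phi$ on Schwartz functions, and the factorization $\Kc_{\Lc'_{A'}}(\widetilde m_A)=\Kc_{\Lc_A}(m_A)\otimes\delta_e$, you explicitly delegate the remaining domain/convolution bookkeeping and the identity $\mi_{\Lc'_{A'}}=\bigotimes_{A\in\Ac}\mi_{\Lc_A}$ to exactly the two propositions the paper cites, so there is no real divergence there. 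For assertion 1, by contrast, you give a genuinely self-contained argument where the paper leans on Martini's product theory: you homogenize the degrees of the positive Rockland operators $R_A$ furnished by condition 3 of Theorem~\ref{teo:1} (legitimate, since that condition expressly allows changing the dilations provided the $\Lc_\alpha$ remain homogeneous, and conditions 1--2 do not involve the dilations at all), form $R=\sum_{A\in\Ac}R'_A$, and check the Rockland condition representation by representation. This buys independence from the weighted-subcoercive machinery at the cost of two standard but nontrivial inputs that you should cite explicitly: that every irreducible continuous unitary representation of a product of simply connected nilpotent (hence type I) groups is equivalent to an outer tensor product of irreducible representations of the factors, and that $\dd\pi(R'_A)$ is symmetric and non-negative on $C^\infty(\pi)$ (positivity of a positive Rockland operator passes to representations), which is what makes your Cauchy--Schwarz step and the contraction argument (the components $v_j$ being smooth vectors of $\pi_{A_0}$, with $\dd\pi_{A_0}(R_{A_0})v_j$ obtained by contracting $\dd\pi(R'_{A_0})v$) legitimate. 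With those references supplied, your argument is sound, and it is a reasonable alternative to the paper's citation-only treatment.
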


The following result is basically a consequence of~\cite[Proposition 3.4.4]{Martini}. The proof is omitted.

\begin{proposition}\label{prop:6:2}
	The following hold:
	\begin{enumerate}
		\item $\beta_{\Lc'_{ A'}}= \bigotimes_{A\in \Ac} \beta_{\Lc_A}$;
		
		\item for $(\beta_{\Lc'_{ A'}}\otimes \nu_G)$-almost every $((\lambda_\alpha), (g_A))\in \R^{ A'}\times G$,
		\[
		\chi_{\Lc'_{ A'}} ((\lambda_\alpha)_{\alpha\in  A'}, (g_A)_{A\in \Ac})= \prod_{A\in \Ac} \chi_{\Lc_A}((\lambda_\alpha)_{\alpha\in A}, g_A  ).
		\]
	\end{enumerate}
\end{proposition}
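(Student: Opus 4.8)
The plan is to show that $\gamma:=\bigotimes_{A\in\Ac}\beta_{\Lc_A}$, viewed as a Radon measure on $E_{\Lc'_{A'}}=\prod_{A\in\Ac}\R^A$, satisfies the three properties characterizing the Plancherel measure of a Rockland family; uniqueness then forces $\beta_{\Lc'_{A'}}=\gamma$, and the second assertion follows from the same tensorization combined with the uniqueness of the integral kernel.

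The homogeneity property is immediate: $(r\cdot)_*\gamma=\bigotimes_A(r\cdot)_*\beta_{\Lc_A}=\bigotimes_A r^{-Q_A}\beta_{\Lc_A}=r^{-Q}\gamma$, where $Q=\sum_A Q_A$ is the homogeneous dimension of $G$. For the isometry property, the preceding proposition gives $\Kc_{\Lc'_{A'}}(\bigotimes_A m_A)=\bigotimes_A\Kc_{\Lc_A}(m_A)$ when each $m_A$ admits a kernel in $L^2$; passing to inner products in $L^2(G)=\bigotimes_A L^2(G_A)$ shows $\Kc_{\Lc'_{A'}}$ preserves inner products on the dense subspace of $L^2(\gamma)$ spanned by such tensors, with range dense in $\bigotimes_A L^2_{\Lc_A}(G_A)$; one then identifies this Hilbert tensor product with $L^2_{\Lc'_{A'}}(G)$ by approximating an arbitrary $L^2$ convolution kernel on $G$ by tensors. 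The delicate point is the equivalence $\mi_{\Lc'_{A'}}\sim\gamma$. For $\alpha\in A_0\in\Ac$ the operator $\Lc'_\alpha$ is essentially self-adjoint on $C^\infty_c(G)$ and its self-adjoint extension is the amplification to $\bigotimes_A L^2(G_A)$ of the self-adjoint extension of $\Lc_\alpha$ on $L^2(G_{A_0})$ (with $\bigotimes_A C^\infty_c(G_A)$ a common core); hence the joint spectral measure satisfies $\mi_{\Lc'_{A'}}(\prod_A B_A)=\bigotimes_A\mi_{\Lc_A}(B_A)$ on Borel rectangles, so a rectangle is $\mi_{\Lc'_{A'}}$-null exactly when it is $\gamma$-null. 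To pass from rectangles to arbitrary Borel sets one writes each $\sigma$-finite $\beta_{\Lc_A}$ as equivalent to a countable sum of measures $\langle\mi_{\Lc_A}(\cdot)\xi,\xi\rangle$, expands $\gamma$ accordingly into a countable sum of product measures $\bigotimes_A\langle\mi_{\Lc_A}(\cdot)\xi_A,\xi_A\rangle=\langle\mi_{\Lc'_{A'}}(\cdot)\bigotimes_A\xi_A,\bigotimes_A\xi_A\rangle$, and uses that the vectors $\bigotimes_A\xi_A$ span a dense subspace of $L^2(G)$.

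Granting part $1$, set $\psi((\lambda_\alpha)_\alpha,(g_A)_A):=\prod_{A\in\Ac}\chi_{\Lc_A}((\lambda_\alpha)_{\alpha\in A},g_A)$; after reindexing, $\beta_{\Lc'_{A'}}\otimes\nu_G\cong\bigotimes_A(\beta_{\Lc_A}\otimes\nu_{G_A})$, so $\psi\in L^\infty(\beta_{\Lc'_{A'}}\otimes\nu_G)$ with $\norm{\psi}_\infty\meg 1$. For $m=\bigotimes_A m_A$ with $m_A\in L^1(\beta_{\Lc_A})$, the preceding proposition, the defining property of the $\chi_{\Lc_A}$, and Fubini--Tonelli give
\[
\Kc_{\Lc'_{A'}}(m)((g_A)_A)=\prod_{A}\int_{\R^A}m_A(\lambda)\chi_{\Lc_A}(\lambda,g_A)\,\dd\beta_{\Lc_A}(\lambda)=\int_{E_{\Lc'_{A'}}}m(\lambda)\,\psi(\lambda,(g_A)_A)\,\dd\beta_{\Lc'_{A'}}(\lambda)
\]
for $\nu_G$-almost every $(g_A)_A$. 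Such tensors being total in $L^1(\beta_{\Lc'_{A'}})$, and both sides of the resulting identity depending continuously on $m\in L^1(\beta_{\Lc'_{A'}})$ (the left-hand side through $\Kc_{\Lc'_{A'},1}\colon L^1\to C_0(G)$, the right-hand side because $\psi$ is bounded), the identity holds for every $m\in L^1(\beta_{\Lc'_{A'}})$. By the uniqueness of the integral kernel this gives $\psi=\chi_{\Lc'_{A'}}$ in $L^\infty(\beta_{\Lc'_{A'}}\otimes\nu_G)$, which is the assertion (and forces $\norm{\psi}_\infty=1$).

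The main obstacle is the measure-theoretic bookkeeping behind the equivalence $\mi_{\Lc'_{A'}}\sim\bigotimes_A\beta_{\Lc_A}$: identifying the self-adjoint extensions on $G$ with amplifications, and then upgrading the null-set statement from Borel rectangles to all Borel sets using $\sigma$-finiteness. The remaining ingredients — the homogeneity identity, the isometry on tensors, and the Fubini computation for $\chi$ — are routine.
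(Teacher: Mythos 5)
Your argument is correct in substance, but it is a genuinely different (self-contained) route: the paper does not prove this proposition at all, deferring it to \cite[Proposition 3.4.4]{Martini}, whereas you re-derive part 1 from the uniqueness clause of the Plancherel theorem by checking the three characterizing properties for $\bigotimes_A\beta_{\Lc_A}$ (homogeneity, equivalence with $\mi_{\Lc'_{A'}}$, isometry), and part 2 from the uniqueness of the Dunford--Pettis kernel tested on simple tensors. The key technical input you supply -- that the closure of $\Lc'_\alpha$ on $C^\infty_c(G)$ coincides with the amplification of $\overline{\Lc_\alpha}$, because the algebraic tensor product of the cores $C^\infty_c(G_A)$ is a core for the amplification and two self-adjoint operators one of which extends the other must coincide -- is exactly what makes the rectangle identity $\mi_{\Lc'_{A'}}(\prod_A B_A)=\bigotimes_A\mi_{\Lc_A}(B_A)$ available, and your passage from rectangles to all Borel sets via the scalar measures $\langle\mi(\cdot)\xi,\xi\rangle$ and the totality of simple tensors is sound (for the direction ``$\gamma$-null implies $\mi_{\Lc'_{A'}}$-null'' note that $\langle P\eta,\eta\rangle=\norm{P\eta}^2$ for a projection, so vanishing on a total set suffices). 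What the paper's citation buys is brevity and the greater generality of weighted subcoercive systems; what your argument buys is independence from \cite{Martini} and an explicit mechanism (uniqueness of $\beta$ and of $\chi$) that also delivers part 2 in one stroke.

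Two details deserve tightening. First, the surjectivity of $\Kc_{\Lc'_{A'}}$ from $L^2(\gamma)$ onto $L^2_{\Lc'_{A'}}(G)$, which the uniqueness clause formally requires, is dispatched with one clause; either justify it by the density of linear combinations of tensors $\bigotimes_A m_A$, $m_A\in C_c(E_{\Lc_A})$, in $L^2(\beta_{\Lc'_{A'}})$, or bypass the onto condition altogether by comparing $\int\abs{m}^2\,\dd\gamma$ with $\norm{\Kc_{\Lc'_{A'}}(m)}_2^2=\int\abs{m}^2\,\dd\beta_{\Lc'_{A'}}$ on such tensors and concluding $\gamma=\beta_{\Lc'_{A'}}$ by a Stone--Weierstrass/monotone class argument. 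Second, in part 2 you take $m_A\in L^1(\beta_{\Lc_A})$ and invoke the tensorization of kernels, but the cited proposition is stated for multipliers admitting a kernel, which an unbounded $L^1$ multiplier need not obviously do; working with $m_A\in L^1\cap L^\infty(\beta_{\Lc_A})$ (still total in $L^1$) removes the issue without changing your density-and-continuity extension.
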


Now we focus on property $(RL)$.  
From now on, we shall sometimes make use of topological tensor products \emph{over $\C$}. We shall generally agree with the notation of~\cite{Treves}, except for the fact that, without further specifications, we shall endow every tensor product with the $\pi$-topology.  

\begin{lemma}\label{lem:1}
	Assume that $\Ac=\Set{A_1,A_2}$. Then, for every $m\in L^1(\beta_{\Lc'_{ A'}})$ and for every $\mi\in \Mc^1(G_{A_2})$ there is $m_{\mi}\in L^1(\beta_{\Lc_{A_1}})$ such that
	\[
	\int_{G_{A_2}}\Kc_{\Lc'_{ A'},1}(m)(\,\cdot\,,g_2)\,\dd \mi(g_2) = \Kc_{\Lc_{A_1},1}(m_{\mi}).
	\]
\end{lemma}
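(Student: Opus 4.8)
The plan is to exhibit $m_{\mi}$ explicitly and then verify the required identity by a density/continuity argument, using the factorization properties of $\Kc_{\Lc'_{A'}}$ and $\chi_{\Lc'_{A'}}$ from Proposition~\ref{prop:6:2}. First I would treat the case in which $m$ is a simple tensor, say $m=m_1\otimes m_2$ with $m_1\in L^1(\beta_{\Lc_{A_1}})$ and $m_2\in L^1(\beta_{\Lc_{A_2}})$; here, by Proposition~\ref{prop:6:2}(1) and the tensor-product formula for $\Kc$, one has $\Kc_{\Lc'_{A'},1}(m)=\Kc_{\Lc_{A_1},1}(m_1)\otimes \Kc_{\Lc_{A_2},1}(m_2)$, so that integrating the second variable against $\mi$ gives $\bigl(\int_{G_{A_2}}\Kc_{\Lc_{A_2},1}(m_2)\,\dd\mi\bigr)\,\Kc_{\Lc_{A_1},1}(m_1)$. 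Since $\Kc_{\Lc_{A_2},1}(m_2)\in C_0(G_{A_2})$ is bounded, the scalar $c_{\mi}\coloneqq\int_{G_{A_2}}\Kc_{\Lc_{A_2},1}(m_2)\,\dd\mi$ is finite, and we may take $m_{\mi}\coloneqq c_{\mi}\,m_1\in L^1(\beta_{\Lc_{A_1}})$, which works. By linearity this extends to finite linear combinations of simple tensors.

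Next I would pass to the general $m\in L^1(\beta_{\Lc'_{A'}})=L^1(\beta_{\Lc_{A_1}}\otimes\beta_{\Lc_{A_2}})$ by density: finite sums of simple tensors $m_1\otimes m_2$ are dense in $L^1$ of a product measure. Given $m$, pick a sequence $(m^{(n)})$ of such elementary tensors with $m^{(n)}\to m$ in $L^1(\beta_{\Lc'_{A'}})$. On one side, $\Kc_{\Lc'_{A'},1}(m^{(n)})\to \Kc_{\Lc'_{A'},1}(m)$ in $C_0(G)$ (hence uniformly), so for fixed $\mi$ the functions $g_1\mapsto\int_{G_{A_2}}\Kc_{\Lc'_{A'},1}(m^{(n)})(g_1,g_2)\,\dd\mi(g_2)$ converge uniformly, with limit the corresponding integral for $m$; the convergence is legitimate because $\|\mi\|<\infty$ and the convergence inside is uniform. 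On the other side, I claim the map $m\mapsto m_{\mi}$ constructed above is continuous from $L^1(\beta_{\Lc'_{A'}})$ to $L^1(\beta_{\Lc_{A_1}})$, at least on the dense subspace. Indeed one can identify $m_{\mi}$ intrinsically: using Proposition~\ref{prop:3:8} together with the integral-kernel representation and the product formula for $\chi_{\Lc'_{A'}}$ from Proposition~\ref{prop:6:2}(2), one sees that $\Kc_{\Lc_{A_1},1}(m_{\mi})(g_1)=\int_{G_{A_2}}\int_{E_{\Lc'_{A'}}} m(\lambda_1,\lambda_2)\chi_{\Lc_{A_1}}(\lambda_1,g_1)\chi_{\Lc_{A_2}}(\lambda_2,g_2)\,\dd\beta_{\Lc'_{A'}}(\lambda)\,\dd\mi(g_2)$, which — by Fubini, justified since $|\chi|\le 1$ and $m\in L^1$, $\mi\in\Mc^1$ — equals $\int_{E_{\Lc_{A_1}}}\bigl(\int_{E_{\Lc_{A_2}}}m(\lambda_1,\lambda_2)\,\dd(\Mc_{\Lc_{A_2}}(\mi)\cdot\beta_{\Lc_{A_2}})(\lambda_2)\bigr)\chi_{\Lc_{A_1}}(\lambda_1,g_1)\,\dd\beta_{\Lc_{A_1}}(\lambda_1)$. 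Thus the natural candidate is $m_{\mi}(\lambda_1)=\int_{E_{\Lc_{A_2}}}m(\lambda_1,\lambda_2)\,\Mc_{\Lc_{A_2}}(\mi)(\lambda_2)\,\dd\beta_{\Lc_{A_2}}(\lambda_2)$, and since $\Mc_{\Lc_{A_2}}(\mi)\in L^\infty(\beta_{\Lc_{A_2}})$ with $\|\Mc_{\Lc_{A_2}}(\mi)\|_\infty\le\|\mi\|$, this defines $m_{\mi}\in L^1(\beta_{\Lc_{A_1}})$ with $\|m_{\mi}\|_{L^1}\le\|\mi\|\,\|m\|_{L^1}$, i.e. the map is indeed continuous.

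With both sides of the claimed identity continuous in $m$ (for fixed $\mi$) and agreeing on the dense set of elementary tensors, the identity holds for all $m\in L^1(\beta_{\Lc'_{A'}})$, which completes the proof. The one point requiring care — and the main technical obstacle — is the Fubini/uniform-convergence juggling: one must make sure that $\int_{G_{A_2}}\Kc_{\Lc'_{A'},1}(m)(\,\cdot\,,g_2)\,\dd\mi(g_2)$ is well defined as an element of $C_0(G_{A_1})$ (which follows since $\Kc_{\Lc'_{A'},1}(m)\in C_0(G)$ and $\|\mi\|<\infty$, so the integral converges uniformly in $g_1$ and defines a continuous function vanishing at infinity), and that it genuinely coincides with $\Kc_{\Lc_{A_1},1}(m_{\mi})$ for the $m_{\mi}$ just identified; this is exactly what the density argument in the previous paragraph delivers, since on elementary tensors both descriptions of $m_{\mi}$ visibly match. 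I would also note that the representative of $\chi$ used in the Fubini computation should be one of the well-behaved representatives furnished by Theorem~\ref{teo:2}, so that all pointwise manipulations are legitimate.
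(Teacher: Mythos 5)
Your proof is correct, and it reaches the conclusion by a somewhat different route than the paper. The paper's own argument is shorter but more abstract: it invokes the identifications $L^1(\beta_{\Lc'_{A'}})\cong L^1(\beta_{\Lc_{A_1}})\widehat\otimes L^1(\beta_{\Lc_{A_2}})$ and the Grothendieck--Treves series representation $m=\sum_j c_j\, m_{j,1}\otimes m_{j,2}$ with $(c_j)\in\ell^1$ and bounded factors, and then simply defines $m_\mi\coloneqq\sum_j c_j\bigl(\int_{G_{A_2}}\Kc_{\Lc_{A_2},1}(m_{j,2})\,\dd\mi\bigr)m_{j,1}$, the series converging absolutely in $L^1(\beta_{\Lc_{A_1}})$. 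You replace that machinery by the elementary density of finite sums of simple tensors in $L^1$ of the product measure, and you obtain the needed continuity of $m\mapsto m_\mi$ from an explicit integral-kernel formula, $m_\mi(\lambda_1)=\int_{E_{\Lc_{A_2}}}m(\lambda_1,\lambda_2)\,w(\lambda_2)\,\dd\beta_{\Lc_{A_2}}(\lambda_2)$ with $\norm{w}_\infty\meg\norm{\mi}$; this buys an intrinsic description of $m_\mi$ (which the paper's series does not provide, and which in fact would yield a direct, density-free proof), at the cost of the Fubini bookkeeping with $\chi$.

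Two small points of care, neither fatal. First, the weight you call $\Mc_{\Lc_{A_2}}(\mi)$ is not quite that: by Proposition~\ref{prop:3:3}, $\Mc_{\Lc_{A_2}}(\mi)(\lambda)=\int\overline{\chi_0(\lambda,g)}\,\dd\mi(g)$, whereas the function produced by your Fubini computation is $w(\lambda)=\int\chi_0(\lambda,g)\,\dd\mi(g)$, i.e.\ $\Mc_{\Lc_{A_2}}$ applied to the image of $\mi$ under inversion (use $\overline{\chi_0(\lambda,g)}=\chi_0(\lambda,g^{-1})$, $\chi_0(\lambda,\cdot)$ being of positive type); the bound $\norm{w}_\infty\meg\norm{\mi}$ and the matching with $c_\mi$ on simple tensors are unaffected. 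Second, since $\mi$ may be singular with respect to Haar measure (e.g.\ $\mi=\delta_e$), the a.e.\ identity $\Kc_{\Lc'_{A'},1}(m)(g)=\int m\,\chi_{\Lc'_{A'}}(\cdot,g)\,\dd\beta_{\Lc'_{A'}}$ must be upgraded to an everywhere identity before integrating in $g_2$ against $\mi$; this is exactly what the smooth representative of Theorem~\ref{teo:2} gives (both sides are then continuous in $g$ and agree $\nu_G$-a.e., hence everywhere), and your closing remark about using that representative is the right one — it just deserves to be said explicitly.
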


\begin{proof}
	Observe first that 
	\[
	L^1(\beta_{\Lc'_{ A'}})\cong L^1(\beta_{\Lc_{A_1}}; L^1(\beta_{\Lc_{A_2}}))\cong L^1(\beta_{\Lc_{A_1}})\widehat \otimes L^1(\beta_{\Lc_{A_2}})
	\]
	thanks to~\cite[Theorem 46.2]{Treves}. Therefore,~\cite[Theorem 45.1]{Treves} implies that there are $(c_j)\in \ell^1$ and two bounded sequences $(m_{j,1}), (m_{j,2})$ in $L^1(\beta_{\Lc_{A_1}})$ and $L^1(\beta_{\Lc_{A_2}})$, respectively, such that
	\[
	m=\sum_{j\in \N} c_j (m_{j,1}\otimes m_{j,2})
	\]
	in $L^1(\beta_{\Lc'_{ A'}})$. Hence, it suffices to define
	\[
	m_{\mi}\coloneqq\sum_{j\in \N} c_j\, \int_{G_2}\Kc_{\Lc_{A_2},1}(m_{j,2})(g_2)\,\dd \mi(g_2)\, m_{j,1}.
	\]	
\end{proof}

\begin{corollary}\label{cor:2}
	Assume that $\Ac=\Set{A_1,A_2}$. Take $m\in L^\infty(\beta_{\Lc'_{ A'}})$ such that $\Kc_{\Lc'_{ A'}}(m)\in L^1(G)$ and $f\in L^\infty(G_{A_2})$. Then, 
	\[
	\int_{G_{A_2}}\Kc_{\Lc'_{ A'}}(m)(\,\cdot\,,g_2) f(g_2)\,\dd \nu_{G_{A_2}}(g_2)\in L^1_{\Lc_{A_1}}(G_{A_1}).
	\]
	In addition, $\Kc_{\Lc'_{ A'}}(m)(\,\cdot\,,g_2)\in L^1_{\Lc_{A_1}}(G_{A_1})$ for almost every $g_2\in G_{A_2}$.
\end{corollary}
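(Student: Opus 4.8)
The plan is to use Lemma~\ref{lem:1} together with the Riesz representation for $L^\infty(G_{A_2})=(L^1(G_{A_2}))'\subset\Mc^1(G_{A_2})$ in order to identify the partial integrals as kernels over $G_{A_1}$, and then to exploit the closedness of $L^1_{\Lc_{A_1}}(G_{A_1})$ (Proposition~\ref{prop:1}) to handle the almost-everywhere statement. More precisely, set $A'=A_1\cup A_2$ and write $K\coloneqq\Kc_{\Lc'_{A'}}(m)\in L^1(G)$, which by the Fréchet space isomorphism $L^1(G)\cong L^1(G_{A_1};L^1(G_{A_2}))$ we may regard as a map $g_1\mapsto K(g_1,\,\cdot\,)\in L^1(G_{A_2})$, Bochner integrable. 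Since $m\in L^1(\beta_{\Lc'_{A'}})$ need not hold, the first step is to reduce to that case: pick $\tau\in\Sc(E_{\Lc'_{A'}})$ with $\tau>0$ everywhere, so that $m\tau\in L^1(\beta_{\Lc'_{A'}})$ and $\Kc_{\Lc'_{A'}}(m\tau)=K*\Kc_{\Lc'_{A'}}(\tau)\in L^1(G)$; one recovers $K$ itself by letting $\tau$ run through an approximate identity of the form $\tau(2^{-j}\,\cdot\,)$ and passing to the limit in $L^1(G)$ at the end. Alternatively, and more cleanly, one can first treat $m\in L^1(\beta_{\Lc'_{A'}})$ and then remove this hypothesis using that $\{m:\Kc_{\Lc'_{A'}}(m)\in L^1(G)\}\cdot L^1(\beta)\subseteq L^1(\beta)$ together with the convolution identity $\Kc_{\Lc'_{A'}}(m_1m_2)=\Kc_{\Lc'_{A'}}(m_1)*\Kc_{\Lc'_{A'}}(m_2)$.

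For $m\in L^1(\beta_{\Lc'_{A'}})$, apply Lemma~\ref{lem:1} with $\mi\coloneqq f\cdot\nu_{G_{A_2}}$, which is a bounded measure precisely because $f\in L^\infty(G_{A_2})$ and, after the reduction, we may assume $K\in L^1(G)$ so that $g_1\mapsto\int_{G_{A_2}}K(g_1,g_2)f(g_2)\,\dd\nu_{G_{A_2}}(g_2)$ lies in $L^1(G_{A_1})$ by Fubini. Lemma~\ref{lem:1} then yields $m_{\mi}\in L^1(\beta_{\Lc_{A_1}})$ with $\int_{G_{A_2}}\Kc_{\Lc'_{A'},1}(m)(\,\cdot\,,g_2)f(g_2)\,\dd\nu_{G_{A_2}}(g_2)=\Kc_{\Lc_{A_1},1}(m_{\mi})$, which is exactly the assertion that this partial integral belongs to $L^1_{\Lc_{A_1}}(G_{A_1})$. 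Undoing the reduction to $m\in L^1(\beta)$ requires only that $\Kc_{\Lc_{A_1}}$ intertwines multiplication by $\tau$ with convolution by $\Kc_{\Lc_{A_1}}(\tau)$ on $G_{A_1}$ — which it does by the functional calculus — and that $L^1_{\Lc_{A_1}}(G_{A_1})$ is closed in $L^1(G_{A_1})$, so that the limit as $\tau\to 1$ stays in the space.

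For the final sentence, the goal is to upgrade the result from integrals $\int K(\,\cdot\,,g_2)f(g_2)\,\dd g_2$ to the pointwise statement that $K(\,\cdot\,,g_2)\in L^1_{\Lc_{A_1}}(G_{A_1})$ for $\nu_{G_{A_2}}$-almost every $g_2$. Here I would argue as follows: by the Bochner-integrability of $g_1\mapsto K(g_1,\,\cdot\,)$, the function $g_2\mapsto K(\,\cdot\,,g_2)$ belongs to $L^1(G_{A_2};L^1(G_{A_1}))$, hence is the $L^1(G_{A_1})$-limit of a sequence of simple functions $\sum_i \mathbf 1_{E_i^{(n)}}(g_2)\,h_i^{(n)}$ with $h_i^{(n)}\in L^1(G_{A_1})$; but $h_i^{(n)}=\frac{1}{\nu_{G_{A_2}}(E_i^{(n)})}\int_{E_i^{(n)}}K(\,\cdot\,,g_2)\,\dd g_2$ is, by the integral case just proved (applied with $f=\mathbf 1_{E_i^{(n)}}$), an element of $L^1_{\Lc_{A_1}}(G_{A_1})$. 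Since $L^1_{\Lc_{A_1}}(G_{A_1})$ is closed in $L^1(G_{A_1})$ by Proposition~\ref{prop:1}, passing to a subsequence converging $\nu_{G_{A_2}}$-almost everywhere in $L^1(G_{A_1})$ shows $K(\,\cdot\,,g_2)\in L^1_{\Lc_{A_1}}(G_{A_1})$ for almost every $g_2$. The main obstacle I anticipate is bookkeeping in this last step — making sure the simple-function approximation in the Bochner sense is compatible with almost-everywhere convergence in $g_2$ — but this is standard measure theory once the closedness of $L^1_{\Lc_{A_1}}(G_{A_1})$ is in hand; the genuinely substantive input is Lemma~\ref{lem:1}, which has already been established.
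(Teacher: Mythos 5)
There is a genuine gap at the central step: you apply Lemma~\ref{lem:1} with $\mi\coloneqq f\cdot\nu_{G_{A_2}}$ and justify this by saying that $\mi$ is a bounded measure ``because $f\in L^\infty(G_{A_2})$''. That is false: $G_{A_2}$ is a non-trivial homogeneous group, hence non-compact, so $f\cdot\nu_{G_{A_2}}$ has infinite total mass already for $f=1$; boundedness of $f$ alone gives nothing, and the Fubini remark about the partial integral lying in $L^1(G_{A_1})$ does not repair the applicability of the lemma. Lemma~\ref{lem:1} genuinely needs $\mi\in\Mc^1(G_{A_2})$: in its proof the coefficients $\int_{G_{A_2}}\Kc_{\Lc_{A_2},1}(m_{j,2})\,\dd\mi$ are controlled by $\norm{\Kc_{\Lc_{A_2},1}(m_{j,2})}_\infty\,\norm{\mi}$, and for an unbounded $\mi$ these integrals need not even be defined, since $\Kc_{\Lc_{A_2},1}(m_{j,2})$ is only known to lie in $C_0(G_{A_2})$, not in $L^1(G_{A_2})$. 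The repair is exactly what the paper does: integrate over an increasing sequence $(K_j)$ of compact subsets of $G_{A_2}$, so that the measures $(\chi_{K_j}f)\cdot\nu_{G_{A_2}}$ are bounded and Lemma~\ref{lem:1} applies to each truncation; since $\Kc_{\Lc'_{A'}}(m)\in L^1(G)$, the truncated integrals converge in $L^1(G_{A_1})$ to the full one, and the closedness of $L^1_{\Lc_{A_1}}(G_{A_1})$ (Proposition~\ref{prop:1}), which you already invoke elsewhere, finishes this step.

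Apart from this, your architecture matches the paper's: reduce to a multiplier in $L^1(\beta_{\Lc'_{A'}})$ by multiplying with dilated cutoffs $\tau(2^{-j}\,\cdot\,)$ (the paper takes $\tau\in C^\infty_c$ with $\tau(0)=1$, a normalization you should also impose so that $\Kc_{\Lc'_{A'}}(m\tau_j)=\Kc_{\Lc'_{A'}}(m)*\Kc_{\Lc'_{A'}}(\tau_j)$ converges to $\Kc_{\Lc'_{A'}}(m)$ in $L^1(G)$), and conclude by Proposition~\ref{prop:1}. For the almost-everywhere statement your detour through Bochner simple functions is heavier than necessary and slightly inaccurate as written: the values of an approximating simple function are not automatically the averages $\nu_{G_{A_2}}(E_i^{(n)})^{-1}\int_{E_i^{(n)}}K(\,\cdot\,,g_2)\,\dd g_2$, so you would have to construct the approximation that way (conditional expectations, or Lebesgue points of the Bochner-integrable map $g_2\mapsto K(\,\cdot\,,g_2)$). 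The paper's route is shorter: apply Lemma~\ref{lem:1} with $\mi=\delta_{g_2}$, which is a bounded measure, and combine with Fubini to know that $\Kc_{\Lc'_{A'}}(m)(\,\cdot\,,g_2)\in L^1(G_{A_1})$ for almost every $g_2$; the general case then follows from the same $\tau_j$-approximation together with Proposition~\ref{prop:1}, after extracting a subsequence converging almost everywhere in the Bochner sense, as you indicate.
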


\begin{proof}
	{\bf1.} Assume first that $m$ is compactly supported. Let $(K_j)$ be an increasing sequence of compact subsets of $G_{A_2}$ whose union is $G_{A_2}$. Then,
	\[
	\begin{split}
	\lim_{j\to \infty} \int_{K_j} \Kc_{\Lc'_{ A'}}(m)(\,\cdot\,,g_2) f(g_2)\,\dd &\nu_{G_{A_2}}(g_2)=\int_{G_{A_2}} \Kc_{\Lc'_{ A'}}(m)(\,\cdot\,,g_2) f(g_2)\,\dd \nu_{G_{A_2}}(g_2)
	\end{split}
	\]
	in $L^1(G_{A_1})$. The first assertion follows from Lemma~\ref{lem:1} and Proposition~\ref{prop:1}, while the second assertion follows directly from Lemma~\ref{lem:1}.
	
	{\bf2.} Now, take $\tau\in C^\infty_c(E_{\Lc_A})$ such that $\tau(0)=1$, and define $\tau_j\coloneqq \tau(2^{-j}\,\cdot\,)$ for every $j\in \N$. Then,~{\bf1} above implies that 
	\[
	\int_{G_2}\Kc_{\Lc'_{ A'}}(m \tau_j )(\,\cdot\,,g_2) f(g_2)\,\dd \nu_{G_{A_2}}(g_2)\in  L^1_{\Lc_{A_1}}(G_{A_1})
	\]
	and that  $\Kc_{\Lc'_{ A'}}(m \tau_j)(\,\cdot\,,g_2)\in L^1_{\Lc_{A_1}}(G_{A_1})$ for every $j\in \N$ and for almost every $g_2\in G_{A_2}$. 
	Since $\Kc_{\Lc'_{ A'}}(m \tau_j )=\Kc_{\Lc'_{ A'}}(m )*\Kc_{\Lc'_{ A'}}( \tau_j )$ converges to $\Kc_{\Lc'_{ A'}}(m)$ in $L^1(G_{ A'})$, the assertions follow from Proposition~\ref{prop:1}.
\end{proof}

\begin{theorem}\label{teo:4}
	If $\Lc_A$ satisfies property $(RL)$ for every $A\in \Ac$, then $\Lc'_{ A'}$ satisfies property $(RL)$.
\end{theorem}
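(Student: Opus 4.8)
The plan is to reduce to the case $\Ac=\{A_1,A_2\}$ by induction on $\card\Ac$ and then argue on $G=G_{A_1}\times G_{A_2}$. Suppose $m\in L^\infty(\beta_{\Lc'_{A'}})$ with $\Kc_{\Lc'_{A'}}(m)\in L^1(G)$; I want to produce a continuous representative of $m$. The natural strategy is to \emph{freeze one variable at a time}: fix $g_2\in G_{A_2}$ and look at the `slice' $\Kc_{\Lc'_{A'}}(m)(\,\cdot\,,g_2)$, which by Corollary~\ref{cor:2} lies in $L^1_{\Lc_{A_1}}(G_{A_1})$ for a.e.\ $g_2$; by property $(RL)$ for $\Lc_{A_1}$ the corresponding multiplier $\lambda_1\mapsto m_{g_2}(\lambda_1)$ is continuous on $\sigma(\Lc_{A_1})$ for a.e.\ $g_2$. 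The content of Proposition~\ref{prop:6:2}, together with the factorization $\chi_{\Lc'_{A'}}=\chi_{\Lc_{A_1}}\otimes\chi_{\Lc_{A_2}}$, should identify this slice-multiplier with the partial transform of $m$ in the second variable. More precisely, one expects that for $\nu_{G_{A_2}}$-a.e.\ $g_2$ the function $\lambda_1\mapsto\bigl(\Mc_{\Lc_{A_1}}(\Kc_{\Lc'_{A'}}(m)(\,\cdot\,,g_2))\bigr)(\lambda_1)$ agrees with $\int \chi_{\Lc_{A_2}}(\lambda_2,g_2)\,m(\lambda_1,\lambda_2)\,\dd\beta_{\Lc_{A_2}}(\lambda_2)$, i.e.\ a partial inverse transform in the $A_2$-variable. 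Applying property $(RL)$ for $\Lc_{A_2}$ to the other slice then upgrades continuity in $\lambda_2$ as well.

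The key steps, in order: (i) reduce to $\card\Ac=2$ by induction, since $G=G_{A_1}\times(\prod_{A\in\Ac\setminus\{A_1\}}G_A)$ and the second factor again carries a Rockland family satisfying $(RL)$ by the inductive hypothesis applied to the smaller product (and Proposition~\ref{prop:6:2} guarantees compatibility of all the attached objects); (ii) for $m$ with $\Kc_{\Lc'_{A'}}(m)\in L^1(G)$ and compact support, use Lemma~\ref{lem:1}/Corollary~\ref{cor:2} to get, for each bounded measure $\mi$ on $G_{A_2}$, a function $m_\mi\in L^1(\beta_{\Lc_{A_1}})$ with $\Kc_{\Lc_{A_1}}(m_\mi)=\int_{G_{A_2}}\Kc_{\Lc'_{A'}}(m)(\,\cdot\,,g_2)\,\dd\mi(g_2)$; (iii) identify $m_\mi$ explicitly: testing against characters/multipliers and using Proposition~\ref{prop:6:2}(2) one gets $m_\mi(\lambda_1)=\int_{G_{A_2}}\bigl(\int_{E_{\Lc_{A_2}}}m(\lambda_1,\lambda_2)\chi_{\Lc_{A_2}}(\lambda_2,g_2)\,\dd\beta_{\Lc_{A_2}}(\lambda_2)\bigr)\dd\mi(g_2)$, so that $m_{\delta_{g_2}}=m(\,\cdot\,,\cdot\,)$ partially transformed in the second slot; (iv) since $\Kc_{\Lc_{A_1}}(m_{\delta_{g_2}})=\Kc_{\Lc'_{A'}}(m)(\,\cdot\,,g_2)\in L^1(G_{A_1})$ for a.e.\ $g_2$, property $(RL)$ for $\Lc_{A_1}$ gives that $m_{\delta_{g_2}}$ has a continuous representative on $\sigma(\Lc_{A_1})$ for a.e.\ $g_2$; (v) symmetrically, for a.e.\ $\lambda_1$ the section $\lambda_2\mapsto m(\lambda_1,\lambda_2)$ reconstructs a slice whose $\Lc_{A_2}$-kernel is in $L^1(G_{A_2})$, so property $(RL)$ for $\Lc_{A_2}$ gives continuity in $\lambda_2$; (vi) combine separate continuity into joint continuity. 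A clean way to do step (vi) without chasing $\eps$'s is to run the dyadic truncation argument already used in the proofs of Theorem~\ref{teo:3} and Corollary~\ref{cor:2}: truncate $m$ to $m\tau_j$ with $\tau_j=\tau(2^{-j}\,\cdot\,)$, $\tau\in C^\infty_c$, $\tau(0)=1$; then $\Kc_{\Lc'_{A'}}(m\tau_j)=\Kc_{\Lc'_{A'}}(m)*\Kc_{\Lc'_{A'}}(\tau_j)\to\Kc_{\Lc'_{A'}}(m)$ in $L^1(G)$, $m\tau_j$ is compactly supported, and one checks (using Proposition~\ref{prop:1} and the isometry properties of $\Kc_{\Lc_A}$) that the continuous representatives of $m\tau_j$ converge uniformly, so that $m$ has a continuous representative as a uniform limit.

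Alternatively, and probably more transparently, I would phrase everything through the multiplier transforms $\Mc_{\Lc_A}$: by the remark after the definition of $\Mc_{\Lc_A}$ each extends to $L^1(G_A)\to C_0$ (under $(RL)$, by Theorem~\ref{teo:3}, it even lands in $C_0(\sigma(\Lc_A))$), and one shows $\Mc_{\Lc'_{A'}}=\Mc_{\Lc_{A_1}}\otimes\Mc_{\Lc_{A_2}}$ on $L^1(G_{A_1})\otimes L^1(G_{A_2})$, hence on all of $L^1(G)$ by density; then $m=\Mc_{\Lc'_{A'}}(\Kc_{\Lc'_{A'}}(m))$ (Corollary~\ref{cor:5}), and it remains only to see that the tensor product of two maps into $C_0$-spaces, extended to $L^1(G_{A_1}\times G_{A_2})$, takes values in $C_0(\sigma(\Lc_{A_1})\times\sigma(\Lc_{A_2}))$ — a statement about $C_0\widehat\otimes C_0\hookrightarrow C_0(\text{product})$ plus an approximation (the key input being Theorem~\ref{teo:3}'s equivalence so that $\chi_0$ is jointly continuous on each factor). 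The main obstacle, in either route, is precisely this last upgrading from \emph{separate} continuity (which drops out cheaply from the two one-factor $(RL)$ hypotheses) to \emph{joint} continuity of the representative: one cannot appeal to compactness since $\sigma(\Lc_A)$ is generally unbounded, so one needs the uniform control at $\infty$ coming from the $C_0$-valued tensor-product bound together with the dyadic truncation to make the limit uniform; verifying that the truncations' continuous representatives form a uniformly Cauchy sequence (using $\norm{\Kc_{\Lc'_{A'}}(m\tau_j)-\Kc_{\Lc'_{A'}}(m\tau_k)}_{L^1}\to0$ and the norm-$1$ estimate $\norm{\Mc_{\Lc'_{A'}}}_{L^1\to C_0}\meg1$) is the technical heart of the argument.
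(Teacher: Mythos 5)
Your slicing set-up (induction to two factors, Lemma~\ref{lem:1} and Corollary~\ref{cor:2}, applying property $(RL)$ factor by factor to obtain continuity in each variable separately) coincides with the first step of the paper's argument, but the step you yourself single out as the technical heart --- passing from separate to joint continuity --- is where both of your proposed mechanisms break down. The dyadic truncation does not help: the uniform-limit argument would produce a continuous representative of $m$ only if each truncation $m\tau_j$ already had a \emph{jointly} continuous representative, and nothing in the truncation supplies that; compact support does not convert separate continuity into joint continuity. The alternative route through $\Mc_{\Lc'_{A'}}=\Mc_{\Lc_{A_1}}\otimes\Mc_{\Lc_{A_2}}$ rests on a misreading of Theorem~\ref{teo:3}: property $(RL)$ only gives that $\Mc_{\Lc_{A_j}}$ maps the \emph{closed subspace} $L^1_{\Lc_{A_j}}(G_{A_j})$ into $C_0(\sigma(\Lc_{A_j}))$ (this is exactly how the hypothesis enters the paper's proof), whereas condition 2 of Theorem~\ref{teo:3} --- $\Mc_{\Lc_{A_j}}$ being $C_0$-valued on \emph{all} of $L^1(G_{A_j})$ --- is equivalent to continuity of the integral kernel and is strictly stronger than $(RL)$, as Remark~\ref{oss:1} shows. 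Hence the factorization on $L^1(G_{A_1})\widehat\otimes L^1(G_{A_2})\cong L^1(G)$ with values in $C_0\widehat\otimes C_0$ is not available under your hypotheses, and you cannot recover it by density either, since $L^1_{\Lc_{A_1}}(G_{A_1})\otimes L^1_{\Lc_{A_2}}(G_{A_2})$ is not known to be dense in $L^1_{\Lc'_{A'}}(G)$.

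The paper closes precisely this gap by a different device: it regards $g_1\mapsto\Mc_{\Lc_{A_2}}[\Kc_{\Lc'_{A'}}(m)(g_1,\cdot)]$ as an element of $L^1(G_{A_1};C_0(\sigma(\Lc_{A_2})))\cong L^1(G_{A_1})\widehat\otimes C_0(\sigma(\Lc_{A_2}))$ --- here $(RL)$ for $\Lc_{A_2}$ is applied only to the slices, which lie in $L^1_{\Lc_{A_2}}(G_{A_2})$ by Lemma~\ref{lem:1} --- and shows, via Corollary~\ref{cor:2} and the series representation of the projective tensor product, that pairing with any $\mi\in\Mc^1(\sigma(\Lc_{A_2}))$ produces an element of $L^1_{\Lc_{A_1}}(G_{A_1})$ depending weakly continuously on $\mi$ over bounded sets. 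Composing with $\Mc_{\Lc_{A_1}}$ (continuous from $L^1_{\Lc_{A_1}}(G_{A_1})$ into $C_0(\sigma(\Lc_{A_1}))$ by $(RL)$ for $\Lc_{A_1}$) and taking $\mi=\delta_{\lambda_2}$ yields a \emph{continuous} map $\sigma(\Lc_{A_2})\to C_0(\sigma(\Lc_{A_1}))$, which is exactly the joint continuity you are missing; the identification with $m$ then follows from the Fubini identity $\Mc_{\Lc_{A_1}}[g_1\mapsto\Mc_{\Lc_{A_2}}[\Kc_{\Lc'_{A'}}(m)(g_1,\cdot)](\lambda_2)](\lambda_1)=m(\lambda_1,\lambda_2)$ almost everywhere, Tonelli's theorem, and the extension theorem quoted from Bourbaki. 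To repair your argument you would need some such uniform-in-one-variable (vector-valued) formulation of the factorwise $(RL)$ hypothesis; separate continuity plus $L^1$-approximation alone does not suffice.
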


\begin{proof}
	{\bf1.} Proceeding by induction, we may reduce to the case in which $\Ac=\Set{A_1,A_2}$. In order to simplify the notation, we shall simply write $G_j $ instead of $G_{A_j}$ for $j=1,2$. 
	Now, take $m\in L^\infty(\beta_{\Lc'_{ A'}})$ such that $\Kc_{\Lc'_{ A'}}(m)\in L^1(G)$. Then, Corollary~\ref{cor:5}, Proposition~\ref{prop:6:2} and Fubini's theorem imply that
	\[
	\Mc_{\Lc_{A_1}}[g_1\mapsto\Mc_{\Lc_{A_2}}[\Kc_{\Lc'_{ A'}}(m)(g_1,\,\cdot\,)](\lambda_2)](\lambda_1)=m(\lambda_1, \lambda_2) 
	\]
	for $\beta_{\Lc_{A_1}}$-almost every $\lambda_1\in E_{\Lc_{A_1}}$ and for $\beta_{\Lc_{A_2}}$-almost every $\lambda_2\in E_{\Lc_{A_2}}$.  
	Observe that Lemma~\ref{lem:1} implies that $\Kc_{\Lc'_{ A'}}(m)(g_1,\,\cdot\,)\in  L^1_{\Lc_{A_2}}(G_{2})$ for almost every $g_1\in G_{1}$, and that by assumption $\Mc_{\Lc_{A_2}}$ induces a continuous linear mapping from $L^1_{\Lc_{A_2}}(G_{2}) $ into $C_0(\sigma(\Lc_{A_2}))$. Therefore, the mapping $g_1 \mapsto \Mc_{\Lc_{A_2}}[\Kc_{\Lc'_{ A'}}(m)(g_1,\,\cdot\,)]$ defines an element of $L^1(G_{1}; C_0(\sigma(\Lc_{A_2})))$. 
	
	{\bf2.} Let us prove that, for every $\mi \in \Mc^1(\sigma(\Lc_{A_2}))$,  the mapping 
	\[
		g_1 \mapsto (\mi\Mc_{\Lc_{A_2}})[\Kc_{\Lc'_{ A'}}(m)(g_1,\,\cdot\,)]
	\] 
	belongs to  $L^1_{\Lc_{A_1}}(G_{1})$. Indeed, the preceding considerations show that $\mi\Mc_{\Lc_{A_2}}$ defines an element of $L^1_{\Lc_{A_2}}(G_{2})'$, so that it can be represented by an element of $L^\infty(G_2)$; therefore, the assertion follows from Corollary~\ref{cor:2}. 
	
	Now, let us prove that the mapping
	\[
	\Mc^1(\sigma(\Lc_{A_1}))\ni \mi\mapsto \left[g_1 \mapsto (\mi\Mc_{\Lc_{A_2}})[\Kc_{\Lc'_{ A'}}(m)(g_1,\,\cdot\,)]\right]\in L^1_{\Lc_{A_1}}(G_1)
	\]
	is weakly continuous \emph{on the bounded subsets of $\Mc^1(\sigma(\Lc_{A_1}))$}. Indeed,~\cite[Theorem 46.2]{Treves} implies that $L^1(G_1; C_0(\sigma(\Lc_{A_2})))\cong L^1(G_1)\widehat \otimes C_0(\sigma(\Lc_{A_2}))$, so that~\cite[Theorem 45.1]{Treves} implies that there are $(c_j)\in \ell^1$ and two bounded sequences $(f_{j}), (\phi_{j})$ in $L^1(G_1)$ and $C_0(\sigma(\Lc_{A_2}))$, respectively, such that
	\[
	\left[g_1\mapsto \Mc_{\Lc_{A_2}}[\Kc_{\Lc'_{ A'}}(m)(g_1,\,\cdot\,)]\right]=\sum_{j\in \N} c_j (f_{j}\otimes \phi_{j})
	\]
	in $L^1(G_1; C_0(\sigma(\Lc_{A_2})))$. Since the series 
	\[
	\sum_{j\in \N} c_j \langle \mi, \phi_j\rangle f_j
	\]
	converges uniformly to $g_1 \mapsto (\mi\Mc_{\Lc_{A_2}})[\Kc_{\Lc'_{ A'}}(m)(g_1,\,\cdot\,)] $ as $\mi$ stays in a bounded subset of $\Mc^1(\sigma(\Lc_{A_2}))$, the assertion follows.

	{\bf3.} Next, observe that by assumption $\Mc_{\Lc_{A_1}}$ induces a continuous linear mapping from $L^1_{\Lc_{A_1}}(G_{1})$ into $C_0(\sigma(\Lc_{A_1}))$, so that~{\bf2} above implies that the mapping
	\[
	\begin{split}
	\sigma(\Lc_{A_2})\ni \lambda_2 \mapsto \Mc_{\Lc_{A_1}}\left(g_1\mapsto \Mc_{\Lc_{A_2}}[\Kc_{\Lc'_{ A'}}(m)(g_1,\,\cdot\,)](\lambda_2)   \right)\in C_0(\sigma(\Lc_{A_1}))
	\end{split}
	\] 
	is continuous.
	Therefore, the mapping
	\[
	\sigma(\Lc'_{ A'})\ni (\lambda_1,\lambda_2)\mapsto \Mc_{\Lc_{A_1}}\left(g_1\mapsto \Mc_{\Lc_{A_2}}[\Kc_{\Lc'_{ A'}}(m)(g_1,\,\cdot\,)](\lambda_2)   \right)(\lambda_1)\in \C
	\]
	is continuous; hence, it extends to a continuous mapping $m_0$ on $E_{\Lc'_{ A'}}$ by~\cite[Corollary to Theorem 2 of Chapter IX, § 4, No.\ 3]{BourbakiGT2}.
	Now,~{\bf1} implies that $m_0(\lambda_1,\lambda_2)=m(\lambda_1,\lambda_2)$ for $\beta_{\Lc_{A_1}}$-almost every $\lambda_1\in E_{\Lc_{A_1}}$ and for $\beta_{\Lc_{A_2}}$-almost every $\lambda_2\in E_{\Lc_{A_2}}$. Since both $m$ and $m_0$ are $\beta_{\Lc'_{ A'}}$-measurable, Tonelli's theorem implies that $m=m_0$ $\beta_{\Lc'_{ A'}}$-almost everywhere.
\end{proof}

Now, we focus on property $(S)$. First, we need some definitions.

\begin{definition}
	Let $E$ be a homogeneous group, and let $F$ be a Fréchet space. We shall define $\Sc(E;F)$ as the set of $\phi \in \Ec(E;F)$ such that $(1+\abs{\,\cdot\,})^k X\phi$ is bounded for every $k\in \N$ and for every left-invariant differential operator $X$ on $E$. We shall endow $\Sc(E;F)$ with the topology induced by the semi-norms
	\[
	\phi \mapsto \norm*{(1+\abs{\,\cdot\,})^k\norm{X\phi}_\rho}_\infty 
	\]
	as $k$ runs through $\N$, $X$ runs through the set of left-invariant differential operators on $E$, and $\rho$ runs through the set of continuous semi-norms on $F$.
	
	Now, let $C$ be a closed subset of $E$, and let $N_{E,C,F}$ be the set of $\phi \in \Sc(E;F)$ which vanish on $C$. Then, we shall define $\Sc_E(C;F)\coloneqq \quot{\Sc(E;F)}{N_{E,C,F}}$; we shall omit to denote $E$ when it is clear by the context. 
	We shall simply write $\Sc_E(C)$ instead of  $\Sc_E(C;\C)$.
\end{definition}

\begin{proposition}\label{prop:9}
	Let $F$ be a Fréchet space over $\C$, and $E$ a homogeneous group. Then, the bilinear mapping $\Sc(E)\times F\ni (\phi,v)\mapsto [h\mapsto \phi(h) v]\in \Sc(E;F)$ induces an isomorphism
	\[
	\Sc(E)\widehat \otimes F\to \Sc(E;F).
	\]
\end{proposition}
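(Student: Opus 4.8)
The plan is to invoke the classical theory of topological tensor products of Fréchet spaces: since $\Sc(E)$ is a nuclear Fréchet space, the $\pi$-topology and the $\eps$-topology on $\Sc(E)\otimes F$ coincide, and the completion $\Sc(E)\widehat\otimes F$ is canonically isomorphic to the space $\Lc_\eps(F';\Sc(E))$ of certain continuous linear maps, as well as to various spaces of vector-valued functions (cf.~\cite[Theorems 44.1, 45.1 and 50.1]{Treves}). The concrete model that matches $\Sc(E;F)$ is the space $\Sc(E)\widehat\otimes F\cong \Sc(E;F)$, which for $E=\R^n$ with the ordinary Schwartz space is precisely~\cite[Theorem 51.6]{Treves}. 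So the first step is to reduce the statement on a general homogeneous group $E$ to the Euclidean case.

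First I would observe that if $E$ has underlying manifold $\R^n$ (via $\exp$ composed with a choice of homogeneous basis), then the family of seminorms defining $\Sc(E;F)$ — built from $(1+\abs{\,\cdot\,})^k$, left-invariant differential operators $X$, and continuous seminorms $\rho$ on $F$ — is equivalent to the family built from ordinary polynomial weights, ordinary partial derivatives $\partial^\alpha$, and the seminorms $\rho$. This is a standard fact: left-invariant differential operators on $E$ span, over $C^\infty$ functions with polynomially bounded derivatives, the same module as the $\partial^\alpha$, and the homogeneous norm $\abs{\,\cdot\,}$ is comparable to $(1+|x|)$ outside a compact set in the sense that each controls a power of the other. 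Hence $\Sc(E;F)=\Sc(\R^n;F)$ as topological vector spaces, and likewise $\Sc(E)=\Sc(\R^n)$. Once this identification is in place, the proposition for general $E$ is literally the Euclidean statement, so it remains to treat $E=\R^n$.

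For $E=\R^n$, the canonical bilinear map $\Sc(\R^n)\times F\ni(\phi,v)\mapsto[h\mapsto\phi(h)v]$ has image in $\Sc(\R^n;F)$ and is separately continuous, hence induces a continuous linear map $\Sc(\R^n)\otimes_\pi F\to\Sc(\R^n;F)$; since $\Sc(\R^n;F)$ is complete (being a Fréchet space, as $F$ is Fréchet and the seminorm family is countable), this extends to $\Sc(\R^n)\widehat\otimes F\to\Sc(\R^n;F)$. To see it is a topological isomorphism one uses nuclearity of $\Sc(\R^n)$: by~\cite[Theorem 51.6]{Treves} (the identification $\Sc(\R^n;F)\cong\Sc(\R^n)\widehat\otimes F$ for Fréchet $F$, which is exactly our claim in the model case), or alternatively by combining~\cite[Theorem 51.1]{Treves} (nuclearity of $\Sc(\R^n)$) with~\cite[Theorem 50.1]{Treves} to identify $\Sc(\R^n)\widehat\otimes F$ with the space of $F$-valued rapidly decreasing functions, one gets exactly $\Sc(\R^n;F)$ with the stated seminorms. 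The map above is then checked to coincide with this identification on the dense subspace $\Sc(\R^n)\otimes F$, hence everywhere.

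The main obstacle is the passage from the intrinsic, invariant-differential-operator description of the seminorms on a homogeneous group to the Euclidean multi-index description — i.e.\ verifying that left-invariant vector fields together with the homogeneous norm generate the same topology as the coordinate derivatives together with the Euclidean norm. This requires (i) writing left-invariant differential operators as polynomial-coefficient combinations of the $\partial^\alpha$ with coefficients of controlled growth, and conversely, and (ii) the comparison $c^{-1}(1+|x|)^{1/M}\meg 1+\abs{x}\meg c\,(1+|x|)^M$ for suitable $c,M$. Both are routine but must be done carefully so that the two seminorm families are mutually dominated; everything after that is a direct appeal to the nuclearity machinery of~\cite{Treves}.
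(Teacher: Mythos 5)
Your proposal is correct and in substance the same as the paper's: the paper omits the proof, saying only that it is ``similar to that of~\cite[Theorem 51.6]{Treves}'', i.e.\ it rests on exactly the nuclearity/completed-tensor-product identification $\Sc(\R^n)\widehat\otimes F\cong\Sc(\R^n;F)$ that you invoke. Your explicit preliminary step --- checking that in exponential coordinates the seminorms built from left-invariant operators and the homogeneous norm are mutually dominated by the Euclidean ones, so that $\Sc(E;F)=\Sc(\R^n;F)$ topologically --- is precisely the routine adaptation the paper leaves implicit, and your sketch of it (polynomial-coefficient interchange of $X$'s and $\partial^\alpha$'s, polynomial comparability of the norms) is sound.
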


The proof is similar to that of~\cite[Theorem 51.6]{Treves} and is omitted.

\begin{proposition}\label{prop:12}
	Let $E_1,E_2$ be two homogeneous groups, and let $C_1,C_2$ be two closed subspaces of $E_1,E_2$, respectively. Then, $\Sc_{E_1\times E_2}(C_1\times C_2)$ is canonically isomorphic to $\Sc_{E_1}(C_1)\widehat \otimes \Sc_{E_2}(C_2)$.
\end{proposition}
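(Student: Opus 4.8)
The plan is to combine the two tensor-product facts already at hand: the isomorphism $\Sc(E)\widehat\otimes F\cong\Sc(E;F)$ of Proposition~\ref{prop:9} (applied with vector-valued targets) and the standard identification $\Sc(E_1\times E_2)\cong\Sc(E_1)\widehat\otimes\Sc(E_2)$, which follows by the same argument as~\cite[Theorem 51.6]{Treves}. The strategy is: first establish the claim at the level of the ambient spaces $\Sc(E_1\times E_2)\cong\Sc(E_1;\Sc(E_2))$, then pass to the quotients by identifying the relevant subspaces of functions vanishing on $C_1\times C_2$.

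First I would record the canonical isomorphisms
\[
\Sc(E_1\times E_2)\;\cong\;\Sc(E_1)\widehat\otimes\Sc(E_2)\;\cong\;\Sc(E_1;\Sc(E_2)),
\]
where the first is the Schwartz-kernel theorem in the homogeneous-group setting (proved exactly as~\cite[Theorem 51.6]{Treves}, since a homogeneous group is diffeomorphic to a Euclidean space under $\exp$ and the Schwartz seminorms transport accordingly) and the second is Proposition~\ref{prop:9} with $F=\Sc(E_2)$. Under this identification a function $\phi\in\Sc(E_1\times E_2)$ corresponds to the map $h_1\mapsto\phi(h_1,\,\cdot\,)\in\Sc(E_2)$. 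Next I would identify, under the isomorphism $\Sc(E_1\times E_2)\cong\Sc(E_1;\Sc(E_2))$, the subspace $N_{E_1\times E_2,\,C_1\times C_2}$ of functions vanishing on $C_1\times C_2$. The key observation is that $\phi$ vanishes on $C_1\times C_2$ if and only if, for every $h_1\in C_1$, the function $\phi(h_1,\,\cdot\,)\in\Sc(E_2)$ lies in $N_{E_2,C_2}$; equivalently, the $\Sc(E_2)$-valued function $h_1\mapsto\phi(h_1,\,\cdot\,)$ takes values in $N_{E_2,C_2}$ over $C_1$.

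The heart of the argument is then an exactness statement: I claim the quotient map $\Sc(E_2)\to\Sc_{E_2}(C_2)$ induces, after tensoring with the nuclear Fréchet space $\Sc(E_1)$ and passing to the first variable's quotient, a topological isomorphism
\[
\quot{\Sc(E_1;\Sc(E_2))}{\{\phi:\phi(C_1\times C_2)=0\}}\;\cong\;\Sc_{E_1}\!\bigl(C_1;\Sc_{E_2}(C_2)\bigr)\;\cong\;\Sc_{E_1}(C_1)\widehat\otimes\Sc_{E_2}(C_2).
\]
For the first isomorphism one pushes the quotient through in two stages: composing with the continuous surjection $\Sc(E_2)\to\Sc_{E_2}(C_2)$ turns $\Sc(E_1;\Sc(E_2))$ into $\Sc(E_1;\Sc_{E_2}(C_2))$ with kernel exactly the functions taking values in $N_{E_2,C_2}$, and then quotienting the $E_1$-variable by the functions vanishing on $C_1$ gives $\Sc_{E_1}(C_1;\Sc_{E_2}(C_2))$; one checks this composite kernel coincides with $N_{E_1\times E_2,\,C_1\times C_2}$ using the pointwise characterization above together with the fact that $\Sc(E_2)\to\Sc_{E_2}(C_2)$ is an open surjection of Fréchet spaces. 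The second isomorphism is Proposition~\ref{prop:9} again, now applied on the group $E_1$ with the Fréchet space $F=\Sc_{E_2}(C_2)$ and combined with the definition $\Sc_{E_1}(C_1;F)=\quot{\Sc(E_1;F)}{N_{E_1,C_1,F}}$ together with the right-exactness of $\Sc(E_1)\widehat\otimes(\,\cdot\,)$ for quotients of Fréchet spaces (valid since $\Sc(E_1)$ is nuclear, so $\widehat\otimes$ is exact on short exact sequences of Fréchet spaces, cf.~\cite{Treves}).

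The main obstacle I anticipate is the exactness bookkeeping: one must verify that the completed tensor product $\Sc(E_1)\widehat\otimes(\,\cdot\,)$ carries the short exact sequence $0\to N_{E_2,C_2}\to\Sc(E_2)\to\Sc_{E_2}(C_2)\to 0$ to a short exact sequence, and that the resulting kernel is genuinely $N_{E_1\times E_2,\,C_1\times C_2}$ rather than merely its closure. Both points rest on nuclearity of $\Sc(E_1)$ (so that $\widehat\otimes$ agrees with the injective tensor product and preserves exactness of Fréchet-space sequences) and on the openness of the restriction map $\Sc(E)\to\Sc_E(C)$, which is automatic since it is a quotient of Fréchet spaces; once these are in place the remaining identifications are the routine ones indicated above, and the canonicity of the isomorphism follows from the canonicity of each intermediate step.
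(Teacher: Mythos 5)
Your architecture (identify $\Sc(E_1\times E_2)\cong\Sc(E_1;\Sc(E_2))$, quotient in two stages, compute the kernel of the composite pointwise) differs from the paper's and can be made to work, but as written the last step has a genuine gap. The facts you invoke — Proposition~\ref{prop:9}, surjectivity and openness of quotient maps of Fréchet spaces, and exactness of $\Sc(E_1)\widehat\otimes(\,\cdot\,)$ — only yield that the kernel of the canonical map $\Sc(E_1)\widehat\otimes\Sc_{E_2}(C_2)\to\Sc_{E_1}(C_1)\widehat\otimes\Sc_{E_2}(C_2)$ is the \emph{closure} of $N_{E_1,C_1,\C}\otimes\Sc_{E_2}(C_2)$, that is, the closed image of $N_{E_1,C_1,\C}\widehat\otimes\Sc_{E_2}(C_2)$. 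To identify $\Sc_{E_1}(C_1;\Sc_{E_2}(C_2))$, which by definition is the quotient of $\Sc(E_1;\Sc_{E_2}(C_2))$ by $N_{E_1,C_1,\Sc_{E_2}(C_2)}$, with $\Sc_{E_1}(C_1)\widehat\otimes\Sc_{E_2}(C_2)$, you additionally need that this closure exhausts the space $N_{E_1,C_1,\Sc_{E_2}(C_2)}$ of \emph{all} vector-valued Schwartz functions vanishing pointwise on $C_1$; only the reverse inclusion is immediate. This is precisely the substantive content of the proposition (the same issue reappears if one tensors the two quotient maps directly: the kernel is the closure of $N_{E_1,C_1,\C}\otimes\Sc(E_2)+\Sc(E_1)\otimes N_{E_2,C_2,\C}$, and the whole point is that it captures every function vanishing on $C_1\times C_2$). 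Nuclearity and exactness of the tensor functor do not by themselves produce this; what you need is in effect a version of Proposition~\ref{prop:9} for the closed subspace $N_{E_1,C_1,\C}$, namely $N_{E_1,C_1,\C}\widehat\otimes F\cong N_{E_1,C_1,F}$, which is not among your cited tools and requires a proof (it is true, e.g.\ via the $\varepsilon$-product description of $\widehat\otimes$ when one factor is nuclear Fréchet, under which elements with values in the closed subspace $N_{E_1,C_1,\C}$ correspond exactly to $F$-valued functions vanishing on $C_1$, or by a Hahn--Banach argument).

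For comparison, the paper closes this very point by a different device: it introduces the evaluation maps $\Psi_{E,C}\colon\Sc(E)\to\C^{C}$, whose kernels are the spaces $N_{E,C,\C}$, observes that $\Psi_{E_1\times E_2,C_1\times C_2}$ is, up to canonical identifications, $\Psi_{E_1,C_1}\widehat\otimes\Psi_{E_2,C_2}$, and then applies Grothendieck's description of the kernel of a completed tensor product of maps twice, obtaining that $N_{E_1\times E_2,C_1\times C_2,\C}$ coincides with the kernel of the canonical projection $\Sc(E_1)\widehat\otimes\Sc(E_2)\to\Sc_{E_1}(C_1)\widehat\otimes\Sc_{E_2}(C_2)$. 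If you wish to keep your two-stage vector-valued scheme — whose pointwise identification of the kernel of the composite $\Sc(E_1;\Sc(E_2))\to\Sc_{E_1}(C_1;\Sc_{E_2}(C_2))$ is clean and correct — you should either prove the missing lemma $N_{E_1,C_1,\C}\widehat\otimes F\cong N_{E_1,C_1,F}$ along the lines indicated above, or replace your final identification by the paper's argument with the maps $\Psi_{E,C}$.
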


\begin{proof}
	Define
	\[
	\Psi_{E,C}\colon \Sc(E)\ni \phi \mapsto (\phi(x))_{x\in C}\in \C^C
	\]
	for every homogeneous group $E$ and for every closed subspace $C$ of $E$. 
	Then, clearly $N_{E,C,\C}$ is the kernel of $\Psi_{E,C,\C}$. Now, observe that, with a slight abuse of notation, $\Psi_{E_1\times E_2, C_1\times C_2}=\Psi_{E_1,C_1}\widehat \otimes \Psi_{E_2,C_2}$ (cf.~\cite[Proposition 6 of Chapter I, §1, No.\ 3]{Grothendieck}). Therefore,~\cite[Proposition 3 of Chapter I, § 1, No.\ 2]{Grothendieck} implies that $N_{E_1\times E_2, C_1\times C_2, \C}$ is the closed vector subspace of $\Sc(E_1)\widehat\otimes \Sc(E_2)$ generated by the tensors of the form $\phi_1\otimes \phi_2$, with $\Psi_{E_1,C_1}(\phi_1)=0$ or $\Psi_{E_2,C_2}(\phi_2)=0$. By the same reference, we see that $N_{E_1\times E_2, C_1\times C_2, \C}$ is also the kernel of the canonical projection $\Sc(E_1)\widehat \otimes \Sc(E_2)\to \Sc_{E_1}(C_1)\widehat \otimes \Sc_{E_2}(C_2)$, so that the assertion follows.
\end{proof} 

\begin{lemma}\label{lem:3}
	Assume that $\Ac=\Set{A_1,A_2}$. Take $m\in L^\infty(\beta_{\Lc'_{ A'}})$ such that $\Kc_{\Lc'_{ A'}}\in \Sc(G_{ A'})$ and $T\in \Sc'(G_{A_2})$. Then, 
	\[
	\langle T, g_2\mapsto\Kc_{\Lc'_{ A'}}(m)(\,\cdot\,,g_2)\rangle\in \Sc_{\Lc_{A_1}}(G_{A_1}).
	\]
\end{lemma}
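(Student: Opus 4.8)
The plan is to write $\Kc_{\Lc'_{A'}}(m)$ as a rapidly decreasing sum of elementary tensors and then apply $T$ term by term, as in the proof of Lemma~\ref{lem:1}. Since $\Kc_{\Lc'_{A'}}(m)\in\Sc(G_{A'})=\Sc(G_{A_1}\times G_{A_2})$, the mapping $\Phi\colon g_2\mapsto\Kc_{\Lc'_{A'}}(m)(\,\cdot\,,g_2)$ belongs to $\Sc(G_{A_2};\Sc(G_{A_1}))$ (combine Proposition~\ref{prop:9} with the isomorphism $\Sc(G_{A_1}\times G_{A_2})\cong\Sc(G_{A_1})\widehat\otimes\Sc(G_{A_2})$, cf.\ the proof of Proposition~\ref{prop:12}). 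Hence, by Proposition~\ref{prop:9} and~\cite[Theorem~45.1]{Treves}, there are $(c_j)\in\ell^1$ and bounded sequences $(\psi_{j,1})$, $(\psi_{j,2})$ in $\Sc(G_{A_1})$ and $\Sc(G_{A_2})$, respectively, such that $\Phi=\sum_{j\in\N}c_j\,\psi_{j,2}\otimes\psi_{j,1}$ in $\Sc(G_{A_2};\Sc(G_{A_1}))$. Applying $T$ then gives
\[
\langle T, g_2\mapsto\Kc_{\Lc'_{A'}}(m)(\,\cdot\,,g_2)\rangle=\sum_{j\in\N}c_j\,\langle T,\psi_{j,2}\rangle\,\psi_{j,1},
\]
and since $(\langle T,\psi_{j,2}\rangle)_j$ is bounded (continuity of $T$ on the bounded set $\{\psi_{j,2}:j\in\N\}$), while $(\psi_{j,1})$ is bounded in the Fréchet space $\Sc(G_{A_1})$ and $(c_j)\in\ell^1$, the series converges absolutely in $\Sc(G_{A_1})$. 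Thus $f\coloneqq\langle T, g_2\mapsto\Kc_{\Lc'_{A'}}(m)(\,\cdot\,,g_2)\rangle$ belongs to $\Sc(G_{A_1})$.

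It remains to show that $f$ is a convolution kernel for $\Lc_{A_1}$, i.e.\ that $f\in\Sc_{\Lc_{A_1}}(G_{A_1})=\Sc(G_{A_1})_{\Lc_{A_1}}$; this is where the actual work lies. For $\phi\in C^\infty_c(G_{A_2})$ one has
\[
\langle\phi\cdot\nu_{G_{A_2}}, g_2\mapsto\Kc_{\Lc'_{A'}}(m)(\,\cdot\,,g_2)\rangle=\int_{G_{A_2}}\Kc_{\Lc'_{A'}}(m)(\,\cdot\,,g_2)\,\phi(g_2)\,\dd\nu_{G_{A_2}}(g_2),
\]
which by Corollary~\ref{cor:2} (applicable because $\Kc_{\Lc'_{A'}}(m)\in\Sc(G_{A'})\subseteq L^1(G_{A'})$ and $\phi\in L^\infty(G_{A_2})$) lies in $L^1_{\Lc_{A_1}}(G_{A_1})$; being also Schwartz by the first paragraph, it lies in $\Sc(G_{A_1})\cap L^1_{\Lc_{A_1}}(G_{A_1})=\Sc_{\Lc_{A_1}}(G_{A_1})$ (the last equality because, if $\Kc_{\Lc_{A_1}}(m')$ happens to be a Schwartz function, then $m'$ admits a kernel in $\Sc(G_{A_1})$). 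Now pick $\phi_i\in C^\infty_c(G_{A_2})$ with $\langle\phi_i,\psi\rangle\to\langle T,\psi\rangle$ for every $\psi\in\Sc(G_{A_2})$ (a standard density fact; weak-$*$ convergence suffices here). Using the decomposition $\Phi=\sum_j c_j\psi_{j,2}\otimes\psi_{j,1}$ once more, one has $\langle\phi_i\cdot\nu_{G_{A_2}},\Phi\rangle-f=\sum_j c_j\langle\phi_i\cdot\nu_{G_{A_2}}-T,\psi_{j,2}\rangle\psi_{j,1}$, and $\sup_j|\langle\phi_i\cdot\nu_{G_{A_2}}-T,\psi_{j,2}\rangle|\to0$ because $\{\psi_{j,2}:j\in\N\}$ is relatively compact in the Montel space $\Sc(G_{A_2})$ while $(\phi_i\cdot\nu_{G_{A_2}}-T)$ is equicontinuous (Banach--Steinhaus); hence $\langle\phi_i\cdot\nu_{G_{A_2}},\Phi\rangle\to f$ in $\Sc(G_{A_1})$. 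Since each of these functions lies in $\Sc_{\Lc_{A_1}}(G_{A_1})$, which is closed in $\Sc(G_{A_1})$ by Proposition~\ref{prop:1}, we conclude $f\in\Sc_{\Lc_{A_1}}(G_{A_1})$, as desired.

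The routine details to be supplied are the identification of $\langle\phi\cdot\nu_{G_{A_2}},\,\cdot\,\rangle$ with the displayed integral (interchange of $\sum_j c_j$ with the integral, legitimate since $\sum_j|c_j|\,\norm{\psi_{j,1}}_\infty\norm{\psi_{j,2}}_\infty<\infty$) and the density of $C^\infty_c(G_{A_2})$ in $\Sc'(G_{A_2})$. I expect the only genuine difficulty to be conceptual: the tensor decomposition alone shows that $f$ is a Schwartz function but gives no information on $f$ being a kernel for $\Lc_{A_1}$, and since there is no convenient closed formula for the corresponding multiplier one is forced to regularize $T$ into $C^\infty_c$ functions so as to fall within the scope of Corollary~\ref{cor:2}, and then to exploit the closedness of $\Sc_{\Lc_{A_1}}(G_{A_1})$.
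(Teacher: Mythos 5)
Your proof is correct and follows essentially the route the paper indicates for this lemma: approximate $T$ in $\Sc'(G_{A_2})$ by compactly supported (smooth) measures, handle those via Corollary~\ref{cor:2}, and conclude by the closedness of $\Sc_{\Lc_{A_1}}(G_{A_1})$ in $\Sc(G_{A_1})$ given by Proposition~\ref{prop:1}. The decomposition of $\Kc_{\Lc'_{A'}}(m)$ in $\Sc(G_{A_2})\widehat\otimes\Sc(G_{A_1})$, together with the equicontinuity/Montel argument, is a legitimate way of supplying the membership in $\Sc(G_{A_1})$ and the $\Sc(G_{A_1})$-convergence details that the paper leaves implicit.
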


The proof is similar to that of Corollary~\ref{cor:2}, the only difference being that here one has to approximate $T$ in $\Sc'(G_{A_2})$ by a sequence of measures with compact support.

\begin{theorem}\label{teo:5}
	If $\Lc_A$ satisfies property $(S)$ for every $A\in \Ac$, then $\Lc'_{ A'}$ satisfies property $(S)$.
\end{theorem}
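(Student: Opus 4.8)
The plan is to mirror the structure of the proof of Theorem~\ref{teo:4}, replacing the use of the multiplier transform $\Mc_{\Lc_A}$ by a direct "iterated partial transform" argument in the Schwartz category, and using Proposition~\ref{prop:12} and Proposition~\ref{prop:9} in place of the tensor-product factorizations of $L^1$-spaces. As before, by induction it suffices to treat the case $\Ac=\Set{A_1,A_2}$; write $G_j=G_{A_j}$. So fix $m\in L^\infty(\beta_{\Lc'_{A'}})$ with $K\coloneqq\Kc_{\Lc'_{A'}}(m)\in\Sc(G)$, and we must produce a representative of $m$ in $\Sc(E_{\Lc'_{A'}})$.

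First I would observe, via Proposition~\ref{prop:9} applied twice, that $\Sc(G)=\Sc(G_1)\widehat\otimes\Sc(G_2)\cong\Sc(G_1;\Sc(G_2))\cong\Sc(G_2;\Sc(G_1))$, so $K$ may be viewed as a Schwartz function $g_1\mapsto K(g_1,\cdot)$ valued in $\Sc(G_2)$, and symmetrically. Using the analogue of Lemma~\ref{lem:3} (with $T=\delta_{g_2}$, which is legitimate since point evaluation is continuous on $\Sc(G_2)$), we get that $K(\cdot,g_2)$ is, for every $g_2$, the kernel $\Kc_{\Lc_{A_1}}(m_{1}(\cdot,g_2))$ of some $L^\infty$ multiplier; moreover by Lemma~\ref{lem:3} itself $g_2\mapsto K(\cdot,g_2)$ is a $\Sc(G_2)$-valued map whose values lie in $\Sc_{\Lc_{A_1}}(G_1)$, i.e.\ an element of $\Sc(G_2;\Sc_{\Lc_{A_1}}(G_1))$. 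Since $\Lc_{A_1}$ has property $(S)$, for each fixed $g_2$ the multiplier $m_1(\cdot,g_2)$ has a Schwartz representative on $E_{\Lc_{A_1}}$; the content of property $(S)$ combined with the closed graph theorem (applied to the map $\Sc_{\Lc_{A_1}}(G_1)\to\Sc(E_{\Lc_{A_1}})$ sending a kernel to the canonical Schwartz representative of its multiplier, which is well-defined and has closed graph because $\Kc_{\Lc_{A_1}}$ is continuous into $\Sc'$) shows this map is continuous. Composing, $g_2\mapsto[\text{Schwartz rep.\ of }m_1(\cdot,g_2)]$ is an element of $\Sc(G_2;\Sc(E_{\Lc_{A_1}}))=\Sc(E_{\Lc_{A_1}};\Sc(G_2))$; call its value at $\lambda_1$ the function $g_2\mapsto n(\lambda_1,g_2)\in\Sc(G_2)$.

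Now I would run the same argument in the second variable. For each fixed $\lambda_1\in E_{\Lc_{A_1}}$, the function $g_2\mapsto n(\lambda_1,g_2)\in\Sc(G_2)$ should be recognized as $\Kc_{\Lc_{A_2}}$ of an $L^\infty$ multiplier: indeed for $\beta_{\Lc_{A_1}}$-a.e.\ $\lambda_1$ it equals $g_2\mapsto\langle\chi_0(\lambda_1,\cdot),\ldots\rangle$-type contractions of $K$, and one checks using Corollary~\ref{cor:5} / the multiplier transform that $n(\lambda_1,\cdot)=\Kc_{\Lc_{A_2}}(m(\lambda_1,\cdot))$ where $m(\lambda_1,\cdot)$ is the partial slice of the original multiplier — this is the step where one must be careful to identify the iterated partial transforms with the genuine restriction of $m$, exactly as in step~{\bf1} of the proof of Theorem~\ref{teo:4}. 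Applying property $(S)$ for $\Lc_{A_2}$ together with the closed graph theorem again, the map $\lambda_1\mapsto[\text{Schwartz rep.\ of }m(\lambda_1,\cdot)]$ is an element of $\Sc(E_{\Lc_{A_1}};\Sc(E_{\Lc_{A_2}}))=\Sc(E_{\Lc_{A_1}}\times E_{\Lc_{A_2}})=\Sc(E_{\Lc'_{A'}})$ by Proposition~\ref{prop:9} once more. Call this function $m_0$. By construction $m_0(\lambda_1,\lambda_2)=m(\lambda_1,\lambda_2)$ for $\beta_{\Lc_{A_1}}$-a.e.\ $\lambda_1$ and $\beta_{\Lc_{A_2}}$-a.e.\ $\lambda_2$, and since $m_0$ is continuous (hence $\beta_{\Lc'_{A'}}$-measurable) and $m$ is $\beta_{\Lc'_{A'}}$-measurable, Proposition~\ref{prop:6:2}(1) and Tonelli's theorem give $m=m_0$ $\beta_{\Lc'_{A'}}$-a.e., which is the claim.

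The main obstacle I anticipate is the bookkeeping in the two "closed graph" steps: one must verify that the assignment "kernel in $\Sc_{\Lc_A}(G)\mapsto$ Schwartz representative of the associated multiplier on $E_{\Lc_A}$" is a well-defined \emph{single-valued} linear map (two Schwartz representatives of the same $L^\infty$ class on $E_{\Lc_A}$ agree $\beta_{\Lc_A}$-a.e., but one needs them to agree \emph{pointwise} — this uses that $\beta_{\Lc_A}$ has full support after quotienting by its support, or that property $(S)$ is phrased so the representative is genuinely determined; here the homogeneity/continuity in Theorem~\ref{teo:2} and the fact that $\sigma(\Lc_A)=\Supp{\beta_{\Lc_A}}$ can be used to pin it down), and then that its graph is closed for the relevant Fréchet topologies so that Banach's closed graph theorem (valid for Fréchet spaces) applies. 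A second delicate point, shared with Theorem~\ref{teo:4}, is justifying the identification of the twice-iterated partial transform with the true two-variable multiplier $m$; this is handled exactly as there, via Corollary~\ref{cor:5}, Proposition~\ref{prop:6:2} and Fubini. Everything else is a routine transcription of the $(RL)$ proof with $L^1$-tensor factorizations replaced by the Schwartz-space factorizations of Propositions~\ref{prop:9} and~\ref{prop:12}.
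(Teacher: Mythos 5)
You follow the same overall strategy as the paper (iterate the one-factor property $(S)$ through partial multiplier transforms, glue with tensor-product identifications, and identify the result with $m$ almost everywhere via Fubini--Tonelli), but the two points you defer to ``bookkeeping'' are where the actual content lies, and one of them is a genuine gap. The smaller issue first: there is no well-defined map sending a kernel in $\Sc_{\Lc_{A_1}}(G_1)$ to \emph{the} Schwartz representative of its multiplier in $\Sc(E_{\Lc_{A_1}})$. Two Schwartz representatives agree $\beta_{\Lc_{A_1}}$-almost everywhere, hence on $\Supp{\beta_{\Lc_{A_1}}}=\sigma(\Lc_{A_1})$, but nothing pins them down off the spectrum, and no canonical (or continuous linear) choice of representative is available; your suggested fixes do not resolve this. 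The paper's remedy is precisely to take the quotient space $\Sc_{E_{\Lc_{A_1}}}(\sigma(\Lc_{A_1}))$ of Proposition~\ref{prop:12} as the target of the multiplier map; with that correction your closed-graph continuity argument for the first factor is fine, and is indeed what the paper implicitly uses.

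The genuine gap is the second iteration. After your first stage the object lives in $\Sc(G_2)\widehat\otimes \Sc_{E_{\Lc_{A_1}}}(\sigma(\Lc_{A_1}))$, and in order to apply the multiplier map of the second family in the $G_2$-variable while retaining \emph{joint} Schwartz regularity you must show that it belongs to the subspace $\Sc_{\Lc_{A_2}}(G_2)\widehat\otimes \Sc_{E_{\Lc_{A_1}}}(\sigma(\Lc_{A_1}))$. Your argument only checks that the slices at ($\beta_{\Lc_{A_1}}$-a.e., hence by closedness all) $\lambda_1\in\sigma(\Lc_{A_1})$ lie in $\Sc_{\Lc_{A_2}}(G_2)$, i.e.\ you test only against Dirac masses; this does not yield membership in the completed tensor product, since the dual of the quotient space consists of general tempered distributions carried by the spectrum and you give no approximation allowing the passage from Diracs to such $T$. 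Hence the assertion that $\lambda_1\mapsto(\text{Schwartz representative of }m(\lambda_1,\,\cdot\,))$ is an element of $\Sc(E_{\Lc_{A_1}};\Sc(E_{\Lc_{A_2}}))$ is exactly the point at issue, not a consequence of the closed graph theorem. This is what step~2 of the paper's proof supplies: it tests against \emph{every} $T$ in the dual of the quotient space, uses Lemma~\ref{lem:3} for arbitrary $T\in\Sc'(G_2)$ (whose proof approximates $T$ by compactly supported measures), and invokes nuclearity to identify the completed tensor product with a space of linear maps on the dual; only after that can $\Mc_{\Lc_{A_1}}\widehat\otimes I$ be applied, with the Dirac evaluations used only at the very end. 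Filling this hole would essentially reproduce the paper's argument with the roles of the two factors exchanged.
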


\begin{proof}
	{\bf1.} Proceeding by induction, we may reduce to the case in which $\Ac=\Set{A_1,A_2}$. In order to simplify the notation, we shall simply write $G_j$, and $\Sc(\sigma(\Lc_{A_j}))$ instead of $G_{A_j}$ and $\Sc_{E_{\Lc_{A_j}}}(\sigma(\Lc_{A_j}))$, respectively, for $j=1,2$.
	Now, take $m\in L^\infty(\beta_{\Lc'_{ A'}})$ such that $\Kc_{\Lc'_{ A'}}\in\Sc(G)$. Then, Corollary~\ref{cor:5}, Proposition~\ref{prop:6:2} and Fubini's theorem imply that
	\[
	\Mc_{\Lc_{A_1}}[g_1\mapsto\Mc_{\Lc_{A_2}}[\Kc_{\Lc'_{ A'}}(m)(g_1,\,\cdot\,)](\lambda_2)](\lambda_1)=m(\lambda_1, \lambda_2) 
	\]
	for $\beta_{\Lc_{A_1}}$-almost every $\lambda_1\in E_{\Lc_{A_1}}$ and for $\beta_{\Lc_{A_2}}$-almost every $\lambda_2\in E_{\Lc_{A_2}}$. 
	Observe that Lemma~\ref{lem:3} implies that $\Kc_{\Lc'_{ A'}}(m)(g_1,\,\cdot\,)\in \Sc_{\Lc_{A_2}}(G_{2})$ for every $g_1\in G_{1}$, and that by assumption $\Mc_{\Lc_{A_2}}$ induces a continuous linear mapping from $\Sc_{\Lc_{A_2}}(G_{2}) $ onto $\Sc(\sigma(\Lc_{A_2}))$. Therefore, the map $g_1 \mapsto \Mc_{\Lc_{A_2}}[\Kc_{\Lc'_{ A'}}(m)(g_1,\,\cdot\,)]$ defines an element of $\Sc(G_{1}; \Sc(\sigma(\Lc_{A_2})))$. 
	
	{\bf2.} Let us prove that the mapping $g_1 \mapsto \Mc_{\Lc_{A_2}}[\Kc_{\Lc'_{ A'}}(m)(g_1,\,\cdot\,)]$ belongs to the space $\Sc_{\Lc_{A_1}}(G_{1})\widehat \otimes \Sc(\sigma(\Lc_{A_2}))$. Take $T\in \Sc(\sigma(\Lc_{A_2}))'$; then, Lemma~\ref{lem:3} implies that
	\[
	[g_1 \mapsto (T\Mc_{\Lc_{A_2}})[\Kc_{\Lc'_{ A'}}(m)(g_1,\,\cdot\,)]] \in \Sc_{\Lc_{A_1}}(G_{1})
	\]
	since $T\Mc_{\Lc_{A_2}}$ defines an element of $\Sc_{\Lc_{A_2}}(G_{2})'$,
	which can be extended to an element of $\Sc'(G_2)$. 
	Next, observe that~\cite[Proposition 50.4]{Treves} implies that 
	\[
	\Sc_{\Lc_{A_1}}(G_{1})\widehat \otimes \Sc(\sigma(\Lc_{A_2}))\cong \Lc(\Sc(\sigma(\Lc_{A_2}))'; \Sc_{\Lc_{A_1}}(G_{1}))
	\]
	since $\Sc(\sigma(\Lc_{A_2}))$ is nuclear thanks to~\cite[Proposition 50.1]{Treves}. Now, the mapping 
	\[
	g_1 \mapsto \Mc_{\Lc_{A_2}}[\Kc_{\Lc'_{ A'}}(m)(g_1,\,\cdot\,)]
	\]
	belongs to $\Sc(G_1; \Sc(\sigma(\Lc_{A_2})))$; arguing as above, we see that this latter space is the canonical image of $\Sc(G_1)\widehat \otimes \Sc(\sigma(\Lc_{A_2}))\cong \Lc(\Sc(\sigma(\Lc_{A_2}))'; \Sc(G_{1}))$, so that the preceding arguments imply our claim.
	
	{\bf3.} Now, by assumption $\Mc_{\Lc_{A_1}}$ induces a continuous linear map from $\Sc_{\Lc_{A_1}}(G_{1})$ into $\Sc(\sigma(\Lc_{A_1}))$, so that we have the continuous linear mapping
	\[
	\Mc_{\Lc_{A_1}}\widehat \otimes I_{\Sc(\sigma(\Lc_{A_2}))  } \colon \Sc_{\Lc_{A_1}}(G_{1})\widehat \otimes \Sc(\sigma(\Lc_{A_2}))\to \Sc(\sigma(\Lc_{A_1}))\widehat \otimes \Sc(\sigma(\Lc_{A_2}));
	\] 
	in addition, for every $T\in \Sc(\sigma(\Lc_{A_2}))'$ and for every $\lambda_1\in \sigma(\Lc_{A_1})$, 
	\[
	\begin{split}
	&\langle T, \left(\Mc_{\Lc_{A_1}}\widehat \otimes I_{\Sc(\sigma(\Lc_{A_2}))}\right)( g_1 \mapsto \Mc_{\Lc_{A_2}}[\Kc_{\Lc'_{ A'}}(m)(g_1,\,\cdot\,)] )(\lambda_1)\rangle= \Mc_{\Lc_{A_1}}[g_1 \mapsto T\Mc_{\Lc_{A_2}}[\Kc_{\Lc'_{ A'}}(m)(g_1,\,\cdot\,)]](\lambda_1)
	\end{split}
	\]
	(reason as in~{\bf2}). Choosing $T=\delta_{\lambda_2}$ for $\lambda_2\in \sigma(\Lc_{A_2})$, and taking into account Proposition~\ref{prop:3}, we see that the mapping
	\[
	\sigma(\Lc'_{ A'})\ni(\lambda_1,\lambda_2)\mapsto \Mc_{\Lc_{A_1}}(g_1 \mapsto \Mc_{\Lc_{A_2}}[\Kc_{\Lc'_{ A'}}(m)(g_1,\,\cdot\,)](\lambda_2))(\lambda_1)
	\]
	extends to an element $m_0$ of $\Sc(E_{\Lc'_{ A'}})$. Now,~{\bf1} implies that $m_0(\lambda_1,\lambda_2)=m(\lambda_1,\lambda_2)$ for $\beta_{\Lc_{A_1}}$-almost every $\lambda_1\in E_{\Lc_{A_1}}$ and for $\beta_{\Lc_{A_2}}$-almost every $\lambda_2\in E_{\Lc_{A_2}}$. Since both $m$ and $m_0$ are $\beta_{\Lc'_{ A'}}$-measurable, Tonelli's theorem implies that $m=m_0$ $\beta_{\Lc'_{ A'}}$-almost everywhere.
	The assertion follows.
\end{proof}

\section{Image Families}\label{sec:8}

In this section we shall fix a Rockland family $\Lc_A$ on a homogeneous group $G$; we consider $\Lc_A$ as `known' and we study an `image family' $P(\Lc_A)$, where $P\colon \R^A\to \R^\Gamma$ is a polynomial mapping with homogeneous components, and $\Gamma$ is a finite set. We shall investigate what we can say about $P(\Lc_A)$ on the base of our knowledge of $\Lc_A$.

\begin{proposition}\label{prop:1:2}\label{prop:7:4}
	The following statements are equivalent:
	\begin{enumerate}
		\item $P(\Lc_A)$ is a Rockland family;
		
		\item the restriction of $P$ to $\sigma(\Lc_A)$ is proper, that is, $P(\lambda)\neq 0$ for every $\lambda\in \sigma(\Lc_A)$ such that $\abs{\lambda}=1$.
	\end{enumerate}
	
	In addition, if $P(\Lc_A)$ is a Rockland family, then:
	\begin{enumerate}
		\item[(i)] $\mi_{P(\Lc_A)}=P_*(\mi_{\Lc_A})$ and $\sigma(P(\Lc_A))=P(\sigma(\Lc_A))$;
		
		\item[(ii)] a $\beta_{P(\Lc_A)}$-measurable function $m\colon E_{P(\Lc_A)}\to \C$ admits a kernel if and only if $m\circ P$ admits a kernel; in this case,
		\[
		\Kc_{P(\Lc_A)}(m)= \Kc_{\Lc_A}(m\circ P);
		\]
		
		\item[(iii)] $\beta_{P(\Lc_A)}=P_*(\beta_{\Lc_A})$.
	\end{enumerate}
\end{proposition}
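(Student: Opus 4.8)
The plan is to establish the equivalence $1 \iff 2$ first, and then to deduce the three supplementary statements in the order (i), (iii), (ii), since the formula for $\Kc_{P(\Lc_A)}$ will follow most easily once the Plancherel measure is identified. Throughout, I would use the homogeneity of the components of $P$: writing $\delta_\gamma$ for the homogeneous degree of the $\gamma$-th component $P_\gamma$, the map $P \colon E_{\Lc_A} \to E_{P(\Lc_A)}$ is homogeneous of degree $1$ with respect to the dilations on the two spaces. This is the structural fact that makes $P$ behave well with respect to the dilation-equivariant objects $\beta$, $\chi$, and the supports.

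For $2 \implies 1$, I would verify the characterization in terms of representations from Theorem~\ref{teo:1}: if $\pi$ is a non-trivial irreducible unitary representation of $G$, then $\dd\pi(\Lc_A)$ is a commuting family of (essentially) self-adjoint operators on $C^\infty(\pi)$, jointly injective by hypoellipticity of $\Lc_A$; by the spectral theorem the joint spectral measure of $\dd\pi(\Lc_A)$ is carried by $\sigma(\Lc_A)$, and since $P$ does not vanish on $\sigma(\Lc_A) \setminus \{0\}$ — equivalently, by homogeneity and properness, $|P(\lambda)| \to \infty$ as $|\lambda| \to \infty$ along $\sigma(\Lc_A)$, while $P^{-1}(0) \cap \sigma(\Lc_A) = \{0\}$ — the operator $P(\dd\pi(\Lc_A))$ is injective on $C^\infty(\pi)$. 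Hence $P(\Lc_A)$ is jointly hypoelliptic. The components $P_\gamma(\Lc_A)$ are formally self-adjoint (as real polynomials in commuting formally self-adjoint operators), homogeneous, left-invariant, without terms of order $0$ (since $P_\gamma(0)=0$ by homogeneity of positive degree, and $P_\gamma \ne 0$ forces $\delta_\gamma > 0$), and they commute; so $P(\Lc_A)$ is a Rockland family. For the converse $1 \implies 2$, if $P(\lambda_0) = 0$ for some $\lambda_0 \in \sigma(\Lc_A)$ with $|\lambda_0| = 1$, then by part (i)-type reasoning (or directly: pick a representation whose spectral measure charges a neighbourhood of $\lambda_0$) one produces a non-zero joint eigenvector of $\dd\pi(\Lc_A)$ with eigenvalue $\lambda_0$, which lies in the joint kernel of $P(\dd\pi(\Lc_A))$, contradicting hypoellipticity via Theorem~\ref{teo:1}.2.

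For (i), the equality $\mi_{P(\Lc_A)} = P_*(\mi_{\Lc_A})$ is the functional-calculus statement that for a bounded Borel $m$ on $E_{P(\Lc_A)}$ one has $m(P(\Lc_A)) = (m \circ P)(\Lc_A)$; this follows from the multiplicativity of the spectral calculus applied to the polynomial generators and a monotone-class / approximation argument, together with the fact that $P_*(\mi_{\Lc_A})$ is a Radon measure because $P$ restricted to $\sigma(\Lc_A)$ is proper (here one uses statement 2, and that $\mi_{\Lc_A}$ is carried by $\sigma(\Lc_A)$). Then $\sigma(P(\Lc_A)) = \Supp{P_*(\mi_{\Lc_A})} = \overline{P(\Supp{\mi_{\Lc_A}})} = P(\sigma(\Lc_A))$, the last equality because $P$ is continuous and proper on $\sigma(\Lc_A)$ so has closed image. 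For (iii), I would invoke the uniqueness clause of the Plancherel theorem (the theorem preceding this section): it suffices to check that $P_*(\beta_{\Lc_A})$ is a positive Radon measure on $E_{P(\Lc_A)}$ that is equivalent to $\mi_{P(\Lc_A)}$, that $\Kc_{P(\Lc_A)}$ is an isometry of $L^2(P_*(\beta_{\Lc_A}))$ onto $L^2_{P(\Lc_A)}(G)$, and that it transforms correctly under dilations. Equivalence with $\mi_{P(\Lc_A)} = P_*(\mi_{\Lc_A})$ follows from equivalence of $\beta_{\Lc_A}$ and $\mi_{\Lc_A}$; the dilation relation $(r\cdot)_*(P_*\beta_{\Lc_A}) = P_*((r\cdot)_*\beta_{\Lc_A}) = r^{-Q} P_*(\beta_{\Lc_A})$ is immediate from the degree-$1$ homogeneity of $P$; and the isometry property is a change-of-variables computation, $\int_{E_{P(\Lc_A)}} |m|^2 \dd P_*(\beta_{\Lc_A}) = \int_{E_{\Lc_A}} |m \circ P|^2 \dd \beta_{\Lc_A} = \norm{\Kc_{\Lc_A}(m\circ P)}_{L^2(G)}^2$, combined with the identification of $\Kc_{P(\Lc_A)}(m)$ with $\Kc_{\Lc_A}(m \circ P)$ on a dense subspace. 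Finally (ii): for $m$ with $m(P(\Lc_A))$ densely defined, the identity $m(P(\Lc_A)) = (m\circ P)(\Lc_A)$ on $C^\infty_c(G)$ shows the domains agree and that the associated convolution kernels coincide, giving $\Kc_{P(\Lc_A)}(m) = \Kc_{\Lc_A}(m \circ P)$.

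The main obstacle I anticipate is not any single deep step but the measure-theoretic bookkeeping around properness: one must be careful that $P$ restricted to $\sigma(\Lc_A)$ (a closed cone) is a \emph{proper} map precisely under condition 2 — the point is that a homogeneous polynomial map non-vanishing on the unit sphere of a closed cone is proper on that cone, and conversely — and then that pushforwards of Radon measures under proper maps are Radon, so that all the objects $\mi_{P(\Lc_A)}$, $\beta_{P(\Lc_A)}$, and the spectra are well-defined and the uniqueness statements from the earlier theorems actually apply. A secondary subtlety is the passage from the polynomial generators to arbitrary bounded Borel $m$ in the identity $m(P(\Lc_A)) = (m \circ P)(\Lc_A)$, which needs the usual functional-calculus approximation argument but is routine once the spectra are correctly identified.
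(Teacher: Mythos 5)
Your handling of (i)--(iii) and of $2\implies 1$ is sound in outline, but there is a genuine gap in your proof of $1\implies 2$. From the hypothesis that the spectral measure of $\dd\pi(\Lc_A)$ charges a neighbourhood of $\lambda_0$ you cannot ``produce a non-zero joint eigenvector of $\dd\pi(\Lc_A)$ with eigenvalue $\lambda_0$'': joint spectra need not contain eigenvalues, and what you actually get are approximate eigenvectors, which are not annihilated by $P(\dd\pi(\Lc_A))$ and therefore do not contradict the joint injectivity required in condition 2 of Theorem~\ref{teo:1}. The paper does not attempt such an argument at all; it disposes of this direction by citing~\cite[Lemma 3.5.1]{Martini}. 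If you want a self-contained repair in the spirit of your own toolkit, argue as follows: suppose $P(\lambda_0)=0$ for some $\lambda_0\in\sigma(\Lc_A)$ with $\abs{\lambda_0}=1$, and take $m\in\Sc(E_{P(\Lc_A)})$ with $m(0)=1$. If $P(\Lc_A)$ were Rockland, then $\Kc_{\Lc_A}(m\circ P)=\Kc_{P(\Lc_A)}(m)\in\Sc(G)\subseteq L^1(G)$, so $m\circ P$ agrees $\mi_{\Lc_A}$-almost everywhere with a continuous function vanishing at infinity (Remark~\ref{oss:1:1}, i.e.~\cite[Proposition 3.2.11]{Martini}); two continuous functions equal $\mi_{\Lc_A}$-a.e.\ coincide on $\Supp{\mi_{\Lc_A}}=\sigma(\Lc_A)$, yet by homogeneity $m\circ P\equiv 1$ on the ray $\R_+^*\cdot\lambda_0\subseteq\sigma(\Lc_A)$ --- a contradiction. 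This works; the eigenvector step as written does not.

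For $2\implies 1$ you take a genuinely different route from the paper. The paper verifies condition 4 of Theorem~\ref{teo:1}: it observes that the union of $\Lc_A$ and $P(\Lc_A)$ is Rockland (giving essential self-adjointness and commuting closures of the $P(\Lc_\gamma)$), and then, since $m\circ P$ need \emph{not} be Schwartz on $E_{\Lc_A}$ ($P$ may vanish on rays outside the spectrum), it multiplies by the homogeneous cutoff $\tau_2+(1-\tau_2)\tau_1$, equal to $1$ near $\sigma(\Lc_A)$, to land in $\Sc(E_{\Lc_A})$ and conclude $\Kc_{P(\Lc_A)}(m)\in\Sc(G)$. Your route through condition 2 can be made to work, but only on top of several nontrivial inputs you do not supply: essential self-adjointness of the $\dd\pi(\Lc_\alpha)$ on $C^\infty(\pi)$ with commuting closures for an arbitrary irreducible $\pi$, compatibility of the differential-operator action of $P_\gamma(\dd\pi(\Lc_A))$ with the joint functional calculus, and containment of the joint spectrum of $\dd\pi(\Lc_A)$ in $\sigma(\Lc_A)$ (weak containment, via amenability); all of these live in Martini's weighted subcoercive machinery and should be cited explicitly. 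Your derivations of (i), (iii) via uniqueness of the Plancherel measure and of (ii) via the functional calculus are essentially the paper's ``by spectral theory'' argument, just spelled out.
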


\begin{proof}
	By spectral theory,  $\mi_{P(\Lc_A)}=P_*(\mi_{\Lc_A})$ and  $\sigma(P(\Lc_A))=\overline{P(\sigma(\Lc_A))}$, without further assumptions on $P(\Lc_A)$.
	If $P(\Lc_A)$ is a Rockland family, then also~{\bf(ii)} holds by spectral theory again; as a consequence, also~{\bf(iii)} holds in this case.
	Then, we are reduced to proving the equivalence of~{\bf1} and~{\bf2}.
	
	{\bf 1 $\implies$ 2.} This follows from~\cite[Lemma 3.5.1]{Martini}.

	{\bf2 $\implies$ 1.} Notice first that the union of the families $\Lc_A$ and $P(\Lc_A)$ is clearly Rockland, so that the $P(\Lc_\alpha)$ are essentially self-adjoint on $C^\infty_c(G)$ with commuting closures.
	Let $S$ be the unit sphere of $E_{\Lc_A}$ corresponding to some homogeneous norm; take $\tau_1\in C^\infty(S)$ such that $\tau_1=1$ on a neighbourhood of $\sigma(\Lc_A)\cap S$ and such that $\tau_1$ is supported in $\Set{x\in S\colon 2\abs{P(x)}\Meg \min_{S\cap\sigma(\Lc_A)} \abs{P}}$. Then, extend $\tau_1$ to a homogeneous function of degree $0$.
	In addition, take $\tau_2\in C^\infty_c(E_{\Lc_A})$ so that $\tau_2=1$ on a neighbourhood of $0$.
	Now, if $m\in \Sc(E_{P(\Lc_A)})$, then clearly $[\tau_2+(1-\tau_2)\tau_1] (m\circ P)\in \Sc(E_{\Lc_A})$, so that $\Kc_{P(\Lc_A)}(m)\in \Sc(G)$. 
	The assertion follows. 	
\end{proof}

\begin{proposition}\label{prop:7:3}
	Assume that $P(\Lc_A)$ is a Rockland family, and take a disintegration $(\beta_{\lambda'})_{\lambda'\in E_{P(\Lc_A)}}$ of $\beta_{\Lc_A}$ relative to $P$. Then,
	\[
	\chi_{P(\Lc_A)}(\lambda',g)= \int_{E_{\Lc_A}} \chi_{\Lc_A}(\lambda,g)\,\dd \beta_{\lambda'}(\lambda)
	\]
	for $(\beta_{P(\Lc_A)}\otimes \nu_G)$-almost every $(\lambda',g)\in E_{P(\Lc_A)}\times G$.
\end{proposition}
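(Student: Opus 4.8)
The plan is to characterize $\chi_{P(\Lc_A)}$ by testing it against a dense family of multipliers, following the same philosophy as the proof of Proposition~\ref{prop:3:8} and the definition of the integral kernel. Recall that by Proposition~\ref{prop:1:2}\textbf{(ii)} we have $\Kc_{P(\Lc_A)}(m) = \Kc_{\Lc_A}(m\circ P)$ for every $\beta_{P(\Lc_A)}$-measurable $m$ admitting a kernel, and by~\textbf{(iii)} that $\beta_{P(\Lc_A)} = P_*(\beta_{\Lc_A})$. First I would fix $m\in C^\infty_c(E_{P(\Lc_A)})$, so that both $m$ and $m\circ P$ have kernels in $L^1$; then, using the defining property of the integral kernel on both sides together with Proposition~\ref{prop:1:2}\textbf{(ii)}, write
\[
\int_{E_{P(\Lc_A)}} m(\lambda')\chi_{P(\Lc_A)}(\lambda',g)\,\dd\beta_{P(\Lc_A)}(\lambda') = \Kc_{P(\Lc_A)}(m)(g) = \Kc_{\Lc_A}(m\circ P)(g) = \int_{E_{\Lc_A}} m(P(\lambda))\chi_{\Lc_A}(\lambda,g)\,\dd\beta_{\Lc_A}(\lambda)
\]
for $\nu_G$-almost every $g\in G$. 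Applying the disintegration formula $\beta_{\Lc_A} = \int_{E_{P(\Lc_A)}} \beta_{\lambda'}\,\dd\beta_{P(\Lc_A)}(\lambda')$ to the right-hand side and using that $\beta_{\lambda'}$ is carried by $P^{-1}(\lambda')$ (so $m(P(\lambda)) = m(\lambda')$ on the support of $\beta_{\lambda'}$), the right-hand side becomes $\int_{E_{P(\Lc_A)}} m(\lambda')\bigl(\int_{E_{\Lc_A}} \chi_{\Lc_A}(\lambda,g)\,\dd\beta_{\lambda'}(\lambda)\bigr)\dd\beta_{P(\Lc_A)}(\lambda')$.

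Next I would like to conclude by the arbitrariness of $m$. Since the above equality holds for $\nu_G$-almost every $g$ and for every $m$ in a countable dense subset of $C^\infty_c(E_{P(\Lc_A)})$ (or, more carefully, integrating against a test function in $g$ as well to turn this into an equality of measures on $E_{P(\Lc_A)}\times G$ via Fubini--Tonelli), one obtains that $\chi_{P(\Lc_A)}(\lambda',g)$ and $\int_{E_{\Lc_A}} \chi_{\Lc_A}(\lambda,g)\,\dd\beta_{\lambda'}(\lambda)$ agree as $(\beta_{P(\Lc_A)}\otimes\nu_G)$-measurable functions. A technical point worth addressing is that the right-hand side must define a genuine element of $L^\infty(\beta_{P(\Lc_A)}\otimes\nu_G)$: using the representative $\chi_0$ from Theorem~\ref{teo:2}, which is $\beta_{\Lc_A}$-measurable in $\lambda$ for fixed $g$ and bounded by $1$, the inner integral is well-defined for $\beta_{P(\Lc_A)}$-almost every $\lambda'$ (for those $\lambda'$ for which $\beta_{\lambda'}$ is a probability measure, which is $\beta_{P(\Lc_A)}$-almost all of them after normalizing the disintegration), measurable in $\lambda'$ by a Fubini-type argument for disintegrations, and bounded by $\norm{\chi_0}_\infty = 1$.

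\textbf{Main obstacle.} The genuinely delicate step is the joint measurability and the interchange of integrals: one must justify that $(\lambda',g)\mapsto \int_{E_{\Lc_A}}\chi_0(\lambda,g)\,\dd\beta_{\lambda'}(\lambda)$ is $(\beta_{P(\Lc_A)}\otimes\nu_G)$-measurable and that Fubini's theorem applies when pairing the displayed identity against a test function $\phi_2\in C^\infty_c(G)$ — this rests on standard but slightly technical facts about disintegrations of Radon measures on Polish spaces (the map $\lambda'\mapsto\beta_{\lambda'}$ is scalarly measurable, $\beta_{P(\Lc_A)}$-almost everywhere a probability measure after normalization) combined with the good representative $\chi_0$. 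Once measurability is in hand, the arbitrariness argument is routine. I would also note that the proof shows the identity is independent of the chosen disintegration, as it must be, since two disintegrations of $\beta_{\Lc_A}$ relative to $P$ coincide $\beta_{P(\Lc_A)}$-almost everywhere.
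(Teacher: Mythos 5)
Your proposal is correct and takes essentially the same route the paper indicates (and then omits): one tests both sides against products of test functions in $(\lambda',g)$, uses $\Kc_{P(\Lc_A)}(m)=\Kc_{\Lc_A}(m\circ P)$ and $\beta_{P(\Lc_A)}=P_*(\beta_{\Lc_A})$ from Proposition~\ref{prop:1:2} together with the disintegration formula, and settles measurability and boundedness via the well-behaved representative of Theorem~\ref{teo:2}. Nothing essential is missing.
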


Notice that the existence of a disintegration follows from~\cite[Theorem 1 of Chapter VI, § 3, No.\ 1]{BourbakiInt1}. 
Then, the proof amounts to showing that both sides of the asserted equality have the same integrals when multiplied by elements of $C^\infty_c(E_{\Lc_A})\otimes C^\infty_c(G)$; it is omitted.

\smallskip

Now, consider property $(RL)$. Assume that $\Lc_A$ satisfies property $(RL)$, and take $m\in L^\infty(\beta_{P(\Lc_A)})$ such that $\Kc_{P(\Lc_A)}(m)\in L^1(G)$. Then, there is $\widetilde m\in C_0(E_{\Lc_A})$ such that $m\circ P=\widetilde m$ $\beta_{\Lc_A}$-almost everywhere.
In Section~\ref{sec:6}, we shall study this situation in a general setting, seeking  conditions under which $\widetilde m$ is constant on the fibres of $P$ in $\sigma(\Lc_A)$. Since $P$ is proper, this implies that $\widetilde m$ is the composite of a continuous function with $P$, at least on $\sigma(\Lc_A)$. 
Notice, however, that sometimes it is more convenient to argue on suitable subsets of the spectrum.

Property $(S)$ is studied in a similar way, making use of the results of Sections~\ref{sec:6} and~\ref{sec:7}. 

\section{Composite Functions: Continuous Functions}\label{sec:6}

In this section we consider the following problem:
given three Polish spaces $X, Y, Z$, a measure $\mi$ on $X$, a $\mi$-measurable mapping $\pi\colon X\to Y$, and a function $m\colon Y\to Z$ such that $m\circ \pi$ equals $\mi$-almost everywhere a continuous function, does $m$ equal $\pi_*(\mi)$-almost everywhere a continuous function?

To this end, we introduce the following definition.

\begin{definition}
	Let $X$ be a Polish space, $Y$ a set, $\mi$ a positive Radon measure on $X$, and  $\pi$ a  mapping from $X$ into $Y$.
	We say that two points $x,x'$ of $\Supp{\mi}$ are $(\mi,\pi)$-connected if $\pi(x)=\pi(x')$ and there are $x=x_1,\dots, x_k=x'\in \pi^{-1}(\pi(x))\cap \Supp{\mi}$ such that, for every $j=1,\dots, k$, for every neighbourhood $U_j$ of $x_j$ in $\Supp{\mi}$, and for every neighbourhood $U_{j+1}$ of $x_{j+1}$ in $\Supp{\mi}$, the set $ \pi^{-1}(\pi(U_j)\cap \pi(U_{j+1})) $ is not $\mi$-negligible.
	We say that $\mi$ is $\pi$-connected if every pair of elements of $ \Supp{\mi}$ having the same image under $\pi$ are $(\mi,\pi)$-connected.
\end{definition}

Observe that  $(\mi,\pi)$-connectedness actually depends only on the equivalence class of $\mi$ and the equivalence relation induced by $\pi$ on $X$.
In addition, notice that, if $Y$ is a topological space and $\pi$ is open at some point of each fibre (in the support of $\mi$), then $\mi$ is $\pi$-connected.

We emphasize that, in the definition of $(\mi,\pi)$-connectedness, the points $x_1,\dots,x_k$ are fixed \emph{before} considering their neighbourhoods. In other words,  if for every neighbourhood $U$ of $x$ in $\Supp{\mi}$ and for every  neighbourhood $U'$ of $x'$ in $\Supp{\mi}$ we found $x=x_1,\dots, x_k=x'$ and neighbourhoods $U_j$ of $x_j$ in $\Supp{\mi}$ so that $U=U_1$, $U'=U_k$ and, for every $j=1,\dots, k$, the set $\pi^{-1}(\pi(U_j)\cap \pi(U_{j+1})) $ were not $\mi$-negligible, then we would \emph{not} be able to conclude that $x$ and $x'$ are $(\mi,\pi)$-connected.

Now we can prove our main result. Notice that, even though its hypotheses are quite restrictive, it still gives rise to important consequences.

\begin{proposition}\label{prop:A:7}
	Let $X,Y,Z$ be three Polish spaces, $\pi\colon X\to Y$ a $\mi$-measurable mapping, and $\mi$ a  $\pi$-connected positive Radon measure on $X$.  
	Assume that $\pi$ is $\mi$-proper and that there is a disintegration $(\lambda_y)_{y\in Y}$ of $\mi$ relative to $\pi$ such that $\Supp{\lambda_y}\supseteq\Supp{\mi} \cap \pi^{-1}(y)$ for $\pi_*(\mi)$-almost every $y\in Y$.
	
	Take a continuous map $m_0\colon X\to Z$ such that there is map $m_1\colon Y\to Z$ such that $m_0(x)= (m_1\circ \pi)(x)$ for $\mi$-almost every $x\in X$. 
	Then, there is a $\pi_*(\mi)$-measurable mapping $m_2\colon Y\to Z$ such that $m_0=m_2\circ \pi$ \emph{pointwise} on $\Supp{\mi}$.
\end{proposition}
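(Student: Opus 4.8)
The plan is to define $m_2$ on the support of $\pi_*(\mi)$ by choosing, for each $y$ in (an appropriate conull subset of) $\pi\bigl(\Supp{\mi}\bigr)$, a point $x$ in the fibre $\pi^{-1}(y)\cap\Supp{\mi}$ and setting $m_2(y)\coloneqq m_0(x)$; outside this set we may set $m_2$ to be any fixed value. The content of the statement is that this is well defined up to $\pi_*(\mi)$-negligible sets and that, on \emph{all} of $\Supp{\mi}$ (not just almost everywhere), one has $m_0=m_2\circ\pi$. So there are two things to check: (a) that $m_0$ is \emph{constant} on each fibre $\pi^{-1}(y)\cap\Supp{\mi}$ for $\pi_*(\mi)$-almost every $y$, which makes the choice of $x$ irrelevant and gives $m_0=m_2\circ\pi$ a.e.; and (b) a bootstrapping argument that upgrades this to a pointwise equality on the whole support.

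The heart of the matter is step (a), and this is where $\pi$-connectedness and the support condition on the disintegration enter. First I would use the hypothesis $m_0=m_1\circ\pi$ $\mi$-a.e.\ together with the disintegration: writing $\mi=\int\lambda_y\,\dd\pi_*(\mi)(y)$, for $\pi_*(\mi)$-almost every $y$ the set $\{x: m_0(x)\neq m_1(y)\}$ is $\lambda_y$-negligible; since $\Supp{\lambda_y}\supseteq\Supp{\mi}\cap\pi^{-1}(y)$, and since $m_0$ is continuous so that $\{x\in\Supp{\mi}\cap\pi^{-1}(y): m_0(x)=m_1(y)\}$ is relatively closed in $\Supp{\mi}\cap\pi^{-1}(y)$, a $\lambda_y$-conull relatively closed set containing the support must be the whole fibre intersected with $\Supp{\mi}$. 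Hence $m_0\equiv m_1(y)$ on $\Supp{\mi}\cap\pi^{-1}(y)$ for $\pi_*(\mi)$-a.e.\ $y$. Actually this already shows $m_0$ is fibrewise constant on the support for a.e.\ $y$, which is what (a) needs; set $m_2(y)\coloneqq m_1(y)$ there.

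For step (b), let $N\subseteq Y$ be the $\pi_*(\mi)$-negligible set of bad $y$'s, and let $x\in\Supp{\mi}$ be arbitrary with $y\coloneqq\pi(x)$. If $y\notin N$ we are done by the above. If $y\in N$, I would invoke $\pi$-connectedness to link $x$ to a point $x'$ in a fibre over a \emph{good} $y$: more precisely, for a point $x$ in a bad fibre I need to approximate it, within $\Supp{\mi}$, by points whose fibres meet the good set in a non-negligible way. The mechanism is: by the disintegration-support hypothesis and Fubini, the set of good $y$ carries $\mi$; given a neighbourhood $U$ of $x$ in $\Supp{\mi}$, the set $\pi^{-1}(\pi(U))\cap(\text{good fibres})$ is non-negligible, so it contains a point $\tilde x$; then $m_0(\tilde x)=m_2(\pi(\tilde x))$, and one controls $m_0(x)$ via continuity of $m_0$ and $m_2\circ\pi$ by shrinking $U$. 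Making this rigorous — and in particular pinning down that the candidate value is forced to be $m_2(y)$ independently of the approximating sequence, which is exactly where the precise ($x_1,\dots,x_k$ chosen first, then neighbourhoods) formulation of $(\mi,\pi)$-connectedness is used — is the main obstacle. I expect the argument to run: show the function $y\mapsto m_2(y)$ (defined a.e.) extends to the closure of the good set by continuity along $\pi$, use the chain $x=x_1,\dots,x_k=x'$ from connectedness together with the non-negligibility of $\pi^{-1}(\pi(U_j)\cap\pi(U_{j+1}))$ and the a.e.\ identity to propagate the value $m_0(x)=m_0(x_1)=\cdots=m_0(x_k)=m_0(x')=m_2(\pi(x'))$, and finally note $\pi(x')=\pi(x)=y$ so this common value is what we must call $m_2(y)$. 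One must check the consistency of this assignment (different chains give the same value), which again follows from transitivity of the relation built into the definition, and then $m_2$ so defined is $\pi_*(\mi)$-measurable because it agrees $\pi_*(\mi)$-a.e.\ with $m_1$.
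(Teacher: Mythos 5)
Your step (a) is essentially the paper's own argument and is sound, up to one small repair: work with the set $\{x\in X\colon m_0(x)=m_1(y)\}$, which is closed in $X$ (not merely relatively closed in the fibre), so that its complement is an open $\lambda_y$-negligible set and therefore cannot meet $\Supp{\lambda_y}\supseteq \Supp{\mi}\cap\pi^{-1}(y)$. The genuine gap is in step (b), which is the crux of the proposition. First, you misread what $(\mi,\pi)$-connectedness provides: the chain $x=x_1,\dots,x_k=x'$ lies entirely inside $\pi^{-1}(\pi(x))\cap\Supp{\mi}$, i.e.\ \emph{all} its points have the same image $y$, so it cannot ``link $x$ to a point $x'$ in a fibre over a good $y$''; accordingly your concluding identity $m_0(x')=m_2(\pi(x'))$ is circular, since $\pi(x')=y\in N$ is exactly a point where $m_2$ is not yet defined. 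Second, the mechanism you propose --- pick $\tilde x\in\pi^{-1}(\pi(U))$ lying over a good fibre and ``control $m_0(x)$ via continuity of $m_0$ and $m_2\circ\pi$'' --- does not work: $\tilde x$ belongs to the $\pi$-saturation of $U$, not to $U$, so it need not be anywhere near $x$, and $m_2\circ\pi$ (or $m_1\circ\pi$) has no continuity you may invoke. Your worry about the consistency of different chains is a symptom of this wrong framing: nothing should be defined as a limit along chains.

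What closes the argument (and is the paper's proof) is the following. Reduce to $\Supp{\mi}=X$ and show that $m_0$ is constant on \emph{every} fibre, the value of $m_2$ then being this common value. Fix $y$, two points $x,x'\in\pi^{-1}(y)$, and a chain $x=x_1,\dots,x_k=x'$ as in the definition. For an adjacent pair $x_j,x_{j+1}$ and arbitrary neighbourhoods $U_j,U_{j+1}$, the set $\pi^{-1}\bigl(\pi(U_j)\cap\pi(U_{j+1})\bigr)$ is not $\mi$-negligible while $\pi^{-1}(N)$ is, so there exists $y_{U_j,U_{j+1}}\in\pi(U_j)\cap\pi(U_{j+1})\setminus N$; choose a point of $U_j$ and a point of $U_{j+1}$ lying over this common \emph{good} value, where step (a) forces $m_0$ to take equal values. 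Letting $(U_j,U_{j+1})$ shrink along the product of the two neighbourhood filters, these points converge to $x_j$ and $x_{j+1}$ respectively, and continuity of $m_0$ at \emph{both} points yields $m_0(x_j)=m_0(x_{j+1})$; chaining over $j$ gives $m_0(x)=m_0(x')$. Thus $m_0$ is constant on each fibre meeting $\Supp{\mi}$, and $m_2$ defined as the common fibre value (arbitrary elsewhere) satisfies $m_0=m_2\circ\pi$ on $\Supp{\mi}$ by construction, with no consistency check needed. Finally, do not deduce measurability of $m_2$ from ``$m_2=m_1$ a.e.'': $m_1$ is not assumed measurable. One can instead transfer the $\mi$-measurability of $m_2\circ\pi$ (which equals the continuous $m_0$ $\mi$-almost everywhere) through the $\mi$-measurable, $\mi$-proper map $\pi$.
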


If $\pi$ is also proper, then $m_2$ is actually continuous on $\pi(\Supp{\mi})$. 

\begin{proof}
	Observe first that there is a  $\pi_*(\mi)$-negligible subset $N$ of $Y$ such that $m_1\circ \pi=m_0$ $\lambda_y$-almost everywhere for every $y\in Y\setminus N$. 
	Notice that we may assume that $\Supp{\mi}=X$ and that, if $y\in Y\setminus N$, then the support of $\lambda_y$ contains $\pi^{-1}(y)$.	
	Since $m_0$ is continuous and since $m_1\circ \pi$ is constant on the support of $\lambda_y$, it follows that $m_0$ is constant on $\pi^{-1}(y)$ for every $y\in Y\setminus N$.   
	
	Now, take $y\in \pi(X)\cap N$ and $x_1,x_2\in \pi^{-1}(y)$. Let $\Uf(x_1)$ and $\Uf(x_2)$ be the filters of neighbourhoods of $x_1$ and $x_2$, respectively.
	Assume first that $\pi(U_1)\cap \pi(U_2)  $ is not $\pi_*(\mi)$-negligible for every $U_1\in \Uf(x_1)$ and for every $U_2\in \Uf(x_2)$. 
	Take $U_1\in \Uf(x_1)$ and $U_2\in \Uf(x_2)$. 
	Then, there is $y_{U_1,U_2}\in \pi(U_1)\cap \pi(U_2)\setminus N$, and then $x_{h,U_1,U_2}\in U_h\cap \pi^{-1}(y_{U_1,U_2})$ for $h=1,2$. 
	Now, $m_0(x_{1,U_1,U_2})=m_0(x_{2,U_1,U_2})$ for every $U_1\in \Uf(x_1)$ and for every $U_2\in \Uf(x_2)$. 
	In addition, $x_{h,U_1,U_2}\to x_h$ in $X$ along the product filter of  $\Uf(x_1)$ and $ \Uf(x_2)$. 
	Since $m_0$ is continuous,  passing to the limit we see that $m_0(x_1)=m_0(x_2)$.
	Since $\mi$ is $\pi$-connected, this implies that $m_0$ is constant on $ P^{-1}(y)$ for \emph{every} $y\in \pi(X)$.
	The assertion follows.
\end{proof}

In the following proposition we give sufficient conditions in order that a measure be connected.

\begin{proposition}\label{prop:A:8}
	Let $E_1,E_2$ be two finite-dimensional vector spaces, $L\colon E_1\to E_2$ a linear mapping, $C$ a closed convex subset of $E_1$ and $\mi$ a positive Radon measure on $E_1$ with support $C$. Take a Polish subspace $X$ of $E_1$ so that $\mi(E_1\setminus X)=0$.
	Then, $\mi_X$ is $\restr{L}{X}$-connected.
\end{proposition}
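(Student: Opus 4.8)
The plan is to show that any two points $x, x' \in X$ with $L(x) = L(x')$ are $(\mi, \restr{L}{X})$-connected, by exploiting the convexity of $C = \Supp{\mi}$. Since $x, x' \in \Supp{\mi} = C$ and $C$ is convex, the segment $[x, x']$ lies in $C$, and in fact the whole segment maps to the single point $L(x) = L(x')$ under $L$ because $L$ is affine (linear) and constant on the endpoints. So the natural candidate for the connecting chain is $x = x_1, x' = x_2$ (a chain of length $2$), or rather to use the midpoint; but the subtlety is that the intermediate points must lie in $X\cap L^{-1}(L(x))\cap \Supp{\mi}$, which the segment provides, and one must verify the neighbourhood condition.

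First I would reduce to the chain $x = x_1$, $x' = x_2$ and check directly: given a neighbourhood $U_1$ of $x$ in $X$ and $U_2$ of $x'$ in $X$, I must show $(\restr{L}{X})^{-1}(L(U_1) \cap L(U_2))$ is not $\mi$-negligible. The key geometric observation is that, because $C$ is convex with $x, x' \in C$, for small $\eps > 0$ the points $x_\eps := x + \eps(x'-x)$ and $x'_\eps := x' + \eps(x-x') = x' - \eps(x'-x)$ both lie in the relative interior direction inside $C$; more usefully, a whole neighbourhood (relative to the affine hull of $C$, intersected with $C$) of a generic interior point of the segment is charged positively by $\mi$ since $\mi$ has support exactly $C$. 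I would pick a point $z$ in the relative interior of the segment $[x,x']$ (or use $x$ and $x'$ themselves together with convexity to slide small balls around $x$ toward $x'$): for $U_1 \ni x$, the set $L(U_1)$ contains $L$ of a small ball $B(x,\delta)\cap C$; by convexity, $\{x + t(x'-x) : |t| \text{ small}, \ t \text{ near } 0\}$-translates of this ball stay in $C$ and have the same image under $L$. Thus $L(U_1) \cap L(U_2) \supseteq L(B(x,\delta')\cap C)$ for suitable $\delta'$, and its preimage contains $B(x,\delta')\cap C$, which has positive $\mi$-measure (being a neighbourhood of $x\in\Supp\mi$) minus the $\mi$-null set $E_1 \setminus X$. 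Hence the preimage is not $\mi$-negligible.

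The main obstacle I anticipate is handling the two distinct roles of the topology: the neighbourhoods $U_1, U_2$ are taken in $X$ (with its subspace Polish topology, which agrees with the Euclidean one since $X \subseteq E_1$), whereas the convexity and support arguments naturally live in $C \subseteq E_1$; one must be careful that $\mi(E_1 \setminus X) = 0$ lets us pass freely between ``$\mi$-negligible in $X$'' and ``$\mi$-negligible in $E_1$'', and that a neighbourhood of $x$ in $X$ contains $B(x,\delta)\cap X$, whose $\mi$-measure equals that of $B(x,\delta)\cap C$ since the latter has positive measure and $\mi(C\setminus X)=0$. A second point of care is the degenerate case $x = x'$: here the chain condition is trivially satisfied by taking $k=1$ and noting $L^{-1}(L(x))$ meets every neighbourhood of $x$ in a non-negligible set since $x \in \Supp{\mi}$ (again after removing $E_1\setminus X$). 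A third technical wrinkle: the segment $[x,x']$ need not lie in the relative interior of $C$, so I should not claim interior points directly; instead I rely only on ``$x\in\Supp\mi$ implies every neighbourhood of $x$ has positive measure,'' plus the translation trick along the direction $x'-x$ which keeps us inside $C$ by convexity. Once these bookkeeping points are dispatched, the argument is short: connectedness of $\mi$ with respect to $\restr{L}{X}$ follows, so $\mi_X$ (i.e.\ $\mi$ regarded on $X$) is $\restr{L}{X}$-connected.
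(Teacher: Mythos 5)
There is a genuine gap, and it sits exactly at the point you dismiss as bookkeeping. In the definition of connectedness applied to $\mi_X$, the neighbourhoods $U_1,U_2$ are neighbourhoods of $x,x'$ in $\Supp{\mi_X}\subseteq X$, so $L(U_2)$ only contains images of points of $X$ lying near $x'$. Your key inclusion $L(U_1)\cap L(U_2)\supseteq L(B(x,\delta')\cap C)$ therefore requires, for (a non-negligible set of) $z\in B(x,\delta')\cap C$, a point of $C\cap X$ near $x'$ lying in the fibre $L^{-1}(L(z))$; the natural candidate $z+(x'-x)$ lies in $C$ (when it does) but need not lie in $X$. The principle ``$\mi(E_1\setminus X)=0$ lets us pass freely between negligible in $X$ and in $E_1$'' is of no help here: what has to be discarded is not the null set $E_1\setminus X$ itself but its saturation $L^{-1}\big(L(B(x',\delta)\cap C\setminus X)\big)$, and $L$-saturations of $\mi$-null sets can have positive, even locally full, $\mi$-measure. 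This is the real crux of the statement, not a technicality (note also that, contrary to what you write, the segment $[x,x']$ need not meet $X$ anywhere except at its endpoints, so it does not by itself supply admissible chain points). The paper does not settle this by a null-set remark either; what it does, and what your argument lacks, is the geometric preparation: it fixes a bounded convex open $U$ and a neighbourhood $V$ of $0$ in $\ker L$ with $U+V\subseteq C$, proves the two-point condition only for pairs with $x'-x\in V$, by comparing the contracted set $x+r(U-x)$ with its translate $x'+r(U-x)$ (which stays in $C$ precisely because $U+V\subseteq C$), and then joins a general pair by a finite chain of such small steps; your single chain $x=x_1$, $x'=x_2$ for arbitrarily distant points skips this structure entirely.

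This brings out the second gap: convexity does not give rigid translates. From $z\in B(x,\delta)\cap C$ and $x'\in C$ you may conclude $(1-t)z+tx'\in C$, i.e.\ you may contract towards $x'$, but $z+t(x'-x)$ may leave $C$: take for $C$ the triangle with vertices $(0,0),(1,0),(0,1)$, $L(a,b)=a+b$, $x=(1,0)$, $x'=(0,1)$, $t=1$, $z=(1-\delta,\delta/2)$. Sliding a small ball by small $t$ never connects a neighbourhood of $x$ with a neighbourhood of a distant $x'$, and sliding with $t$ near $1$ is exactly the rigid translate that convexity does not control; so the inclusion of images you assert is unproved even before the $X$-issue enters. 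If one ignores $X$ (i.e.\ takes $X=E_1$), your idea can be repaired cleanly and for all pairs at once by observing that $L\big((1-r)x+rU\big)=L\big((1-r)x'+rU\big)$ because $L(x)=L(x')$, both being nonempty open subsets of $C$ contained in small balls around $x$ and $x'$ respectively, so their common image has a preimage of positive measure; but the intersection with a general co-null Polish $X$ cannot be absorbed by the argument you sketch, and any complete proof has to confront it head on.
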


Actually, there is no need that $X$ be a Polish space, but we did not consider Radon measures on more general Hausdorff spaces.

\begin{proof}
	We may assume that $C$ has non-empty interior.
	Then,  we may find a bounded convex open subset $U$ of $ C$ and an convex open neighbourhood $V$ of $0$ in $\ker L$ such that $U+V\subseteq C$.
	Take $r\in ]0,1]$ and $x,y\in C\cap X$ such that $y-x\in V$; take $R_x>0$ so that $U\subseteq B(x,R_x)$. 
	Then, for every $u\in U$ we have $y+ r(u-x)\in B(y,r R_x)\cap [y,y-x+u]\subseteq B(y,r R_x)\cap C$; analogously, $x+r(U-x)\subseteq B(x,r R_x)\cap C$.
	Since $L(x)=L(y)$, we infer that 
	\[
	L^{-1}(L( B(x, r R_x)\cap C \cap X)\cap L(B(y,r R_x)\cap C\cap X))\supseteq [x+r(U-x)]\cap X.
	\]
	Now, $x+r(U-x)$ is a non-empty open subset of $C=\Supp{\mi}$, so that $\mi_X([x+r(U-x)]\cap X)=\mi(x+r(U-x))>0$. 
	The arbitrariness of $r$ then implies that $x$ and $y$ are $(\mi,L)$-connected.
	The assertion follows easily. 
\end{proof}

Now we present a result on the disintegration of Hausdorff measures, which is particularly useful to check the assumptions of Proposition~\ref{prop:A:7}.
It is a corollary of~\cite[Theorem 3.2.22]{Federer}; we omit the proof and refer to~\cite{Federer} for any unexplained notation.

\begin{proposition}\label{prop:A:6}
	Let, for $j=1,2$, $E_j$ be an $\Hc^{k_j}$-measurable and countably $\Hc^{k_j}$-rectifiable subset of $\R^{n_j}$. 
	Assume that $k_2\meg k_1$, and let $P$ be a locally Lipschitz mapping of $E_1$ into $E_2$. 
	Take a positive function $f\in L^1_\loc(\chi_{E_1}\cdot \Hc^{k_1})$ and assume that $f(x)\,\ap J_{k_2} P(x)\neq 0$ for $\Hc^{k_1}$-almost every $x\in E_1$, and that $P$ is $(f\cdot \Hc^{k_1})$-proper.
	
	Then, the following hold:
	\begin{enumerate}
		\item  the mapping
		\[
		g\colon\R^{n_2}\ni y \mapsto \int_{P^{-1}(y)} \frac{f}{\ap J_{k_2} P}\,\dd \Hc^{k_1-k_2}
		\]
		is well-defined $\Hc^{k_2}$-almost everywhere and measurable; in addition,
		\[
		P_*(f\cdot \Hc^{k_1})= g\cdot \Hc^{k_2};
		\]
		
		\item the measure
		\[
		\beta_y\coloneqq \frac{1}{g(y)}\frac{f}{\ap J_{k_2} P} \chi_{P^{-1}(y)}\cdot \Hc^{k_1-k_2}
		\]
		is well-defined and Radon for $P_*(f\cdot \Hc^{k_1})$-almost every $y\in \R^{n_2}$; in addition, $(\beta_y)$ is a disintegration of $f\cdot \Hc^{k_1}$ relative to $P$;
		
		\item $\beta_y$ is equivalent to $\chi_{P^{-1}(y)}\cdot \Hc^{k_1-k_2} $ for $P_*(f\cdot \Hc^{k_1})$-almost every $y\in E_2$. 
	\end{enumerate}
\end{proposition}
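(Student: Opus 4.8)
The plan is to derive all three claims from the rectifiable coarea formula of Federer (Theorem 3.2.22), which in the present notation asserts: if $W\subseteq\R^{n_1}$ is $\Hc^{k_1}$-measurable and countably $\Hc^{k_1}$-rectifiable, $h\colon W\to\R^{n_2}$ is locally Lipschitz, $k_2\meg k_1$, and $u\colon W\to[0,+\infty]$ is $\Hc^{k_1}$-measurable, then $y\mapsto\int_{W\cap h^{-1}(y)}u\,\dd\Hc^{k_1-k_2}$ is $\Hc^{k_2}$-measurable and
\[
\int_{W}u(x)\,\ap J_{k_2}h(x)\,\dd\Hc^{k_1}(x)=\int_{\R^{n_2}}\left(\int_{W\cap h^{-1}(y)}u\,\dd\Hc^{k_1-k_2}\right)\dd\Hc^{k_2}(y),
\]
and, in addition, $W\cap h^{-1}(y)$ is $\Hc^{k_1-k_2}$-rectifiable for $\Hc^{k_2}$-almost every $y$. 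Since $f(x)\,\ap J_{k_2}P(x)\neq 0$ for $\Hc^{k_1}$-almost every $x\in E_1$ and $f>0$, we have $\ap J_{k_2}P\neq 0$ $\Hc^{k_1}$-almost everywhere on $E_1$; removing an $\Hc^{k_1}$-null set (hence an $(f\cdot\Hc^{k_1})$-null one, so without loss) I would assume $f$ everywhere positive and finite and $\ap J_{k_2}P$ everywhere finite and nonzero on $E_1$.

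For~{\bf1} I would apply the formula with $W=E_1$, $h=P$ and $u=\dfrac{(\varphi\circ P)f}{\ap J_{k_2}P}$, where $\varphi\in C_c(\R^{n_2})$ is nonnegative. On $P^{-1}(y)$ one has $\varphi(P(x))=\varphi(y)$, so the right-hand side becomes $\int_{\R^{n_2}}\varphi(y)\,g(y)\,\dd\Hc^{k_2}(y)$ while the left-hand side is $\int_{\R^{n_2}}\varphi\,\dd P_*(f\cdot\Hc^{k_1})$; by the arbitrariness of $\varphi$ this gives $P_*(f\cdot\Hc^{k_1})=g\cdot\Hc^{k_2}$, the $\Hc^{k_2}$-measurability of $g$ being part of the coarea statement. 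Letting $\varphi_j\uparrow\chi_K$ for a compact $K$ shows $\int_K g\,\dd\Hc^{k_2}=(f\cdot\Hc^{k_1})(P^{-1}(K))$, which is finite because $P$ is $(f\cdot\Hc^{k_1})$-proper; hence $g\in L^1_\loc(\Hc^{k_2})$, and in particular $g(y)<+\infty$ for $\Hc^{k_2}$-almost every $y$, which is the sense in which $g$ is well-defined $\Hc^{k_2}$-almost everywhere.

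For~{\bf2} and~{\bf3} I would fix a countable exhausting family $(K_i)$ of compacta in $\R^{n_1}$ and discard the $\Hc^{k_2}$-null set $N$ outside of which: $g(y)\in\,]0,+\infty[$; $P^{-1}(y)$ is $\Hc^{k_1-k_2}$-rectifiable; $\int_{K_i\cap P^{-1}(y)}\tfrac{f}{\ap J_{k_2}P}\,\dd\Hc^{k_1-k_2}<+\infty$ for every $i$ (each such condition failing only on a null set, by~{\bf1} applied over $K_i$); and $\{x\in P^{-1}(y):\ap J_{k_2}P(x)=0\}$ is $\Hc^{k_1-k_2}$-negligible (by the coarea formula applied to $u=\chi_{\{\ap J_{k_2}P=0\}}$, which is $\Hc^{k_1}$-null). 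For $y\notin N$, $\beta_y$ is then a well-defined positive Borel measure on the Polish space $\R^{n_1}$ which is locally finite, hence Radon; its density $\tfrac{1}{g(y)}\tfrac{f}{\ap J_{k_2}P}$ is strictly positive $\Hc^{k_1-k_2}$-a.e. on $P^{-1}(y)$, so $\beta_y$ is equivalent to $\chi_{P^{-1}(y)}\cdot\Hc^{k_1-k_2}$, giving~{\bf3}; moreover $\beta_y$ is a probability measure, since the normalization by $1/g(y)$ makes its total mass $1$ by the definition of $g$. Finally, applying the coarea formula with $u=\dfrac{vf}{\ap J_{k_2}P}$ for an arbitrary Borel $v\colon E_1\to[0,+\infty]$, the left-hand side is $\int v\,\dd(f\cdot\Hc^{k_1})$ and the right-hand side is $\int_{\R^{n_2}}\big(\int v\,\dd\beta_y\big)g(y)\,\dd\Hc^{k_2}(y)=\int_{\R^{n_2}}\big(\int v\,\dd\beta_y\big)\,\dd P_*(f\cdot\Hc^{k_1})(y)$ by~{\bf1}, the map $y\mapsto\int v\,\dd\beta_y$ being measurable directly by the coarea measurability statement (divided by the a.e.\ positive factor $g$); this is exactly the disintegration identity, so $(\beta_y)$ disintegrates $f\cdot\Hc^{k_1}$ relative to $P$.

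The hard part is purely organisational: one must collect all the ``for $\Hc^{k_2}$-almost every $y$'' exceptional sets — finiteness of $g$, local finiteness of the fibre measures $\beta_y$ over a countable family of compacta, $\Hc^{k_1-k_2}$-rectifiability of the fibres, and $\Hc^{k_1-k_2}$-negligibility of $\{\ap J_{k_2}P=0\}$ inside each fibre — into a single null set, and check that the coarea identity, initially stated for nonnegative $\Hc^{k_1}$-measurable integrands, passes cleanly to the properness statement in~{\bf1} and to the disintegration identity. Once Federer's theorem is in hand, none of this presents a genuine obstacle.
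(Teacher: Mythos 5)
Your proposal is correct and follows exactly the route the paper intends: the paper omits the proof precisely because the statement is, as you show, a direct corollary of Federer's coarea formula for rectifiable sets (Theorem 3.2.22), applied with the weighted integrands $\frac{(\varphi\circ P)f}{\ap J_{k_2}P}$ and $\frac{vf}{\ap J_{k_2}P}$ and with the usual bookkeeping of exceptional fibres. The only blemish is the remark that the set $\Set{g=0}$ need not be $\Hc^{k_2}$-negligible but only $P_*(f\cdot\Hc^{k_1})$-negligible (since $P_*(f\cdot\Hc^{k_1})=g\cdot\Hc^{k_2}$), which is all that the statement of~\textbf{2} and~\textbf{3} requires, so this is a matter of phrasing rather than a gap.
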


Notice that, if $E_1$ is a submanifold of $\R^{n_1}$ and $P$ is of class $C^1$, then $\ap J_{k_2}P(x)$ is simply $\norm{\bigwedge^{k_2} T_x (P)}$ for every $x\in E_1$.

\section{Composite Functions: Schwartz Functions}\label{sec:7}

In this section we shall extend some results on composite differentiable functions by E.\ Bierstone, P.\ Milman and G.\ W.\ Schwarz to the case of Schwartz functions by means of techniques developed by F.\ Astengo, B.\ Di Blasio and F.\ Ricci.

We shall take advantage of the remarkable works of E.\ Bierstone, P.\ Milman and G.\ W.\ Schwarz about the composition of smooth functions on analytic manifolds, and we shall refer to~\cite{BierstoneMilman,BierstoneMilman2,BierstoneSchwarz} for any unexplained definition, in particular for the notion of (Nash) subanalytic sets.
As a matter of fact, in the applications we shall only need to know that any convex subanalytic set is automatically Nash subanalytic, since it is contained in an affine space of the same dimension, and that semianalytic sets are Nash subanalytic (cf.~\cite[Proposition 2.3]{BierstoneMilman}).

Our starting point is the following result (cf.~\cite[Theorem 0.2]{BierstoneMilman} and~\cite[Theorem 0.2.1]{BierstoneSchwarz}).
Here, $\Ec(\R^m)$ denotes the set of $C^\infty$ functions on $\R^m$ endowed with the topology of locally uniform convergence of all derivatives; $\Ec_{\R^n}(C)$ is the quotient of $\Ec(\R^n)$ by the space of $C^\infty$ functions which vanish on $C$.

\begin{theorem}\label{teo:8}
	Let $C$ be a closed subanalytic subset of $\R^n$ and let $P\colon \R^n\to \R^m$ be an analytic mapping. Assume that $P$ is proper on $C$ and that $P(C)$ is Nash subanalytic.
	Then, the canonical mapping
	\[
	\Phi\colon\Ec(\R^m)\ni \phi\mapsto \phi\circ P\in \Ec_{\R^n}(C)
	\]
	has a closed range, and admits a continuous linear section defined on $\Phi(\Ec(\R^n))$.
	
	In addition, $\psi\in \Ec_{\R^n}(C)$ belongs to the image of $\Phi$ if and only if for every $y\in \R^m$ there is $\phi_y\in \Ec(\R^m)$ such that, for every $x\in C$ such that $P(x)=y$, the Taylor series of $\phi_y\circ P$ and $\psi$ at $x$ differ by the Taylor series of a function of class $C^\infty$ which vanishes on $C$.
\end{theorem}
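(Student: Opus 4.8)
The plan is to obtain the statement by combining the two results of Bierstone--Milman and Bierstone--Schwarz cited in the introduction to this section, after checking that their hypotheses are met and reconciling the function spaces involved. Concretely, the first assertion --- that $\Phi$ has closed range and that $\Phi(\Ec(\R^n))$ is exactly the set described by the pointwise Taylor--series condition of the ``in addition'' clause --- is the $C^\infty$ composite function theorem of \cite[Theorem 0.2]{BierstoneMilman}; the existence of a continuous linear section of $\Phi$ defined on $\Phi(\Ec(\R^n))$ is then the continuous linear splitting supplied by \cite[Theorem 0.2.1]{BierstoneSchwarz}. So the first step is simply to match hypotheses: $\R^n$ and $\R^m$ are real-analytic manifolds, $P$ is analytic, $C$ is closed and subanalytic, the restriction $\restr{P}{C}$ is proper, and its image $P(C)$ is Nash subanalytic --- all of which is assumed.

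The second step is the reconciliation of the target space. Here $\Ec_{\R^n}(C)$ is the Fréchet quotient of $\Ec(\R^n)$ by the ideal of $C^\infty$ functions vanishing on $C$; one checks that this is precisely the space on which the cited theorems operate, so that in particular the jet ``Taylor series of $\psi$ at $x$'' occurring in the statement is well defined modulo the Taylor series at $x$ of functions vanishing on $C$. Under this identification, the pointwise condition of the ``in addition'' clause is nothing but the requirement that $\psi$ be \emph{formally composite with $P$} along the fibre $C\cap P^{-1}(y)$ in the sense of \cite{BierstoneMilman}, and the step amounts to unwinding that definition. The uniformity built into the statement --- a \emph{single} $\phi_y\in\Ec(\R^m)$ matching $\psi$ formally at every $x\in C$ with $P(x)=y$, rather than one formal solution per point --- is legitimate because $\restr{P}{C}$ is proper, so that $C\cap P^{-1}(y)$ is compact and pointwise formal solutions may be glued by a Whitney-type partition-of-unity argument exactly as in \cite{BierstoneMilman}; alternatively one quotes the theorem in the uniform form in which it is already stated there.

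Since the substantive input --- resolution of singularities, the (local) flattening theorem, and the attendant \L{}ojasiewicz-type inequalities --- is entirely contained in \cite{BierstoneMilman} and \cite{BierstoneSchwarz}, no genuinely new difficulty arises; the only point requiring care, and hence the ``main obstacle'' of the argument, is to be sure that no regularity of $C$ beyond ``closed subanalytic'' has been used implicitly and that ``$P(C)$ Nash subanalytic'' is exactly the hypothesis under which those theorems are valid. In the applications this last condition will cost nothing: $P$ will be polynomial with homogeneous components and $C$ will be the intersection of $\sigma(\Lc_A)$ with a convex set, so that $P(C)$ is convex subanalytic and therefore automatically Nash subanalytic, by the remark preceding the statement together with \cite[Proposition 2.3]{BierstoneMilman}.
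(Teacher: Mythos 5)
Your proposal is correct and follows essentially the same route as the paper, which offers no independent argument: Theorem~\ref{teo:8} is presented there precisely as the combination of the composite function theorem of Bierstone--Milman (Theorem~0.2) with the continuous linear splitting of Bierstone--Schwarz (Theorem~0.2.1), after matching the hypotheses ($C$ closed subanalytic, $P$ analytic and proper on $C$, $P(C)$ Nash subanalytic) and identifying $\Ec_{\R^n}(C)$ with the quotient space those theorems work in. Your aside on gluing pointwise formal solutions over a fibre is unnecessary (and would need more care than a partition of unity), but it is not load-bearing, since, as you note, the statement already carries the fibrewise-uniform formulation that coincides with the notion of formal composite used in the cited references.
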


In order to simplify the notation, we shall simply say that $\psi $ is a formal composite of $P$ if the second condition of the statement holds.

Now, we are particularly interested in the case of Schwartz functions. 
The strategy developed in~\cite{AstengoDiBlasioRicci2} is the following: one first decomposes dyadically a given Schwartz function in the sum of dilates of a family of test functions with a suitable decay; then, one applies the section given by Theorem~\ref{teo:8}, truncates the resulting functions (so that they are still test functions), and finally sums their dilates.
In order to do that, however, one needs homogeneity.

\begin{theorem}\label{teo:7}
	Let $P\colon \R^n\to \R^m$ be a polynomial mapping, and assume that $\R^n$ and $\R^m$ are endowed with dilations such that $P(r\cdot x)=r\cdot P(x)$ for every $r>0$ and for every $x\in \R^n$.
	Let $C$ be a dilation-invariant subanalytic closed subset of $\R^n$, and assume that $P$ is proper on $C$ and that $P(C)$ is Nash subanalytic.
	Then, the canonical mapping
	\[
	\Phi\colon\Sc(\R^m)\ni \phi\mapsto \phi\circ P\in \Sc_{\R^n}(C)
	\]
	has a closed range and admits a continuous linear section defined on $\Phi(\Sc(\R^m))$.
	In addition, $\psi\in \Sc_{\R^n}(C)$ belongs to the image of $\Phi$ if and only if it is a formal composite of $P$.
\end{theorem}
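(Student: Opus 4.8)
The plan is to establish the two substantive assertions — that the image of $\Phi$ is exactly the set of formal composites, and that $\Phi$ admits a continuous linear section on that image — by one explicit construction, after which the closedness of the range follows formally. That $\Phi(\Sc(\R^m))$ is contained in the set of formal composites is immediate (take $\phi_y=\phi$ for all $y$). Moreover, once the equality $\Phi(\Sc(\R^m))=\{\text{formal composites}\}$ is known, the range is closed: by Theorem~\ref{teo:8} the formal composites form a closed subset of $\Ec_{\R^n}(C)$, the inclusion $\Sc_{\R^n}(C)\hookrightarrow \Ec_{\R^n}(C)$ is continuous, and the notion of formal composite is the same in both quotients, so $\Phi(\Sc(\R^m))$ is the preimage of a closed set under a continuous map. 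Thus everything reduces to producing, for a given formal composite $\psi\in\Sc_{\R^n}(C)$, a $\phi\in\Sc(\R^m)$ with $\phi\circ P=\psi$, depending continuously and linearly on $\psi$.

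The key device is a dyadic partition of unity pulled back \emph{from the target}. Fix $\tilde\chi\in C^\infty_c(\R^m)$, radial in a homogeneous norm, equal to $1$ near $0$, and put $\tilde\eta_0=\tilde\chi$, $\tilde\eta_j=\tilde\chi(2^{-j}\cdot\,)-\tilde\chi(2^{-(j-1)}\cdot\,)$ for $j\geq1$, so that $\sum_{j\geq0}\tilde\eta_j\equiv1$, each $\tilde\eta_j$ ($j\geq1$) is supported in a fixed dilate of an annulus, and $\tilde\eta_j=\tilde\eta_1\circ(2^{-(j-1)}\cdot\,)$. Since $\tilde\eta_j$ is compactly supported and $P$ is polynomial, $\tilde\eta_j\circ P$ is a Schwartz multiplier on $\R^n$, hence induces a continuous multiplier on $\Sc_{\R^n}(C)$. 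Writing $\psi=\sum_{j\geq0}\psi_j$ with $\psi_j\coloneqq(\tilde\eta_j\circ P)\psi$, the series converges in $\Sc_{\R^n}(C)$ (using that, on $C$, the tail supports escape to infinity by properness of $P$ on $C$). The crucial point is that each $\psi_j$ is \emph{again} a formal composite: if $\phi_y$ witnesses $\psi$ at $x\in C$ with $P(x)=y$, then $\tilde\eta_j\phi_y\in\Ec(\R^m)$ witnesses $\psi_j$ at $x$, since Taylor series multiply. Also, as $P$ is homogeneous, proper on $C$ and $C$ is dilation-invariant, $P$ does not vanish on $C\setminus\{0\}$ and $\mathrm{supp}(\tilde\eta_1\circ P)$ is contained in some $\{c\le\abs{x}\}$ (because $\abs{P(r\cdot x_0)}=r\abs{P(x_0)}$); in particular each $\psi_j$ has, on $C$, a compactly supported representative.

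Now I rescale and invoke the section from Theorem~\ref{teo:8}. For $j\geq1$ put $\Psi^{(j)}\coloneqq\psi_j\circ(2^{j-1}\cdot\,)$; by homogeneity of $P$ and dilation-invariance of $C$ this is again a formal composite, and, taking a Schwartz representative $\tilde\psi$ of $\psi$ and using that $\mathrm{supp}(\tilde\eta_1\circ P)$ keeps $\abs{x}$ bounded below, one checks that for all $k,R,N$ one has $\norm{\Psi^{(j)}}_{C^k(B_R)}\le C_{k,R,N}\,2^{-jN}$ (differentiating $\tilde\psi(2^{j-1}\cdot\,)$ only produces powers of $2^j$, absorbed by the Schwartz decay). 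Apply the continuous linear section of Theorem~\ref{teo:8} to each $\Psi^{(j)}$ to obtain $\Phi^{(j)}\in\Ec(\R^m)$ with $\Phi^{(j)}\circ P=\Psi^{(j)}$ in $\Ec_{\R^n}(C)$ and, by continuity, $\norm{\Phi^{(j)}}_{C^k(B_R)}\le C_{k,R,N}\,2^{-jN}$ for all $N$. Fix $\rho\in C^\infty_c(\R^m)$ equal to $1$ on $\mathrm{supp}\,\tilde\eta_1$ and set $\phi^{(j)}\coloneqq\rho\Phi^{(j)}$: then $\phi^{(j)}\in C^\infty_c(\R^m)$ is supported in a fixed compact set, satisfies $\norm{\phi^{(j)}}_{C^k}\le C_{k,N}2^{-jN}$ for all $N$, and $\phi^{(j)}\circ P=(\rho\circ P)\Psi^{(j)}=\Psi^{(j)}$ in $\Sc_{\R^n}(C)$, since $\Psi^{(j)}$ vanishes on $C$ off $\{\rho\circ P=1\}$. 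The term $j=0$ is treated directly, without rescaling, giving $\phi^{(0)}\in C^\infty_c(\R^m)$. Finally set $\phi\coloneqq\phi^{(0)}+\sum_{j\geq1}\phi^{(j)}\circ(2^{-(j-1)}\cdot\,)$: the summands have boundedly overlapping supports concentrated where $\abs{y}\sim 2^{j}$, where $(1+\abs{y})^M\sim 2^{jM}$ while each derivative of $\phi^{(j)}\circ(2^{-(j-1)}\cdot\,)$ is $O(2^{jd}\norm{\phi^{(j)}}_{C^k})=O(2^{j(d-N)})$, so $\phi\in\Sc(\R^m)$ with every Schwartz seminorm controlled by a seminorm of $\psi$ in $\Sc_{\R^n}(C)$; and $\phi\circ P=\sum_j\psi_j=\psi$ in $\Sc_{\R^n}(C)$ because $\sum_j\tilde\eta_j\circ P\equiv1$. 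As the whole construction is linear in $\psi$, $\psi\mapsto\phi$ is the desired continuous linear section, and combined with the trivial inclusion and the closedness argument this proves the theorem.

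I expect the main obstacle to be precisely the compatibility of the dyadic decomposition with the composition structure: a partition of unity on $\R^n$ would not split $\psi$ into a sum of formal composites, and it is the pullback of a partition of unity from $\R^m$ — legitimate because $\tilde\eta_j\circ P$ is a Schwartz multiplier and, by multiplicativity of Taylor series, preserves the class of formal composites — that makes the argument work. A secondary delicate point is the uniformity in $j$ of the estimates inherited from the section in Theorem~\ref{teo:8}, together with the verification that truncating $\Phi^{(j)}$ by the fixed cutoff $\rho$ leaves the identity $\phi^{(j)}\circ P=\Psi^{(j)}$ intact modulo functions vanishing on $C$; both hinge on the support localization $\mathrm{supp}(\tilde\eta_1\circ P)\subseteq\{c\le\abs{x}\}$ furnished by properness and homogeneity.
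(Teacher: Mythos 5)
Your proposal is correct and follows essentially the same route as the paper, namely the Astengo--Di Blasio--Ricci scheme that the paper's proof cites: decompose dyadically by cutoffs pulled back through $P$, rescale each piece to unit scale, apply the continuous linear section of Theorem~\ref{teo:8}, truncate, and resum the dilates, with the homogeneity of $P$ and the dilation-invariance of $C$ playing exactly the role the paper indicates. The only (immaterial) difference is organizational: you obtain the formal-composite characterization directly from the construction and then deduce closedness of the range via the continuous map into $\Ec_{\R^n}(C)$, whereas the paper derives that characterization from the compactly supported case of Theorem~\ref{teo:8} by approximation, using the closed range established first.
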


As a matter of fact, in our applications $\R^n$ will be $E_{\Lc_A}$, while $\R^m$ will be $E_{P(\Lc_A)}$; $C$ will be (a subset of) $\sigma(\Lc_A)$. 
Then, Theorem~\ref{teo:7} gives some sufficient conditions in order that some $f\in \Sc_{P(\Lc_A)}(G)$, which has a Schwartz multiplier on $E_{\Lc_A}$, should have a Schwartz multiplier on $E_{P(\Lc_A)}$ (cf.~Section~\ref{sec:8}).

Notice, however, that sometimes it is convenient to take $C$ so as to be a portion of $\sigma(\Lc_A)$ such that $P(C)=\sigma(P(\Lc_A))$, since $\sigma(\Lc_A)$ need \emph{not} be subanalytic.

\begin{proof}
	For the first assertion, simply argue as in the proof of~\cite[Theorem 6.1]{AstengoDiBlasioRicci2} replacing the linear section provided by Schwarz and Mather with that of Theorem~\ref{teo:8}.
	
	As for the second part of the statement, notice first that it follows easily from Theorem~\ref{teo:8} when $\psi$ is compactly supported; since the image of $\Phi$ is closed, it follows by approximation in the general case.
\end{proof}

In the following result we give a simple but very useful application of  Theorem~\ref{teo:7}.

\begin{corollary}\label{cor:A:7}
	Let $V$ and $W$ be two finite-dimensional vector spaces,  $C$ a subanalytic closed convex cone in $V$, and $L$ a linear mapping of $V$ into  $W$ which is proper on $C$. 
	Take $m_1\in \Sc(V)$ and assume that there is $m_2\colon W\to \C$ such that $m_1=m_2\circ L$ on $C$. 
	Then, there is $m_3\in \Sc(W)$ such that $m_1=m_3\circ L$ on $C$.
\end{corollary}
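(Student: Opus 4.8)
The plan is to deduce this from Theorem~\ref{teo:7}, applied with $\R^n = V$, $\R^m = W$, and $P = L$. The only issue is that Theorem~\ref{teo:7} requires both $\R^n$ and $\R^m$ to be equipped with dilations under which $L$ is homogeneous, and the hypothesis only mentions a cone $C \subseteq V$. So the first step is to manufacture compatible dilation structures. Since $L$ is linear, equipping $V$ with the standard scalar dilations $r \cdot v = rv$ and $W$ with $r \cdot w = rw$ makes $L$ homogeneous of degree $1$, and these are genuine dilation families on the vector spaces. The cone $C$ is then automatically dilation-invariant for the $V$-dilations, since $C$ is a cone; it is closed and, by hypothesis, subanalytic; and $L$ is proper on $C$ by hypothesis.

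The remaining hypothesis of Theorem~\ref{teo:7} to check is that $L(C)$ is Nash subanalytic. Here I would invoke the observation made in the text just before Theorem~\ref{teo:8}: a convex subanalytic set is automatically Nash subanalytic, because it is contained in an affine subspace of the same dimension. Now $L(C)$ is the image of a convex set under a linear map, hence convex; and it is subanalytic because $L$ is a polynomial (in fact linear, hence analytic) map that is proper on $C$, so the image of a closed subanalytic set under a proper analytic map is subanalytic (this is a standard stability property of subanalytic sets, and it is also exactly the kind of conclusion the preamble to Theorem~\ref{teo:8} presupposes). Therefore $L(C)$ is convex and subanalytic, hence Nash subanalytic.

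With all hypotheses of Theorem~\ref{teo:7} verified, I get that the canonical map $\Phi\colon \Sc(W) \to \Sc_V(C)$, $\phi \mapsto \phi \circ L$, has closed range, and that an element $\psi \in \Sc_V(C)$ lies in the image of $\Phi$ if and only if $\psi$ is a formal composite of $L$. It remains to check that the class of $m_1$ in $\Sc_V(C)$ is a formal composite of $L$. For this I would use the hypothesis $m_1 = m_2 \circ L$ on $C$ directly at the level of Taylor series: fix $y \in W$; if $y \notin L(C)$ take $\phi_y = 0$; if $y \in L(C)$, I need a $C^\infty$ (indeed it suffices to take a polynomial, or any smooth function) $\phi_y$ on $W$ whose pullback $\phi_y \circ L$ agrees with $m_1$ to infinite order along $C \cap L^{-1}(y)$ modulo functions flat on $C$. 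Since $L$ is linear and $C \cap L^{-1}(y)$ is an affine slice, one can build such a $\phi_y$ by a local argument; alternatively, because $L$ is linear one can split $V = \ker L \oplus V'$ with $L|_{V'}$ injective, and transport a Taylor expansion of $m_1$ in the $V'$-directions to $W$, using that $m_1$ restricted to $C$ factors through $L$.

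The main obstacle, and the only place requiring real care, is this last verification that $m_1$ is a formal composite of $L$: one must pass from the pointwise identity $m_1 = m_2\circ L$ on $C$ (where $m_2$ is merely some function, not assumed smooth) to a statement about Taylor series, for which the flexibility of choosing a \emph{different} $\phi_y$ at each $y$ — rather than a single global smooth extension of $m_2$ — is essential. The linearity of $L$ is what makes this tractable: the fibers are affine, the derivatives of $m_1$ in the fiber directions are forced to vanish on $C$ because $m_1$ is constant on the (relatively open-in-$C$, by the convex-cone structure) fiber slices, and the transverse derivatives can be matched by a polynomial in the $W$-coordinates. Once that is in hand, Theorem~\ref{teo:7} produces $m_3 \in \Sc(W)$ with $m_3 \circ L = m_1$ on $C$, which is the claim.
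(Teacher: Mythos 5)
Your reduction to Theorem~\ref{teo:7} is the same as the paper's, and that part is fine: with the standard scalar dilations on $V$ and $W$ the linear map $L$ is homogeneous, the closed convex cone $C$ is dilation-invariant, and $L(C)$ is convex and subanalytic (image of a closed subanalytic set under an analytic map which is proper on it), hence Nash subanalytic, exactly as in the printed proof. So everything hinges on the step you yourself flag as ``the only place requiring real care'', namely that the class of $m_1$ in $\Sc_V(C)$ is a formal composite of $L$ --- and this step is not actually proved in your proposal; it is the entire content of the paper's argument, and the heuristic you give does not contain the idea that makes it work.

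Concretely, the difficulty is uniformity along a fiber: for $w\in L(C)$ you must produce a \emph{single} $\phi_w\in\Ec(W)$ whose pullback agrees with the Taylor series of $m_1$, up to the Taylor series of a smooth function vanishing on $C$, at \emph{every} point of $C\cap L^{-1}(w)$ simultaneously. The pointwise identity $m_1=m_2\circ L$ on $C$, with $m_2$ an arbitrary (not even measurable, let alone smooth) function, gives no a priori compatibility between the transverse jets of $m_1$ at distinct points of the same fiber; the assertion that ``the transverse derivatives can be matched by a polynomial in the $W$-coordinates'' is precisely what has to be established. Note also that an arbitrary splitting $V=\ker L\oplus V'$ does not interact with the hypothesis: the projection of a point of $C$ onto $V'$ along $\ker L$ need not lie in $C$, so one cannot compare $m_1\circ L'\circ L$ with $m_1$ via $m_1=m_2\circ L$ on $C$. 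The missing construction in the paper is an \emph{adapted} section: after reducing to the case where $C$ has vertex $0$ and non-empty interior, for a given $x\in C$ one chooses $V'$ spanned by a free family \emph{contained in $C$} and containing $x$ (possible since properness forces $C\cap\ker L=\Set{0}$), sets $C'\coloneqq C\cap V'$, $L'$ the corresponding section and $\phi\coloneqq m_1\circ L'$; then for every $y\in C\cap(x+\ker L)$ and $z\in C'$ one has $(L'\circ L)(y+z)=x+z\in C$, so that $m_1=\phi\circ L$ on the convex set $C\cap(y+C'+\ker L)$, which is thick enough at $y$ (because $C'$ has non-empty interior in $V'$) to force the Taylor series of $m_1$ and $\phi\circ L$ to coincide at $y$; since this holds at every point of the fiber, the single jet of $\phi$ at $w$ works, and Theorem~\ref{teo:7} applies. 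Without this (or an equivalent) argument your proof is incomplete at its decisive point; the remaining assertions in your sketch (vanishing of fiber-direction derivatives on $C$, Borel-type matching of transverse data) neither address the fiber-wise consistency nor the points where the fiber slice of $C$ degenerates, which is exactly what the adapted section handles.
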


\begin{proof}
	Observe first that we may assume $C$ has vertex $0$ and has non-empty interior.
	Observe, by the way, that $L(C)$ is subanalytic (cf.~\cite[Theorem 0.1 and Proposition 3.13]{BierstoneMilman2}), hence Nash subanalytic.
	Now, fix $x\in C$. 
	Since the interior of $C$ is non-empty, it is clear that $C$ is a total subset of $V$, so that we may find a free family $(v_j)_{j\in J}$ in $C$ which generates an algebraic complement $V'$ of $\ker L$ in $V$. 
	In addition, since either $x=0$ or $x\not \in \ker L$, we may assume that $x\in V'$.  
	Let $L'\colon W\to V$ be the composite of the inverse of the restriction of $L$ to $V'$ with the natural immersion of $V'$ in $V$. 
	Then, $L'$ is a linear section of $L$. 
	
	Define $m'\coloneqq m_1\circ L'$, so that $m'\in \Ec(W)$. 
	Next, define $C'\coloneqq V'\cap C$, so that $C'$ is a closed convex cone with non-empty interior in $V'$, since it contains the non-empty open set $\sum_{j\in J} \R_+^* v_j$. 
	Take $z\in C'$ and any $y\in C\cap[x+\ker L]$. 
	Then,  $x+z=(L'\circ L) (x+z)=(L' \circ L )(y+z)$, so that $m_1=m'\circ L$ on $y+C'$. 
	Since $m_1$ is constant on the intersections of $C$ with the translates of $\ker L$, the same holds on $  C\cap (y+C'+\ker L)$. Now, denote by $\open{C'}$ the interior of $C'$ in $V'$. 
	Then, $y+\open{C'}+\ker L$ is an open convex set and $y$ is adherent to $C\cap (y+\open{C'}+\ker L)$, so that the Taylor polynomials of every fixed order of $m_1$ and $m'\circ L$ about $y$ coincide on $C\cap (y+\open{C'}+\ker L)$, hence on $V$. 
	Since this holds for every $y\in C\cap [x+\ker L]$,  Theorem~\ref{teo:7} implies that there is $m_3\in \Sc(W)$ such that $m_1=m_3\circ L$ on $C$.
\end{proof}

\section{Quadratic Operators on $2$-Step Stratified Groups}

A connected Lie group $G$ is called $2$-step  nilpotent if $[\gf,[\gf,\gf]]=0$, where $\gf$ is the Lie algebra of $G$.
The group $G$ is $2$-step stratified if, in addition, it is simply connected and  $\gf=\gf_1\oplus \gf_2$, with $[\gf_1,\gf_1]=[\gf,\gf]=\gf_2$.

Notice that, if $G$ is a simply connected $2$-step nilpotent group, then it is `stratifiable,' that is, for every algebraic complement $\gf_1$ of $\gf_2\coloneqq [\gf,\gf]$, the decomposition $\gf=\gf_1 \oplus \gf_2$ turns $G$ into a stratified group. Nevertheless, $G$ may be endowed with many different structures of a stratified group; when we speak of a $2$-step stratified group, we then mean that an algebraic complement of $[\gf,\gf]$ is fixed. 

A $2$-step stratified group is endowed with the canonical dilations, that is $r\cdot (X+Y)=r X+ r^2 Y$
for every $r>0$, for every $X\in \gf_1$ and for every $Y\in \gf_2$. Thus, $G$ becomes a homogeneous group. 

\begin{definition}
	Let $G$ be a $2$-step stratified group. Then, for every $\omega\in \gf_2^*$ we shall define
	\[
	B_\omega\colon \gf_1\times \gf_1\ni(X,Y)\mapsto \langle \omega, [X,Y]\rangle.
	\]
	Then,  $G$ is an $MW^+$ group if $B_\omega$ is non-degenerate for some $\omega\in \gf_2^*$ (cf.~\cite{MooreWolf} and also~\cite{MullerRicci}).
	A Heisenberg group is an $MW^+$ group with one-dimensional centre.
\end{definition}

\begin{definition}
	Take $d\in \N^*$, and let $\gf$ be the free Lie algebra on $d$ generators. Then, the quotient $\gf'$ of $\gf$ by its ideal $[\gf, [\gf,\gf]]$ is the free $2$-step nilpotent Lie algebra on $d$ generators.
	The simply connected Lie group with Lie algebra $\gf'$ is called the free $2$-step nilpotent Lie group on $d$ generators.
\end{definition}

Now,  to every symmetric bilinear form $Q$ on $\gf_1^*$ we can associate a differential operator on $G$ as follows:
\[
\Lc\coloneqq -\sum_{\ell,\ell'} Q(X_\ell^*, X_{\ell'}^*) X_{\ell} X_{\ell'},
\] 
where $(X_\ell)$ is a basis of $\gf_1$ with dual basis $(X^*_\ell)$. As the reader my verify, $\Lc$ does not depend on the choice of $(X_\ell)$; actually, one may prove that $-\Lc$ is the symmetrization of the quadratic form induced by $Q$ on $\gf^*$ (cf.~\cite[Theorem 4.3]{Helgason}).

\begin{lemma}\label{lem:10:10}
	Let $Q$ be a symmetric bilinear form on $\gf_1^*$, and let $\Lc$ be the associated operator. 
	Then, $\Lc$ is formally self-adjoint if and only if $Q$ is real. 
	In addition, $\Lc$ is formally self-adjoint and hypoelliptic if and only if $Q$ is non-degenerate and either positive or negative.
\end{lemma}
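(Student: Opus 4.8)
The plan is to prove the two equivalences separately, using Theorem~\ref{teo:1} to deal with hypoellipticity.

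\emph{Formal self-adjointness.} Since $G$ is nilpotent, hence unimodular, the formal adjoint of a left-invariant vector field $X$ on $G$ is $-X$, so that $(X_\ell X_{\ell'})^*=X_{\ell'}X_\ell$. Feeding this into the definition of $\Lc$ and using the symmetry of $Q$, I would obtain $\Lc^*=-\sum_{\ell,\ell'}\overline{Q(X_\ell^*,X_{\ell'}^*)}\,X_\ell X_{\ell'}$, that is, $\Lc^*$ is the operator associated with the complex-conjugate form $\overline Q$. Since $-\Lc$ is the symmetrization of the quadratic element of $\Uf_\C(\gf)$ determined by $Q$ (as recorded before the statement), the assignment $Q\mapsto\Lc$ is injective; hence $\Lc=\Lc^*$ if and only if $Q=\overline Q$, i.e.\ $Q$ is real. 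From now on one may assume $Q$ real, so that $\Lc$ is formally self-adjoint.

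\emph{The ``if'' direction of the second equivalence.} The operator $\Lc$ is homogeneous of degree $2$, left-invariant, has no term of order $0$, and (by the first part) is formally self-adjoint, so Theorem~\ref{teo:1} applies: $\Lc$ is hypoelliptic iff $\dd\pi(\Lc)$ is injective on $C^\infty(\pi)$ for every non-trivial continuous irreducible unitary representation $\pi$ of $G$. If $Q$ is non-degenerate and positive (the negative case being identical up to a global sign), I would write $Q$ as a sum of squares of linear forms on $\gf_1^*$ to produce $Z_1,\dots,Z_r\in\gf_1$ spanning $\gf_1$ with $\Lc=-\sum_k Z_k^2$; since $G$ is stratified, the $Z_k$ together with their iterated brackets span $\gf$, so $\Lc$ is a sub-Laplacian on $G$ — a positive Rockland operator — and hence hypoelliptic (Hörmander's theorem, or the representation criterion of Theorem~\ref{teo:1} applied to $\Lc$ itself). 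Thus $\Lc$ is formally self-adjoint and hypoelliptic.

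\emph{The ``only if'' direction.} Assume $Q$ real but neither positive- nor negative-definite. The real quadratic form $\xi\mapsto Q(\xi,\xi)$ on $\gf_1^*$ then vanishes at some $\xi\neq0$: this follows from Sylvester's law of inertia, or directly — if the form took values of both signs, an intermediate-value argument along a segment joining two vectors realising opposite signs produces a nonzero zero; if it took values of one sign only it would be semidefinite, hence (being neither definite) would already vanish off the origin. Let $\chi\in\gf^*$ be the extension of $\xi$ by $0$ on $\gf_2$. Since $\chi$ vanishes on $[\gf,\gf]=\gf_2$, the formula $\pi_\chi(\exp W)\coloneqq e^{i\langle\chi,W\rangle}$ defines a one-dimensional (hence irreducible) continuous unitary representation of $G$ on $\C$, non-trivial because $\chi\neq0$; moreover $\dd\pi_\chi(W)=i\langle\chi,W\rangle$ for every $W\in\gf$, whence
\[
\dd\pi_\chi(\Lc)=-\sum_{\ell,\ell'}Q(X_\ell^*,X_{\ell'}^*)\,\dd\pi_\chi(X_\ell)\,\dd\pi_\chi(X_{\ell'})=\sum_{\ell,\ell'}Q(X_\ell^*,X_{\ell'}^*)\,\langle\chi,X_\ell\rangle\,\langle\chi,X_{\ell'}\rangle=Q(\xi,\xi)=0.
\]
So $\dd\pi_\chi(\Lc)$ is the zero operator on the nonzero space $C^\infty(\pi_\chi)=\C$, hence not injective, and Theorem~\ref{teo:1} shows that $\Lc$ is not hypoelliptic. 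Combined with the first part, this gives: if $\Lc$ is formally self-adjoint and hypoelliptic, then $Q$ is real and definite, i.e.\ non-degenerate and either positive or negative.

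\emph{Main obstacle.} The only non-formal point is the choice of representation witnessing non-hypoellipticity in the degenerate or indefinite case. The ``generic'' (Schrödinger-type) representations attached to those $\omega\in\gf_2^*$ for which $B_\omega$ is non-degenerate do \emph{not} detect the failure — on them $\dd\pi(\Lc)$ is still an injective quadratic differential operator — so one must descend to lower-dimensional representations, here the one-dimensional characters, which are available precisely because $\gf_1\neq0$ is an abelian quotient of $G$. Once this is realised, the rest is the short computation above.
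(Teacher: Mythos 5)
Your proof is correct, and its first half (formal adjoint associated with $\overline Q$, plus injectivity of $Q\mapsto \Lc$ via symmetrization) is exactly the paper's argument for the first assertion. For the second assertion you diverge from the paper in the necessity direction: the paper disposes of the whole equivalence by citing H\"ormander's paper \cite{Hormander3}, which contains both the sufficiency (write $\Lc=-\sum_k Z_k^2$ with the $Z_k$ spanning $\gf_1$, so the bracket condition holds, exactly as you do) and the necessity (the principal symbol of a hypoelliptic second-order operator cannot change sign, and for analytic --- in particular left-invariant --- vector fields the rank condition is also necessary). You instead derive necessity from condition~2 of Theorem~\ref{teo:1}: if $Q$ is real but degenerate or indefinite, a nonzero isotropic $\xi\in\gf_1^*$ exists, and the associated one-dimensional character $\pi_\chi$ (well defined because $\chi$ kills $[\gf,\gf]=\gf_2$) satisfies $\dd\pi_\chi(\Lc)=Q(\xi,\xi)=0$, so injectivity fails on a non-trivial irreducible representation. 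This is a genuinely different mechanism: it keeps the argument self-contained within the paper's Rockland framework and only uses elementary linear algebra, at the price of being longer; the paper's citation is shorter but leans on the necessity results of \cite{Hormander3} rather than on representation theory. Your remark that the ``generic'' Schr\"odinger-type representations do not detect the failure, so one must pass to characters of the abelianization, is accurate and is precisely why the one-dimensional representations are the right witnesses here.
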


\begin{proof}
	The first assertion follows from the fact that the formal adjoint of $\Lc$ is associated with $\overline Q$. 
	The last assertion then follows from~\cite{Hormander3}.
\end{proof}

Next, we  show how to put $\Lc$ in a particularly convenient form according to the chosen $\omega\in \gf_2^*$.

\begin{definition}
	Let $V$ be a vector space and $\Phi$ a bilinear form on $V$. Then, we shall define
	\[
	\dd_\Phi\colon V\ni v \mapsto \Phi(\,\cdot\,,v)\in V^*.
	\]
\end{definition}

Notice that any algebraic complement of the radical of a skew-symmetric bilinear form on a finite-dimensional vector space is symplectic. Therefore, by~\cite[Corollary 5.6.3]{AbrahamMarsden} we deduce the following result.

\begin{proposition}\label{prop:10:8}
	Let $V$ be a finite-dimensional vector space over $\R$, let $\sigma$ be a skew-symmetric bilinear form on $V$, and let $Q$ be a positive, non-degenerate bilinear form on $V$.
	Then, there are a basis $(v_j)_{j=1,\dots,m}$ of $V$ and a positive integer $n\meg \frac{m}{2}$ such that the following hold:
	\begin{itemize}
		\item $Q(v_j,v_j)=Q(v_{n+j},v_{n+j})>0$ for every $j=1,\dots, n$;
		
		\item $Q(v_j,v_k)=0$ for every $j,k\in \Set{1,\dots,m}$ such that $j\neq k$ and either $j\meg 2 n$ or $k\meg 2 n$;
		
		\item for every $j,k=1,\dots, m$,
		\[
		\sigma(v_j,v_k)=\begin{cases}
		1 & \text{if $j\in \Set{1,\dots,n}$ and $k=n+j$};\\
		-1 & \text{if $j\in \Set{n+1,\dots,2 n}$ and $k=j-n$};\\
		0 & \text{otherwise}.
		\end{cases}
		\]
	\end{itemize} 
\end{proposition}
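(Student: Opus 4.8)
The plan is to isolate the radical of $\sigma$ in a $Q$-compatible way and then invoke the cited simultaneous normal form \cite[Corollary 5.6.3]{AbrahamMarsden} on the remaining symplectic block. Concretely, I would put $\mathfrak{r}\coloneqq\{v\in V\colon \sigma(v,\,\cdot\,)=0\}$, the radical of $\sigma$, and let $W\coloneqq\mathfrak{r}^{\perp}$ be its orthogonal complement with respect to $Q$ (regarded as an inner product, $Q$ being positive and non-degenerate). Since $Q$ is definite, $\dim W=\dim V-\dim\mathfrak{r}$ and $W\cap\mathfrak{r}=\{0\}$ (an element of the intersection is $Q$-orthogonal to itself, hence zero), so $V=W\oplus\mathfrak{r}$. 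By the observation recalled just before the statement, $\restr{\sigma}{W}$ is non-degenerate, hence symplectic; in particular $\dim W$ is even, say $\dim W=2n$, with $n\Meg 1$ provided $\sigma\neq 0$, which is implicit in the statement (it requires $n$ to be a positive integer).

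Next I would apply \cite[Corollary 5.6.3]{AbrahamMarsden} to the symplectic space $(W,\restr{\sigma}{W})$ together with the positive definite form $\restr{Q}{W}$: this yields a basis $v_1,\dots,v_n,v_{n+1},\dots,v_{2n}$ of $W$ that is a symplectic (Darboux) basis for $\restr{\sigma}{W}$---so that the third condition of the statement holds for all $j,k\meg 2n$, the two displayed cases being exchanged by skew-symmetry of $\sigma$---and in which $\restr{Q}{W}$ is diagonal with $Q(v_j,v_j)=Q(v_{n+j},v_{n+j})$ for $j=1,\dots,n$; these numbers are positive since $Q$ is. This already gives the first condition and the part of the second condition in which both indices are $\meg 2n$.

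Finally I would choose any basis $v_{2n+1},\dots,v_m$ of $\mathfrak{r}$---no normalisation on it being needed, which is precisely why the second condition imposes nothing when both indices exceed $2n$---so that $(v_j)_{j=1,\dots,m}$ is a basis of $V$. For $j\meg 2n<k$ one has $v_j\in W=\mathfrak{r}^{\perp}$ and $v_k\in\mathfrak{r}$, hence $Q(v_j,v_k)=0$; together with the previous step this gives the second condition in full. And if $j>2n$ or $k>2n$, then the corresponding $v_j\in\mathfrak{r}$, so $\sigma(v_j,\,\cdot\,)=0$, which settles the remaining cases of the third condition. Taking \cite[Corollary 5.6.3]{AbrahamMarsden} as given, there is essentially no real obstacle here; the only points requiring a little care are the verification that $\mathfrak{r}^{\perp}$ is a complement of $\mathfrak{r}$ on which $\sigma$ remains non-degenerate (which follows from the positivity of $Q$ and the recalled remark) and the bookkeeping of the sign conventions in the third condition.
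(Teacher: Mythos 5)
Your proof is correct and follows essentially the same route as the paper: the paper's argument is precisely the remark that an algebraic complement of the radical of $\sigma$ (here, taken to be the $Q$-orthogonal complement, as you do, so that the $Q$-orthogonality between the two blocks comes for free) is symplectic, followed by an application of \cite[Corollary 5.6.3]{AbrahamMarsden} to that block. Your additional bookkeeping (the verification that $\mathfrak{r}^{\perp}$ is a genuine complement, the sign convention, and the observation that $n\Meg 1$ presupposes $\sigma\neq 0$) is accurate and consistent with what the paper leaves implicit.
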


Observe that $Q(v_j,v_j)$ is the eigenvalue of $\abs{\dd_{Q}^{-1}\circ \dd_\sigma }$ corresponding to $v_j$, where the absolute value is computed with respect to $Q$.

\section{Plancherel Measure and Integral Kernel}\label{sec:11}

In this section, $G$ denotes a $2$-step stratified group of dimension $n$ which does \emph{not} satisfy the $MW^+$ condition, $Q$ a  symmetric bilinear form on $\gf_1^*$, and $(T_1,\dots, T_{n_2})$ a basis of $\gf_2$.
We shall denote by $\Lc $ the sub-Laplacian induced by $Q $ and we shall assume that $\Lc_A\coloneqq (\Lc, (-i T_k)_{k=1,\dots,n_2})$ is a Rockland family, that is, that $\Lc$ is a hypoelliptic sub-Laplacian, up to a sign. 
Indeed, if $\pi_0$ is the projection of $G$ onto its abelianization, then $\dd \pi_0(\Lc_A)$ is a Rockland family, so that $\Fc(\dd \pi_0(\Lc_A))$ vanishes only at $0$. 
Since $\dd \pi_0(T_k)=0$  for every $k=1,\dots, n_2$, this implies that $Q$ is non-degenerate and either positive or negative; hence, $\Lc$ is a hypoelliptic sub-Laplacian, up to a sign.
We may then assume that $Q$ is positive and non-degenerate.

We shall also endow $\gf$ with a scalar product for which  $\gf_1$ and $\gf_2$ are orthogonal,
and which induces $\widehat Q$ on $\gf_1$.
Then, we may endow $\gf$ with the translation-invariant measure $\Hc^n$;
up to a normalization, \emph{we may then assume that $(\exp_G)_*(\Hc^n)$ is the chosen Haar measure on $G$.}
We  shall endow $\gf_2^*$ with the scalar product induced by that of $\gf_2$, and then with the corresponding Lebesgue measure. 

Define
\[
J_{Q, \omega}\coloneqq \dd_{Q}\circ \dd_{B_\omega}\colon \gf_1 \to \gf_1
\]
for every $\omega\in \gf_2^*$, and define $d\coloneqq \min_{\omega\in \gf_2^*}\dim \ker \dd_{B_\omega}$, so that $d>0$ since $G$ is not an $MW^+$ group.
We denote by $W$ the set of $\omega\in \gf_2^*$ such that $\dim \ker \dd_{B_\omega}>d$. Define $n_1\coloneqq \frac{1}{2}(\dim \gf_1-d)$, and observe that $n_1=0$ if and only if $G$ is abelian.

We denote by $\Omega$ the set of $\omega\in \gf_2^*\setminus W$ where $\card\left(\sigma( \abs{J_{Q,\omega} } )\setminus\Set{0}\right)$ attains its maximum $\overline h$. 
As the next lemma shows, if $G$ is not abelian, then  $\Omega$ is open and dense.

\begin{lemma}\label{lem:2}
	The sets $W$ and $\gf_2^*\setminus \Omega$ are algebraic varieties.
\end{lemma}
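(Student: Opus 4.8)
The plan is, in each case, to exhibit the set as the common zero locus of a finite family of polynomial functions on the finite-dimensional real vector space $\gf_2^*$.

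\emph{The set $W$.} Fix bases of $\gf_1$ and of $\gf_2$; then the matrix of $\dd_{B_\omega}\colon\gf_1\to\gf_1^*$ has entries that are linear in $\omega\in\gf_2^*$. By definition $d=\min_\omega\dim\ker\dd_{B_\omega}$, so the maximal value of $\mathrm{rank}\,\dd_{B_\omega}$ is $\dim\gf_1-d$, and
\[
W=\Set{\omega\in\gf_2^*\colon \mathrm{rank}\,\dd_{B_\omega}\meg\dim\gf_1-d-1}
\]
is the common zero set of all the $(\dim\gf_1-d)\times(\dim\gf_1-d)$ minors of the matrix of $\dd_{B_\omega}$. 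These minors are (homogeneous) polynomials in $\omega$, so $W$ is an algebraic variety (in fact a cone).

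\emph{The set $\gf_2^*\setminus\Omega$.} If $\overline h=0$ then $\Omega=\gf_2^*\setminus W$ and $\gf_2^*\setminus\Omega=W$, so we may assume $\overline h\Meg1$ (equivalently $n_1\Meg1$). Fix a $\widehat Q$-orthonormal basis of $\gf_1$ and set $M_\omega:=J_{Q,\omega}^*J_{Q,\omega}$, the adjoint being taken with respect to $\widehat Q$; then $M_\omega$ is a symmetric, positive semi-definite endomorphism of $\gf_1$ whose matrix entries are polynomials in $\omega$ (indeed $J_{Q,\omega}=\dd_Q\circ\dd_{B_\omega}$ is linear in $\omega$). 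Since $\abs{J_{Q,\omega}}=M_\omega^{1/2}$, the non-zero elements of $\sigma(\abs{J_{Q,\omega}})$ are exactly the square roots of the non-zero eigenvalues of $M_\omega$; hence $\card(\sigma(\abs{J_{Q,\omega}})\setminus\Set0)$ equals the number of distinct non-zero eigenvalues of $M_\omega$. Moreover $\ker M_\omega=\ker J_{Q,\omega}=\ker\dd_{B_\omega}$ (the last equality because $\dd_Q$ is invertible), which has dimension $\Meg d$ for every $\omega$, and exactly $d$ for $\omega\notin W$. As $M_\omega$ is symmetric, the multiplicity of $0$ as a root of the characteristic polynomial $\chi_{M_\omega}$ equals $\dim\ker M_\omega$, so $\chi_{M_\omega}(t)$ is divisible by $t^d$ for \emph{every} $\omega$, and
\[
\psi_\omega(t):=t^{-d}\,\chi_{M_\omega}(t)=t^{2n_1}+a_1(\omega)\,t^{2n_1-1}+\dots+a_{2n_1}(\omega)
\]
is a monic polynomial of degree $2n_1$ whose coefficients $a_j$ are polynomials on $\gf_2^*$. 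For $\omega\notin W$ the roots of $\psi_\omega$ are precisely the non-zero eigenvalues of $M_\omega$, listed with multiplicity, so $\card(\sigma(\abs{J_{Q,\omega}})\setminus\Set0)$ equals the number of distinct roots of $\psi_\omega$, namely $2n_1-\deg\gcd(\psi_\omega,\psi_\omega')$.

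It then remains to express the condition ``$\psi_\omega$ has fewer than $\overline h$ distinct roots'' by polynomial equations in $\omega$. For this I use the classical fact that, for a monic $\psi$ of degree $N\Meg1$ over a field of characteristic $0$, the Sylvester map $(u,v)\mapsto u\psi+v\psi'$ on pairs with $\deg u<N-1$ and $\deg v<N$ has kernel of dimension $\deg\gcd(\psi,\psi')$, so its $(2N-1)\times(2N-1)$ matrix $\mathrm{Syl}(\psi,\psi')$ has rank $2N-1-\deg\gcd(\psi,\psi')$. Taking $N=2n_1$, $\psi=\psi_\omega$, and setting $\ell:=2n_1-\overline h+1$, for $\omega\notin W$ the condition $\card(\sigma(\abs{J_{Q,\omega}})\setminus\Set0)\meg\overline h-1$ is equivalent to $\deg\gcd(\psi_\omega,\psi_\omega')\Meg\ell$, hence to $\mathrm{rank}\,\mathrm{Syl}(\psi_\omega,\psi_\omega')\meg 2n_1+\overline h-2$, i.e.\ to the simultaneous vanishing of all $(2n_1+\overline h-1)$-rowed minors of $\mathrm{Syl}(\psi_\omega,\psi_\omega')$. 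Since the entries of $\mathrm{Syl}(\psi_\omega,\psi_\omega')$ are, up to integer factors, the $a_j(\omega)$, these minors form a finite family of polynomials on $\gf_2^*$; let $Z$ be their common zero set. Because $\overline h$ is the maximum of $\card(\sigma(\abs{J_{Q,\omega}})\setminus\Set0)$ over $\gf_2^*\setminus W$, we have $Z\cap(\gf_2^*\setminus W)=(\gf_2^*\setminus W)\setminus\Omega$, and therefore $\gf_2^*\setminus\Omega=W\cup(Z\setminus W)=W\cup Z$, a union of two algebraic varieties, hence an algebraic variety.

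The only mildly delicate points are the uniform divisibility $t^d\mid\chi_{M_\omega}(t)$ — needed so that $\psi_\omega$ genuinely has polynomial coefficients, and which follows from $\dim\ker M_\omega\Meg d$ together with the diagonalizability of the symmetric operator $M_\omega$ — and the fact that the identification of $\card(\sigma(\abs{J_{Q,\omega}})\setminus\Set0)$ with the number of distinct roots of $\psi_\omega$ is valid precisely on $\gf_2^*\setminus W$; everything else is the routine translation of rank conditions into the vanishing of minors, and I do not expect any genuine obstacle.
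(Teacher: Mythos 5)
Your proof is correct, and it reaches the conclusion by a somewhat different mechanism than the paper. Both arguments hinge on the same object: the reduced characteristic polynomial of $-J_{Q,\omega}^2=J_{Q,\omega}^*J_{Q,\omega}$ (your $\psi_\omega$, the paper's $P_\omega$), whose coefficients are polynomial in $\omega$ precisely because the zero eigenvalue has multiplicity at least $d$ everywhere — a point you rightly flag and justify via diagonalizability. The difference lies in how the condition ``fewer than $\overline h$ distinct non-zero eigenvalues'' is turned into polynomial equations: the paper constructs, for each $k$, an explicit $\Sf_{n_1}$-invariant polynomial $P_k(X_1,\dots,X_{n_1})=\prod_{\Kc}\sum_{K\in\Kc}\sum_{k_1,k_2\in K}(X_{k_1}-X_{k_2})^2$ evaluated at the squared eigenvalues, and invokes the fundamental theorem of symmetric functions to see that $\omega\mapsto P_k(\widetilde\mi_{1,\omega}^2,\dots,\widetilde\mi_{n_1,\omega}^2)$ is a polynomial, so that $\gf_2^*\setminus\Omega=W\cup W_{\overline h-1}$ with each piece the zero set of a single polynomial; you instead encode $\deg\gcd(\psi_\omega,\psi_\omega')\Meg\ell$ through the rank of the Sylvester matrix and the vanishing of its $(2n_1+\overline h-1)$-rowed minors, which is the standard subresultant translation. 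Likewise, for $W$ you use the minors of the matrix of $\dd_{B_\omega}$ (linear in $\omega$), while the paper uses the vanishing of the constant term $P_\omega(0)$; both are rank/degeneracy conditions expressed polynomially. The paper's route is slightly more economical (one defining polynomial per condition) and makes transparent the remark, used right after the lemma, that the eigenvalue multiplicities are locally constant on $\Omega$; your route avoids symmetric-function theory at the cost of a larger but entirely routine family of minor equations, and all the delicate points you single out (uniform divisibility by $t^d$, the identification of the eigenvalue count with the number of distinct roots of $\psi_\omega$ only off $W$, the non-vanishing leading coefficients needed for the Sylvester rank formula, and the set identity $\gf_2^*\setminus\Omega=W\cup Z$) are handled correctly.
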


As the proof shows, the multiplicities of the eigenvalues are constant on $\Omega$.

\begin{proof}
	Define $P_{\omega}$ so that $X^d P_{\omega}(X)$ is the characteristic polynomial of $-J_{Q,\omega}^2$. 
	Then, it is clear that $W$ is the zero locus of the polynomial mapping $\omega \mapsto P_{\omega}(0)$, so that it is an algebraic variety.
	
	Next,  take $k\in \Set{1,\dots, n_1}$ and let $\Pf_k$ be the set of partitions of $\Set{1,\dots, n_1}$ into $k$ non-empty sets. Define
	\[
	P_k(X_{1},\dots, X_{n_1})\coloneqq \prod_{\Kc\in \Pf_k} \sum_{K\in \Kc} \sum_{k_1,k_2\in K} (X_{k_1}-X_{k_2})^2,
	\]
	so that $P_k$ is a $\Sf_{n_1}$-invariant polynomial. 
	Take $\widetilde \mi_{1,\omega},\dots, \widetilde \mi_{n_1,\omega}\Meg 0$ so that the eigenvalues of $J_{Q,\omega}$  are $0,\dots, 0, \pm i \widetilde \mi_{1,\omega},\dots, \pm i \widetilde \mi_{n_1,\omega}$ for every $\omega\in \gf_2^*$.
	Now, the mapping $\omega \mapsto P_k(\widetilde \mi_{1,\omega}^2,\dots, \widetilde \mi_{n_1,\omega}^2)$ is a $\Sf_{n_1}$-invariant polynomial mapping in the roots of the polynomial $P_{\omega}$; hence, it is a polynomial mapping (cf.~\cite[Theorem 1 of Chapter IV, § 6, No.\ 1]{BourbakiA2}). 
	Therefore, the set of $\omega\in \gf_2^*$ such that $P_k(\widetilde \mi_{1,\omega},\dots, \widetilde \mi_{n_1,\omega})=0$ is an algebraic variety $W_k$. 
	In addition, it is clear that $\Omega$ is the complement of $W\cup W_{\overline h-1}$, so that it is open in the Zariski topology.
\end{proof}

\begin{proposition}\label{prop:11:5}
	There are four analytic mappings
	\begin{align*}
		&\mi\colon \Omega \to (\R_+^*)^{\overline{h}} 
		& &P\colon \Omega \to \Lc(\gf_1)^{\overline{h}} 
		& &P_0\colon \gf_2^*\setminus W\to \Lc(\gf_1)
		& &\rho\colon \Omega \to \Set{1,\dots, \overline h}^{n_1}
	\end{align*}
	such that the following hold:
	\begin{itemize}
		\item the mapping
		\[
		\Omega  \ni \omega \mapsto \mi_{\rho_{k,\omega},\omega}\in \R_+
		\]
		extends to a continuous mapping $\omega \mapsto \widetilde \mi_{k,\omega}$ on $\gf_2^*$ for every $k=1,\dots, n_1$;
		
		\item for every $h=0,\dots, \overline h$ and for every $\omega\in \Omega$ (for every $\omega\in \gf_2^*\setminus W$, if $h=0$), $P_{h,\omega}$ is a $B_\omega$- and $\widehat Q$-self-adjoint projector of $\gf_1$;
		
		\item if $h=1,\dots, \overline h$ and $\omega\in \Omega$, then $\tr P_{h,\omega}=2 \card( \Set{k\in \Set{1,\dots, n_1}\colon \rho_{k,\omega}=h} )$;
		
		\item $\sum_{h=0}^{\overline h} P_{h,\omega}=I_{\gf_1}$ and $\sum_{h=1}^{\overline h} \mi_{h,\omega} P_{h,\omega}= \abs{J_{Q, \omega}}$  for every $\omega\in \Omega$;
		
		\item $P_{0,\omega}(\gf_1)=\ker \dd_{B_\omega}$ for every $\omega\in \gf_2^*\setminus W$.
	\end{itemize}
\end{proposition}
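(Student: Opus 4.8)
The plan is to diagonalize the pair $(B_\omega, \widehat Q)$ analytically in $\omega$ using perturbation theory for a family of self-adjoint operators. The key object is $J_{Q,\omega} = \dd_Q \circ \dd_{B_\omega}$, which is skew-adjoint with respect to $\widehat Q$ (since $B_\omega$ is skew-symmetric and $Q$ is symmetric), so $-J_{Q,\omega}^2 = \abs{J_{Q,\omega}}^2$ is $\widehat Q$-self-adjoint and positive semidefinite, with eigenvalue $0$ on $\ker \dd_{B_\omega}$ of constant dimension $d$ over $\gf_2^*\setminus W$, and $\overline h$ distinct positive eigenvalues over $\Omega$ by the very definition of $\Omega$. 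First I would work on $\Omega$: since the number of distinct eigenvalues of $-J_{Q,\omega}^2$ is locally constant there (equal to $\overline h$ by Lemma~\ref{lem:2} and the remark following it, the multiplicities being constant on $\Omega$), standard analytic perturbation theory for self-adjoint families (Rellich--Kato, applied to the real-analytic map $\omega \mapsto -J_{Q,\omega}^2$) gives, locally, analytic eigenvalue functions and analytic spectral projectors. The positive eigenvalues of $-J_{Q,\omega}^2$ I would call $\mi_{1,\omega}^2, \dots, \mi_{\overline h,\omega}^2$ and set $\mi_{h,\omega}>0$; the associated $\widehat Q$-orthogonal spectral projectors are $P_{h,\omega}$ for $h=1,\dots,\overline h$, and $P_{0,\omega}$ is the projector onto $\ker J_{Q,\omega} = \ker \dd_{B_\omega}$.

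**Next I would address globality over $\Omega$.** The functions $\mi_{h,\omega}$ and $P_{h,\omega}$ are a priori only locally defined; to get genuine global analytic maps I would order the positive eigenvalues \emph{by magnitude}, $\mi_{1,\omega}>\mi_{2,\omega}>\cdots>\mi_{\overline h,\omega}>0$, so that they are globally well-defined on $\Omega$ and each $\mi_h$ is a single-valued function there, analytic because it locally coincides with one of the analytic branches. The projector $P_{h,\omega}$ is then the Riesz projector $\frac{1}{2\pi i}\oint_{\gamma_h}(\zeta I + J_{Q,\omega}^2)^{-1}\,\dd\zeta$ for a small circle $\gamma_h$ around $\mi_{h,\omega}^2$ separating it from the other eigenvalues; this is manifestly globally defined and analytic on $\Omega$, and is $\widehat Q$-self-adjoint since $J_{Q,\omega}^2$ is. It is also $B_\omega$-self-adjoint: using $\dd_{B_\omega} = \dd_Q^{-1} J_{Q,\omega}$, one checks $B_\omega(P_{h,\omega}x,y) = \langle \dd_Q^{-1}J_{Q,\omega}P_{h,\omega}x, y\rangle$, and since $J_{Q,\omega}$ commutes with $P_{h,\omega}$ and $P_{h,\omega}$ is $\widehat Q$-self-adjoint (i.e. $\dd_Q$-self-adjoint), this equals $B_\omega(x,P_{h,\omega}y)$. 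The trace identity $\tr P_{h,\omega} = \dim(\text{eigenspace for }\mi_{h,\omega}^2)$ follows since each $\pm i\mi_{h,\omega}$-eigenspace of $J_{Q,\omega}$ has the same dimension (complex-conjugate pairs), so the rank is $2\,\card\{k: \rho_{k,\omega}=h\}$ once I \emph{define} $\rho_{k,\omega}\in\{1,\dots,\overline h\}$ to be the index such that the $k$-th nonzero $\widetilde\mi$ (in the chosen enumeration of eigenvalues with multiplicity) equals $\mi_{\rho_{k,\omega},\omega}$. The resolution of identity $\sum_{h=0}^{\overline h}P_{h,\omega}=I_{\gf_1}$ and $\sum_{h=1}^{\overline h}\mi_{h,\omega}P_{h,\omega}=\abs{J_{Q,\omega}}$ are then just the spectral decomposition of $\abs{J_{Q,\omega}}$.

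**Then I would handle the extension of $\mi$ across $W\cup(\gf_2^*\setminus\Omega)$ and the extension of $P_0$ to $\gf_2^*\setminus W$.** On $\gf_2^*\setminus W$ the kernel $\ker\dd_{B_\omega}$ has constant dimension $d$, so $P_{0,\omega}$, being the Riesz projector of $J_{Q,\omega}^2$ for the eigenvalue $0$ (a small circle around $0$), extends analytically to all of $\gf_2^*\setminus W$ — this uses only that $0$ stays isolated from the rest of the spectrum with constant multiplicity, which is exactly the content of $\omega\notin W$. For the extension of each $\mi_k$ to a continuous function $\widetilde\mi_k$ on all of $\gf_2^*$: set $\widetilde\mi_{k,\omega}$ to be the $k$-th largest element of $\sigma(\abs{J_{Q,\omega}})$ counted with multiplicity among the $n_1$ nonzero slots (padding with zeros when eigenvalues collide or cross into $W$). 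These are continuous on all of $\gf_2^*$ because the eigenvalues of $\abs{J_{Q,\omega}}$ (a continuous — indeed polynomial-entried — family of self-adjoint matrices), listed in decreasing order with multiplicity, depend continuously on $\omega$; on $\Omega$ this listing agrees with $\mi_{\rho_{k,\omega},\omega}$ by the definition of $\rho$.

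**The main obstacle** is the bookkeeping that reconciles the three enumerations: (a) the globally-defined-by-magnitude functions $\mi_1>\cdots>\mi_{\overline h}$ on $\Omega$; (b) the "with multiplicity" list $\widetilde\mi_1\geq\cdots\geq\widetilde\mi_{n_1}$ needed for the continuous extension to $\gf_2^*$; and (c) the index map $\rho$ connecting them, so that the trace formula and the identity $\sum_h\mi_h P_h = \abs{J_{Q,\omega}}$ come out with the right combinatorial constants. Getting the analyticity of $\mi$ and $P$ on $\Omega$ is, by contrast, essentially a citation of Rellich--Kato once one observes the number of distinct eigenvalues is constant there; and the analyticity of $P_0$ on $\gf_2^*\setminus W$ and the continuity of $\widetilde\mi_k$ on $\gf_2^*$ are routine resolvent/eigenvalue-continuity arguments. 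I would also remark that $J_{Q,\omega}$, $B_\omega$, and hence $-J_{Q,\omega}^2$ depend polynomially (in particular real-analytically) on $\omega\in\gf_2^*$, which is what licenses the analytic, as opposed to merely continuous, perturbation theory.
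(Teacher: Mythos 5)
Your proposal is correct and takes essentially the same route as the paper, whose proof is omitted with a citation of Kato's finite-dimensional perturbation theory (Ch.~II, § 1.3--4 and § 5.1): analyticity of the Riesz projectors and eigenvalues on $\Omega$ and of $P_0$ on $\gf_2^*\setminus W$ where no crossing occurs, plus continuity of the ordered eigenvalues for the extension $\widetilde \mi$ to all of $\gf_2^*$. The bookkeeping you carry out (decreasing orderings, locally constant multiplicities on $\Omega$, and the induced locally constant $\rho$) is exactly the intended ``straightforward generalization'' of those arguments.
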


The proof is omitted, since it basically consists of straightforward generalizations of the arguments of~\cite[§ 1.3--4 and § 5.1 of Chapter II]{Kato}.

\begin{definition}
	We define  $\mi, \widetilde \mi, P$ and  $P_0$ as in Proposition~\ref{prop:11:5}. 
	In addition, we define
	$\vect{n_{1}}\colon \Omega \to (\N^*)^{\overline{h}} $ so that $n_{1,h,\omega}=\frac{1}{2}\tr P_{h,\omega}$ for every $h=1,\dots, \overline{h} $ and for every $\omega\in \Omega$.
	
	Furthermore, we shall sometimes identify $\mi_\omega$ with the linear mapping
	\[
	\R^{\overline{h}} \ni \lambda \mapsto \sum_{h=1}^{\overline{h} }\mi_{h,\omega} \lambda_h\in \R
	\]
	for every $\omega\in \Omega$. Analogous notation for $\widetilde \mi_\omega$.
\end{definition}

With the above notation, we have $\mi_{\omega}(\vect{n_{1,\omega}})=\sum_{h=1}^{\overline{h} }\mi_{h,\omega} n_{1,h,\omega}$.
Observe, in addition, that the index $1$ in $\vect{n_1}$ refers to the first layer $\gf_1$, just as the index $2$ in $n_2$ refers to the second layer $\gf_2$.

\begin{corollary}\label{cor:11:2}
	The function $\omega \mapsto \mi_{\omega}(\vect{n_{1,\omega}})=\widetilde \mi_{\omega}(\vect{1}_{n_1})$ is a norm on $\gf_2^*$ which is analytic on $\gf_2^*\setminus W$. 
\end{corollary}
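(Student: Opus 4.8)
The plan is to recognise the function as (half) the nuclear norm of the linear family $\omega\mapsto J_{Q,\omega}$, derive the norm axioms from that, and then handle analyticity off $W$ by a holomorphic functional calculus argument that exploits the fact that there $0$ is an isolated eigenvalue of $-J_{Q,\omega}^2$ of constant multiplicity. For the identification: for $\omega\in\Omega$, Proposition~\ref{prop:11:5} gives $\sum_{h=1}^{\overline h}\mi_{h,\omega}P_{h,\omega}=\abs{J_{Q,\omega}}$ and $\tr P_{h,\omega}=2n_{1,h,\omega}$, whence $\tr\abs{J_{Q,\omega}}=2\sum_{h=1}^{\overline h}\mi_{h,\omega}n_{1,h,\omega}=2\mi_\omega(\vect{n_{1,\omega}})$; and since $\widetilde\mi_{k,\omega}=\mi_{\rho_{k,\omega},\omega}$ and $n_{1,h,\omega}=\card\Set{k\colon\rho_{k,\omega}=h}$, this equals $2\sum_{k=1}^{n_1}\widetilde\mi_{k,\omega}=2\widetilde\mi_\omega(\vect{1}_{n_1})$. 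Since $\omega\mapsto\tr\abs{J_{Q,\omega}}$ is continuous (the eigenvalues depend continuously on $\omega$) and $\omega\mapsto\widetilde\mi_\omega(\vect{1}_{n_1})$ is continuous on all of $\gf_2^*$ (continuous extension, Proposition~\ref{prop:11:5}), and they agree on the dense set $\Omega$, the function of the statement is $N(\omega)\coloneqq\tfrac12\tr\abs{J_{Q,\omega}}$ on all of $\gf_2^*$.

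Next I would observe that, $Q$ and $\widehat Q$ being mutually dual, $\dd_{\widehat Q}\circ J_{Q,\omega}=\dd_{\widehat Q}\circ\dd_Q\circ\dd_{B_\omega}=\dd_{B_\omega}$, which is skew-symmetric; hence $J_{Q,\omega}$ is skew-adjoint with respect to $\widehat Q$, so $\abs{J_{Q,\omega}}=(-J_{Q,\omega}^2)^{1/2}$ and $\tr\abs{J_{Q,\omega}}$ is precisely the nuclear norm of $J_{Q,\omega}$ as an operator on $(\gf_1,\widehat Q)$. Since $\omega\mapsto B_\omega$, and hence $\omega\mapsto J_{Q,\omega}$, is linear, $N$ is half a norm precomposed with a linear map, so it is absolutely homogeneous and subadditive; moreover $N(\omega)=0$ forces $J_{Q,\omega}=0$, hence $\dd_{B_\omega}=0$ (as $\dd_Q$ is invertible), hence $B_\omega=0$, hence $\omega\perp[\gf_1,\gf_1]=\gf_2$, i.e. $\omega=0$. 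Thus $N$ is a norm on $\gf_2^*$.

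For analyticity on $\gf_2^*\setminus W$ I would use that there $\ker(-J_{Q,\omega}^2)=\ker J_{Q,\omega}=\ker\dd_{B_\omega}$ has constant dimension $d$, so the nonzero eigenvalues of $-J_{Q,\omega}^2$ are the strictly positive numbers $\widetilde\mi_{k,\omega}^2$, which by continuity of $\widetilde\mi$ stay uniformly away from $0$ near any $\omega_0\in\gf_2^*\setminus W$. Fixing such an $\omega_0$ and a positively oriented contour $\Gamma$ in the right half-plane encircling the positive eigenvalues of $-J_{Q,\omega_0}^2$ but not $0$, continuity of eigenvalues (and polynomiality of $\omega\mapsto J_{Q,\omega}^2$) keep $\Gamma$ with this property, and off $\sigma(-J_{Q,\omega}^2)$, for $\omega$ in a neighbourhood $U$ of $\omega_0$. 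On $U$ the operator
\[
S_\omega\coloneqq\frac{1}{2\pi i}\oint_\Gamma\sqrt{z}\,\bigl(z\,I_{\gf_1}+J_{Q,\omega}^2\bigr)^{-1}\,\dd z
\]
(principal branch of the square root) equals $(-J_{Q,\omega}^2)^{1/2}$ on the sum of the positive eigenspaces and $0$ on $\ker J_{Q,\omega}$ (the latter because $\oint_\Gamma z^{-1}\sqrt{z}\,\dd z=0$, $\Gamma$ not enclosing $0$), so $\tr S_\omega=\tr\abs{J_{Q,\omega}}=2N(\omega)$; and $(z,\omega)\mapsto(z\,I_{\gf_1}+J_{Q,\omega}^2)^{-1}$ is analytic on $\Gamma\times U$, whence $N=\tfrac12\tr S_\bullet$ is analytic on $U$, and therefore on $\gf_2^*\setminus W$. (Alternatively one may invoke the analytic perturbation theory of~\cite{Kato} to get an analytic spectral projection onto $\ker J_{Q,\omega}$ and an analytic square root of $-J_{Q,\omega}^2$ on the complementary analytic sub-bundle.) The main obstacle — and the reason the open dense set $\Omega$ does not already settle the matter — is exactly this last step: a norm need not be analytic, and continuity plus analyticity on $\Omega$ does not yield analyticity across $W$, where the eigenvalues $\widetilde\mi_{k,\omega}^2$ may collide with $0$; the real work is the uniform separation of those eigenvalues from $0$ on $\gf_2^*\setminus W$ that makes the functional calculus applicable locally.
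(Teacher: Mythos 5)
Your proof is correct, and its core coincides with the paper's: you identify $2\mi_{\omega}(\vect{n_{1,\omega}})$ with the trace norm $\norm{J_{Q,\omega}}_1$ of the $\widehat Q$-skew-adjoint operator $J_{Q,\omega}$, and deduce the norm axioms from the linearity and injectivity of $\omega\mapsto J_{Q,\omega}$ (injectivity being exactly where stratification, $[\gf_1,\gf_1]=\gf_2$, enters); this is precisely the paper's observation. Where you diverge is the analyticity step on $\gf_2^*\setminus W$: the paper writes $2\mi_{\omega}(\vect{n_{1,\omega}})=\norm{J_{Q,\omega}+P_{0,\omega}}_1-d=\tr\sqrt{-J_{Q,\omega}^2+P_{0,\omega}}-d$ and leans on the analyticity of $P_0$ off $W$ (Proposition~\ref{prop:11:5}) together with the analyticity of the square root on non-degenerate positive operators, exactly as in the proof of Proposition~\ref{prop:11:1}, whereas you replace this with a Dunford--Riesz contour integral for $\sqrt{\,\cdot\,}$ over the nonzero spectrum of $-J_{Q,\omega}^2$. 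Both mechanisms exploit the same key fact---off $W$ the eigenvalue $0$ of $-J_{Q,\omega}^2$ is isolated with constant multiplicity $d$, so the nonzero eigenvalues stay locally uniformly away from $0$---but your route has the small advantage of not invoking the analyticity of $P_0$ at all (only the continuity of $\widetilde\mi$ and the constancy of $\dim\ker\dd_{B_\omega}$), at the cost of setting up the functional calculus by hand, while the paper's shift by $P_{0,\omega}$ is shorter given that Propositions~\ref{prop:11:5} and~\ref{prop:11:1} are already in place. Your preliminary reduction (agreement with $\tfrac12\tr\abs{J_{Q,\omega}}$ on the dense open set $\Omega$ plus continuity of both sides) is also sound and makes explicit what the paper leaves implicit.
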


\begin{proof}
	Observe that
	\[
	2\mi_{\omega}(\vect{n_{1,\omega}})= \norm{ J_{Q , \omega} }_1=\norm{J_{Q,\omega}+P_{0,\omega}}_1-d
	\]
	for every $\omega\in \gf_2^*$, and that the linear mapping $\omega \mapsto J_{Q, \omega}$ is one-to-one since $G$ is stratified. The assertion follows.
\end{proof}

\begin{definition}
	By an abuse of notation, we shall denote by $(x,t)$ the elements of $G$, where $x\in \gf_1$ and $t\in \gf_2$, thus identifying $(x,t)$ with $\exp_G(x,t)$. 
	For every $x\in \gf_1$ and for every $\omega\in \gf_2^*\setminus W$, we shall define
	\[
	x_{0,\omega}\coloneqq P_{0,\omega} (x),
	\]
	while, for every $\omega\in \Omega$ and for every $h=1,\dots, \overline{h} $,
	\[
	x_{h,\omega}\coloneqq \sqrt{\mi_{h,\omega}}P_{h,\omega} (x).
	\] 
	By an abuse of notation, we shall write $x_\omega$ instead of $\sum_{h=1}^{\overline h}x_{h,\omega}$, so that $\abs{x_{\omega}}=\left( \sum_{h=1}^{\overline{h} } \abs{x_{h,\omega}}^2\right)^{\sfrac{1}{2}}$.
\end{definition}

\begin{proposition}\label{prop:11:1}
	The mapping
	\[
	\gf_1 \times \Omega  \ni (x,\omega)\mapsto \sum_{h=1}^{\overline{h} } x_{h,\omega}
	\]
	extends uniquely to a continuous function on $\gf_1\times\gf_2^*$ which is analytic on $\gf_1 \times (\gf_2^*\setminus W)$.
\end{proposition}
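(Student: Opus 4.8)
The plan is to exhibit the extension explicitly, in terms of the operator $J_{Q,\omega}$ alone. Set $A_\omega:=J_{Q,\omega}^{*}J_{Q,\omega}\in\Lc(\gf_1)$, the adjoint being taken with respect to $\widehat Q$; since $\omega\mapsto J_{Q,\omega}$ is polynomial (in fact linear) in $\omega$, so is $\omega\mapsto A_\omega$, and every $A_\omega$ is positive and self-adjoint for $\widehat Q$. First I would check that the mapping of the statement agrees, on $\gf_1\times\Omega$, with $(x,\omega)\mapsto A_\omega^{\sfrac{1}{4}}x$, where $A_\omega^{\sfrac{1}{4}}$ denotes the positive fourth root (with the convention $0^{\sfrac{1}{4}}:=0$): for $\omega\in\Omega$ the $P_{h,\omega}$ ($h=1,\dots,\overline h$) are $\widehat Q$-self-adjoint idempotents which, together with $P_{0,\omega}$, add up to $I_{\gf_1}$, hence are pairwise orthogonal, and since $\sum_{h=1}^{\overline h}\mi_{h,\omega}P_{h,\omega}=\abs{J_{Q,\omega}}=A_\omega^{\sfrac{1}{2}}$ by Proposition~\ref{prop:11:5}, the $\mi_{h,\omega}$ are precisely the nonzero eigenvalues of $A_\omega^{\sfrac{1}{2}}$ with spectral projectors $P_{h,\omega}$; therefore $\sum_{h=1}^{\overline h}x_{h,\omega}=\sum_{h=1}^{\overline h}\sqrt{\mi_{h,\omega}}\,P_{h,\omega}x=A_\omega^{\sfrac{1}{4}}x$ on $\Omega$.

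The continuity of the extension then reduces to the continuity of $\omega\mapsto A_\omega^{\sfrac{1}{4}}$ on $\gf_2^{*}$. I would use that $A\mapsto A^{\sfrac{1}{2}}$ is continuous on the cone of positive self-adjoint endomorphisms of $\gf_1$ — if positive operators $A_n$ converge to $A$, then $(A_n^{\sfrac{1}{2}})$ is bounded and each of its cluster points is a positive square root of $A$, hence equals $A^{\sfrac{1}{2}}$ — and apply this twice; composing with the polynomial map $\omega\mapsto A_\omega$ then gives continuity of $\omega\mapsto A_\omega^{\sfrac{1}{4}}$, so that $(x,\omega)\mapsto A_\omega^{\sfrac{1}{4}}x$ is a continuous function on $\gf_1\times\gf_2^{*}$ extending the given mapping. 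It is the unique such extension, since $\gf_1\times\Omega$ is dense in $\gf_1\times\gf_2^{*}$, $\Omega$ being dense in $\gf_2^{*}$ when $G$ is not abelian (Lemma~\ref{lem:2}); when $G$ is abelian one has $\overline h=0$ and the mapping is identically $0$, so there is nothing to prove.

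For the analyticity on $\gf_1\times(\gf_2^{*}\setminus W)$ it suffices, by linearity in $x$, to prove that $\omega\mapsto A_\omega^{\sfrac{1}{4}}$ is analytic on $\gf_2^{*}\setminus W$, and here I would argue by the holomorphic functional calculus. For $\omega\notin W$ one has $\ker A_\omega=\ker\dd_{B_\omega}$ of dimension exactly $d$, hence $A_\omega$ has constant rank $\dim\gf_1-d$ there; as the eigenvalues of a self-adjoint operator vary continuously, the least nonzero eigenvalue of $A_\omega$ stays bounded below by a positive constant on a neighbourhood of any $\omega_0\in\gf_2^{*}\setminus W$. Fixing such an $\omega_0$ and a closed ball $\overline B\subseteq\gf_2^{*}\setminus W$ about it on which the nonzero eigenvalues of $A_\omega$ lie in some $[c,M]\subseteq\,]0,+\infty[$, and choosing a contour $\gamma$ in $\C\setminus{]{-\infty},0]}$ winding once around $[c,M]$ but not around $0$ (say the boundary of a rectangle contained in the right half-plane), with $z^{\sfrac{1}{4}}$ the principal branch, one has
\[
A_\omega^{\sfrac{1}{4}}=\frac{1}{2\pi i}\oint_\gamma z^{\sfrac{1}{4}}\,(z-A_\omega)^{-1}\,\dd z\qquad(\omega\in B),
\]
as one sees by evaluating both sides on the eigenspaces of $A_\omega$; the right-hand side is analytic in $\omega\in B$ because $(z,\omega)\mapsto(z-A_\omega)^{-1}$ is analytic for $z\in\gamma$ and $\omega\in B$. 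Since $\omega_0$ was arbitrary, this yields analyticity on $\gf_2^{*}\setminus W$, and composing once more with $x$ gives the claimed analyticity on $\gf_1\times(\gf_2^{*}\setminus W)$.

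The step I expect to be the crux is the separation of $\ker A_\omega$ from the rest of the spectrum of $A_\omega$ over $\gf_2^{*}\setminus W$ — that is, the local lower bound on the positive eigenvalues — since this is exactly what confines the failure of analyticity of $t\mapsto t^{\sfrac{1}{4}}$ at $0$ to the locally constant kernel and makes the above contour available. It is also the structural reason why analyticity cannot be expected across $W$, where $\dim\ker\dd_{B_\omega}$ jumps.
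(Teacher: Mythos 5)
Your proposal is correct, and its core coincides with the paper's: you identify the map with $(x,\omega)\mapsto \sqrt[4]{-J_{Q,\omega}^2}\,x=A_\omega^{\sfrac14}x$ and get continuity from the continuity of the root on the closed cone of positive endomorphisms, exactly as in the paper (which cites H\"ormander for this point rather than spelling out the cluster-point argument). Where you genuinely diverge is the analyticity step. The paper writes $\sqrt[4]{-J_{Q,\omega}^2}=\sqrt[4]{-J_{Q,\omega}^2+P_{0,\omega}}-P_{0,\omega}$, so that on $\gf_2^*\setminus W$ one only needs analyticity of the fourth root on the \emph{open} cone of non-degenerate positive operators (a Bourbaki spectral-theory reference) together with the analyticity of $\omega\mapsto P_{0,\omega}$ supplied by Proposition~\ref{prop:11:5}; your argument instead applies the Riesz--Dunford integral directly to $A_\omega$ along a contour separating the nonzero spectrum from $0$, the key input being that on $\gf_2^*\setminus W$ the kernel has locally constant dimension $d$, so the smallest nonzero eigenvalue is locally bounded away from $0$ (this is legitimate: $W$ is closed, being an algebraic variety by Lemma~\ref{lem:2}, and continuity of the eigenvalues then forces the $d$ small eigenvalues to be exactly $0$ near $\omega_0$). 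The trade-off is that your route is self-contained and bypasses the analyticity of $P_{0,\omega}$ altogether, while the paper's device of adding $P_{0,\omega}$ lets it quote a ready-made analyticity statement for roots of invertible positive operators; both hinge on the same spectral-gap phenomenon off $W$, which you correctly single out as the crux and as the reason no analyticity can be expected across $W$.
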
	

\begin{proof}
	Observe that, for every $\omega\in \gf_2^*$, $-J_{Q,\omega}^2=J_{Q,\omega}^*J_{Q,\omega}$ is positive, and that
	\[
	-J_{Q,\omega}^2+P_{0,\omega}
	\]
	is positive and non-degenerate as long as $\omega\not \in W$.
	Therefore, the mapping
	\[
	\omega\mapsto \sqrt[4]{-J_{Q,\omega}^2}=\sqrt[4]{-J_{Q,\omega}^2+P_{0,\omega}}- P_{0,\omega}\in \Lc(\gf_1)
	\]
	is continuous on $\gf_2^*$ and analytic on $\gf_2^*\setminus W$ thanks to~\cite[Proposition 10 of Chapter I, § 4, No.\ 8]{BourbakiTS}.\footnote{For what concerns continuity, just observe that $\sqrt[4]{\,\cdot\,}$ is continuous on the cone of positive endomorphisms of $\gf_1$, which is the closure of the cone of non-degenerate positive endomorphisms of $\gf_1$, as in~\cite[p.\ 85]{Hormander2}.}
	Then, it suffices to observe that
	\[
	\sqrt[4]{-J_{Q,\omega}^2}(x)= \sum_{h=1}^{\overline{h}} x_{h,\omega}
	\]
	for every $\omega\in \Omega$ and for every $x\in \gf_1$.
\end{proof}

\begin{definition}
	Define $G_\omega$, for every $\omega\in \gf_2^*$,  as the quotient of $G$ by its normal subgroup $\exp_G(\ker \omega)$. 
	
	Then, $G_0$ is the abelianization of $G$, and we identify it with $\gf_1$.
	If $\omega\neq 0$, then we shall identify $G_\omega$ with $\gf_1 \oplus \R$, endowed with the product
	\[
	(x_1,t_1) (x_2,t_2)\coloneqq\left(x_1+x_2, t_1+t_2+\frac{1}{2}B_\omega(x_1,x_2) \right)
	\]
	for every $x_1,x_2\in \gf_1$ and for every $t_1,t_2\in \R$. Hence, 
	\[
	\pi_\omega(x,t)=(x,\omega(t))
	\]
	for every $(x,t)\in G$.
\end{definition}

\begin{definition}
	For every $\omega\in \gf_2^*\setminus W$, define $\abs{\Pfaff(\omega)}\coloneqq \prod_{h=1}^{\overline{h}} \mi_{h,\omega}^{n_{1,h,\omega}}$, the Pfaffian of $\omega$ (cf.~\cite{AstengoCowlingDiBlasioSundari}). 
\end{definition}

Now we are in position to find the Plancherel measure and the integral kernel associated with $\Lc_A$. This is done by means of the explicit knowledge of the Plancherel and inversion formulae of $G$ (cf.~\cite{AstengoCowlingDiBlasioSundari}) and the following weak version of Poisson's formula (cf.~\cite[Proposition 5.4]{MartiniRicciTolomeo} for a proof in a slightly different setting).

\begin{proposition}\label{prop:2:9}
	Let $\Lc'_{A'}$ be a Rockland family on a homogeneous group $G'$, and take $m\in L^\infty(\beta_{\Lc'_{A'}})$ such that $\Kc_{\Lc'_{A'}}(m)\in L^1(G)$. 
	Then,\footnote{If $f\in L^1(G)$ and $\pi$ is an irreducible unitary representation of $G$, then $\Fc(f)(\pi)\coloneqq\int_G f(x) \pi(x^{-1})\,\dd x=\pi^*(f)$.  }
	\[
	\Fc(\Kc_{\Lc'_{A'}}(m))(\pi)=m(\dd \pi(\Lc'_{A'}))
	\]
	for almost every $[\pi]$ in the dual of $G$.
\end{proposition}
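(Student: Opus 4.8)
The plan is to reduce the assertion to the Plancherel theorem of $G'$ together with the functional calculus, exploiting that both sides of the claimed identity, viewed as measurable fields of operators on the dual $\widehat{G'}$, agree after testing against a dense family of functions. First I would recall that, since $\Lc'_{A'}$ is a Rockland family, each $\dd\pi(\Lc'_\alpha)$ is (essentially) self-adjoint on $C^\infty(\pi)$ for almost every $[\pi]$, these operators commute, and hence $m(\dd\pi(\Lc'_{A'}))$ is a well-defined bounded operator for $\beta_{\Lc'_{A'}}$-almost every behaviour of the joint spectral measure; moreover, by the standard theory (cf.~\cite{Martini}, and the discussion around Theorem~\ref{teo:1}), for $\phi\in\Sc(G')$ one has $\dd\pi(\phi*\Kc_{\Lc'_{A'}}(m)) = \dd\pi(\phi)\,m(\dd\pi(\Lc'_{A'}))$ as operators, because convolution by $\Kc_{\Lc'_{A'}}(m)$ \emph{is} the operator $m(\Lc'_{A'})$ and the representation intertwines the spectral measures of the (left-invariant) $\Lc'_\alpha$ with those of the $\dd\pi(\Lc'_\alpha)$.

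Next I would use the abstract Plancherel theorem for the nilpotent (more generally, type I unimodular) group $G'$: there is a measure $\widehat\nu_{G'}$ on $\widehat{G'}$ such that for $f\in L^1(G')\cap L^2(G')$, $f\mapsto (\Fc f(\pi))_{[\pi]}$ is an isometry into the direct integral of Hilbert–Schmidt classes, and $\Fc(f_1*f_2)(\pi)=\Fc(f_2)(\pi)\Fc(f_1)(\pi)$ (with the convention in the footnote, $\Fc(f)(\pi)=\pi^*(f)$, so convolution on the right corresponds to composition on the left in the appropriate order). Now take $\phi\in\Sc(G')$ and $\Kc_{\Lc'_{A'}}(m)\in L^1(G')$; both $\phi$ and $\phi*\Kc_{\Lc'_{A'}}(m)$ lie in $L^1\cap L^2$, so
\[
\Fc(\phi*\Kc_{\Lc'_{A'}}(m))(\pi)=\Fc(\Kc_{\Lc'_{A'}}(m))(\pi)\,\Fc(\phi)(\pi)
\]
for $\widehat\nu_{G'}$-almost every $[\pi]$. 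Comparing with the intertwining identity of the previous paragraph gives $\Fc(\Kc_{\Lc'_{A'}}(m))(\pi)\,\Fc(\phi)(\pi)=m(\dd\pi(\Lc'_{A'}))\,\Fc(\phi)(\pi)$ for a.e.\ $[\pi]$, where the exceptional null set a priori depends on $\phi$.

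To remove the $\phi$-dependence I would choose a countable family $(\phi_j)$ in $\Sc(G')$ (or in $C^\infty_c(G')$) such that $(\dd\pi(\phi_j)\xi)_j$ is total in the representation space of $\pi$ for $\widehat\nu_{G'}$-almost every $[\pi]$ and every $\xi$ in a fixed orthonormal basis — this is possible because $\dd\pi$ is nondegenerate and the field is measurable, so a single countable dense subset of $\Sc(G')$ already works on a full-measure set. Intersecting the countably many exceptional null sets, we obtain a full-measure set of $[\pi]$ on which $\Fc(\Kc_{\Lc'_{A'}}(m))(\pi)$ and $m(\dd\pi(\Lc'_{A'}))$ agree on the dense subspace spanned by the $\dd\pi(\phi_j)$-images, hence everywhere since both are bounded. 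This yields the claimed equality for almost every $[\pi]$.

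The main obstacle I anticipate is the measurability/totality bookkeeping in the last step: one must handle the direct-integral structure carefully so that the "intersect countably many null sets" argument is legitimate, i.e.\ that there genuinely is a countable $\phi$-family whose representation images are total simultaneously on a conull set of $[\pi]$, and that $[\pi]\mapsto m(\dd\pi(\Lc'_{A'}))$ is a measurable field so that the a.e.\ equality statement is meaningful. This is standard for type I groups (and is implicit in the results of~\cite{Martini}), but it is the part requiring the most care; the algebraic identity $\Fc(\Kc_{\Lc'_{A'}}(m))(\pi)\,\Fc(\phi)(\pi)=m(\dd\pi(\Lc'_{A'}))\,\Fc(\phi)(\pi)$ itself is a direct consequence of the functional calculus and the Plancherel convolution formula.
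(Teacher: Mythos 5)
The paper gives no in-text proof of this proposition: it defers to \cite[Proposition 5.4]{MartiniRicciTolomeo}, and your Plancherel-plus-density outline is indeed the standard route in that spirit. Note also that the bookkeeping you single out at the end as the delicate point is in fact the easy part: a countable approximate identity $(\phi_j)\subset C^\infty_c(G')$ satisfies $\pi(\check\phi_j)\to I$ strongly for \emph{every} unitary $\pi$, so the union of the ranges of the $\Fc(\phi_j)(\pi)$ is total in $H_\pi$ simultaneously for all $[\pi]$, with no measurable-selection argument needed; one only has to know that $[\pi]\mapsto m(\dd\pi(\Lc'_{A'}))$ is a measurable field so that the a.e.\ statement makes sense.

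The genuine gap is earlier, in the step you dispatch with ``the representation intertwines the spectral measures'': the identity $\Fc\bigl(m(\Lc'_{A'})\phi\bigr)(\pi)=m(\dd\pi(\Lc'_{A'}))\,\Fc(\phi)(\pi)$ for almost every $[\pi]$ is essentially equivalent to the proposition itself (multiplying by $\Fc(\phi)(\pi)$ and using the Plancherel convolution formula is all that your remaining steps do), so as written the argument is close to circular. Two things actually have to be proved. First, for almost every $[\pi]$ the joint spectral measure of $\dd\pi(\Lc'_{A'})$ must give no mass to $\beta_{\Lc'_{A'}}$-negligible sets, so that $m(\dd\pi(\Lc'_{A'}))$ is well defined independently of the chosen representative of $m\in L^\infty(\beta_{\Lc'_{A'}})$; you only gesture at this (``for $\beta$-a.e.\ behaviour of the joint spectral measure''). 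Second, the fiberwise intertwining itself: the legitimate, non-circular way to get it is to verify it first for a generating family of multipliers with good kernels --- e.g.\ $m=e^{-tP}$ for a positive Rockland operator $P(\Lc'_{A'})$, whose kernels are Schwartz, where the identity holds for \emph{every} $\pi$ by essential self-adjointness of $P(\dd\pi(\Lc'_{A'}))$ on $C^\infty(\pi)$ and uniqueness of the semigroup (this is where a transference result such as \cite[Proposition 3.2.4]{Martini} enters) --- and then to decompose the joint spectral measure of $\Lc'_{A'}$ along the Plancherel direct integral of the regular representation; this decomposition yields both the absolute-continuity statement above and the extension of the fiberwise identity from the generating family to all bounded Borel $m$. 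With these two points supplied, your concluding density argument goes through.
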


Before we state the next result, where we find relatively explicit formulae for the Plancherel measure and the integral kernel associated with $\Lc_A$, let us briefly comment on our techniques.
Thanks to the form of the Plancherel formula for $G$ (see~\cite{AstengoCowlingDiBlasioSundari}), we may basically reduce to study $\dd \pi_\omega(\Lc_A)$ for $\omega \neq 0$, or only for $\omega\in \Omega$. 
Therefore, the analysis of $\Lc_A$ is basically reduced to the case in which $n_2=1$. If $G$ is actually a Heisenberg group, then the Plancherel formula only involves the Bargmann-Fock representations $\pi_\lambda$ ($\lambda\neq 0$), and it is well-known that $\dd\pi_\lambda(\Lc_A)$ has an orthonormal basis of eigenfunctions such that the corresponding functions of positive type on $G$ are suitable Laguerre functions (cf.~\cite{HulanickiRicci}).
When $G$ has higher-dimensional centre (as in our case), then it splits into the product of a Heisenberg group and an abelian group, and the results are somewhat similar, even though the abelian factor causes some `superpositions' of different `layers' of the Plancherel measure associated with $\Lc_A$.

\begin{proposition}\label{prop:11:2}
	For every $\phi\in C_c(E_{\Lc_A})$,
	\[
	\begin{split}
	\int_{E_{\Lc_A}} \phi\,\dd \beta_{\Lc_A}&= \frac{\pi^{\frac{d}{2}}}{(2\pi)^{n_1+n_2+d} \Gamma\left( \frac{d}{2}\right) } \sum_{\gamma\in \N^{\overline h}} \binom{\vect{n_{1,\omega}}+\gamma-\vect{1}_{\overline h} }{\gamma} \times\\
	&\qquad\times\int_{ \R_+\times \gf_2^* } \phi(\mi_{\omega}(\vect{n_{1,\omega}}+2 \gamma)+\lambda, \omega(\vect{T})) \abs{\lambda}^{\frac{d}{2}-1}  \abs{\Pfaff(\omega)} \,\dd (\lambda,\omega).
	\end{split}
	\]
\end{proposition}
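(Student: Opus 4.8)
The plan is to pin down $\beta_{\Lc_A}$ by computing $\norm{\Kc_{\Lc_A}(m)}_{L^2(G)}^2$ in two ways for $m\in C^\infty_c(E_{\Lc_A})$. On the one hand, since $\Kc_{\Lc_A}$ induces an isometry of $L^2(\beta_{\Lc_A})$ onto $L^2_{\Lc_A}(G)$, this norm equals $\int_{E_{\Lc_A}}\abs{m}^2\,\dd\beta_{\Lc_A}$. On the other hand, because $\Lc_A$ is a Rockland family we have $\Kc_{\Lc_A}(m)\in\Sc(G)\subseteq L^1(G)\cap L^2(G)$ (Theorem~\ref{teo:1}), so we may apply the Plancherel formula of $G$ recalled in~\cite{AstengoCowlingDiBlasioSundari} together with the weak Poisson formula of Proposition~\ref{prop:2:9}. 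Once the two expressions are matched for all $m\in C^\infty_c(E_{\Lc_A})$, polarization yields the analogous identity for $\int\overline{m_1}m_2\,\dd\beta_{\Lc_A}$ with $m_1,m_2\in C^\infty_c(E_{\Lc_A})$; choosing $m_1$ real with $m_1\equiv 1$ on $\Supp{m_2}$ recovers $\int m_2\,\dd\beta_{\Lc_A}$ for every $m_2\in C^\infty_c(E_{\Lc_A})$, and the stated identity then follows for every $\phi\in C_c(E_{\Lc_A})$ by the density of $C^\infty_c$ and the uniqueness of Radon measures. (The set $W\cup(\gf_2^*\setminus\Omega)$ is Lebesgue-negligible by Lemma~\ref{lem:2}, so all the $\omega$-dependent data in the formula make sense a.e.)

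The heart of the matter is the description of $\dd\pi(\Lc_A)$ for $\pi$ ranging over the support of the Plancherel measure. Recall from~\cite{AstengoCowlingDiBlasioSundari} that, up to a null set, this measure is carried by representations $\pi_{\omega,\mu}$ indexed by $\omega\in\gf_2^*\setminus W$ and $\mu\in\ker\dd_{B_\omega}$ (a $d$-dimensional space, identified with $\R^d$ via $\widehat Q$), with Plancherel density proportional to $\abs{\Pfaff(\omega)}\,\dd\mu\,\dd\omega$; here $\dd\pi_{\omega,\mu}$ acts on $\gf_2$ through the character determined by $\omega$, on $\ker\dd_{B_\omega}$ through the character determined by $\mu$, and on a $\widehat Q$-orthogonal $B_\omega$-symplectic complement $V_\omega$ of $\ker\dd_{B_\omega}$ in $\gf_1$ through a Fock representation. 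Hence $\dd\pi_{\omega,\mu}(-iT_k)$ is the scalar $\omega(T_k)$ for every $k$, while, decomposing $\Lc$ as the $\widehat Q$-orthogonal sum of its restrictions to $\ker\dd_{B_\omega}$ and to $V_\omega$, one gets $\dd\pi_{\omega,\mu}(\Lc)=\abs{\mu}^2 I+H_\omega$, where $H_\omega$ is the image of $\Lc|_{V_\omega}$. By Proposition~\ref{prop:10:8} applied to $\widehat Q|_{V_\omega}$ and $B_\omega|_{V_\omega}$ (together with Propositions~\ref{prop:11:5} and~\ref{prop:11:1} relating the resulting frequencies to the eigenvalues $\widetilde\mi_{j,\omega}$ of $\abs{J_{Q,\omega}}$), $H_\omega$ is unitarily equivalent to $\sum_{j=1}^{n_1}\widetilde\mi_{j,\omega}(-\partial_{u_j}^2+u_j^2)$ on $L^2(\R^{n_1})$; its spectrum is therefore $\{\mi_\omega(\vect{n_{1,\omega}}+2\gamma):\gamma\in\N^{\overline h}\}$, the value $\mi_\omega(\vect{n_{1,\omega}}+2\gamma)$ occurring with multiplicity $\binom{\vect{n_{1,\omega}}+\gamma-\vect{1}_{\overline h}}{\gamma}$ (group the oscillator modes according to $\rho_\omega$ and count the compositions of $\gamma_h$ into $n_{1,h,\omega}$ non-negative parts).

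With this in hand the functional calculus gives, for $m\in C^\infty_c(E_{\Lc_A})$ and $\omega\in\Omega$,
\[
\norm{\Fc(\Kc_{\Lc_A}(m))(\pi_{\omega,\mu})}_{HS}^2=\norm{m(\dd\pi_{\omega,\mu}(\Lc_A))}_{HS}^2=\sum_{\gamma\in\N^{\overline h}}\binom{\vect{n_{1,\omega}}+\gamma-\vect{1}_{\overline h}}{\gamma}\,\abs{m\bigl(\mi_\omega(\vect{n_{1,\omega}}+2\gamma)+\abs{\mu}^2,\ \omega(\vect{T})\bigr)}^2,
\]
the first equality being Proposition~\ref{prop:2:9}. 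Integrating this against the Plancherel density $\abs{\Pfaff(\omega)}\,\dd\mu\,\dd\omega$, passing to polar coordinates in $\mu\in\R^d\cong\ker\dd_{B_\omega}$ and substituting $\lambda=\abs{\mu}^2$ (which produces the factor $\tfrac{\pi^{d/2}}{\Gamma(d/2)}$ and replaces $\dd\mu$ by $\abs{\lambda}^{\frac d2-1}\dd\lambda$), and absorbing the normalization of the Plancherel formula of~\cite{AstengoCowlingDiBlasioSundari} into the constant $(2\pi)^{-(n_1+n_2+d)}$, one obtains the asserted formula with $\phi=\abs{m}^2$. Combining with the first paragraph finishes the proof.

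The main obstacle is the precise identification of $\dd\pi_{\omega,\mu}(\Lc)$ as $\abs{\mu}^2 I$ plus a scaled harmonic oscillator with exactly the frequencies $\widetilde\mi_{j,\omega}$, and the resulting multiplicity count, together with the careful bookkeeping of all the normalization constants coming from the Plancherel formula of $G$ and from the polar-coordinate change of variables; the remaining measure-theoretic manipulations (Fubini, restriction to the full-measure open set $\Omega$, polarization and the density of $C^\infty_c$) are routine.
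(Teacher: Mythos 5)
Your proposal is correct and follows essentially the same route as the paper: both compute $\norm{\Kc_{\Lc_A}(m)}_{2}^2$ via the isometry onto $L^2(\beta_{\Lc_A})$ on one side and via the Plancherel formula of~\cite{AstengoCowlingDiBlasioSundari} on the other, using the eigenspace decomposition of $\dd\pi_{\omega,\tau}(\Lc_A)$ (scaled harmonic oscillator plus $\abs{\tau}^2$, with multiplicities $\binom{\vect{n_{1,\omega}}+\gamma-\vect{1}_{\overline h}}{\gamma}$) and then polar coordinates in the central-character parameter. The only cosmetic differences are that you route the identification of $\Fc(\Kc_{\Lc_A}(m))(\pi)$ through Proposition~\ref{prop:2:9} and spell out the polarization/density step, which the paper leaves implicit.
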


\begin{proof}
	We follow the construction of the Plancherel measure of~\cite{AstengoCowlingDiBlasioSundari} as in~\cite[4.4.1]{Martini}. 
	Take $\omega\in \gf_2^*\setminus W$ and $\tau\in P_{0,\omega}(\gf_1)$. 
	Let $\pi_{\omega,\tau}$ be an irreducible unitary representation of $G$ in a hilbertian space $H_{\omega,\tau}$ such that $\pi_{\omega,\tau}(x,t)=e^{i \omega(t)+ i\tau(x) } I_{H_{\omega,\tau}}$ for every $(x,t)\in P_{0,\omega}(\gf_1)\times \gf_2$.\footnote{Recall that such a  representation is uniquely determined up to unitary equivalence; cf.~\cite{AstengoCowlingDiBlasioSundari} and the references therein.}
	Then, for every $f\in L^2(G)$,
	\[
	\norm{f}_2^2= \frac{1}{(2\pi)^{n_1+n_2+d}} \int_{\gf_2^*} \int_{P_{0,\omega}(\gf_1)  } \norm{ \pi_{\omega,\tau}(f) }_2^2 \abs{\Pfaff(\omega)}\,\dd \tau\,\dd \omega.
	\]
	
	Now, it is well-known that there is a commutative family $(P_{\omega,\tau,\gamma})_{\gamma\in \N^{\overline h}}$ of self-adjoint projectors of $H_{\omega,\tau}$ such that $I_{H_{\omega, \tau}}=\sum_{\gamma\in \N^{\overline h}} P_{\omega,\tau,\gamma}$ pointwise, and such that for every $\gamma\in \N^{\overline h}$ we have $\tr P_{\omega, \tau,\gamma}=  \binom{\vect{n_{1,\omega}}+\gamma-\vect{1}_{\overline h}  }{\gamma}$ and (cf.~Proposition~\ref{prop:10:8})
	\[
	\dd \pi_{\omega, \tau}(\Lc_A) \cdot P_{\omega, \tau,\gamma}=(\abs{\tau}^2+\mi_\omega (\vect{n_{1,\omega}}+2 \gamma),\omega(\vect{T})) P_{\omega, \tau,\gamma}.
	\]
	Therefore, for every $\phi \in C^\infty_c(E_{\Lc_A})$, $\norm{\Kc_{\Lc_A}(\phi)}_2^2$ equals
	\[
	\begin{split}
	&\frac{1}{(2\pi)^{n_1+n_2+d}} \int_{\gf_2^*}\int_{P_{0,\omega}(\gf_1)} \sum_{\gamma\in \N^{\overline h}} \binom{\vect{n_{1,\omega}}+\gamma-\vect{1}_{\overline h}  }{\gamma} \abs*{\phi\left(\abs{\tau}^2+\mi_\omega (\vect{n_{1,\omega}}+2 \gamma),\omega(\vect{T})\right) }^2 \abs{\Pfaff(\omega)}\,\dd \tau\,\dd \omega,
	\end{split}
	\]
	whence stated formulae for $\beta_{\Lc_A}$.
\end{proof}

Now, let us make some remarks on how one may find an expression for $\chi_{\Lc_A}$; since that expression is not particularly illuminating, we shall omit to present it explicitly. We begin with a definition.

\begin{definition}
	Define
	\[
	\Phi_d\colon \R_+\ni x \mapsto \Gamma\left( \frac{d}{2} \right) \frac{ J_{\frac{d}{2}-1}(x)}{ \left( \frac{x}{2} \right)^{\frac{d}{2}-1} }=\Gamma\left( \frac{d}{2} \right)\sum_{k\in \N} \frac{(-1)^k x^{2 k} }{ 4^{ k} k!  \Gamma\left( k+\frac{d}{2} \right) }  ,
	\]
	where $J_{\frac{d}{2}-1}$ is the Bessel function (of the first kind) of order $\frac{d}{2}-1$.
\end{definition}

Observe first that from the Plancherel formula for $G$ which we stated in the proof of Proposition~\ref{prop:11:2} we deduce the following inversion formula:
\[
f(x,t)= \frac{1}{(2\pi)^{n_1+n_2+d}} \int_{\gf_2^*}\int_{P_{0,\omega}(\gf_1)} \tr( \pi_{\omega,\tau}(x,t)^* \pi_{\omega,\tau}(f) ) \abs{\Pfaff(\omega)}\,\dd \tau\,\dd\omega
\]
for every $f\in \Sc(G)$. If $\phi \in C^\infty_c(E_{\Lc_A})$, then $\Kc_{\Lc_A}(\phi)(x,t)$ equals, for almost every $(x,t)\in G$,
\[
\begin{split}
&\frac{1}{(2\pi)^{n_1+n_2+d}} \int_{\gf_2^*} \int_{P_{0,\omega}(\gf_1)} \sum_{\gamma\in \N^{\overline h}} \phi\left(\abs{\tau}^2+\mi_{\omega}(\vect{n_{1,\omega}}+2 \gamma),\omega(\vect{T})\right)\times\\
	&\qquad \qquad \qquad \qquad \qquad \qquad \qquad \qquad\times\tr (\pi_{\omega,\tau}(x,t)^* P_{\omega, \tau,\gamma} ) \abs{\Pfaff(\omega)}\,\dd \tau\,\dd \omega.
\end{split}
\]
Next,  if $\Lambda^m_\gamma(X)=\sum_{j=0}^\gamma \binom{\gamma+m}{\gamma-j} \frac{(-X)^j}{j!}$ denotes the $\gamma$-th Laguerre polynomial of order $m$, then
\[
\tr (\pi_{\omega,\tau}(x,t)^* P_{\omega, \tau,\gamma} )= e^{-\frac{1}{4}\abs{x_\omega}^2+  i \tau(x_{0,\omega}) +i \omega(t)}
\prod_{h=1}^{\overline{h}}  \Lambda^{n_{1,\omega,h}-1}_{\gamma_{h}}\left(\frac{1}{2} \abs{x_{\omega,h}}^2  \right) 
\]
by~\cite[Proposition 2]{HulanickiRicci} and~\cite[10.12 (41)]{Erdelyi}, while
\[
\mint{-}_{\partial B(0,1)\cap P_{0,\omega}(\gf_1)} e^{i \tau(x_{0,\omega})}\,\dd \Hc^{d-1}(\tau)= \Gamma\left( \frac{d}{2} \right)   \frac{ J_{\frac{d}{2}-1}\left( \abs{x_{0,\omega} } \right)  }{ \left( \frac{\abs{x_{0,\omega}}}{2}  \right)^{\frac{d}{2}-1  }}=\Phi_d(\abs{x_{0,\omega}}).
\]
One may then find formulae for $\chi_{\Lc_A}$.

\begin{remark}\label{oss:2}
	Let $T'_1,\dots, T'_n$ be $n$ homogeneous elements of the centre $\zf$ of $\gf$. 
	Let us show that the study of the family $(\Lc,-i T_1',\dots, - i T_n')$ can be reduced to that of the families of the form considered above on suitable $2$-step stratified groups.
	
	Notice that we may assume that there is $n'\in \Set{0,\dots, n}$ such that $T_j'\in \gf_2$ if and only if $j\meg n'$; let $\gf''$ be the vector subspace of $\gf$ generated by $T_{n'+1}',\dots, T_{n}'$, and observe that $\gf''\subseteq \gf_1$ by homogeneity. 
	Let $\gf'_1$ be the polar in $\gf_1$ of the $Q$-orthogonal complement of the polar of  $\gf''$ in $\gf_1^*$; define $\gf'\coloneqq \gf'_1\oplus \gf_2$. 
	Then,  $\gf$ is the direct sum of its ideals $\gf'$ and $\gf''$.
	Let $G'$ and $G''$ be the Lie subgroups of $G$ corresponding to $\gf'$ and $\gf''$, and let $\Lc'$ and $\Lc''$ be the sub-Laplacians on $G'$ and $G''$, respectively, corresponding to the restriction of $Q$ to $\gf_1'^*$ and $\gf''^*$. 
	By an abuse of notation, then, $\Lc=\Lc'+\Lc''$, so that the family $(\Lc, - i T_1',\dots, - i T_n')$ is equivalent to the family $(\Lc', - i T'_1,\dots, - i T'_n)$.
	Now, the family $(- i T'_{n'+1},\dots, - i T'_n)$ on $G''$ satisfies property $(RL)$ by classical Fourier analysis.
	Therefore, Theorem~\ref{teo:4} and its easy converse imply that the family $(\Lc, - i T_1',\dots, - i T_n')$ satisfies property $(RL)$  
	if and only if the family $(\Lc', - i T_1',\dots, - i T_{n'}')$ satisfies property $(RL)$. 
	Since this latter family is equivalent to a family of the form $(\Lc',-i T_1,\dots, - i T_{n_2'})$ for some $n_2'$ and for some choice of the basis $T_1,\dots, T_{n_2}$ of $\gf_2$, our assertion follows.
	Notice, however, that $G'$ may be an $MW^+$ group; we shall deal with $MW^+$ groups in a future paper.
	
	Similar arguments apply to property $(S)$ and the continuity of the integral kernel.
\end{remark}

\section{Property $(RL)$}

In this section we shall present several sufficient conditions for the validity of property $(RL)$. 
First of all, we observe that the spectrum of $\Lc_A$ is a semianalytic convex cone.
In addition,  we can basically ignore the Laguerre polynomials of higher order which appear in the Fourier inversion formula, thanks to Proposition~\ref{prop:2:9}. 
Indeed, with reference to the proof of Proposition~\ref{prop:11:2}, the `ground state', that is, the first eigenvalue of $\dd \pi_{\omega,\tau}(\Lc_A)$, is sufficient to cover the whole of $\sigma(\Lc_A)$, as $\omega$ and $\tau$ vary.
This fact leads to significant simplifications, as the basic Lemma~\ref{lem:9} shows.

We need to distinguish between the `full' family $\Lc_A$, for which we can prove continuity of the multipliers only on a dense subset of the spectrum \emph{in full generality} (cf.~Lemma~\ref{lem:9}), and the `partial' family $(\Lc, (-i T_1,\dots, - i T_{n_2'}))$ for $n_2'<n_2$, where by means of a deeper analysis we are able to prove property $(RL)$ in full generality (cf.~Theorem~\ref{prop:10}).
This latter result requires to deal with Radon measures defined on Polish spaces which are not necessarily locally compact.

Concerning the `full' family $\Lc_A$, as we observed above, we can prove in full generality that every integrable kernel corresponds to a multiplier which is continuous on a dense subset of the spectrum. Nevertheless, we can prove that property $(RL)$ holds in the following cases: when $P_0$ extends to a continuous function on $\gf_2^*\setminus\Set{0}$, for example when $W=\Set{0}$ or when $G$ is the product of an $MW^+$ group and a non-trivial abelian group (cf.~Theorem~\ref{prop:19:1}); when $G$ is a free $2$-step stratified group on an odd number of generators (cf.~Theorem~\ref{prop:19:5}).
In both cases, we make use of the simplified `inversion formula' for $\Kc_{\Lc_A}$ which is available in this case; in the second case, we employ the simple structure of free groups to prove that the $L^1$ kernels are invariant under sufficiently many linear transformations in order that the above-mentioned inversion formula give rise to a continuous multiplier.

\begin{lemma}\label{lem:9}
	Take $f\in L^1_{\Lc_A}(G)$. Then, $\Mc_{\Lc_A}(f)$ has a representative which is continuous on 
	\[
	\Set{ ( \mi_\omega(\vect{n_{1,\omega}} )  ,\omega(\vect{T}) )\colon \omega\in \gf_2^*  }\cup \Set{ ( \lambda ,\omega(\vect{T}) )\colon \omega\in \gf_2^*\setminus W ,\lambda \Meg \mi_\omega(\vect{n_{1,\omega}}) }.
	\]
\end{lemma}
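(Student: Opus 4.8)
The strategy is to use the explicit Fourier-inversion description of $\Kc_{\Lc_A}$ obtained in Section~\ref{sec:11}, together with the weak Poisson formula of Proposition~\ref{prop:2:9} and the representative $\chi_0$ of $\chi_{\Lc_A}$ provided by Theorem~\ref{teo:2}, to write $\Mc_{\Lc_A}(f)$ as an explicit integral over $\gf_2^*$ (and over a sphere in $P_{0,\omega}(\gf_1)$) whose integrand depends analytically — hence continuously — on the spectral parameter, at least over $\gf_2^*\setminus W$ and for $\lambda$ large enough. First I would recall that, by Proposition~\ref{prop:3:3}, $\Mc_{\Lc_A}(f)(\lambda,\tau) = \int_G \overline{\chi_0((\lambda,\tau),g)}\, f(g)\,\dd g$ for $\beta_{\Lc_A}$-almost every $(\lambda,\tau)$; by the dominated convergence theorem it suffices to exhibit a representative of $\chi_0$ which is jointly continuous on the stated set, uniformly bounded (which it is, by $\norm{\chi_{\Lc_A}}_\infty = 1$ and Theorem~\ref{teo:2}), so that the integral defines a continuous function there.

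The main point is therefore to produce that continuous representative of $\chi_0$ on the prescribed set. Here I would exploit the fact — emphasised in the introduction to this section — that the relevant eigenvalue is the \emph{ground state} of $\dd\pi_{\omega,\tau}(\Lc_A)$, so that the higher Laguerre polynomials do not contribute: the points of the form $(\mi_\omega(\vect{n_{1,\omega}}),\omega(\vect{T}))$ correspond to $\gamma = 0$ and $\tau = 0$, while the points $(\lambda,\omega(\vect{T}))$ with $\lambda \geqslant \mi_\omega(\vect{n_{1,\omega}})$ correspond to $\gamma = 0$ and $\abs{\tau}^2 = \lambda - \mi_\omega(\vect{n_{1,\omega}}) \geqslant 0$. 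Using the trace formula $\tr(\pi_{\omega,\tau}(x,t)^* P_{\omega,\tau,0}) = e^{-\frac14\abs{x_\omega}^2 + i\tau(x_{0,\omega}) + i\omega(t)}$ (the $\gamma=0$ Laguerre polynomials being $\equiv 1$) and the spherical-average identity
\[
\mint{-}_{\partial B(0,1)\cap P_{0,\omega}(\gf_1)} e^{i\tau(x_{0,\omega})}\,\dd\Hc^{d-1}(\tau) = \Phi_d(\abs{x_{0,\omega}}),
\]
one obtains, after integrating out the radial variable $\abs{\tau}$ against the Plancherel density, a candidate formula for $\chi_0((\lambda,\omega(\vect{T})),(x,t))$ expressed through the analytic data $\omega\mapsto \mi_\omega$, $\omega\mapsto P_{h,\omega}$, $\omega\mapsto \abs{\Pfaff(\omega)}$ of Proposition~\ref{prop:11:5} and Proposition~\ref{prop:11:1}, composed with the entire function $\Phi_d$. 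Since all of these are continuous on $\gf_2^*\setminus W$ (and the map $\omega\mapsto \sum_h x_{h,\omega}$ is even continuous on all of $\gf_2^*$ by Proposition~\ref{prop:11:1}), and since $\Phi_d$ is entire, this candidate is continuous in $(\lambda,\tau)$ jointly with $(x,t)$; one then checks it genuinely represents $\chi_{\Lc_A}$ on the given set by comparing integrals against $C_c^\infty(E_{\Lc_A})\otimes C_c^\infty(G)$, using Proposition~\ref{prop:2:9} to identify $\Fc(\Kc_{\Lc_A}(\phi))(\pi_{\omega,\tau})$ with $\phi(\dd\pi_{\omega,\tau}(\Lc_A))$ and picking out the ground-state component.

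\textbf{The main obstacle} I anticipate is handling the locus $W$, where the eigenvalue multiplicities jump and $P_{0,\omega}$ need not extend continuously: on the complementary set $\gf_2^*\setminus W$ the argument above is clean, but the first set in the statement, $\Set{(\mi_\omega(\vect{n_{1,\omega}}),\omega(\vect{T})) : \omega\in\gf_2^*}$, includes images of points of $W$ (and $\omega = 0$). For those, one must argue that the relevant combination $\abs{J_{Q,\omega}}$, equivalently $\sum_h x_{h,\omega}$, still varies continuously across $W$ — which is exactly the content of Proposition~\ref{prop:11:1} and Corollary~\ref{cor:11:2} — so that the $\gamma=0$, $\tau=0$ specialisation of the integrand (where the problematic factor $\Phi_d(\abs{x_{0,\omega}})$ is evaluated at an argument that is bounded, but $x_{0,\omega}$ itself jumps) still behaves: one needs that $\Kc_{\Lc_A}(m)$ only depends on $x$ through $x_\omega$ and $x_{0,\omega}$ up to the rotation-invariant quantity $\abs{x_{0,\omega}}$, and that $W$ is $\nu_G$-negligible so that the integral $\int_G \overline{\chi_0}\,f$ is unaffected by redefining $\chi_0$ on $W \times G$. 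I would close the argument by invoking that $W$ is an algebraic variety (Lemma~\ref{lem:2}), hence Lebesgue-negligible in $\gf_2^*$, and that the density $\abs{\Pfaff(\omega)}$ is continuous and vanishes on no open set, so the measure $\beta_{\Lc_A}$ assigns no mass over $W$; combined with a limiting argument along $\omega_j \to \omega \in W$ inside $\gf_2^*\setminus W$, continuity of the representative on the full stated set follows.
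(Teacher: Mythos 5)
Your instinct to integrate $f$ against an explicit, continuous ground-state formula is exactly where the true proof lives, and your closing appeal to Proposition~\ref{prop:2:9} points in the right direction; but the scaffolding you build around it has a genuine gap. You reduce the lemma to producing a representative of $\chi_{\Lc_A}$ continuous on the stated set (times $G$) and claim that the $\gamma=0$ candidate $e^{-\frac14\abs{x_\omega}^2+i\omega(t)}\,\Phi_d\bigl(\sqrt{\lambda-\mi_\omega(\vect{n_{1,\omega}})}\,\abs{x_{0,\omega}}\bigr)$ ``genuinely represents $\chi_{\Lc_A}$ on the given set''. It does not. The stated set exhausts $\sigma(\Lc_A)$ up to the $\beta_{\Lc_A}$-negligible part over $W$, and by Proposition~\ref{prop:11:2} the Plancherel measure at a point $(\lambda,\omega(\vect{T}))$ carries mass from \emph{every} layer $\gamma$ with $\mi_\omega(\vect{n_{1,\omega}}+2\gamma)\meg\lambda$; the true $\chi_{\Lc_A}$ there is the corresponding weighted superposition of the Laguerre terms $\prod_h\Lambda^{n_{1,\omega,h}-1}_{\gamma_h}$, not the ground-state term alone (this is the ``superposition of layers'' explicitly warned about before Proposition~\ref{prop:11:2}). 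Hence your candidate is not almost everywhere equal to $\chi_{\Lc_A}$, and the genuine $\chi_{\Lc_A}$ is in general discontinuous in $\lambda$ across the thresholds $\mi_\omega(\vect{n_{1,\omega}}+2\gamma)$, which lie inside the stated set; compare Remark~\ref{oss:1}, where property $(RL)$ holds although the integral kernel admits no continuous representative. So the route ``Proposition~\ref{prop:3:3} plus dominated convergence'' cannot be made to work as stated.

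The paper's proof bypasses $\chi_{\Lc_A}$ entirely, and that is how your argument should be repaired: since $f\in L^1$, Proposition~\ref{prop:2:9} gives $\pi_{\omega,\tau}^*(f)=m(\dd\pi_{\omega,\tau}(\Lc_A))$ for almost every $(\omega,\tau)$ with $\omega\notin W$; taking the normalized trace against the ground-state projector $P_{\omega,\tau,0}$ alone yields $m(\mi_\omega(\vect{n_{1,\omega}})+\abs{\tau}^2,\omega(\vect{T}))=\int_G f(x,t)\,e^{-\frac14\abs{x_\omega}^2+i\omega(t)+i\tau(x_{0,\omega})}\,\dd(x,t)$ -- an identity for the \emph{multiplier} at those spectral points, not a formula for the kernel. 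Averaging over the spheres $\abs{\tau}=\sqrt{\lambda}$ then produces precisely your $\Phi_d$ expression, now correctly interpreted as equal to $m$ almost everywhere on each ray; its continuity on $[(\gf_2^*\setminus W)\times\R_+]\cup[\gf_2^*\times\{0\}]$ follows from Proposition~\ref{prop:11:1}, where at $\lambda=0$ the factor $\Phi_d$ equals $1$, so the discontinuity of $x_{0,\omega}$ across $W$ disappears altogether (your remark about a ``bounded argument'' misses this point), and a Tonelli argument shows the resulting continuous function is a representative of $m$. The conceptual distinction you are conflating is that the Poisson formula constrains $m$ through each single representation, whereas $\chi_{\Lc_A}$ aggregates all eigenvalue layers sitting over a given spectral point; the lemma is a statement about the former, and only the former is continuous on the set in question.
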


\begin{proof}
	Fix a multiplier $m$ of $f$. 
	By Proposition~\ref{prop:2:9}, there is a negligible subset $N_1$ of $\gf_2^*$ such that for every $\omega\in \gf_2^*\setminus N_1$ there is negligible subset $N_{2,\omega}$ of $P_{0,\omega}(\gf_1)$ such that
	\[
	\pi_{\omega, \tau}^*(f)= m(\dd \pi_{\omega, \tau}(\Lc_A))
	\]
	for every $\tau \in P_{0,\omega}(\gf_1)\setminus N_{2,\omega}$. Notice that we may assume that $W\subseteq N_1$.
	Therefore, for every $\omega\in \gf_2^*\setminus N_1$ and for every $\tau\in P_{0,\omega}(\gf_1)\setminus N_{2,\omega}$,
	\[
	\begin{split}
	m(\mi_{\omega}(\vect{n_{1,\omega}})+\abs{\tau}^2, \omega(\vect{T}))&=\frac{1}{\tr P_{\omega,\tau,0}} \tr (m(\dd \pi_{\omega, \tau}(\Lc_A)) P_{\omega, \tau,0})\\
		&= \int_{G} f(x,t) e^{- \frac{1}{4}\abs{x_{\omega}}^2+ i \omega(t)+i \tau(x_{0,\omega})  }\,\dd (x,t). 
	\end{split}
	\]
	Now, for every $\omega\in \gf_2^*\setminus N_1$ there is negligible subset $N_{3,\omega}$  of $\R_+^*$ such that, for every $\lambda\in \R_+^*\setminus N_{3,\omega}$, we have $\Hc^{d-1}\big(\partial B_{P_{0,\omega}(\gf_1)}\big(0,\sqrt{\lambda} \big)\cap N_{2,\omega}\big)=0$.  
	Therefore, for every $\omega\in \gf_2^*\setminus N_1$ and for every $\lambda\in \R_+^*\setminus N_{3,\omega}$,
	\[
	\begin{split}
	m(\mi_{\omega}(\vect{n_{1,\omega}})+\lambda, \omega(\vect{T}))&=\mint{-}_{\partial B(0,\sqrt \lambda)} \int_G  f(x,t) e^{ -\frac{1}{4}\abs{x_{\omega}}^2+i \omega(t)+ i \tau(x_{0,\omega})   }\,\dd (x,t) \,\dd \Hc^{d-1}(\tau)\\
	&= \int_G  f(x,t) e^{ -\frac{1}{4}\abs{x_{\omega}}^2+i \omega(t) } \Phi_d\left(  \sqrt{\lambda} \abs{x_{0,\omega}} \right)\,\dd (x,t).
	\end{split}
	\]
	Now, the mapping
	\[
	(\omega, \lambda) \mapsto \int_G  f(x,t) e^{ -\frac{1}{4}\abs{x_{\omega}}^2+i \omega(t) } \Phi_d\left(  \sqrt{\lambda} \abs{x_{0,\omega}} \right)\,\dd (x,t)
	\]
	is continuous  on $[(\gf_2^*\setminus W)\times \R_+]\cup [\gf_2^*\times \Set{0}]$ by Proposition~\ref{prop:11:1}, so that by means of Tonelli's theorem we see that it induces a representative of $m$ which satisfies the conditions of the statement.
\end{proof}

\begin{theorem}\label{prop:19:1}
	Assume that $P_0$ can be extended to a continuous function on $\gf_2^*\setminus\Set{0}$. Then, $\Lc_A$ satisfies property $(RL)$.
\end{theorem}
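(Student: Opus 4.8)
The plan is to reduce, via Corollary~\ref{cor:5}, to the following: for every $f\in L^1_{\Lc_A}(G)$ the multiplier $\Mc_{\Lc_A}(f)$ admits a representative continuous on the whole spectrum $\sigma(\Lc_A)$; such a representative then extends to a continuous function on $E_{\Lc_A}$ by \cite[Corollary to Theorem 2 of Chapter IX, § 4, No.\ 3]{BourbakiGT2}, and vanishing at infinity is automatic by Remark~\ref{oss:1:1}. The starting point is Lemma~\ref{lem:9}: from its proof one extracts the explicit formula
\[
\Psi(\omega,\lambda)=\int_G f(x,t)\, e^{-\frac14\abs{x_\omega}^2+i\omega(t)}\,\Phi_d\bigl(\sqrt\lambda\,\abs{x_{0,\omega}}\bigr)\,\dd(x,t),
\]
which, through $(\mi_\omega(\vect{n_{1,\omega}})+\lambda,\omega(\vect T))\mapsto\Psi(\omega,\lambda)$, represents $\Mc_{\Lc_A}(f)$ on the set described in Lemma~\ref{lem:9} and is continuous on $[(\gf_2^*\setminus W)\times\R_+]\cup[\gf_2^*\times\Set{0}]$; the sole obstruction to continuity on all of $\sigma(\Lc_A)$ is the discontinuity across $W$ of $\omega\mapsto x_{0,\omega}=P_{0,\omega}(x)$.

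Next I would exploit the hypothesis. Let $\widetilde P_0$ be the continuous extension of $P_0$ to $\gf_2^*\setminus\Set{0}$; it is homogeneous of degree $0$, and since each $P_{0,\omega}$ is a $\widehat Q$-orthogonal projector, so is $\widetilde P_{0,\omega}$ for every $\omega\neq0$, whence $\abs{\widetilde P_{0,\omega}(x)}\meg\abs x$. Because $\norm{\Phi_d}_\infty\meg1$, replacing $x_{0,\omega}$ by $\widetilde P_{0,\omega}(x)$ in the formula above yields a function $\widetilde\Psi$ continuous on $(\gf_2^*\setminus\Set{0})\times\R_+$ which still agrees with $\Psi$ (hence represents $\Mc_{\Lc_A}(f)$) on the dense set $(\gf_2^*\setminus W)\times\R_+$. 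Composing with the change of variables $(\mu,v)\mapsto(\omega(v),\,\mu-\mi_{\omega(v)}(\vect{n_{1,\omega(v)}}))$ — continuous, since $v\mapsto\omega(v)$ is the inverse of the linear isomorphism $\omega\mapsto\omega(\vect T)$ and $\omega\mapsto\mi_\omega(\vect{n_{1,\omega}})$ is continuous by Corollary~\ref{cor:11:2} — I would conclude that $\Mc_{\Lc_A}(f)$ has a representative continuous on $\sigma(\Lc_A)\setminus(\R\times\Set{0})$, i.e.\ everywhere outside the fibre over $\omega=0$.

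It then remains to extend continuously across the fibre $\Set{(\mu,0):\mu\Meg0}\cap\sigma(\Lc_A)$. The origin is handled by Corollary~\ref{cor:5}. For $\mu_0>0$: since $(\mu_0,0)$ lies in the support of $\beta_{\Lc_A}$ and $\beta_{\Lc_A}$ is equivalent to Lebesgue measure on a dense open subset of $\sigma(\Lc_A)$ (Proposition~\ref{prop:11:2}), the value of any continuous representative at $(\mu_0,0)$ must equal $\lim_{\omega\to0}\widetilde\Psi(\omega,\mu_0-\mi_\omega(\vect{n_{1,\omega}}))$; by dominated convergence (using $\norm{\Phi_d}_\infty\meg1$ and Proposition~\ref{prop:11:1}), along $\omega\to0$ with $\omega/\abs\omega\to u$ this limit equals $\int_{\gf_1}(\pi_0)_*(f)(x)\,\Phi_d\bigl(\sqrt{\mu_0}\,\abs{\widetilde P_{0,u}(x)}\bigr)\,\dd x$. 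The integrand depends on $x$ only through the $\widehat Q$-norm of its projection onto the $d$-dimensional subspace $\widetilde P_{0,u}(\gf_1)$, so the limit is independent of $u$ as soon as $(\pi_0)_*(f)$ is $O(\gf_1,\widehat Q)$-invariant. To obtain this invariance, note that $(\pi_0)_*(f)\in L^1(\gf_1)$ and, by a density argument based on Proposition~\ref{prop:1}, the continuity of $(\pi_0)_*\colon L^1(G)\to L^1(\gf_1)$, and Proposition~\ref{prop:3} applied to $\pi_0$, it is a multiplier kernel of $\dd\pi_0(\Lc_A)=(\Lc_0,0,\dots,0)$, where $\Lc_0$ is the $\widehat Q$-Laplacian on $\gf_1$; since $\Lc_0$ is $O(\gf_1,\widehat Q)$-invariant and the null operators trivially are, every multiplier kernel of $\dd\pi_0(\Lc_A)$ is $O(\gf_1,\widehat Q)$-invariant. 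This supplies the missing continuous extension and finishes the proof.

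I expect the last step — the fibre over $\omega=0$, where the degenerate stratum $W$ is largest — to be the main obstacle: one must control the non-convergence of $\omega\mapsto\widetilde P_{0,\omega}$ as $\omega\to0$, and the only escape is the radiality of $(\pi_0)_*(f)$, whose justification (that a pushforward of a multiplier kernel is a multiplier kernel of the pushed-forward family) is the delicate point and has to be arranged so as not to presuppose the continuity of the multiplier on that very fibre. By contrast, the hypothesis on $P_0$ enters only in the second step, exactly to remove the $W$-discontinuity away from this fibre.
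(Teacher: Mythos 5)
Your first three steps coincide with the paper's argument: the formula from Lemma~\ref{lem:9}, the substitution of $\widetilde P_{0,\omega}$ for $P_{0,\omega}$ to get continuity off the ray $\R\times\Set{0}$, and the reduction of continuity on that ray to the statement that $\int_{\gf_1}(\pi_0)_*(f)(x)\,\Phi_d\bigl(\sqrt{\lambda}\,\abs{\widetilde P_{0,u}(x)}\bigr)\,\dd x$ does not depend on the direction $u$, i.e.\ to the $O(\gf_1,\widehat Q)$-invariance of $(\pi_0)_*(f)$. The gap is exactly at the point you yourself flag as delicate: your justification of that invariance (``a density argument based on Proposition~\ref{prop:1}, the continuity of $(\pi_0)_*$, and Proposition~\ref{prop:3} applied to $\pi_0$'') does not go through. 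Part~3 of Proposition~\ref{prop:3} requires the multiplier to be continuous on an open set carrying $\beta_{\dd\pi_0(\Lc_A)}$, and that measure is carried by the ray $\R_+\times\Set{0}^{n_2}$ — precisely the set on which continuity of $m$ is still unknown at this stage, so applying Proposition~\ref{prop:3} to $\pi_0$ with the multiplier $m$ is circular. Nor does a density argument repair this: the natural approximants of $f$ inside $L^1_{\Lc_A}(G)$, such as $f*\Kc_{\Lc_A}(\tau(2^{-j}\,\cdot\,))=\Kc_{\Lc_A}(m\,\tau(2^{-j}\,\cdot\,))$, have multipliers that are no more continuous near that ray than $m$ is, while an approximation of $f$ in $L^1$ by kernels of everywhere-continuous (say Schwartz) multipliers is not available — its existence would essentially presuppose the result. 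Note also that Proposition~\ref{prop:2:9} is of no help here, since the characters of $G$ form a Plancherel-negligible set.

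The paper fills this hole with a two-stage descent that is absent from your proposal. First it proves the theorem in the case $n_2=1$ (there $W=\Set{0}$ and $P_0$ is constant on $\gf_2^*\setminus\Set{0}$, so the computation of Lemma~\ref{lem:9} already yields a continuous multiplier). Then, for general $n_2$ and $\omega$ in the unit sphere, it applies Proposition~\ref{prop:3} to $\pi_\omega\colon G\to G_\omega$ — legitimate, because $m$ is continuous on a set carrying $\beta_{\dd\pi_\omega(\Lc_A)}$, the fibre over $\omega=0$ being negligible for that measure — obtaining $(\pi_\omega)_*(f)=\Kc_{\dd\pi_\omega(\Lc_A)}(m)$; the $n_2=1$ case gives property $(RL)$ for $\dd\pi_\omega(\Lc_A)$, hence a continuous representative of the pushed multiplier, and only then can Proposition~\ref{prop:3} be applied a second time, to the projection $G_\omega\to G_0$, to conclude $(\pi_0)_*(f)\in L^1_{\dd\pi_0(\Lc_A)}(G_0)$ and therefore its rotational invariance. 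To complete your proof you would need to insert this intermediate step (or an equivalent substitute); as written, the invariance of $(\pi_0)_*(f)$ is asserted rather than proved.
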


Notice that, by polarization, $P_0$ has a continuous extension to $\gf_2^*\setminus\Set{0}$ if and only if $\abs{P_0(x)}$ has a continuous extension to $\gf_2^*\setminus\Set{0}$ for every $x\in \gf_1$.

In addition, observe that the hypotheses of the proposition hold in the following situations:
\begin{itemize}
	\item when $W=\Set{0}$, for example when $G$ is the free $2$-step nilpotent group on three generators;
	
	\item when $P_0$ is constant on  $\gf_2^*\setminus W$, for example when $G=G'\times\R^d$ for some $MW^+$ group $G'$, such as a product of Heisenberg groups.
\end{itemize}

\begin{proof}
	{\bf1.} Keep the notation of the proof of Lemma~\ref{lem:9}. 
	Assume first that $n_2=1$, so that $W=\Set{0}$. 
	In addition,  $\ker \dd_{\sigma_\omega}=\ker \dd_{\sigma_{-\omega}}$ for every $\omega\in \gf_2^*$, so that $P_0$ is constant on $\gf_2^*\setminus \Set{0}$. 
	The computations of the proof of Lemma~\ref{lem:9} then lead to the conclusion.
	
	{\bf2.} Denote by $\widetilde P_0$ the continuous extension of $P_0$ to $\gf_2^*\setminus \Set{0}$; observe that $\widetilde P_{0,\omega}$ is a self-adjoint projector of $\gf_1$ of rank $d$ for every non-zero $\omega\in \gf_2^*$.
	Take $f\in L^1_{\Lc_A}(G)$ and define, for every non-zero $\omega\in \gf_2^*$ and for every $\lambda \Meg 0$,
	\[
	m(\mi_{\omega}(\vect{n_{1,\omega}})+\lambda, \omega(\vect{T}))\coloneqq \int_G  f(x,t) e^{ -\frac{1}{4}\abs{x_{\omega}}^2+i \omega(t) } \Phi_d\left(  \sqrt{\lambda} \abs{\widetilde P_{0,\omega}(x)} \right)\,\dd (x,t),
	\]
	so that $f=\Kc_{\Lc_A}(m)$. 
	Then, $m$ is clearly continuous con $\sigma(\Lc_A)\setminus (\R\times\Set{0}^{n_2})$, and $m(\mi_{r \omega}(\vect{n_{1, \omega}})+\lambda, r \omega(\vect{T}))$ converges to 
	\[
	\int_G  f(x,t) \Phi_d\left(  \sqrt{\lambda} \abs{\widetilde P_{0,\omega}(x)} \right)\,\dd (x,t)
	\]
	as $r\to 0^+$, uniformly as $\omega$ runs through the unit sphere $S$ of $\gf_2^*$. 
	Therefore, it will suffice to prove that the above integrals do not depend on $\omega\in S$ for every $\lambda\Meg 0$.
	Indeed, Proposition~\ref{prop:3} implies that, for every $\omega\in S$,
	\[
	(\pi_\omega)_*(f)=\Kc_{\dd\pi_\omega(\Lc_A)}(m).
	\]
	Now,~{\bf1} above implies that the family $\dd \pi_\omega(\Lc_A)$ satisfies property $(RL)$. 
	Then, Proposition~\ref{prop:3}  implies that
	\[
	(\pi_0)_*(f)\in L^1_{\dd\pi_0(\Lc_A)}(G_0);
	\] 
	in addition, $\dd \pi_0(\Lc_A)$ is identified with $(\Delta, 0,\dots,0)$, where $\Delta$ is the (positive) Laplacian associated with the scalar product $\widehat Q$ on $\gf_1$. 
	Then,
	\[
	\int_G f(x,t)  \Phi_d\left(  \sqrt{\lambda} \abs{\widetilde P_{0,\omega}(x)} \right)\,\dd (x,t)
	=\int_{\gf_1}(\pi_0)_*(f)(x)  \Phi_d\left(  \sqrt{\lambda} \abs{\widetilde P_{0,\omega}(x)} \right)\,\dd x,
	\]
	whence the assertion since $(\pi_0)_*(f)$ is rotationally invariant.	
\end{proof}

\begin{remark}\label{oss:1}
	Let $G$ be $\Hd^1\times \R$, where $\Hd^1$ is the $3$-dimensional Heisenberg group. If $\Lc$ is the standard sub-Laplacian on $\Hd^1$, $T$ is a basis of the centre of the Lie algebra of $\Hd^1$, and $\Delta$ is the (positive) Laplacian on $\R$, then $(\Lc+\Delta, i T)$ satisfies property $(RL)$ by Theorem~\ref{prop:19:1}, but it is easily seen that its integral kernel does not admit any continuous representatives.
\end{remark}

When $G$ is a free group, we can remove the assumption that $P_0$ has a continuous extension.

\begin{theorem}\label{prop:19:5}
	Assume that $G$ is a free $2$-step stratified group on an odd number of generators. 
	Then, $\Lc_A$ satisfies property $(RL)$.
\end{theorem}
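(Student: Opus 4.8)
The plan is to exploit the freeness of $G$ to show that every $L^1$ convolution kernel of $\Lc_A$ carries enough hidden rotational symmetry for the `inversion formula' behind Lemma~\ref{lem:9} to produce a multiplier which extends continuously across the only part of $\sigma(\Lc_A)$ where continuity is not yet granted. Fix the setting: since $G$ is free on an odd number $N$ of generators we may take $\widehat Q$ to be the standard inner product on $\gf_1$ and identify $\gf_2^*$, via $\widehat Q$, with the skew-symmetric endomorphisms of $\gf_1$, $\omega$ corresponding to $J_{Q,\omega}$; then $d=\min_\omega\dim\ker J_{Q,\omega}=1$ ($N$ even would give $d=0$, an $MW^+$ group), so $\Phi_d=\Phi_1=\cos$, and $W=\Set{\omega\colon \dim\ker J_{Q,\omega}\Meg 3}$. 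Given $f\in L^1_{\Lc_A}(G)$ with multiplier $m$, Corollary~\ref{cor:5} and Lemma~\ref{lem:9} give a representative of $m=\Mc_{\Lc_A}(f)$ continuous on
\[
D\coloneqq \Set{\big(\tfrac12\norm{J_{Q,\omega}}_1,\omega(\vect{T})\big)\colon \omega\in\gf_2^*}\cup\Set{(s,\omega(\vect{T}))\colon \omega\notin W,\ s\Meg \tfrac12\norm{J_{Q,\omega}}_1},
\]
given on the second set, for $\omega\notin W$, by $m\big(\tfrac12\norm{J_{Q,\omega}}_1+\lambda,\omega(\vect{T})\big)=\int_G f(x,t)\,e^{-\frac14\langle \abs{J_{Q,\omega}}x,x\rangle+i\omega(t)}\cos\!\big(\sqrt{\lambda}\,\abs{P_{0,\omega}x}\big)\,\dd(x,t)$. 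Since $W$ is a proper algebraic subvariety of $\gf_2^*$, the expression for $\beta_{\Lc_A}$ in Proposition~\ref{prop:11:2} shows $\sigma(\Lc_A)\setminus D$ is $\beta_{\Lc_A}$-negligible, so it suffices to prove this representative extends to a \emph{continuous} function on $\sigma(\Lc_A)$, i.e.\ across the fibres over $W$ (that over $0$ included).

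The crucial input is an invariance property. For $\omega\in\gf_2^*\setminus\Set{0}$ the quotient $G_\omega=G/\exp_G(\ker\omega)$ is, with respect to the $\widehat Q$-orthogonal splitting $\gf_1=(\ker J_{Q,\omega})^\perp\oplus\ker J_{Q,\omega}$, the direct product of a Heisenberg group with the abelian group $\ker J_{Q,\omega}$; moreover $\dd\pi_\omega(\Lc)$ is the sum of the corresponding Heisenberg sub-Laplacian and the Euclidean Laplacian of $\ker J_{Q,\omega}$, while $\dd\pi_\omega(-iT_k)=\omega(T_k)(-iZ)$ for a fixed generator $Z$ of the Heisenberg centre, so $\dd\pi_\omega(\Lc_A)$ is the image of $(\dd\pi_\omega(\Lc),-iZ)$ under a linear map (cf.\ Proposition~\ref{prop:7:4}). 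The orthogonal group of $\ker J_{Q,\omega}$ acts on $G_\omega$ by automorphisms fixing both $\dd\pi_\omega(\Lc)$ and $Z$, hence fixing the whole family; consequently it induces the identity on the von Neumann algebra generated by $(\dd\pi_\omega(\Lc),-iZ)$, so every $L^1$ convolution kernel of that family is invariant under those rotations. Granting that $\pi_{\omega*}(f)$ is such a kernel, the partial Fourier transform $\widehat f_\omega(x)\coloneqq\int_{\gf_2} f(x,t)\,e^{i\omega(t)}\,\dd t$ — which corresponds to $\pi_{\omega*}(f)$ under the above identification — is invariant under rotations of $\ker J_{Q,\omega}$; and $\widehat f_0=\pi_{0*}(f)$ is radial on $\gf_1$, since by the argument of Theorem~\ref{prop:19:1} one has $\pi_{0*}(f)\in L^1_{\dd\pi_0(\Lc_A)}(\gf_1)$ with $\dd\pi_0(\Lc_A)=(\Delta,0,\dots,0)$.

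With this in hand, define $\widetilde m$ on $\sigma(\Lc_A)$ by $\widetilde m\big(\tfrac12\norm{J_{Q,\omega}}_1+\lambda,\omega(\vect{T})\big)\coloneqq\int_{\gf_1}\widehat f_\omega(x)\,e^{-\frac14\langle \abs{J_{Q,\omega}}x,x\rangle}\,\Phi_{\dim\ker J_{Q,\omega}}\!\big(\sqrt{\lambda}\,\abs{P_{0,\omega}x}\big)\,\dd x$, where $P_{0,\omega}$ now denotes the orthogonal projector onto $\ker J_{Q,\omega}$ (agreeing with the $P_{0,\omega}$ of Proposition~\ref{prop:11:5} for $\omega\notin W$). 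For $\omega\notin W$ this is the Lemma~\ref{lem:9} formula, so $\widetilde m$ agrees on $D$ with the continuous representative of $m$, and it remains to see that $\widetilde m$ is continuous on $\sigma(\Lc_A)$; the only issue is at points over $\omega_0\in W\cup\Set 0$. There one uses that $\omega\mapsto\widehat f_\omega$ is continuous into $L^1(\gf_1)$ (dominated convergence, $\abs{\widehat f_\omega(x)}\meg\int_{\gf_2}\abs{f(x,t)}\,\dd t\in L^1(\gf_1)$), that $\omega\mapsto\abs{J_{Q,\omega}}$ and $\omega\mapsto\norm{J_{Q,\omega}}_1$ are continuous (cf.\ Proposition~\ref{prop:11:1}), and the key point that, because $\widehat f_{\omega_0}$ and $x\mapsto\langle\abs{J_{Q,\omega_0}}x,x\rangle$ are invariant under the rotations of $\ker J_{Q,\omega_0}$, the quantity $\int_{\gf_1}\widehat f_{\omega_0}(x)\,e^{-\frac14\langle \abs{J_{Q,\omega_0}}x,x\rangle}\Phi_{k}\!\big(\sqrt{\lambda}\,\abs{P_V x}\big)\,\dd x$ depends neither on the $k$-dimensional subspace $V\subseteq\ker J_{Q,\omega_0}$ nor — via the iterated spherical-mean identity used in the proof of Proposition~\ref{prop:11:2} — on $k$, so it equals $\widetilde m$ at $\omega_0$. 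Since along any sequence $\omega^{(j)}\to\omega_0$ the subspaces $\ker J_{Q,\omega^{(j)}}$ collapse into $\ker J_{Q,\omega_0}$, the corresponding integrals converge to that common value; hence $\widetilde m$ is a continuous representative of $m$, and $(RL)$ holds.

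I expect the main obstacle to be the step taken for granted above: that $\pi_{\omega*}(f)$ is an $L^1$ convolution kernel of $\dd\pi_\omega(\Lc_A)$ for \emph{every} $\omega\neq 0$, in particular for $\omega\in W$. Proposition~\ref{prop:3} yields this only when $m$ is continuous on an open set carrying $\beta_{\dd\pi_\omega(\Lc_A)}$, and for $\omega\in W$ that measure lives over the negligible set $W$, where continuity of $m$ is exactly what is being proved — so a naive appeal is circular. This is where the free structure must genuinely be used: one shows, by means of the simple representation theory of the free $2$-step group, that the identity $\pi_{\omega*}(\Kc_{\Lc_A}(m))=\Kc_{\dd\pi_\omega(\Lc_A)}(m)$ persists here — for instance by approximating $f$ in $L^1(G)$ by the Schwartz kernels $\Kc_{\Lc_A}(m\tau_j)$ attached to compactly supported cut-offs $\tau_j$ of $m$ (Theorem~\ref{teo:1}), for which the identity is available, and invoking the closedness of $L^1_{\dd\pi_\omega(\Lc_A)}(G_\omega)$ from Proposition~\ref{prop:1}. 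Once that is secured, the remaining ingredients — dominated convergence and the spherical-mean identity — are routine.
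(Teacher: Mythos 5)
Your overall architecture coincides with the paper's: take the Lemma~\ref{lem:9} representative, which is continuous off $\R\times W$; establish that the central Fourier transforms $(\pi_\omega)_*(f)$ (equivalently your $\widehat f_\omega$) are invariant under rotations of $\ker \dd_{B_\omega}$ fixing its orthocomplement; then pass to the limit across $\R\times W$ by dominated convergence and the spherical-mean identity (your treatment of $\omega_0=0$ via the two-step projection of Theorem~\ref{prop:19:1}, and the identity relating $\Phi_1$ along lines of the kernel to $\Phi_k$, are fine). The genuine gap is exactly the step you flagged, namely the invariance of $(\pi_{\omega_0})_*(f)$ for $\omega_0\in W\setminus\Set{0}$, and the fix you sketch does not close it. First, the identity $(\pi_{\omega_0})_*\bigl(\Kc_{\Lc_A}(m\tau_j)\bigr)=\Kc_{\dd\pi_{\omega_0}(\Lc_A)}(m\tau_j)$ is not ``available'' for your approximants: Proposition~\ref{prop:3} still requires $m\tau_j$ to be continuous on a set carrying $\beta_{\dd\pi_{\omega_0}(\Lc_A)}$, and that measure is concentrated over the ray $\R\,\omega_0(\vect{T})\subseteq W$, precisely where continuity of $m$ (hence of $m\tau_j$) is what you are trying to prove; the circularity survives the cut-off. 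Second, writing $(\pi_{\omega_0})_*\bigl(f*\Kc_{\Lc_A}(\tau_j)\bigr)=(\pi_{\omega_0})_*(f)*\Kc_{\dd\pi_{\omega_0}(\Lc_A)}(\tau_j)$ does not put the approximants in $L^1_{\dd\pi_{\omega_0}(\Lc_A)}(G_{\omega_0})$: that space is not stable under left convolution by arbitrary $L^1$ functions, and membership in it already encodes the rotational invariance you want, so Proposition~\ref{prop:1} has nothing to act on. A telling symptom is that your patch uses neither freeness nor the odd number of generators, although these hypotheses must enter at exactly this point.

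What the paper does instead avoids proving that $(\pi_{\omega_0})_*(f)$ is a kernel of the projected family at all. Given an isometry $U$ fixing $(\ker\dd_{B_{\omega_0}})^\perp$, the oddness of $\dim\ker\dd_{B_{\omega_0}}$ yields $v\in\ker\dd_{B_{\omega_0}}$ with $Uv=\pm v$; one takes $V$ the orthocomplement of $\R v$ in the kernel (which is $U$-invariant and even-dimensional), equips it with a standard symplectic form $\sigma_V$ compatible with $U$, and uses the \emph{freeness} of $G$ to realize $B_{\omega_0}+2^{-p}\sigma_V$ as $B_{\omega_p}$ for some $\omega_p\in\gf_2^*\setminus W$ converging to $\omega_0$. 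For these $\omega_p$ the $U$-invariance of $(\pi_{\omega_p})_*(f)$ follows from the case already settled off $W$ (Lemma~\ref{lem:9} plus Proposition~\ref{prop:3}), and since $(\pi_{\omega_p})_*(f)\to(\pi_{\omega_0})_*(f)$ in $L^1(\gf_1\oplus\R)$, the invariance passes to the limit. (For your purposes it even suffices to treat rotations in a $2$-plane of the kernel, which carry any line of $\ker\dd_{B_{\omega_0}}$ to any other and are easily made compatible with a choice of $\sigma_V$.) This perturbation-and-limit mechanism is the missing idea; once it is inserted in place of your ``kernel of the projected family'' claim, the rest of your argument goes through essentially as in the paper.
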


\begin{proof}
	Take $f\in L^1_{\Lc_A}(G)$; by Lemma~\ref{lem:9}, $f$ has a multiplier $m$  which is continuous on $\sigma(\Lc_A)\setminus (\R\times W)$.
	Now,
	\[
	(\pi_\omega)_*(f)(x,t) =\int_{\omega(t')= t} f(x, t')\,\dd t'
	\]
	for almost every $(x,t)\in G_\omega$. 
	Then, Proposition~\ref{prop:3} implies that $(\pi_\omega)_*(f)$ is invariant under the isometries which restrict to the identity on $(\ker \dd_{\sigma_\omega})^\perp$, for every $\omega \in \gf_2^*\setminus W$; indeed, $\dd \pi_\omega(\Lc_A)$ is invariant under such isometries, and these isometries  are group automorphisms.
	Next, take $\omega\in W$ and an isometry $U$ of $G_\omega$ which restricts to the identity on $(\ker \dd_{\sigma_\omega})^\perp$. 
	Since $\dim \ker \dd_{\sigma_\omega}$ is odd, there must be some $v\in \ker \dd_{\sigma_\omega}$ such that $U\cdot v=\pm v$. 
	Let $V$ be the orthogonal complement of $\R v$ in $\ker \dd_{\sigma_\omega}$, so that $V$ is $U$-invariant.
	Now, let $\sigma_V$ be a standard symplectic form on the hilbertian space $V$,\footnote{That is, choose a symplectic form $\sigma_V$ on $V$ so that $V$ admits an orthonormal basis (relative to the scalar product) which is also a symplectic basis (relative to $\sigma_V$).} and define $\omega_p$, for every $p\in \N$, so that
	\[
	\sigma_{\omega_p}=\sigma_\omega+ 2^{-p} \sigma_V;
	\]
	this is possible since $G$ is a free $2$-step stratified group.
	Then, $\omega_p$ belongs to $\gf_2^*\setminus W$ and converges to $\omega$. 
	In addition, $(\pi_{\omega_p})_*(f)$	is $U$-invariant thanks to Proposition~\ref{prop:3}.
	Now, it is easily seen that $(\pi_{\omega_p})_*(f)$ converges to $(\pi_\omega)_*(f)$ in $L^1(\gf_1 \oplus \R)$, so that $(\pi_\omega)_*(f)$ is $U$-invariant.
	
	Then, the mapping
	\[
	m_1\colon \R_+\times(\gf_2^*\setminus W) \ni(\lambda,\omega) \mapsto \int_G  f(x,t) e^{ -\frac{1}{4}\abs{x_{\omega}}^2+i \omega(t) } \Phi_1\left(  \sqrt{\lambda} \abs{x_{0,\omega}} \right)\,\dd (x,t),
	\]
	extends to a continuous function on $\gf_2^*\times \R_+$. 
	Now, clearly $m(\lambda, \omega(\vect{T}))=m_1(\lambda- \mi_\omega(\vect{n_{1,\omega}}),\omega)$ for every $(\lambda, \omega(\vect{T}))\in \sigma(\Lc_A)$; the assertion follows.		
\end{proof}

\begin{theorem}\label{prop:10}
	Take $n_2'<n_2$. 
	Then, the family $(\Lc, (-i T_j)_{j=1,\dots,n_2'})$ satisfies property $(RL)$.
\end{theorem}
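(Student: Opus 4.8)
The plan is to realize $(\Lc,(-iT_j)_{j=1,\dots,n_2'})$ as an image family $P(\Lc_A)$ and to combine the strategy of Section~\ref{sec:8} with Lemma~\ref{lem:9}, the missing step — descending a continuous multiplier through $P$ — being supplied by the composite-function results of Section~\ref{sec:6}. Let $P\colon E_{\Lc_A}\to E_{P(\Lc_A)}$ be the linear, homogeneous map forgetting the last $n_2-n_2'$ coordinates. After identifying $\gf_2^*$ with $\R^{n_2}$ via $\omega\mapsto\omega(\vect T)$, Corollary~\ref{cor:11:2} says that $N\colon\omega\mapsto\mi_\omega(\vect{n_{1,\omega}})$ is a norm on $\R^{n_2}$, and Proposition~\ref{prop:11:2} shows that $\sigma(\Lc_A)$ is the epigraph $C\coloneqq\Set{(\lambda,\omega)\colon\lambda\Meg N(\omega)}$ of $N$ (the $\gamma=0$ summand of $\beta_{\Lc_A}$ already has support $C$, and every other summand is supported in $C$). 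Thus $C$ is a closed convex cone, $P$ is proper on it (if $P(\lambda,\omega)=0$ and $(\lambda,\omega)\in C$ then $\lambda=N(\omega)=0$, hence $(\lambda,\omega)=0$), so by Proposition~\ref{prop:7:4} $P(\Lc_A)$ is a Rockland family with $\sigma(P(\Lc_A))=P(C)$, $\beta_{P(\Lc_A)}=P_*(\beta_{\Lc_A})$ and $\Kc_{P(\Lc_A)}(m)=\Kc_{\Lc_A}(m\circ P)$ for every $m$ admitting a kernel; moreover $P(C)$ is again the epigraph of a norm on $\R^{n_2'}$.

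Now fix $m\in L^\infty(\beta_{P(\Lc_A)})$ with $\Kc_{P(\Lc_A)}(m)\in L^1(G)$. Then $f\coloneqq\Kc_{\Lc_A}(m\circ P)=\Kc_{P(\Lc_A)}(m)\in L^1_{\Lc_A}(G)$, so $\Mc_{\Lc_A}(f)=m\circ P$ in $L^\infty(\beta_{\Lc_A})$ by Corollary~\ref{cor:5}, and by Lemma~\ref{lem:9} it has a representative $m_0$ continuous on
\[
S\coloneqq\partial C\cup\bigl(\mathrm{int}\,C\setminus(\R\times W)\bigr),
\]
where $W\subseteq\R^{n_2}$ denotes (the image of) the algebraic variety of Lemma~\ref{lem:2}. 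Since $\partial C$ is closed and $\mathrm{int}\,C\setminus(\R\times W)$ is open, $S$ is a $G_\delta$ subset of $\R^{1+n_2}$, hence a Polish space in the induced topology; but $G$ is non-abelian, so $0\in W\neq\varnothing$, and $S$ fails to be locally compact at every boundary point lying over $W$ (a sequence in $\mathrm{int}\,C\setminus(\R\times W)$ staying at a fixed height above $\partial C$ converges in $\R^{1+n_2}$ to a point of $\mathrm{int}\,C\cap(\R\times W)$, which is disjoint from $S$). This is where a non-locally-compact Polish space is forced upon us. Finally, by Proposition~\ref{prop:11:2} every layer of $\beta_{\Lc_A}$ is supported in $C\setminus(\R\times W)$, so $\beta_{\Lc_A}(\R^{1+n_2}\setminus S)=0$.

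The core of the argument is an application of Proposition~\ref{prop:A:7} with $X\coloneqq S$, $Y\coloneqq E_{P(\Lc_A)}$, $\pi\coloneqq P|_S$, and $\mi$ the Lebesgue measure of $\R^{1+n_2}$ restricted to $\mathrm{int}\,C$, which is concentrated on $S$ and has support $S$ there. The hypotheses hold: $\pi$ is $\mi$-proper because each fibre of $P$ meets $C$ in a bounded set, so $P_*(\mi)$ is locally finite; a disintegration of $\mi$ relative to $\pi$ whose fibre measures are equivalent to $\Hc^{n_2-n_2'}$ on the fibre slices $\Set{\omega''\colon N(\omega',\omega'')\meg\lambda}$ — so that the support condition of Proposition~\ref{prop:A:7} is satisfied — is furnished by Proposition~\ref{prop:A:6} applied to the linear (hence $C^1$) submersion $P$, whose $(1+n_2')$-Jacobian is a positive constant; $\mi$ restricted to $S$ is $\pi$-connected by Proposition~\ref{prop:A:8}, since $C$ is a closed convex cone carrying $\mi$ and $S$ is a $\mi$-conull Polish subspace of $\R^{1+n_2}$; and $m_0=m\circ P$ holds $\mi$-almost everywhere, because it holds $\beta_{\Lc_A}$-almost everywhere while, via the change of variables $(\lambda,\omega)\mapsto(N(\omega)+\lambda,\omega(\vect T))$ and using $d>0$, the $\gamma=0$ summand of $\beta_{\Lc_A}$ is equivalent to Lebesgue measure on $\mathrm{int}\,C\setminus(\R\times W)$, so that $\mi$ is absolutely continuous with respect to $\beta_{\Lc_A}$. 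Proposition~\ref{prop:A:7} then produces a $P_*(\mi)$-measurable $m_2\colon E_{P(\Lc_A)}\to\C$ with $m_0=m_2\circ P$ pointwise on $S$.

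It remains to see that $m_2$ is continuous on $\sigma(P(\Lc_A))=P(C)$ and agrees with $m$ almost everywhere. Since $P|_S$ is not proper (again because $W\neq\varnothing$), one cannot quote the last clause of Proposition~\ref{prop:A:7}; instead, given $(\lambda_k,\omega'_k)\to(\lambda_\infty,\omega'_\infty)$ in $P(C)$, note that $\lambda_k$ lies in the range of the norm $N(\omega'_k,\,\cdot\,)$ on the fibre, so one may choose $\omega''_k$ with $N(\omega'_k,\omega''_k)=\lambda_k$; these stay bounded, so along a subsequence $\omega''_k\to\omega''_\infty$, and the ground states $s_k\coloneqq(\lambda_k,\omega'_k,\omega''_k)$ lie in the \emph{closed} set $\partial C\subseteq S$ and converge to $s_\infty\coloneqq(\lambda_\infty,\omega'_\infty,\omega''_\infty)\in\partial C$. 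Continuity of $m_0$ on $S$ then gives $m_2(\lambda_k,\omega'_k)=m_0(s_k)\to m_0(s_\infty)=m_2(\lambda_\infty,\omega'_\infty)$ along the subsequence, and since the limit is independent of the subsequence, $m_2$ is continuous on $P(C)$. Lastly, $m_0=m_2\circ P$ on $S$ together with $\beta_{\Lc_A}(\R^{1+n_2}\setminus S)=0$ yields $m_2\circ P=m\circ P$ $\beta_{\Lc_A}$-almost everywhere, hence $m_2=m$ in $L^\infty(P_*(\beta_{\Lc_A}))=L^\infty(\beta_{P(\Lc_A)})$; extending $m_2$ continuously to $E_{P(\Lc_A)}$ by Tietze's theorem, $m$ has a continuous representative, so $(\Lc,(-iT_j)_{j=1,\dots,n_2'})$ satisfies property $(RL)$. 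The main obstacle is precisely the variety $W$: it makes the natural domain $S$ non-locally-compact and $P|_S$ non-proper, which is why one needs Radon-measure theory on general Polish spaces (Proposition~\ref{prop:A:7}) together with the explicit shape of $\beta_{\Lc_A}$ near the ground state.
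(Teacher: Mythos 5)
Your proof is correct and follows essentially the same route as the paper: realize the partial family as the image $L(\Lc_A)$, use Lemma~\ref{lem:9} to get a multiplier continuous on $X=(\sigma(\Lc_A)\setminus W)\cup\partial\sigma(\Lc_A)$, descend through $L$ by means of Propositions~\ref{prop:A:6},~\ref{prop:A:8} and~\ref{prop:A:7}, and recover continuity of the descended function from properness of $L$ on $\partial\sigma(\Lc_A)$, followed by a continuous extension. The only (harmless) deviation is that you feed Propositions~\ref{prop:A:7} and~\ref{prop:A:8} the Lebesgue measure on the interior of $\sigma(\Lc_A)$ (justified by its absolute continuity with respect to the $\gamma=0$ layer of $\beta_{\Lc_A}$) rather than the restriction of $\beta_{\Lc_A}$ itself, which lets you sidestep the paper's dimension count for $W\cap L^{-1}(\lambda')$ when verifying the support condition on the fibre measures.
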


\begin{proof}
	Define $\Lc'_{A'}=(\Lc, (-i T_j)_{j=1,\dots, n_2'})$, and let $L\colon E_{\Lc_A}\to E_{\Lc'_{A'}}$ be the unique linear mapping such that $\Lc'_{A'}=L(\Lc_A)$. 
	Until the end of the proof, we shall identify $\gf_2^*$ with $\R^{n_2}$ by means of the mapping $\omega \mapsto \omega (\vect{T})$.
	In addition, define $X\coloneqq (\sigma(\Lc_A) \setminus  W) \cup \partial \sigma(\Lc_A)$, so that $X$ is a Polish space by~\cite[Theorem 1 of Chapter IX, § 6, No.\ 1]{BourbakiGT2}.
	Let $\beta$ be the (Radon) measure induced by $\beta_{\Lc_A}$ on $X$, so that $\Supp{\beta}=X$. 
	Let $L'$ be the restriction of $L$ to $X$. Since  $\sigma(\Lc_A)$ is a convex cone by Corollary~\ref{cor:11:2} and since $ W$ is $\beta_{\Lc_A}$-negligible, Proposition~\ref{prop:A:8} implies that $\beta$ is $L'$-connected. 
	
	Now, Proposition~\ref{prop:A:6} implies that $\beta$ has a disintegration $(\beta_{\lambda'})_{\lambda'\in E_{\Lc'_{A'}}}$ such that $\beta_{\lambda'}$ is equivalent to $\chi_{L'^{-1}(\lambda')}\cdot \Hc^{n_2-n_2'}$ for $\beta_{\Lc'_{A'}}$-almost every $\lambda'\in E_{\Lc'_{A'}}$. 
	Observe that $L^{-1}(\lambda')\cap \sigma(\Lc_A)$ is a convex set of dimension $n_2-n_2'$ for $\beta_{\Lc'_{A'}}$-almost every $\lambda'\in E_{\Lc'_{A'}}$. 
	In addition, $ W\cap L^{-1}(\lambda')$ is an algebraic variety of dimension at most $n_2-n_2'-1$ for $\beta_{\Lc'_{A'}}$-almost every $\lambda'\in E_{\Lc'_{A'}}$, for otherwise $\Hc^{n_2+1}(W)$ would be non-zero, which is absurd.
	Therefore, $\Supp{\beta_{\lambda'}}=L'^{-1}(\lambda')$ for $\beta_{\Lc'_{A'}}$-almost every $\lambda'\in E_{\Lc'_{A'}}$.  
	
	Now, take $m_0\in L^\infty(\beta_{\Lc_A})$ so that $\Kc_{\Lc'_{A'}}(m_0)\in L^1(G)$. 
	Let us prove that $m_0$ has a continuous representative.
	Indeed, Lemma~\ref{lem:9} implies that there is a continuous function $m_1$ on $X$ such that $m_0\circ L'=m_1$ $\beta$-almost everywhere.
	Hence, Proposition~\ref{prop:A:7} implies that there is a function $m_2\colon \sigma(\Lc'_{A'})\to \C$ such that $m_2\circ L'=m_1$. Since the mapping $L\colon \partial \sigma(\Lc_A)\to \sigma(\Lc'_{A'})$ is proper and onto, and since $\partial \sigma(\Lc_A)\subseteq X$, it follows that $m_2$ is continuous.
	The assertion follows (cf.~\cite[Corollary to Theorem 2 of Chapter IX, § 4, No.\ 2]{BourbakiGT2}). 
\end{proof}

\section{Property $(S)$}

The results of this section are basically a generalization of the techniques employed in~\cite{AstengoDiBlasioRicci,AstengoDiBlasioRicci2}. 

Theorem~\ref{prop:20:2} applies, for example, to the free $2$-step nilpotent group on three generators.
Notice that we need to impose the condition $W=\Set{0}$ since our methods cannot be used to infer any kind of regularity on $W\setminus \Set{0}$; for example, in general our auxiliary functions $\abs{x_\omega}^2$ and $P_0$ are not differentiable on $W$.
Nevertheless, this does not mean that property $(S)$ cannot hold when $W\neq \Set{0}$; as a matter of fact, Theorem~\ref{prop:20:3} shows that this happens for a product of free $2$-step stratified groups on $3$ generators and a suitable sub-Laplacian thereon.

In order to simplify the notation, we define $\Sc(G,\Lc_A)\coloneqq\Kc_{\Lc_A}(\Sc(E_{\Lc_A}))$.

\medskip

We begin with a lemma which will allow us to get some `Taylor expansions' of multipliers corresponding to Schwartz kernels under suitable hypotheses. Its proof is modelled on a technique due to D.\ Geller~\cite[Theorem 4.4]{Geller}.
We state it in a slightly more general context.

\begin{lemma}\label{lem:20:3}\label{lem:20:4}
	Let $\Lc_A$ be a Rockland family on a homogeneous group $G'$, and let $T'_1,\dots, T'_n$ be a free family of elements of the centre of the Lie algebra $\gf'$ of $G'$. 
	Let $\pi_1$ be the canonical projection of $G'$ onto its quotient by the normal subgroup $\exp(\R T'_1)$, and assume that the following hold:
	\begin{itemize}
		\item $(\Lc_A, i T'_1,\dots, i T'_n )$ satisfies property $(RL)$;
		
		\item  $\dd \pi_1(\Lc_A, i T'_{2},\dots, i T'_n)$ satisfies property $(S)$.
	\end{itemize}
	
	Take $\phi\in \Sc_{(\Lc_A, i T'_1,\dots, i T'_n )}(G')$. 
	Then, there are two families $(\widetilde \phi_\gamma)_{\gamma\in \N^{n}}$ and $(\phi_\gamma)_{\gamma\in \N^{n}}$  of elements of $\Sc(G',\Lc_A)$ and $\Sc_{(\Lc_A, i T'_1,\dots, i T'_n )}(G')$, respectively, such that 
	\[
	\phi=\sum_{\abs{\gamma}<h} \vect{T}'^\gamma \widetilde \phi_\gamma+\sum_{\abs{\gamma}=h} \vect{T}'^\gamma  \phi_\gamma
	\]
	for every $h\in \N$.
\end{lemma}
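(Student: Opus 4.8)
The plan is to peel off the central operators $T'_1,\dots,T'_n$ one at a time \emph{on the group side}: repeatedly split a Schwartz kernel into its integral along a central one‑parameter subgroup (which no longer depends on that subgroup's variable) plus the antiderivative of the remainder. The antiderivative of a Schwartz function with vanishing total integral is again Schwartz, so no division on the spectral side is ever performed. Write $\Lc_B:=(\Lc_A,iT'_1,\dots,iT'_n)$ and identify $E_{\Lc_B}$ with $E_{\Lc_A}\times\R^n$, with coordinates $(\lambda,\eta_1,\dots,\eta_n)$, so that $iT'_j$ corresponds to multiplication by $\eta_j$; then $T'_j\Kc_{\Lc_B}(m)=\Kc_{\Lc_B}(-i\eta_j m)$, and similarly for each subfamily $(\Lc_A,iT'_j,\dots,iT'_n)$, which is again a Rockland family since it contains $\Lc_A$. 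Recall from Theorem~\ref{teo:1} that a Rockland family maps Schwartz multipliers to Schwartz kernels, and that $\Sc(G',\Lc_A)\subseteq\Sc_{\Lc_B}(G')$.

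The core is the following \emph{one‑step} assertion: for every $\psi\in\Sc_{\Lc_B}(G')$ there exist $\Phi\in\Sc(G',(\Lc_A,iT'_2,\dots,iT'_n))$ and elements $\Psi_k\in\Sc_{\Lc_B}(G')$ $(k\geq1)$ with $\psi=\Phi+T_1'^{\,k}\Psi_k$ for every $k\geq1$. To prove it, let $\pi_1\colon G'\to G'/\exp(\R T'_1)$ be the canonical homogeneous projection. The pushforward $(\pi_1)_*\psi$ is a Schwartz function on $G'/\exp(\R T'_1)$ and, by Proposition~\ref{prop:3}, a convolution kernel for $\dd\pi_1(\Lc_B)$; since $\Lc_B$ satisfies $(RL)$, the multiplier of $\psi$ has a representative in $C_0(\sigma(\Lc_B))$ (cf.~Remark~\ref{oss:1:1}), which we extend to $C_0(E_{\Lc_B})$, so the $C_0$–$L^1$ case of Proposition~\ref{prop:3} (see its proof) identifies the multiplier of $(\pi_1)_*\psi$ with the restriction of that representative to $\sigma(\dd\pi_1(\Lc_B))\subseteq\{\eta_1=0\}$. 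Because $\dd\pi_1(\Lc_A,iT'_2,\dots,iT'_n)$ satisfies $(S)$ — and inserting the null operator $\dd\pi_1(iT'_1)=0$ preserves $(S)$, by multiplying a Schwartz representative by a fixed $\tau\in\Sc(\R)$ with $\tau(0)=1$ — this multiplier has a representative $\bar m\in\Sc(E_{(\Lc_A,iT'_2,\dots,iT'_n)})$. Put $\Phi:=\Kc_{(\Lc_A,iT'_2,\dots,iT'_n)}(\bar m)$, which, viewing $\bar m$ as a multiplier of $\Lc_B$ that is independent of $\eta_1$, equals $\Kc_{\Lc_B}(\bar m)$. By construction $(\pi_1)_*\Phi=(\pi_1)_*\psi$, so $f:=\psi-\Phi\in\Sc(G')$ has vanishing integral along every coset of $\exp(\R T'_1)$, and the antiderivative of $f$ in the $T'_1$-direction is a function $\Psi_1\in\Sc(G')$ with $T'_1\Psi_1=f$.

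It remains to see that $\Psi_1\in\Sc_{\Lc_B}(G')$, which is the main obstacle. First, $\Sc(G')\cap L^2_{\Lc_B}(G')=\Sc_{\Lc_B}(G')$: a Schwartz function lying in the $L^2$-range $L^2_{\Lc_B}(G')$ is $\Kc_{\Lc_B}$ of its $L^2(\beta_{\Lc_B})$-multiplier, which then tautologically admits it as a Schwartz kernel, and the reverse inclusion follows by truncating multipliers to large balls. Now $f=\psi-\Phi=\Kc_{\Lc_B}(m-\bar m)\in\Sc_{\Lc_B}(G')\subseteq L^2_{\Lc_B}(G')$, so it suffices to prove $\Psi_1\in L^2_{\Lc_B}(G')$. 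The self-adjoint operator $\overline{iT'_1}$ has trivial kernel (it generates the free action of $\exp(\R T'_1)\cong\R$ on $L^2(G')$, a multiple of the regular representation of $\R$), and $L^2_{\Lc_B}(G')$ reduces it, since under $\Kc_{\Lc_B}$ it corresponds to multiplication by $\eta_1$ on $L^2(\beta_{\Lc_B})$. Hence, writing $\Psi_1$ as the sum of its components in $L^2_{\Lc_B}(G')$ and in $L^2_{\Lc_B}(G')^\perp$, the second component is mapped by $\overline{iT'_1}$ into $L^2_{\Lc_B}(G')\cap L^2_{\Lc_B}(G')^\perp=\{0\}$ (because $\overline{iT'_1}\Psi_1=if\in L^2_{\Lc_B}(G')$), so it lies in $\ker\overline{iT'_1}=\{0\}$; thus $\Psi_1\in L^2_{\Lc_B}(G')$. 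Finally $(\pi_1)_*\Psi_1=0$, the pushforward of a $T'_1$-derivative of a Schwartz function being zero, so the same argument applied to $\Psi_1$ gives $\Psi_1=T'_1\Psi_2$ with $\Psi_2\in\Sc_{\Lc_B}(G')$, and by induction $\psi=\Phi+T_1'^{\,k}\Psi_k$ for all $k\geq1$, as claimed.

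Applying this to $\psi=\phi$ yields $\phi=\Phi+T_1'^{\,h}\Psi_h$ with $\Phi$ having a genuinely Schwartz multiplier $\bar m$. The very same pushforward/antiderivative mechanism — now requiring \emph{no extra hypothesis}, because at each stage the multiplier restricted to a coordinate hyperplane is a restriction of a derivative of $\bar m$, hence Schwartz — peels $T'_2,\dots,T'_n$ off $\Phi$ to all orders: for each $\delta\in\N^{n-1}$ one obtains $\Kc_{\Lc_A}\big(\tfrac1{\delta!}\,\partial^{\delta}\bar m(\,\cdot\,,0)\big)\in\Sc(G',\Lc_A)$ (here $\partial^\delta$ is taken in the $(\eta_2,\dots,\eta_n)$-variables, then evaluated at $0$) together with Schwartz $\Lc_B$-kernels for the remainders, so that $\Phi=\sum_{\abs{\delta}<N}(\vect{T}'')^{\delta}(\cdots)+\sum_{\abs{\delta}=N}(\vect{T}'')^{\delta}(\cdots)$ for every $N$, with $\vect{T}''=(T'_2,\dots,T'_n)$. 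Combining the two expansions with $N=h$, and setting $\widetilde\phi_\gamma:=0$ and $\phi_\gamma:=0$ for every multi-index $\gamma$ with $\gamma_1\geq1$ other than $h e_1$ (with $\phi_{h e_1}:=\Psi_h$), one reads off the two families $(\widetilde\phi_\gamma)_{\gamma\in\N^n}$ and $(\phi_\gamma)_{\gamma\in\N^n}$. That a single pair of families works simultaneously for \emph{every} $h$ is then routine bookkeeping, using only the commutativity of the $T'_j$ and the fact that each $\Psi_h$ and each remainder is determined independently of $h$.
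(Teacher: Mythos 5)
Your first-order step is essentially the paper's own argument and is correct: push forward along $\exp(\R T'_1)$, use property $(RL)$ of the full family together with Proposition~\ref{prop:3} to identify $(\pi_1)_*\psi$ as a kernel of the projected family, use property $(S)$ of $\dd\pi_1(\Lc_A,iT'_2,\dots,iT'_n)$ to get a Schwartz multiplier, lift it back to $G'$, apply Hadamard's lemma to the difference, and use the injectivity of $\overline{iT'_1}$ on $L^2(G')$ (together with the fact that the relevant $L^2_{\cdot}(G')$ space reduces it) to conclude that the quotient $\Psi_1$ is again a kernel of the full family. The genuine gap is in the iteration. You assert that $(\pi_1)_*\Psi_1=0$ because ``the pushforward of a $T'_1$-derivative of a Schwartz function is zero''; but $\Psi_1$ is the \emph{antiderivative}, not the derivative: what vanishes is $(\pi_1)_*(T'_1\Psi_1)=(\pi_1)_*(\psi-\Phi)$, whereas $(\pi_1)_*\Psi_1$ is in general nonzero (already on $\R$: if $f=g'$ with $g$ a bump function, the Schwartz antiderivative of $f$ is $g$, whose integral is not $0$). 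Consequently your ``one-step assertion'' $\psi=\Phi+(T'_1)^k\Psi_k$ with a \emph{single} $\Phi$ valid for all $k$ is false, and so is the final assembly in which every coefficient $\widetilde\phi_\gamma$ with $\gamma_1\Meg 1$ is set to $0$. A concrete counterexample: $G'=\R^2$ with coordinates $(s,t)$, $\Lc_A$ the Laplacian, $T'_1=\partial_t$ (the hypotheses hold: $(RL)$ for $(\Lc_A,iT'_1)$ by classical Fourier analysis, $(S)$ for the one-dimensional Laplacian by Whitney), and $\phi$ with $\widehat\phi(\sigma,\eta)=\eta\,e^{-\sigma^2-\eta^2}$, which is the kernel of the bounded multiplier $m(\lambda,\eta)=\eta e^{-\lambda}$. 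Any $\Phi\in\Sc(G',\Lc_A)$ has a smooth Fourier transform which is radial, in particular even in $\eta$, so $\partial_\eta\widehat\Phi(\sigma,0)=0$ and $\partial_\eta(\widehat\phi-\widehat\Phi)(\sigma,0)=e^{-\sigma^2}\neq0$; on the other hand $\phi-\Phi=(T'_1)^2\Psi_2$ with $\Psi_2\in L^1$ would give $\abs{\widehat{\phi-\Phi}(\sigma,\eta)}\meg \eta^2\norm{\Psi_2}_1$, forcing that derivative to vanish.

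The repair is exactly the iteration the paper performs: since $(\pi_1)_*\Psi_1$ need not vanish, you must apply the first-order decomposition again to $\Psi_1$ itself (this produces a new, generally nonzero, coefficient admitting a Schwartz multiplier for $(\Lc_A,iT'_2,\dots,iT'_n)$, and a new remainder in $\Sc_{(\Lc_A,iT'_1,\dots,iT'_n)}(G')$), and likewise peel off $T'_2,\dots,T'_n$ from each coefficient — for these no extra hypothesis is needed, as you note, because their multipliers are already Schwartz; this is the paper's treatment of $\widetilde\phi_{0,1}$. One first obtains $\phi=\widetilde\phi_0+\sum_k T'_k\phi_k$ with $\widetilde\phi_0\in\Sc(G',\Lc_A)$ and $\phi_k\in\Sc_{(\Lc_A,iT'_1,\dots,iT'_n)}(G')$, and then iterates on the remainders $\phi_k$; the mixed coefficients $\widetilde\phi_\gamma$ with $\gamma_1\Meg1$ that your construction discards arise precisely at this stage and cannot be avoided.
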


\begin{proof}
	For every $k\in \Set{1,\dots, n}$, let $G'_k$ be the quotient of $G'$ by the normal subgroup $\exp(\R T'_k)$. 
	Endow $\gf'$ with a scalar product which turns $(T'_1,\dots, T'_n)$ into an orthonormal family.
	Then, Proposition~\ref{prop:3}  implies that $(\pi_1)_*(\phi)\in \Sc_{\dd \pi_1(\Lc_A, i T'_{2},\dots, i T'_{n})}(G'_1)$, so that there is $\widetilde m_1\in \Sc(E_{\dd \pi_1(\Lc_A, i T'_{2},\dots, i T'_{n})})$ such that $(\pi_1)_*(\phi)=\Kc_{\dd \pi_1(\Lc_A, i T'_{2},\dots, i T'_{n})}(\widetilde m_1)$.
	Therefore, if we define $\widetilde \phi_{0,1}\coloneqq \Kc_{(\Lc_A, i T'_{2},\dots, i T'_{n})}(\widetilde m_1)$, then Proposition~\ref{prop:3} implies that $(\pi_1)_*(\phi-\widetilde \phi_{0,1})=0$.
	In other words,
	\[
	\int_{\R} (\phi-\widetilde \phi_{0,1})(\exp(x+s T'_1))\,\dd s=0
	\]
	for every $x\in  T_1'^\perp$. 
	Identifying $\Sc(G') $ with $\Sc(\R T'_1;\Sc(T_1'^\perp))$, by means of a simple consequence of the classical Hadamard's lemma we see that there is $\phi_1 \in \Sc(G')$ such that
	\[
	\phi=\widetilde \phi_{0,1}+ T'_1 \phi_1.
	\]
	Now, let us prove that $\phi_1\in \Sc_{(\Lc_A, i T'_{1},\dots, i T'_{n})}(G')$. 
	Indeed,  
	\[
	T'_1 \Kc_{(\Lc_A, i T'_{2},\dots, i T'_{n})}\Mc_{(\Lc_A, i T'_{2},\dots, i T'_{n})}(\phi_1)=\phi-\widetilde \phi_{0,1}=T'_1 \phi_1.
	\] 
	Since clearly $\Kc_{(\Lc_A, i T'_{2},\dots, i T'_{n})}\Mc_{(\Lc_A, i T'_{2},\dots, i T'_{n})}(\phi_1)\in L^2(G')$, and since $T'_1$ is one-to-one on $L^2(G')$, the assertion follows.
	If $n\Meg 2$, then we can apply the same argument to $\widetilde \phi_{0,1}$ considering the quotient $G_2'$, since we already know that $\widetilde \phi_{0,1}$ has a Schwartz multiplier. Then, we obtain $\widetilde \phi_{0,2}\in \Sc(G',(\Lc_A, i T'_{3},\dots, i T'_{n}))$ and $\phi_2\in \Sc_{(\Lc_A, i T'_{1},\dots, i T'_{n})} (G')$ such that
	\[
	\phi=\widetilde \phi_{0,2}+ T'_1 \phi_1+ T'_2 \phi_2.
	\]
	Iterating this procedure, we eventually find functions $\widetilde \phi_0\in \Sc(G',\Lc_A)$ and $\phi_1,\dots, \phi_{n}\in \Sc_{(\Lc_A, i T'_{1},\dots, i T'_{n})}(G')$ such that
	\[
	\phi=\widetilde \phi_0+\sum_{k=1}^{n} T'_k \phi_k.
	\]
	The assertion follows proceeding inductively.
\end{proof}

Notice that, if $G$ is abelian and $\Lc$ is a Laplacian on $G$, then $\Lc$ satisfies properties $(RL)$ and $(S)$ (cf.~\cite{Whitney2}).

\begin{theorem}\label{prop:20:2}
	Assume that $W=\Set{0}$.
	Then, $(\Lc, (-i T_k)_{k=1}^{n_2'})$ satisfies property $(S)$ for every $n_2'\meg n_2$. 
\end{theorem}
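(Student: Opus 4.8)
If $G$ is abelian the statement reduces to property $(S)$ for a Laplacian on $\R^{\dim \gf_1}$ (\cite{Whitney2}), so assume $G$ is not abelian, whence $n_2\Meg 1$. The plan is to first reduce to the case $n_2'=n_2$ and then to prove property $(S)$ for the full family $\Lc_A$ by induction on $n_2$. For the reduction, observe that, by Corollary~\ref{cor:11:2}, $\sigma(\Lc_A)=\Set{(\lambda_0,\omega(\vect T))\colon \omega\in\gf_2^*,\ \lambda_0\Meg\mi_\omega(\vect{n_{1,\omega}})}$ is the epigraph of a norm (namely $\tfrac12\tr\abs{J_{Q,\omega}}$, an algebraic function of $\omega$) on $\gf_2^*$, hence a closed convex subanalytic cone. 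Fix $n_2'<n_2$, let $L\colon E_{\Lc_A}\to E_{\Lc'_{A'}}$ be the coordinate projection, with $\Lc'_{A'}=L(\Lc_A)=(\Lc,(-iT_k)_{k\meg n_2'})$, and note that $L$ is proper on $\sigma(\Lc_A)$. Given $m'$ with $\Kc_{\Lc'_{A'}}(m')\in\Sc(G)$, Proposition~\ref{prop:7:4} yields $\Kc_{\Lc_A}(m'\circ L)=\Kc_{\Lc'_{A'}}(m')\in\Sc(G)$; the case $n_2'=n_2$ then gives $m'\circ L$ a representative $\widetilde m\in\Sc(E_{\Lc_A})$. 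Since $n_2'<n_2$, Theorem~\ref{prop:10} gives $m'$ a continuous representative $m'_0$ on $\sigma(\Lc'_{A'})$; as $\widetilde m$ and $m'_0\circ L$ are continuous and agree $\beta_{\Lc_A}$-a.e.\ on $\sigma(\Lc_A)=\Supp{\beta_{\Lc_A}}$, they agree throughout $\sigma(\Lc_A)$, and Corollary~\ref{cor:A:7} applied to $L$ and the cone $\sigma(\Lc_A)$ furnishes $m_3\in\Sc(E_{\Lc'_{A'}})$ with $\widetilde m=m_3\circ L$ on $\sigma(\Lc_A)$; comparing with $\widetilde m=m'_0\circ L$ and using $L(\sigma(\Lc_A))=\sigma(\Lc'_{A'})$ gives $m'=m_3$ $\beta_{\Lc'_{A'}}$-a.e.

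It remains to treat $n_2'=n_2$, by induction on $n_2$. For the base case $n_2=1$ the group splits (via Proposition~\ref{prop:10:8}) as $\Hd^{n_1}\times\R^d$ with $\Lc$ a sum of a sub-Laplacian on the Heisenberg factor and a Laplacian on $\R^d$, so that $\Lc_A=(\Lc,-iT_1)$ is the image under $(a,b,c)\mapsto(a+c,b)$ of a family on $\Hd^{n_1}\times\R^d$ satisfying $(S)$ by Theorem~\ref{teo:5} (the Heisenberg factor contributing a nilpotent Gelfand pair system, cf.~\cite{Veneruso,AstengoDiBlasioRicci2}, and $\R^d$ being the classical case \cite{Whitney2}); one transfers $(S)$ to $\Lc_A$ by Theorem~\ref{teo:7}, taking for $C$ the dilation-invariant semianalytic ground-state portion of the larger spectrum, on which the map is proper and which it carries onto $\sigma(\Lc_A)$ --- exactly as in the reduction above.

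For the inductive step ($n_2\Meg2$), take $f=\Kc_{\Lc_A}(m)\in\Sc(G)$. Since $W=\Set0$, Theorem~\ref{prop:19:1} gives $m$ a continuous representative $m_0$ on $\sigma(\Lc_A)$, and the weak Poisson formula (Proposition~\ref{prop:2:9}) together with the ground-state computation in the proof of Lemma~\ref{lem:9} yields, for $(\lambda_0,\omega(\vect T))\in\sigma(\Lc_A)$,
\[
m_0(\lambda_0,\omega(\vect T))=\int_G f(x,t)\,e^{-\frac14\abs{x_\omega}^2+i\omega(t)}\,\Phi_d\!\left(\sqrt{\lambda_0-\mi_\omega(\vect{n_{1,\omega}})}\;\abs{x_{0,\omega}}\right)\dd(x,t).
\]
Since $\sqrt[4]{-J_{Q,\omega}^2}$, $P_{0,\omega}$ and $\mi_\omega(\vect{n_{1,\omega}})$ are analytic on $\gf_2^*\setminus\Set0$ (this is where $W=\Set0$ is used) and $\Phi_d$ is an even entire function, the right-hand side is real-analytic, with Schwartz decay, off the singular locus $\Set{\omega=0}$. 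On the other hand, for each $k$ the projection $\pi_k\colon G\to G/\exp(\R T_k)$ satisfies $\sigma(\Lc_A)\cap\Set{\omega(T_k)=0}=\sigma(\dd\pi_k(\Lc_A))$, the quotient group again satisfies $W'=\Set0$ and is not an $MW^+$ group, and $\pi_{k*}(f)=\Kc_{\dd\pi_k(\Lc_A)}(m)\in\Sc(G/\exp(\R T_k))$ by Proposition~\ref{prop:3}; hence the inductive hypothesis applies to $\dd\pi_k(\Lc_A)$, and Lemma~\ref{lem:20:3} (applied to $\pi_{k*}(f)$, iterated over subsets of $\Set{1,\dots,n_2}$) shows that the restrictions of $m_0$ to the coordinate subflats $\bigcap_{k\in S}\Set{\omega(T_k)=0}$ extend to Schwartz functions, with mutually compatible ``Taylor expansions'' in the complementary central variables. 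Reconciling these with the explicit formula one verifies that $m_0$ extends smoothly (Whitney-compatibly) across $\Set{\omega=0}$, hence is a formal composite, in the sense of Theorem~\ref{teo:7}, of a suitable polynomial map on (the ground-state portion of) a larger spectrum; Theorem~\ref{teo:7} then produces a representative of $m$ in $\Sc(E_{\Lc_A})$.

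The main obstacle is this last reconciliation. The integrand above contains $\abs{x_\omega}^2$ and $P_{0,\omega}$, which are positively homogeneous of degrees $1$ and $0$ in $\omega$ and --- unlike in the product/$MW^+$ situation of Theorem~\ref{prop:19:1} --- genuinely direction-dependent, so not smooth at $\omega=0$; proving nonetheless that $m_0$ and all its derivatives extend continuously across $\Set{\omega=0}$, with the jets there prescribed by the slice data, is the technical heart of the argument, and is where the reduction to quotients, the Taylor-expansion Lemma~\ref{lem:20:3}, and the Bierstone--Milman--Schwarz section of Theorem~\ref{teo:7} must all be combined, in the spirit of \cite{AstengoDiBlasioRicci2}.
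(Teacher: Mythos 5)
Your reduction from $n_2'<n_2$ to $n_2'=n_2$ (properness of $L$ on the convex cone $\sigma(\Lc_A)$, Proposition~\ref{prop:7:4}, Theorem~\ref{prop:10} for a continuous representative, then Corollary~\ref{cor:A:7}) is correct and is essentially the paper's own reduction. The problem is the case $n_2'=n_2$, where your argument stops exactly at the decisive step. You yourself flag that ``proving nonetheless that $m_0$ and all its derivatives extend continuously across $\Set{\omega=0}$ \dots is the technical heart of the argument,'' and that the various ingredients ``must all be combined'' --- but you never combine them, so the multiplier is never actually shown to be a formal composite, nor is any quantitative control obtained. The paper's proof does not go through Theorem~\ref{teo:7} at this point at all: after applying Lemma~\ref{lem:20:3} (with $\Lc_A=\Lc$ and all the $T_k$'s, the hypothesis on the quotient family being supplied by the inductive hypothesis, since the quotient by $\exp(\R T_1)$ again has $W=\Set{0}$) it uses a vector-valued Borel lemma to subtract a genuine Schwartz function $\widehat m$ realizing the Taylor coefficients $\widetilde m_\gamma$, and then, for the remainder (flat at $\omega=0$ to order $h$), differentiates the explicit ground-state formula in the coordinates $(\xi,\omega)\mapsto(\xi+N(\omega),\omega(\vect T))$ and trades the factor $\omega^\gamma$, $\abs{\gamma}=h$, against the powers $N(\omega)^{-p_2-p_3}$ produced by differentiating the direction-dependent quantities $\abs{x_\omega}^2$, $P_{0,\omega}$, $N(\omega)$; the resulting uniform bounds $\abs{\partial_1^{p_1}\partial_2^{p_2}m}\meg C(1+\xi+N(\omega))^{-p_4}$ on $\open{\sigma(\Lc_A)}$ are then fed into a Schwartz version of the Stein--Whitney extension theorem. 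None of this estimation appears in your proposal, and without it the ``reconciliation'' you invoke is just a restatement of the claim.

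The base case is also not sound as written. For $n_2=1$ the paper does not reduce to the Heisenberg group: it runs the same inductive-step argument, the quotient by $\exp(\R T_1)$ being abelian, precisely because property $(S)$ for $(\Lc',-iT)$ on $\Hd^{n_1}$ is the $MW^+$ situation that this paper explicitly defers to a future paper. Citing Veneruso (which goes in the Hulanicki direction, multipliers to kernels) or~\cite{AstengoDiBlasioRicci2} (which treats the $U(n)$-invariant, i.e.\ isotropic, sub-Laplacian) does not cover the anisotropic $\Lc'$ produced by Proposition~\ref{prop:10:8}. Moreover, even granting $(S)$ for the product family on $\Hd^{n_1}\times\R^d$, your descent via Theorem~\ref{teo:7} with $C$ the ground-state sheet runs into the same unresolved issue: the sheet $\Set{(\mi_\tau(\vect{n_1}),\tau,\lambda_2)}$ has a corner along $\tau=0$ (the local section is only Lipschitz there), so the formal-composite condition of Theorem~\ref{teo:7} at those fibre points is exactly the Whitney-compatibility across $\Set{\omega=0}$ that was left unproved; also note that Corollary~\ref{cor:A:7} is unavailable there since the relevant supports (the fan, or the sheet) are not convex. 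So both the base case and the inductive step have a genuine gap at the same place, which is where all the analytic work of the paper's proof is concentrated.
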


\begin{proof}
	Notice that Theorems~\ref{prop:19:1} and~\ref{prop:10} imply that $(\Lc, (-i T_k)_{k=1}^{n_2'})$ satisfies property $(RL)$. 
	Therefore, by means of Corollary~\ref{cor:A:7}  we see that it will suffice to prove the assertion for $n_2'=n_2$. In addition, the abelian case, that is, the case $n_2=0$ has already been considered.
	We proceed by induction on $n_2\Meg 1$. 
	
	{\bf1.} Observe first that the abelian case, Theorem~\ref{prop:19:1}, and Lemma~\ref{lem:20:3} imply that we may find a family $(\widetilde \phi_\gamma)$ of elements of $\Sc(G,\Lc)$, and a family $(\phi_\gamma)$ of elements of $\Sc_{\Lc_A}(G)$ such that
	\[
	\phi=\sum_{\abs{\gamma}< h} \vect{T}^\gamma\widetilde \phi_\gamma+ \sum_{\abs{\gamma}=h}\vect{T}^\gamma \phi_\gamma
	\] 
	for every $h\in \N$.
	
	Define $\widetilde m_\gamma\coloneqq \Mc_{\Lc}(\widetilde \phi_\gamma)\in \Sc(\sigma(\Lc))$ and $m_\gamma\coloneqq \Mc_{\Lc_A}(\phi_\gamma)\in C_0(\beta_{\Lc_A})$ for every $\gamma$.
	Then, 
	\[
	m_0(\lambda, \omega) =\sum_{\abs{\gamma}<h} \omega^\gamma \widetilde m_\gamma(\lambda)+ \sum_{\abs{\gamma}=h} \omega^\gamma m_\gamma(\lambda, \omega)
	\]
	for every $h\in \N$ and for every $(\lambda, \omega)\in \sigma(\Lc_A)$.

	By a vector-valued version of Borel's lemma (cf.~\cite[Theorem 1.2.6]{Hormander2} for the scalar, one-dimensional case), we see that there is $\widehat m\in C^\infty_c(\R^{n_2}; \Sc(\R) )$ such that $\widehat m^{(\gamma)}(0)=\widetilde  m_\gamma$ for every $\gamma\in \N^{n_2}$. 
	Interpret $\widehat m$ as an element of $\Sc(E_{\Lc_A})$.
	Reasoning on $m-\widehat m$, we may reduce to the case in which $\widetilde m_\gamma=0$ for every $\gamma$; then, we shall simply write $m$ instead of $m_0$.
	
	{\bf2.}	Consider the norm $N\coloneqq \mi(\vect{n_{1}})$ on $\gf_2^*$ and let $S$ be the associated unit sphere. 
	Define $\sigma(\omega)\coloneqq \frac{\omega}{N(\omega)}$ for every $\omega\in \gf_2^*\setminus \Set{0}$.
	Then, 
	the mapping 
	\[
	S\ni \omega \mapsto (\pi_\omega)_*(\phi)\in \Sc(\gf_1 \oplus \R)
	\]
	is of class $C^\infty$. 
	Fix $\omega_0\in S$. 
	It is not hard to  see that we may find a dilation-invariant open neighbourhood $U$ of $\omega_0$ and an analytic mapping $\psi\colon U\times (\gf_1\oplus \R)\to \R^{2 n_1}\times \R\times \R^d$ such that, for every $\omega\in U$, $\psi_\omega\coloneqq \psi(\omega,\,\cdot\,)$ is an isometry of $\gf_1 \oplus \R$ onto $\R^{2 n_1}\times \R\times \R^d$ such that $\psi_\omega(P_{0,\omega}(\gf_1))=\Set{0}\times \R^d$ and $\psi_\omega(\Set{0}\times \R)=\Set{0}\times \R\times \Set{0}$.
	Take $\omega\in U$. 
	By transport of structure, we may put on $\R^{2 n_1}\times \R$ a group structure for which $\R^{2 n_1}\times \R$ is isomorphic to $\Hd^{n_1}$ and which turns $\psi_\omega$ into an isomorphism of Lie groups, where, $\gf_1\oplus \R$ is endowed with the structure induced by its identification with $G_\omega$.\footnote{Obviously, this structure depends on $\omega$.} 
	Then, there is a sub-Laplacian $\Lc'_\omega$ on $\R^{2 n_1}\times \R$ such that, if $T$ denotes the derivative along  $\Set{0}\times \R\subseteq\R^{2 n_1}\times \R$ and $\Delta$ is the standard (positive) Laplacian on $\R^d$, then
	\[
	\dd (\psi_\omega\circ\pi_\omega)(\Lc_A)=(\Lc'_\omega+\Delta, \omega(\vect{T}) T).
	\]  
	Then, Proposition~\ref{prop:3} and Lemma~\ref{lem:3}  imply that 
	\[
	(\psi_\omega\circ\pi_\omega)_*(\phi_\gamma)((y,t),\,\cdot\,)\in \Sc_\Delta(\R^d)
	\]
	for every $(y,t)\in \R^{2 n_1}\times \R$ and for every $\gamma\in \N^{n_2}$. 
	Define
	\[
	\widehat \phi_\gamma\colon (U\cap S) \times \R_+\times( \R^{2 n_1}\times\R)\ni (\omega, \xi, (y,t))\mapsto \Mc_\Delta ((\psi_\omega\circ\pi_\omega)_*(\phi_\gamma)((y,t),\,\cdot\,)  )(\xi),
	\]
	so that $\widehat \phi_\gamma(\omega,\,\cdot\,,(y,t))\in \Sc(\R_+)$ for every $\omega\in U\cap S$ and for every $(y,t)\in \R^{2 n_1}\times \R$ since $\Delta $ satisfies property $(S)$. 
	In addition,  the mapping 
	\[
	\omega \mapsto [ (y,t)\mapsto (\psi_\omega\circ\pi_\omega)_*(\phi_\gamma)((y,t),\,\cdot\,)  ]
	\]
	belongs to $\Ec(S\cap U; \Sc(\R^{2 n_1}\times \R; \Sc_\Delta(\R^d) ))$, 
	so that the mapping
	\[
	\omega \mapsto [ (y,t)\mapsto \widehat\phi_\gamma(\omega,\,\cdot\,,(y,t))  ]
	\] 
	belongs to $\Ec(S\cap U; \Sc(\R^{2 n_1}\times \R; \Sc_\R(\R_+) ))$.
	Now, observe that the mapping 
	\[
	U\ni \omega \mapsto \psi_\omega^{-1}\in \Lc(\R^{2 n_1}\times \R\times \R^d; \gf_1 \oplus \R)
	\]
	is of class $C^\infty$, so that also the mapping
	\[
	f\colon U\times \R^{n_1}\ni(\omega,y)\mapsto \abs{ ( \psi_{\sigma(\omega)}^{-1}(y,0,0)  )_{\omega} }^2
	\]
	is of class $C^\infty$, thanks to Proposition~\ref{prop:11:1}. 
	In addition, by means of Proposition~\ref{prop:11:2} we see that
	\[
	\begin{split}
	m_\gamma(\xi+N(\omega), \omega(\vect{T}))= \int_{\R^{2 n_1}\times \R} \widehat \phi_\gamma(\sigma(\omega), \xi, (y,t)) e^{ -\frac{1}{4} f(\omega, y)+i N(\omega)t }\,\dd (y,t)
	\end{split}
	\]
	for every $\gamma\in \N^{n_2}$, for every $\omega\in U$ and for every $\xi\Meg 0$.
	Therefore, the preceding arguments and some integrations by parts show that
	\[
	\begin{split}
	m(\xi+N(\omega),\omega(\vect{T}))
	&=\sum_{\abs{\gamma}=h}\sigma(\omega(\vect{T}))^\gamma\int_{\Hd^{n_1}} T^h\widehat \phi_\gamma(\sigma(\omega),\xi,(y,t)) e^{-\frac{1}{4} f(\omega,y)+i N(\omega) t }\,\dd (y,t)\\
	&=\sum_{\abs{\gamma}=h}(-i\omega(\vect{T}))^\gamma\int_{\Hd^{n_1}} \widehat \phi_\gamma(\sigma(\omega),\xi,(y,t)) e^{-\frac{1}{4} f(\omega,y)+i N(\omega) t }\,\dd (y,t)
	\end{split}
	\]
	for every $h\in \N$, for every $\omega\in U$ and for every $\xi \Meg 0$. 
	Now, fix $p_1,p_2,p_3\in \N$, and take $h\in\N$.
	Apply Faà di Bruno's formula and integrate by parts $p_3$ times in the $t$ variable.
	Then,  there is a constant $C>0$ such that
	\[
	\begin{split}
	\abs{ 	(\partial_1^{p_1} \partial_2^{p_2}m )(\xi,\omega(\vect{T})) }&\meg C N(\omega)^{h-p_2-p_3}(1+N(\omega))^{p_2} \int_{\Hd^{n_1}} (1+\abs{(y,t)})^{2 p_2}\times\\
	& \qquad\times \max_{\substack{\abs{\gamma}=h\\q_2+q_3=0,\dots, p_2}} \abs{  \widehat \phi_\gamma^{(p_1+p_3+q_2+q_3)}(\sigma(\omega), \xi-N(\omega), (y,t)) } \,\dd (y,t)
	\end{split}
	\]
	for every $(\xi,\omega(\vect{T}))\in \open{\sigma(\Lc_A)}\cap (\R\times U)$.
	Here, $\abs{(y,t)}=\abs{y}+\sqrt{\abs{t}}$ is a homogeneous norm on $\R^{2 n_1}\times \R$.
	
	Now, take a compact subset $K$ of $U\cap S$. 
	Then, the properties of the $\widehat \phi_\gamma$ imply that for every $p_4\in \N$ there is a constant $C'$ such that
	\[
	\abs{ \widehat \phi_\gamma^{(q)}(\omega, \xi, (y,t)) }\meg \frac{C'}{ (1+\xi)^{p_4}(1+\abs{(y,t)})^{2 p_2+ 2 n_1+3}  }
	\]
	for every $\gamma$ with length $h$, for every $q=0,\dots, p_1+p_2+p_3 $, for every $\omega\in K$, for every $\xi\Meg 0$ and for every $(y,t)\in \R^{2 n_1}\times \R$.
	Therefore, there is a constant $C''>0$ such that
	\[
	\abs{ 	(\partial_1^{p_1} \partial_2^{p_2}m )(\xi,\omega(\vect{T})) }\meg C'' N(\omega)^{h-p_2-p_3}\frac{(1+N(\omega))^{p_2}}{ (1+ \xi-N(\omega)   )^{p_4}   } 
	\]
	for every $(\xi,\omega(\vect{T}))\in \open{\sigma(\Lc_A)}\cap (\R\times U)$ such that $\sigma(\omega)\in K$. 
	By the arbitrariness of $U$ and $K$, and by the compactness of $S$, we see that we may take $C''$ so that the preceding estimate holds for every $(\xi,\omega(\vect{T}))\in \open{\sigma(\Lc_A)}\cap (\R \times (\R^{n_2}\setminus\Set{0}))$.
	
	Now, taking $h-p_3>p_2$ we see that $\partial_1^{p_1} \partial_2^{p_2}m$  extends to a continuous function on $\sigma(\Lc_A)$ which vanishes on $\R_+\times \Set{0}$. 
	If $N(\omega)\meg \frac{1}{3}$, then take $h-p_3=p_2$ and observe that
	\[
	\frac{1}{3} + \xi+ N(\omega) \meg \frac{2}{3}+\xi\meg 1+\xi-N(\omega)
	\] 
	for every $\xi \Meg N(\omega)$.
	On the other hand, if $N(\omega)\Meg \frac{1}{3}$, then take $p_3=p_4+h$ and observe that
	\[
	1+\xi+N(\omega) \meg (1+2 N(\omega))(1+\xi-N(\omega))\meg 5 N(\omega) (1+\xi-N(\omega))
	\]
	for every $\xi \Meg N(\omega)$.
	Hence, for every $p_4\in \N$ we may find a constant $C'''>0$ such that
	\[
	\abs{ 	(\partial_1^{p_1} \partial_2^{p_2}m )(\xi,\omega(\vect{T})) }\meg C''' \frac{1}{ (1+ \xi+N(\omega)   )^{p_4}   } 
	\]
	for every $\xi\Meg N(\omega)$.
	Now, extending~\cite[Theorem 5 of Chapter VI]{Stein} to the case of Schwartz functions in the spirit of~\cite[Theorem 6.1]{AstengoDiBlasioRicci2}, we see that $m\in \Sc_{E_{\Lc_A}}(\sigma(\Lc_A))$.
\end{proof}

\begin{theorem}\label{prop:20:3}
	Assume that $G$ is the product of a finite family $(G_\eta)_{\eta\in H}$ of $2$-step stratified groups which do not satisfy the $MW^+$ condition; endow each $G_\eta$ with a sub-Laplacian $\Lc_\eta$ and assume that $(\Lc_\eta, i \Tc_\eta)$ satisfies property $(RL)$ (resp.\ $(S)$) for some finite family $\Tc_\eta$ of elements of the second layer of the Lie algebra of $G_\eta$.
	Define $\Lc\coloneqq \sum_{\eta\in H} \Lc_\eta$ (on $G$), and let $\Tc$ be a finite family of elements of the vector space generated by the $\Tc_\eta$.
	Then, the family $(\Lc, - i \Tc)$ satisfies property $(RL)$ (resp.\ $(S)$). 
\end{theorem}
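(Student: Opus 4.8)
The plan is to exhibit $(\Lc,-i\Tc)$ as an image family of the union family built from the $(\Lc_\eta,-i\Tc_\eta)$, to transfer $(RL)$ (resp.\ $(S)$) \emph{up} to that union family via the product theorems of Section~\ref{sez:7}, and then to push it back \emph{down} along the image map by means of the results of Sections~\ref{sec:6} and~\ref{sec:7}. Replacing $\Tc_\eta$ by $-\Tc_\eta$ if needed, each $(\Lc_\eta,-i\Tc_\eta)$ is a Rockland family satisfying $(RL)$ (resp.\ $(S)$). Let $\Lc'_{A'}$ denote the union family on $G=\prod_{\eta\in H}G_\eta$ whose components are the operators $\Lc_\eta$ together with all the $-iT$, $T\in\bigcup_{\eta}\Tc_\eta$. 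By Theorem~\ref{teo:4} (resp.\ Theorem~\ref{teo:5}) the family $\Lc'_{A'}$ satisfies $(RL)$ (resp.\ $(S)$), and by Proposition~\ref{prop:6:2} we have $\beta_{\Lc'_{A'}}=\bigotimes_{\eta}\beta_{(\Lc_\eta,-i\Tc_\eta)}$, so that $\Supp{\beta_{\Lc'_{A'}}}=\prod_{\eta}\sigma(\Lc_\eta,-i\Tc_\eta)$. Since $\Lc=\sum_\eta\Lc_\eta$ and each element of $\Tc$ is a linear combination of elements of $\bigcup_\eta\Tc_\eta$, there is a homogeneous linear map $L\colon E_{\Lc'_{A'}}\to E_{(\Lc,-i\Tc)}$ with $L(\Lc'_{A'})=(\Lc,-i\Tc)$ (all coordinates being of degree $2$). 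Moreover $G$ is itself a $2$-step stratified group which does not satisfy $MW^+$ and its bilinear form $Q=\bigoplus_\eta Q_\eta$ is positive and non-degenerate, so $\Lc$ is a hypoelliptic sub-Laplacian and $(\Lc,-i\Tc)$ is a Rockland family (cf.~the beginning of Section~\ref{sec:11}). Hence Proposition~\ref{prop:1:2} applies and gives that $L$ is proper on $\sigma(\Lc'_{A'})$, that $\beta_{(\Lc,-i\Tc)}=L_*(\beta_{\Lc'_{A'}})$, and that $\Kc_{(\Lc,-i\Tc)}(m)=\Kc_{\Lc'_{A'}}(m\circ L)$ for every $m$ admitting a kernel.

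Now fix $m\in L^\infty(\beta_{(\Lc,-i\Tc)})$ with $\Kc_{(\Lc,-i\Tc)}(m)\in L^1(G)$ (resp.\ $\in\Sc(G)$); then $\Kc_{\Lc'_{A'}}(m\circ L)=\Kc_{(\Lc,-i\Tc)}(m)$ lies in $L^1(G)$ (resp.\ $\Sc(G)$), and by $(RL)$ (resp.\ $(S)$) for $\Lc'_{A'}$ there is $\widetilde m\in C_0(E_{\Lc'_{A'}})$ (resp.\ $\widetilde m\in\Sc(E_{\Lc'_{A'}})$) with $m\circ L=\widetilde m$ $\beta_{\Lc'_{A'}}$-almost everywhere. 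I would then apply Proposition~\ref{prop:A:7} with $X=\sigma(\Lc'_{A'})$, $\mi=\beta_{\Lc'_{A'}}$, $\pi=\restr{L}{X}$, $Y=E_{(\Lc,-i\Tc)}$, $m_0=\restr{\widetilde m}{X}$ and $m_1=m$: indeed $\pi$ is proper, hence $\mi$-proper; $\mi$ is $\pi$-connected by Proposition~\ref{prop:A:8}, since $\Supp{\mi}$ is a closed convex cone (a product of the closed convex cones $\sigma(\Lc_\eta,-i\Tc_\eta)$); and a disintegration $(\beta_{\lambda'})$ of $\mi$ relative to $\pi$ with $\Supp{\beta_{\lambda'}}\supseteq\sigma(\Lc'_{A'})\cap L^{-1}(\lambda')$ for $\beta_{(\Lc,-i\Tc)}$-almost every $\lambda'$ exists (see the last paragraph). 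Proposition~\ref{prop:A:7} then yields a $\beta_{(\Lc,-i\Tc)}$-measurable $m_2\colon E_{(\Lc,-i\Tc)}\to\C$ with $\widetilde m=m_2\circ L$ pointwise on $\sigma(\Lc'_{A'})$; in particular $m=m_2$ $\beta_{(\Lc,-i\Tc)}$-almost everywhere, since $\beta_{(\Lc,-i\Tc)}=L_*(\beta_{\Lc'_{A'}})$.

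For property $(RL)$ this concludes the argument: $\widetilde m$ is continuous and $\restr{L}{\sigma(\Lc'_{A'})}$ is proper and surjective onto $\sigma(\Lc,-i\Tc)$, so $m_2$ is continuous on $\sigma(\Lc,-i\Tc)$ (and it vanishes at infinity, because $\widetilde m$ does and $L$ is proper), whence $m$ has a continuous representative. For property $(S)$ I would instead appeal to Corollary~\ref{cor:A:7} with $V=E_{\Lc'_{A'}}$, $W=E_{(\Lc,-i\Tc)}$, $L$ as above, and $C=\sigma(\Lc'_{A'})$: the set $C$ is a closed convex cone, it is subanalytic (being a product of the semianalytic cones $\sigma(\Lc_\eta,-i\Tc_\eta)$), and $L$ is proper on it; since $\widetilde m\in\Sc(V)$ and $\widetilde m=m_2\circ L$ on $C$, the corollary provides $m_3\in\Sc(E_{(\Lc,-i\Tc)})$ with $\widetilde m=m_3\circ L$ on $C$, whence $m=m_3$ $\beta_{(\Lc,-i\Tc)}$-almost everywhere and $m$ has a representative in $\Sc(E_{(\Lc,-i\Tc)})$.

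The hardest part will be the verification of the disintegration hypothesis of Proposition~\ref{prop:A:7}, and here I would argue exactly as in the proof of Theorem~\ref{prop:10}. Using $\beta_{\Lc'_{A'}}=\bigotimes_\eta\beta_{(\Lc_\eta,-i\Tc_\eta)}$ together with the explicit description of each $\beta_{(\Lc_\eta,-i\Tc_\eta)}$ — obtained from Proposition~\ref{prop:11:2} applied to the family attached to a basis of $\gf_{\eta,2}$, of which $(\Lc_\eta,-i\Tc_\eta)$ is an image family — one restricts to the conull open subset of $\gf_2^*$ on which all the relevant eigenvalue data are analytic, invokes Proposition~\ref{prop:A:6} to obtain a disintegration equivalent to the Hausdorff measures on the fibres, and checks that the exceptional algebraic varieties $W_\eta$ are $\beta_{\Lc'_{A'}}$-negligible and meet the generic fibre $\sigma(\Lc'_{A'})\cap L^{-1}(\lambda')$ — a convex set of dimension $\dim\ker L$ — in dimension strictly less, so that they create no gap in $\Supp{\beta_{\lambda'}}$. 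Controlling how the $W_\eta$ sit inside the fibres of $L$, which forces one to use the concrete form of the Plancherel measures, is the only genuinely laborious point.
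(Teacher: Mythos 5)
Your proposal follows essentially the same route as the paper's (very terse) proof: transfer $(RL)$/$(S)$ to the union family on the product group via Theorems~\ref{teo:4} and~\ref{teo:5}, then descend to $(\Lc,-i\Tc)$ as an image family under a homogeneous linear map by means of Propositions~\ref{prop:A:7},~\ref{prop:A:8},~\ref{prop:A:6} and Corollary~\ref{cor:A:7}; the paper merely splits your single map $L$ into a reduction to $\Tc=\bigcup_\eta\Tc_\eta$ followed by the summation of the Laplacian coordinates. The only step you leave as a sketch --- verifying the disintegration hypothesis of Proposition~\ref{prop:A:7} by arguing as in Theorem~\ref{prop:10} --- is precisely what the paper subsumes under ``follows easily,'' so your write-up is, if anything, more detailed than the original.
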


\begin{proof}
	Observe first that, by means of Propositions~\ref{prop:A:7},~\ref{prop:A:8}, and~\ref{prop:A:6}, and Corollary~\ref{cor:A:7}, we may reduce to the case in which $\Tc$ is the union of the $\Tc_\eta$. 
	Then, Theorems~\ref{teo:4},~\ref{teo:5},~\ref{prop:19:1}, and~\ref{prop:20:2}, imply that the family $(\Lc_H, - i \Tc)$ satisfies property $(RL)$ (resp.\ $(S)$).
	Therefore, the assertion follows easily from Propositions~\ref{prop:A:7},~\ref{prop:A:8}, and~\ref{prop:A:6}, and Corollary~\ref{cor:A:7}.	
\end{proof}

\section*{Acknowledgements}
I would like to thank professor F.\ Ricci for patience and guidance, as well as for many inspiring discussions and for the numerous suggestions concerning the redaction of this manuscript.	
I would also like to thank Dr A.\ Martini and L.\ Tolomeo for some discussions concerning their work.

\end{document}